\documentclass[letterpaper,reqno,10pt,twoside]{amsart}
\usepackage{amsmath,amsthm,amsfonts,amssymb,euscript,mathrsfs,graphics,color,latexsym,marginnote}
\usepackage{stmaryrd}
\usepackage[dvips]{graphicx}
\usepackage[left=1 in, right=1 in,top=1 in, bottom=1 in]{geometry}
\usepackage{hyperref}
\usepackage[toc,page]{appendix}
\usepackage{relsize}
\usepackage[shortlabels]{enumitem}
\usepackage[all]{xy}
\usepackage{float}

\usepackage{xargs}
\usepackage[dvipsnames]{xcolor}

\usepackage{hyperref}
\hypersetup{colorlinks=true, pdfstartview=FitV,linkcolor=blue!70!black,citecolor=red!70!black, urlcolor=green!60!black}
\definecolor{labelkey}{rgb}{0.6,0,0}

\usepackage[colorinlistoftodos,prependcaption,textsize=small]{todonotes}
\newcommandx{\change}[2][1=]{\todo[#1]{#2}}
\newcommandx{\unsure}[2][1=]{\todo[linecolor=red,backgroundcolor=red!25,bordercolor=red,#1]{#2}}
\newcommandx{\rmk}[2][1=]{\todo[linecolor=blue,backgroundcolor=blue!25,bordercolor=blue,#1]{#2}}
\newcommandx{\info}[2][1=]{\todo[linecolor=OliveGreen,backgroundcolor=OliveGreen!25,bordercolor=OliveGreen,#1]{#2}}
\newcommandx{\improvement}[2][1=]{\todo[linecolor=Plum,backgroundcolor=Plum!25,bordercolor=Plum,#1]{#2}}
\newcommandx{\thiswillnotshow}[2][1=]{\todo[disable,#1]{#2}}



\setlength{\parskip}{0.2em}

\makeatletter
\renewcommand \theequation {%
\ifnum \c@section>\z@ \@arabic\c@section.%
\fi\@arabic\c@equation} \@addtoreset{equation}{section}
\@namedef{subjclassname@2020}{2020 Mathematics Subject Classification}
\makeatother


\newtheorem{theorem}{Theorem}[section]

\newtheorem{lemma}[theorem]{Lemma}

\theoremstyle{definition}

\theoremstyle{remark}


\def\XXint#1#2#3{{\setbox0=\hbox{$#1{#2#3}{\int}$ }
\vcenter{\hbox{$#2#3$ }}\kern-.6\wd0}}

\title{The steady state of gravity-capillary problem with inclined walls}
\author{Xiaoding Yang}

\begin{document}
\begin{abstract}
The gravity-capillary problem with inclined
walls is a problem that describes an open fluid flowing over an angled wall. It has broad applications in science and engineering. In this paper, we study the steady states of the two-dimensional inclined-wall problem. The steady-state configurations are characterized as solutions of the Euler–Lagrange equation associated with a prescribed energy functional, subject to a fixed contact-angle boundary condition. By parameterizing the free surface using an appropriately chosen maximal point, we construct solutions to this Euler-Lagrange equation via a shooting method, with the fluid volume serving as the shooting parameter. The construction is valid for arbitrary contact angles and arbitrary inclined angles of the walls. 
\end{abstract}

\maketitle

\pagestyle{myheadings} \thispagestyle{plain} 
\setcounter{tocdepth}{1}
\section{Introduction}

The study on Fluid is a fundamental branch of physics and engineering that deals with the behavior of fluids in motion and at rest. In recent years, an increasing number of math studies have focused on this field. One particular area of interest is fluid that flows over inclined surfaces, which is so-called fluid inclined problems. These problems arise in various natural and industrial applications such as the movement of water on sloped terrains, oil transport through pipelines, and the design of spillways and drainage systems\cite{app}.

When a fluid interacts with an inclined surface, several forces influence its motion including gravity, pressure, and surface tension. It is crucial to understand those effects if we want to predict the behavior of the fluid, optimize engineering designs, and improve efficiency in fluid-based systems. Even in the steady state—where the fluid velocity vanishes everywhere—the balance of forces acting on the fluid remains a rich and important subject of study. Analysis of such equilibria provides valuable insight into the structure and behavior of the corresponding dynamical flows. 

Motivated by these considerations, mathematicians and engineers have devoted substantial effort to the analysis of equilibrium and steady-state configurations in gravity–capillary problems. Despite this progress, many challenges remain, both theoretical and computational, leaving significant room for further investigation.

 From the perspective of practical applications, the study on the steady state of Gravity-Capillary problem typically falls into two distinct settings. The first concerns the motion of a fluid flowing down an inclined surface—for instance, a thin film of fluid sliding down a tilted plane under the influence of gravity. The second setting involves the motion of an open fluid flowing over an angled wall, as illustrated in Figure \ref{figure1}. This scenario can also be interpreted as a fluid mechanics problem within a triangular container, such as the cross-section of water in a channel. Both settings are common in everyday life and are important to discuss. In this paper, we focus exclusively on the second setting. 

 \subsection{Formulation} Based on the discussion in the Background part, the purpose of this paper is to study the static viscous incompressible fluid occupied in an open-top vessel in two dimensions. The vessel is modeled as an open, connected, bounded triangular subset $\Gamma\subset \mathbb{R}^{2}$ that obeys the following assumptions (See Figure \ref{figure1}). First, we posit that two inclined angles $\theta_1$ and $\theta_2$ that are defined as angles between the wall of the vessel and the horizontal plane are both angles between $0$ and $\pi$. Second, we assume that $\theta_1+\theta_2<\pi$. Finally, it is assumed that the fluid occupies a subset $\Omega$ of the vessel $\Gamma$, resulting in a free boundary where the fluid meets the air above the vessel.

In this article, we focus on the steady-state configurations, which are equivalent to determining a static boundary surface. In the scenario depicted in Figure \ref{figure1}, the surface can be represented as a graph of
$x$ in Cartesian coordinates, denoted by $v(x)$. However, such a representation is not always possible. For instance,  the boundary curve shown in Figure \ref{figure3} cannot be represented as a graph of $x$. This observation intrigues us to describe the surface using polar coordinates. Specifically, we choose the intersection point of two vessel walls (denoted by point $O$ in Figure \ref{figure1}) as the original point, and represent the surface by a radial function $\rho(\theta )$. With these geometric settings established, we proceed to derive the governing equation satisfied by the boundary surface.

\begin{figure}
\centering
\label{figure1}
\includegraphics[width=0.9\linewidth]{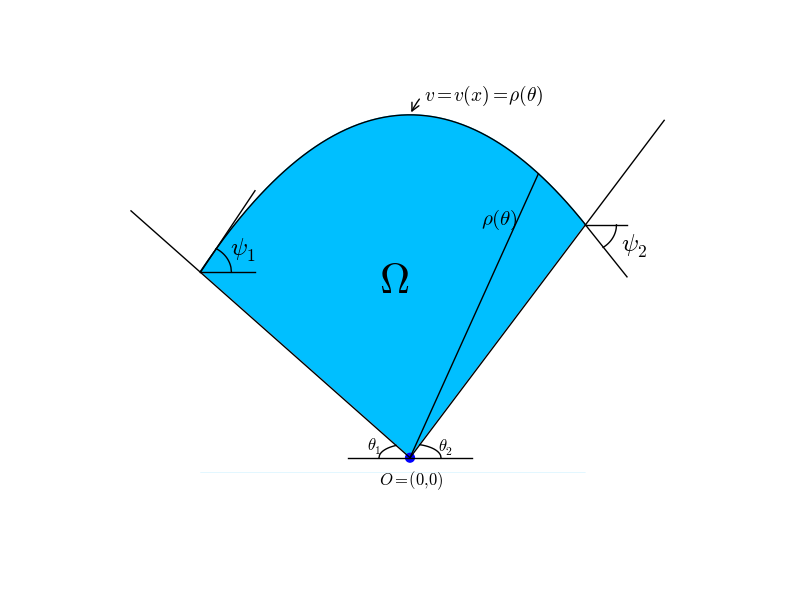}
\caption{}
\end{figure}

\newpage

  Given any nonnegative constant $V$, we formulate the energy functional in polar coordinates subject to the constraint that the total fluid volume is conserved and equals $V$. The functional is given by the following expression.

\begin{equation}{\label{equ:1.2.1}}
        E(\rho):=\frac{1}{3}g\int_{\theta_2}^{\pi-\theta_1}\rho^{3}\sin \theta d\theta+\int_{\theta_2}^{\pi-\theta_1}\sigma \sqrt{\rho^{2}+\rho'^{2}}d\theta-[\![\gamma]\!](\rho(\theta_1)+\rho(\theta_2)),
\end{equation}
\noindent subject to the volume constraint:
\begin{equation}{\label{equ:1.2.2}}
    \mathcal{V}(\rho):=\frac{1}{2}\int_{\theta_2}^{\pi-\theta_1} \rho^{2}(\theta)d\theta=V
\end{equation}

\noindent for any $\rho(\theta)\in C^{1}(\theta_{2},\pi-\theta_{1})$. In equation \eqref{equ:1.2.2}, $g$ is the gravitational constant. $\sigma>0$ is the coefficient of surface tension and $[\![\gamma]\!]:=\gamma_{sv}-\gamma_{sf}$ for $\gamma_{sv},\gamma_{sf}\in\mathbb{R}$, where $\gamma_{sv},\gamma_{sf}$ are measures of the free-energy per unit length with respect to the solid-vapor and solid-fluid intersection. The first term of the functional $E$ defined in \eqref{equ:1.2.1} represents the gravity energy and the second term is the surface tension energy. The third term accounts for the free-boundary energy arising from the interaction between the fluid and the vessel walls. The second equation expresses the conservation of the total volume. To ensure the physical relevance of the model, we assume that
\begin{align}
    \begin{aligned}
        -1\leq\frac{[\![\gamma]\!]}{\sigma}\leq 1.
    \end{aligned}
\end{align}

Applying the variation method to this energy functional $E$, we obtain the associated Euler–Lagrange equations

\begin{equation}{\label{equ:1.2.3}}
\begin{cases}
    g\rho^{2}\sin\theta +\sigma \frac{\rho}{\sqrt{\rho^{2}+\rho'^{2}}}-\sigma\partial_{\theta}(\frac{\rho'}{\sqrt{\rho^{2}+\rho'^{2}}})-P_{0}\rho=0\\
    \frac{\rho'}{\sqrt{\rho^{2}+\rho'^{2}}}(\theta_2)=\frac{[\![\gamma]\!]}{\sigma}\\
     \frac{\rho'}{\sqrt{\rho^{2}+\rho'^{2}}}(\pi-\theta_1)=-\frac{[\![\gamma]\!]}{\sigma}
\end{cases}
\end{equation}

 \noindent where $P_{0}$ is a Lagrange multiplier depending only on the prescribed volume $V$. Then our purpose of finding the steady state is equivalent to constructing a solution to the Euler-Lagrange equation \eqref{equ:1.2.3}. Our approach proceeds as follows. We select a distinguished point on the free surface and use equation \eqref{equ:1.2.3} to construct the corresponding solution curve, parametrized by the coordinate of this point, which we refer to as the special parameter. The total volume $\mathcal{V}(\rho)$ and the pressure $P_{0}$ are then expressed in terms of this parameter. Finally, we determine its value by a shooting method, enforcing the volume constraint $\mathcal{V}(\rho)=V$, which yields the desired equilibrium free-surface profile.

We note that the only unknown parameter in the equation system \eqref{equ:1.2.3} is $P_{0}$, whose determination constitutes the main difficulty of this paper. Even though it is more convenient to  represent the energy $E$ and volume $V$ in polar coordinates, eliminating the unknown parameter $P_{0}$ within this framework proves to be quite challenging. Noting that the surface curve can also be represented as $(x(\theta),y(\theta))$,  we introduce the following shift in Cartesian coordinates to eliminate $P_{0}$

\begin{equation}{\label{equ:1.2.10}}
    (x,y)\rightarrow (x,y-\frac{P_{0}}{g}).
\end{equation}

\noindent We shall see how this transformation simplifies the analysis in the subsequent sections.  

After this shift, the representation of surface curve in polar coordinates is unclear if the pole remains fixed at $(0,0)$. Therefore, to address this difficulty and to better analyze the system \eqref{equ:1.2.3}, we introduce a new intrinsic variable that remains invariant under the above transformation. We introduce the angle $\psi$, defined by
    \begin{equation}{\label{def:slope}}
    \begin{cases}
    \sin\psi(\theta)=\frac{\sin\theta \rho'(\theta)+\rho(\theta)\cos\theta}{\sqrt{\rho^{2}(\theta)+\rho'^{2}(\theta)}}\\
    \cos\psi(\theta)=\frac{\rho(\theta)\sin\theta-\rho'(\theta)\cos\theta}{\sqrt{\rho^{2}(\theta)+\rho'^{2}(\theta)}},
    \end{cases}
    \end{equation}

   \noindent which represents the slope angle of the free surface in Cartesian coordinates. 
   
   Although the expressions in \eqref{def:slope} may be complicated, the geometric meaning of $\psi$ is transparent and is illustrated in Figure~\ref{figure2}. In Cartesian coordinates, $\psi$ is an acute angle and can be expressed more simply as
\begin{equation}\label{equ:psi}
\tan\psi = \partial_x v(x),
\end{equation}
   where the definition of $v(x)$ is given in Figure \ref{figure1}. Given the representation of $\rho(\theta)$, equation \eqref{def:slope} shows how the angle $\psi$ is determined from 
$\theta$. Compared with $\theta$, $\psi$ provides a more natural parametrization of the surface curve $\rho$, considering the structure of equation \eqref{equ:1.2.3}. We will show how we reformulate the equation \eqref{equ:1.2.3} in terms of this new variable $\psi$ in next subsection.
   
     \subsection{The structure of this paper} We introduce the following definitions for contact angles $\psi_{1}$ and $\psi_{2}$ 
    
\begin{figure}
\centering
\includegraphics[width=0.9\linewidth]{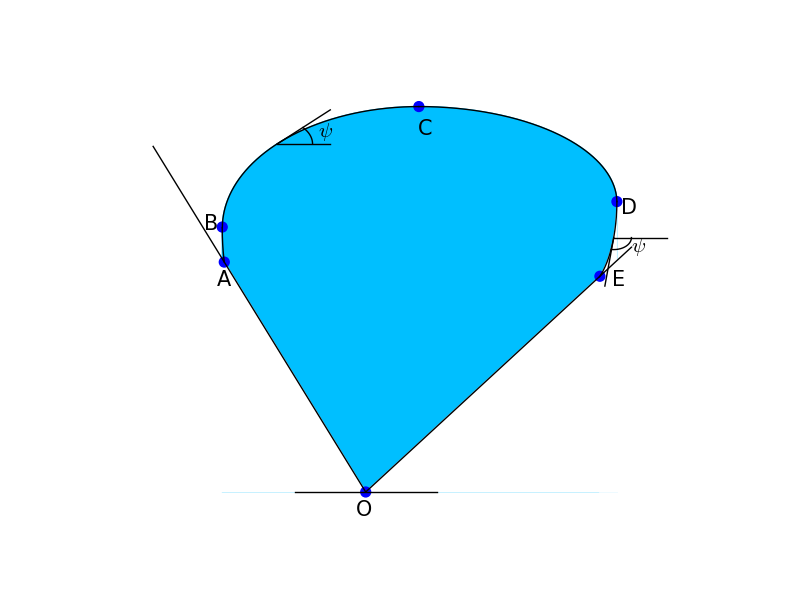}
\caption{\label{figure2}}
\end{figure}

\begin{align}
     \sin\psi_1&:=\sin\psi(\pi-\theta_1)=\sin(\gamma+\frac{\pi}{2}-\theta_1)=(-\frac{[\![\gamma]\!]}{\sigma})\sin\theta_1+\sqrt{1-\frac{[\![\gamma]\!]^2}{\sigma^{2}}}\cos\theta_1 \label{equ:1.2.8}\\
    \sin\psi_2&:=\sin\psi(\theta_2)=\sin(-\gamma-\frac{\pi}{2}+\theta_2)=(\frac{[\![\gamma]\!]}{\sigma})\sin\theta_2+\sqrt{1-\frac{[\![\gamma]\!]^{2}}{\sigma^{2}}}\cos\theta_2 \label{equ:1.2.9}
\end{align}

\noindent where $\psi$ is defined by \eqref{def:slope}. The constant $\gamma\in [-\frac{\pi}{2},\frac{\pi}{2}]$ is determined by

\begin{equation}{\label{equ:1.2.110}}
        \sin\gamma=-\frac{[\![\gamma]\!]}{\sigma}.
\end{equation}

\noindent Then we divide our problem into two cases according to the sign of $\psi_1$ and $\psi_2$. In analyzing each case, we begin by assuming the existence of the solution and deriving some geometric properties that any solution function to \eqref{equ:1.2.3} must satisfy. These properties motivate the introduction of a set of auxiliary parameters. Using these parameters, we combine the first equation of \eqref{equ:1.2.3} with these geometric relations to construct the free-boundary curve. Finally, the remaining boundary conditions—the second and third equations in \eqref{equ:1.2.3}—together with the volume constraint \eqref{equ:1.2.2}, are employed to determine the auxiliary parameters and thereby obtain the desired solution.

\subsection{ \textbf{Case One: $\psi_1$ and $\psi_2$ have opposite signs}}

In Section~2, we consider the case in which $\psi_{1}>0$ and $\psi_{2}<0$, and establish the following result for the angle $\psi$ defined in \eqref{def:slope}.

\begin{theorem}
    Suppose that the system \eqref{equ:1.2.3} admits a solution. Then the angle $\psi$ provides a global parametrization of the free boundary curve. Equivalently, the relation between $\psi$ and $\theta$ given by \eqref{def:slope} is one-to-one.

Let $(x(\theta),y(\theta))=(\rho(\theta)\cos\theta,\rho(\theta)\sin\theta)$ denote the corresponding solution of \eqref{equ:1.2.3} (see Figure~\ref{figure3}). Then, using $\psi$ as the parametrization variable for the boundary curve, we derive the system of equations \eqref{equ:1.2.600}, which is equivalent to the original system \eqref{equ:1.2.3}. This equation can be written as follows

     \begin{equation}{\label{equ:1.2.600}}
    \begin{cases}
        \frac{dx}{d\psi}=\sigma\frac{\cos\psi}{gy-P_{0}}\\
    \frac{dy}{d\psi}=\frac{dy}{dx}\frac{dx}{d\psi}=\sigma\tan\psi\frac{\cos\psi}{gy-P_{0}}=\sigma\frac{\sin\psi}{gy-P_{0}}\\
    \sin\psi(\pi-\theta_1)=\sin\psi_1=(-\frac{[\![\gamma]\!]}{\sigma})\sin\theta_1+\sqrt{1-\frac{[\![\gamma]\!]^{2}}{\sigma^{2}}}\cos\theta_1\\
    \sin\psi(\theta_2)=\sin\psi_{2}=(\frac{[\![\gamma]\!]}{\sigma})\sin\theta_2+\sqrt{1-\frac{[\![\gamma]\!]^{2}}{\sigma^{2}}}\cos\theta_2
    \end{cases}
    \end{equation}
 \end{theorem}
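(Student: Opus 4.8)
The plan is to reduce the bulk equation in \eqref{equ:1.2.3} to a curvature relation, to extract from it a first integral in the pair $(\psi,y)$, and then to read off both the injectivity of $\theta\mapsto\psi$ and the reformulated system \eqref{equ:1.2.600} from that integral.

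\textbf{Step 1: curvature form and the two kinematic identities.} First I would divide the first line of \eqref{equ:1.2.3} by $\rho$ and recognize, with $y=\rho\sin\theta$, the bracketed term as the signed curvature of the polar graph,
\[
\frac{1}{\sqrt{\rho^{2}+\rho'^{2}}}-\frac{1}{\rho}\,\partial_\theta\!\Big(\frac{\rho'}{\sqrt{\rho^{2}+\rho'^{2}}}\Big)=\frac{\rho^{2}+2\rho'^{2}-\rho\rho''}{(\rho^{2}+\rho'^{2})^{3/2}}=:\kappa,
\]
so that \eqref{equ:1.2.3} becomes the Young--Laplace relation $\sigma\kappa=P_0-gy$. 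Writing \eqref{def:slope} in the compact form $\psi=\tfrac{\pi}{2}-\theta+\arctan(\rho'/\rho)$ and differentiating then yields the two identities
\[
\frac{dy}{d\theta}=\sqrt{\rho^{2}+\rho'^{2}}\,\sin\psi,\qquad \frac{d\psi}{d\theta}=-\kappa\,\sqrt{\rho^{2}+\rho'^{2}}=\frac{gy-P_0}{\sigma}\,\sqrt{\rho^{2}+\rho'^{2}},
\]
the first being purely kinematic and the second combining the definition of $\psi$ with the curvature relation.

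\textbf{Step 2: a first integral and strict monotonicity.} Since $\sqrt{\rho^{2}+\rho'^{2}}>0$, the sign of $d\psi/d\theta$ equals the sign of $w:=y-P_0/g$, so the one-to-one claim amounts to showing that $w$ never vanishes. Using the two identities of Step 1 I would verify that
\[
\frac{d}{d\theta}\Big(\sigma\cos\psi+\tfrac{g}{2}\,w^{2}\Big)=0,
\]
so that $C:=\cos\psi+\tfrac{g}{2\sigma}w^{2}$ is constant along the solution. In Case One $\psi$ runs from $\psi_2<0$ to $\psi_1>0$, so by the intermediate value theorem it vanishes at some interior $\theta_\ast$; evaluating $C$ there gives $C=1+\tfrac{g}{2\sigma}w(\theta_\ast)^2\ge 1$. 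The borderline value $C=1$ is excluded because on the level set $\{C=1\}$ the only point with $\psi=0$ is the equilibrium $(w,\psi)=(0,0)$ of the autonomous system $w'=\sin\psi,\ \psi'=(g/\sigma)w$, which a curve of finite length cannot reach. Hence $C>1$, and then $w^{2}=\tfrac{2\sigma}{g}(C-\cos\psi)\ge\tfrac{2\sigma}{g}(C-1)>0$ everywhere; thus $w$ keeps a constant sign, $d\psi/d\theta$ keeps a constant sign, and $\theta\mapsto\psi$ is a strictly monotone bijection. I expect this exclusion of $w=0$ (equivalently, of vanishing curvature) to be the main obstacle, since it is exactly what promotes $\psi$ from a pointwise-defined angle into a legitimate global parameter.

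\textbf{Step 3: reformulation.} With $\psi$ now admissible as a parameter, I would change variables by dividing the Cartesian derivatives by $d\psi/d\theta$. Orienting the parameter by the geometric meaning \eqref{equ:psi} so that the unit tangent is $(\cos\psi,\sin\psi)$, the radical $\sqrt{\rho^{2}+\rho'^{2}}$ cancels between numerator and denominator and leaves
\[
\frac{dx}{d\psi}=\frac{\sigma\cos\psi}{gy-P_0},\qquad \frac{dy}{d\psi}=\frac{\sigma\sin\psi}{gy-P_0},
\]
which are the first two lines of \eqref{equ:1.2.600}; the last two lines are simply the contact-angle conditions \eqref{equ:1.2.8}--\eqref{equ:1.2.9} restated as boundary data for $\psi$. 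Since each step is reversible once $\psi$ is known to be monotone---the curvature relation being equivalent to the first ODE---this shows \eqref{equ:1.2.600} is equivalent to \eqref{equ:1.2.3}, as claimed.
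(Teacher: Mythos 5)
Your proposal is correct in its essential content, but it proves the theorem by a genuinely different and more economical route than the paper. The paper's argument (Theorem \ref{Thm:g_1}) is a geometric proof by contradiction: assuming the contact height satisfies $u_{1}\ge 0$, it extends the curve through $\psi\in(\psi_{1},2\pi-\psi_{1})$, establishes the reflection symmetry $y(\psi)=y(2\pi-\psi)$ from the same energy identity you use, and then invokes non-self-intersection and the Jordan curve theorem to force a contradiction; only afterwards does it conclude that $y-P_{0}/g$ keeps one sign so that $\psi$ is admissible as a parameter. You instead observe directly that $C=\cos\psi+\tfrac{g}{2\sigma}w^{2}$ is a first integral of the arclength system $w'=\sin\psi$, $\psi'=(g/\sigma)w$, evaluate it at the interior zero of $\psi$ guaranteed by the intermediate value theorem, and exclude the separatrix value $C=1$ by uniqueness at the saddle $(0,0)$; this yields $w^{2}\ge\tfrac{2\sigma}{g}(C-1)>0$ and hence strict monotonicity of $\psi$ in $\theta$ in one stroke. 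What your phase-plane argument buys is a shorter and arguably more rigorous exclusion of vanishing curvature height (the paper's self-intersection argument is informal in places); what the paper's argument buys is that it is carried out under the weaker hypothesis that the curve is only piecewise a polar graph, and it additionally pins down the sign $u_{m}<0$, which is used later in Lemma \ref{lem:curve}. Two small caveats: first, your sign bookkeeping in Step 1 follows \eqref{def:slope} literally, which is inconsistent by a sign with \eqref{equ:psi} and with \eqref{equ:1.2.600} (for the unit circle, \eqref{def:slope} gives $\tan\psi=-\partial_{x}v$); this is the paper's own inconsistency and does not affect your conservation law or the monotonicity conclusion, but it flips which sign $w$ ends up carrying, so you should state which convention you adopt. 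Second, your claim of full equivalence of \eqref{equ:1.2.600} with \eqref{equ:1.2.3} in Step 3 is only the forward direction; the converse (that a solution of \eqref{equ:1.2.600} is a polar graph) is a separate fact, handled in the paper by Theorem 2.16.
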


 With this reformulation, our problem reduces to finding the solution function to \eqref{equ:1.2.600} such that the area of the region $\Omega$, enclosed by the free surface and the two inclined walls, equals the prescribed volume $V$. To derive explicit expressions for both the solution curve and the total volume of $\Omega$, we first consider the system \eqref{equ:1.2.600} without the boundary conditions, namely,

    \begin{equation}{\label{equ:1.2.6}}
    \begin{cases}
        \frac{dx}{d\psi}=\sigma\frac{\cos\psi}{gy-P_{0}}\\
    \frac{dy}{d\psi}=\sigma\frac{\sin\psi}{gy-P_{0}}
    \end{cases}
    \end{equation}
    We then apply the shift introduced in \eqref{equ:1.2.10} to eliminate $P_{0}$. This transformation reduces \eqref{equ:1.2.6} to the following system

    \begin{equation}{\label{equ:1.2.7}}
    \begin{cases}
        \frac{dx}{d\psi}=\sigma\frac{\cos\psi}{gy}\\
    \frac{dy}{d\psi}=\sigma\frac{\sin\psi}{gy}.
    \end{cases}
    \end{equation}

    \begin{figure}
    \centering
    \includegraphics[width=0.9\linewidth=2]{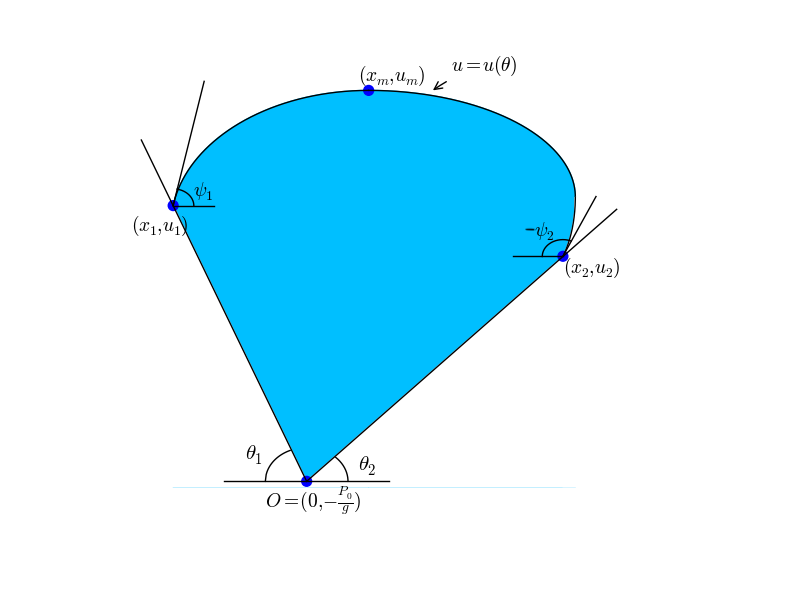}
    \caption{\label{figure3}}
    \end{figure}
    
    \noindent  To construct a solution function of \eqref{equ:1.2.7}, appropriate initial data is required. Observe that for any solution function of \eqref{equ:1.2.600}( assuming existence without imposing the volume constraint), the assumption that $\psi_{1}$ and $\psi_{2}$ have opposite signs implies that $\psi$ ranges from $\psi_{2}<0$ to $\psi_{1}>0$. Therefore, $\psi$ attains value zero at a unique point on the curve, which we denote by $(x_{m},u_{m})$. This point corresponds to the maximum height of the free surface and is characterized by

    \begin{align}
        u_{m}&=\sup_{\psi_2\leq \psi\leq \psi_{1}} y(\psi)=y(0) \label{equ:1.3.1}\\
         x_{m}&=x(0) \label{equ:1.3.2}.
    \end{align}
    
     This geometric observation suggests a natural choice of initial data for constructing solutions of \eqref{equ:1.2.7}. By the existence and uniqueness theorem for ordinary differential equations, once the values of $x_{m}$ and $u_{m}$ are specified, there exists a unique solution curve $(x(\psi),y(\psi))$ of \eqref{equ:1.2.7} satisfying the initial conditions \eqref{equ:1.3.1}–\eqref{equ:1.3.2}. This curve intersects the two inclined walls at points $(x_{1},y_{1})$ and $(x_{2},y_{2})$, respectively. 
     
     With this construction, the remaining task is to determine suitable  values of $u_{m},x_{m}$ such that the resulting curve satisfies the boundary conditions in \eqref{equ:1.2.600}, namely that it meets the two walls with prescribed slope angles $\psi_{1}$ and $\psi_{2}$, and that the area $\Omega$ enclosed by the curve and the walls has the prescribed volume $V$. We denote the volume of this enclosed area by $\mathscr{V}$.
     
     Finally, note that after applying the shift \eqref{equ:1.2.10}, the intersection point $O$ of the two walls is located at $(0,-P_{0}/g)$ (see Figure~\ref{figure3}). We now state the following theorem expressing both $\mathscr{V}$ and $P_{0}$ as functions of the parameter $u_{m}$.

    \begin{figure}
    \centering
    \includegraphics[width=0.9\linewidth=2]{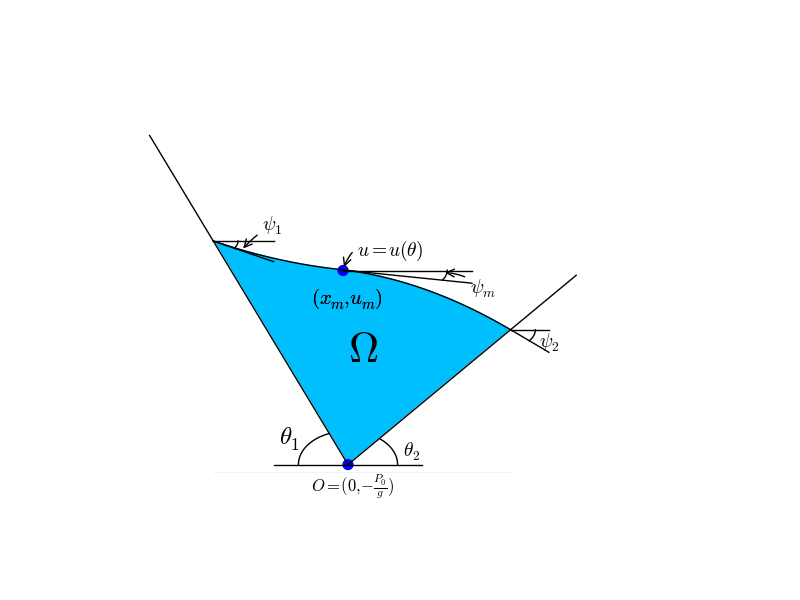}
    \caption{\label{figure4}}
    \end{figure}
    
    \begin{theorem}{\label{thm:V}}
    For any given $x_{m}\in \mathbb{R}$ and $u_{m}\in \mathbb{R}$, let $(x(\psi),y(\psi))$ denote the solution of \eqref{equ:1.2.7} satisfying relations \eqref{equ:1.3.1} and \eqref{equ:1.3.2}. If this curve intersects with two boundary walls with slope angles $\psi_{1}$ and $\psi_{2}$, corresponding to the boundary condition of \eqref{equ:1.2.600}, then the volume functional $\mathscr{V}$, as well as the $y$-coordinate of the intersection point $O$, given by $-\frac{P_{0}}{g}$, can be represented as functions of $u_m$. Moreover, the following two asymptotic properties of $\mathscr{V}$ hold

    \begin{align}
        \lim_{u_{m}\rightarrow 0}\mathscr{V}(u_m)&=+\infty\\
        \lim_{u_m\rightarrow -\infty} \mathscr{V}(u_m)&=0.
    \end{align}

    \noindent Consequently, for any prescribed $V>0$, there exists at least one $u_m$ and a corresponding solution to equation system \eqref{equ:1.2.7} such that $\mathscr{V}(u_m)=V$.  Furthermore, $\mathscr{V}(u_{m})$ is a strictly increasing function of $u_{m}$ when $[\![\gamma]\!]<0$. As a result, for any given $V$, the choice of $u_{m}$ and solution to \eqref{equ:1.2.7} satisfying the volume constraint $\mathscr{V}(u_{m})=V$ are unique when $[\![\gamma]\!]<0$.
    \end{theorem}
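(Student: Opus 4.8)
The plan is to integrate the autonomous system \eqref{equ:1.2.7} explicitly, use the two wall--incidence conditions to determine $x_m$ and $P_0$ as functions of $u_m$, express $\mathscr V$ by Green's theorem, and then read off the two limits and the monotonicity from the resulting formulas. First I would integrate \eqref{equ:1.2.7}: multiplying the second equation by $gy$ gives $gy\,dy=\sigma\sin\psi\,d\psi$, so the initial datum $y(0)=u_m$ forces the first integral $y(\psi)=-\sqrt{u_m^{2}+\tfrac{2\sigma}{g}(1-\cos\psi)}$ (the negative root is dictated by the post--shift geometry of Figure~\ref{figure3}, where the apex $O$ lies below the surface), and then $x(\psi)=x_m-\tfrac{\sigma}{g}\int_0^{\psi}\cos s\,[u_m^{2}+\tfrac{2\sigma}{g}(1-\cos s)]^{-1/2}\,ds$. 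Since $\psi_1,\psi_2$ are fixed by \eqref{equ:1.2.8}--\eqref{equ:1.2.9} and do not depend on $u_m$, the two endpoints of the curve sit at the fixed parameters $\psi=\psi_1,\psi_2$, and $y(\psi_1),y(\psi_2)$ already depend on $u_m$ alone.

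Next I would impose that the endpoints lie on the two walls through the common apex $O=(0,-P_0/g)$, with directions $(-\cos\theta_1,\sin\theta_1)$ and $(\cos\theta_2,\sin\theta_2)$. Eliminating the radial coordinates yields $x(\psi_1)\sin\theta_1+y(\psi_1)\cos\theta_1=-\tfrac{P_0}{g}\cos\theta_1$ and $x(\psi_2)\sin\theta_2-y(\psi_2)\cos\theta_2=\tfrac{P_0}{g}\cos\theta_2$. Writing $x(\psi_i)=x_m+(\text{an integral in }u_m)$, this is a linear system in $(x_m,P_0)$ whose determinant equals $-\sin(\theta_1+\theta_2)\neq0$ (by $0<\theta_1+\theta_2<\pi$); hence it has a unique solution $x_m(u_m),P_0(u_m)$, which is exactly the assertion that $\mathscr V$ and $-P_0/g$ are functions of $u_m$. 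For the area I would apply Green's theorem to $\partial\Omega$ based at $O$: the two walls are radial from $O$ and contribute nothing, so $\mathscr V=\tfrac12\Big|\int_{\psi_2}^{\psi_1}\big[x\,y'-(y+\tfrac{P_0}{g})x'\big]\,d\psi\Big|$; inserting the ODE and integrating by parts collapses this to endpoint terms and the elementary integral of $\cos\psi$, giving a closed expression for $\mathscr V(u_m)$.

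The limits follow from the behaviour of $J_i(u_m):=\int_0^{\psi_i}\cos s\,[u_m^{2}+\tfrac{2\sigma}{g}(1-\cos s)]^{-1/2}\,ds$. As $u_m\to-\infty$ the integrand is $O(1/|u_m|)$ uniformly, so $x(\psi)-x_m$ and $y(\psi)-u_m$ are $O(1/|u_m|)$; the linear system then forces $x_m\to0$ and $-P_0/g\to u_m$ to the same order, so the apex and the whole curve localize in a ball of radius $O(1/|u_m|)$ and $\operatorname{diam}\Omega\to0$, whence $\mathscr V\to0$. As $u_m\to0^-$, the bound $1-\cos s\sim s^2/2$ makes the integrand behave like $(u_m^{2}+\tfrac{\sigma}{g}s^{2})^{-1/2}$ near $s=0$, so $J_i\sim\sqrt{g/\sigma}\,\log(1/|u_m|)$; consequently the span $|x(\psi_1)-x(\psi_2)|\sim W:=2\sqrt{\sigma/g}\,\log(1/|u_m|)\to\infty$ while $y(\psi_i)$ stays bounded, and solving the system gives $-P_0/g\sim-\tfrac{\sin\theta_1\sin\theta_2}{\sin(\theta_1+\theta_2)}W\to-\infty$. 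Thus $\Omega$ contains a triangle of base $\asymp W$ and height $\asymp W$, so $\mathscr V\to+\infty$. Since $\mathscr V$ is continuous on $(-\infty,0)$, the intermediate value theorem yields, for each $V>0$, at least one $u_m$ with $\mathscr V(u_m)=V$.

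The strict monotonicity when $[\![\gamma]\!]<0$ is where I expect the real work. I would differentiate the closed formula for $\mathscr V$, using $\dot y_i=u_m/y_i>0$ together with the derivatives of $J_i$, $x_m$, $P_0$ obtained above, and aim to prove $\mathscr V'(u_m)>0$. Because the formula contains competing terms, the crux is to recombine them into a quantity whose sign is governed by $\gamma$. Here $[\![\gamma]\!]<0$ enters through $\sin\gamma=-[\![\gamma]\!]/\sigma>0$, i.e.\ $\gamma\in(0,\tfrac\pi2]$, which via $\psi_1=\gamma+\tfrac\pi2-\theta_1$ and $\psi_2=\theta_2-\gamma-\tfrac\pi2$ fixes $\psi_1+\psi_2=\theta_2-\theta_1$ and $\psi_1-\psi_2=2\gamma+\pi-\theta_1-\theta_2$; the second identity shows that $\gamma$ alone controls the width of the admissible $\psi$--range. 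I anticipate that $\mathscr V'(u_m)$ reduces to an expression that is a positive multiple of $\sin\gamma$ (degenerating at $\gamma=0$ and reversing sign for $\gamma<0$), which is precisely why monotonicity---and hence uniqueness---is asserted only for $[\![\gamma]\!]<0$. Pinning down this definite sign, and checking that the collapse/spreading estimates above are uniform over all admissible $\theta_1,\theta_2$ (including obtuse angles), is the main obstacle.
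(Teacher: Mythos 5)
Your treatment of the first three assertions (the representation of $\mathscr{V}$ and $-P_{0}/g$ as functions of $u_{m}$, and the two limits) follows essentially the same route as the paper: the same first integral $y(\psi)=-\sqrt{u_{m}^{2}+\tfrac{2\sigma}{g}(1-\cos\psi)}$ and quadrature for $x(\psi)$ (Lemma \ref{lem:curve}), the same use of the wall constraints through the apex to pin down $x_{m}$ and $P_{0}$ (Theorem \ref{thm:relation}), and the same asymptotics of the integrals $J_{i}$. Your packaging differs in two harmless but pleasant ways: writing the wall conditions as a $2\times 2$ linear system with determinant $-\sin(\theta_{1}+\theta_{2})$ avoids the paper's separate treatment of $\theta_{1}=\tfrac{\pi}{2}$ (where $\tan\theta_{1}$ is undefined), and your blow-up argument via the inscribed triangle of side $\asymp W=|x(\psi_1)-x(\psi_2)|$ with $-P_{0}/g\asymp -W$ replaces the paper's direct verification that the double integral $\mathscr{V}_{1}$ in \eqref{equ:2.2.29} diverges at $u_{m}=0$. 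Both are correct; the paper instead decomposes $\Omega$ into five explicit pieces $\Omega_{1},\dots,\Omega_{5}$, which costs more bookkeeping for the limits but pays off later.

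The genuine gap is the last assertion: strict monotonicity of $\mathscr{V}(u_{m})$ when $[\![\gamma]\!]<0$, hence uniqueness. You correctly identify this as ``the real work,'' but what you offer is a plan with an acknowledged obstacle (``Pinning down this definite sign \dots is the main obstacle''), not a proof, and your guess about how $[\![\gamma]\!]<0$ enters is not quite how it actually does. In the paper the cap volumes $\mathscr{V}_{1},\mathscr{V}_{2}$ are increasing in $u_{m}$ for \emph{every} admissible $[\![\gamma]\!]$; the proof needs the nontrivial comparison $\int_{0}^{\psi}\cos\varphi\,(u_{m}^{2}+\tfrac{2\sigma}{g}(1-\cos\varphi))^{-3/2}d\varphi\ge 0$ (Lemmas \ref{lem:1} and 2.9), which is delicate precisely when $\psi_{1}>\tfrac{\pi}{2}$ and the integrand changes sign --- a case your sketch does not anticipate. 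The sign condition $[\![\gamma]\!]<0$ is needed only for the polygonal remainder $\mathscr{V}_{c}=\mathscr{V}_{3}+\mathscr{V}_{4}+\mathscr{V}_{5}$: after the algebraic regrouping \eqref{equ:2.2.40}, its monotonicity reduces to showing that $g_{1}=\tfrac{1}{\tan\theta_{1}+\tan\theta_{2}}\bigl(r_{1}\tan\theta_{1}-(u_{1}-u_{2})\bigr)$ is positive and increasing, which in turn hinges on the pointwise inequality $\tan\theta_{1}\sin\psi_{1}-(\cos\psi_{1}-\cos\psi_{2})\ge 0$, proved by a calculus argument in $\gamma$ using $\gamma\in(0,\tfrac{\pi}{2})$ (Lemma 2.12) --- not by exhibiting $\mathscr{V}'(u_{m})$ as a positive multiple of $\sin\gamma$. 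Without these three ingredients (the integral comparison, the regrouping of $\mathscr{V}_{c}$ into products of positive increasing factors, and the sign of $k(\gamma)$), the uniqueness claim is unsupported in your proposal; differentiating a single Green's-theorem formula for $\mathscr{V}$ would produce exactly the ``competing terms'' you mention, and it is the five-piece decomposition that lets the paper resolve their signs.
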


    \textbf{Note}: By Theorem \ref{thm:V}, there exists a solution $(x(\psi),y(\psi))$ of \eqref{equ:1.2.7} and $P_{0}$ for any prescribed constant 
    $V$. Therefore, the vertically shifted curve $(x(\psi),y(\psi)+\frac{P_{0}}{g})$ provides a solution of the original system \eqref{equ:1.2.600}. Moreover, this curve can be represented in polar coordinates as a graph $\rho(\theta)$, for which the volume constraint is satisfied, namely $\mathcal{V}(\rho)=\mathscr{V}(u_{m})=V$. The detailed construction and verification of these claims are presented in Section~2.
    
   \subsection{Case two: $\psi_1$ and $\psi_2$ have the same sign} 

    In this case we show in Section 3 that the surface curve can be represented as a graph of $x$. See Figure \ref{figure4}. Accordingly, we reformulate the problem in Cartesian coordinates and denote the surface function by $v(x)$. The resulting system of equations, which is equivalent to \eqref{equ:1.2.3}, is given below:

    \begin{equation}{\label{equ:1.3.300}}
        \begin{cases}
        \sigma\partial_{x}(\frac{\partial_{x}v}{\sqrt{1+\vert \partial_{x}u\vert^{2}}})=gv-P_{0}\\
        \frac{\partial_{x}v}{\sqrt{1+\vert \partial_{x}v\vert^{2}}}(x_1)=\sin\psi_1\\
         \frac{\partial_{x}v}{\sqrt{1+\vert \partial_{x}v\vert^{2}}}(x_2)=\sin\psi_2.
        \end{cases} 
    \end{equation}

     \noindent Here, $\psi_1$ and $\psi_2$ are given by equations \eqref{equ:1.2.8} and \eqref{equ:1.2.9}. Then we apply the shift \eqref{def:slope} to eliminate the constant $P_{0}$, which transforms the system \eqref{equ:1.3.300} into

\begin{equation}{\label{equ:1.3.3}}
        \begin{cases}
        \sigma\partial_{x}(\frac{\partial_{x}u}{\sqrt{1+\vert \partial_{x}u\vert^{2}}})=gu\\
        \frac{\partial_{x}u}{\sqrt{1+\vert \partial_{x}u\vert^{2}}}(x_1)=\sin\psi_1\\
         \frac{\partial_{x}u}{\sqrt{1+\vert \partial_{x}u\vert^{2}}}(x_2)=\sin\psi_2,
        \end{cases} 
    \end{equation}

    \noindent where we use $u(x)$ to denote the surface in shifted coordinates. We focus on analyzing the system \eqref{equ:1.3.3} in Chapter 3.
    
      As in the previous case, we begin by examining the system \eqref{equ:1.3.3} without imposing the boundary conditions, and then introduce the new variable $\psi$, defined by \eqref{equ:psi}. This yields

      \begin{equation}{\label{equ:1.3.4}}
          \sigma\partial_{x}(\sin\psi)=\sigma\partial_{x}(\frac{\partial_{x}u}{\sqrt{1+\vert \partial_{x}u\vert^{2}}})=gu.
      \end{equation}

      \noindent However, unlike Case 1, the assumption that $\psi_{1}$ and $\psi_{2}$ have the same sign implies that $\psi$ may not attain value zero along the boundary curve corresponding to any solution of \eqref{equ:1.3.300}. Therefore, we need to choose a different reference point to serve as the initial condition in constructing the solution to \eqref{equ:1.3.3}. 
      
      For any solution function of \eqref{equ:1.3.300}, we show in Section 3 that the slope angle $\psi$ attains its unique maximum value at a point $(x_{m},u_{m})$ in the interior of the interval. Suppose that the boundary curve meets two inclined walls at $x=x_{1}$ and $x=x_{2}$. We characterize this distinguished point by

    \begin{equation}{\label{equ:1.3.5}}
    \begin{cases}
        \psi_{m}=\sup_{x_1\leq x\leq x_1} \psi(x)\\
         x_m=\{x\in(x_{1},x_{2})|\psi(x)=\psi_{m}\}\\
        u_m=u(x_m).
    \end{cases}
    \end{equation}

    \noindent This geometric property provides a natural choice of initial data for constructing solutions of \eqref{equ:1.3.4}. For any prescribed values of $x_{m}$, $u_{m}$, and $\psi_{m}\in[\max(\psi_{1},\psi_{2}),0)$, we take the point and angle defined by \eqref{equ:1.3.5} as initial conditions. Standard ODE theory then guarantees the existence of a unique solution $(x,u(x))$ of \eqref{equ:1.3.4} satisfying these initial conditions.

As in Case~1, we denote by $\mathscr{V}$ the volume of the region enclosed by the free surface and the two inclined walls. The $y$-coordinate of the intersection point $O$ is given by $-P_{0}/g$. We then establish the following theorem concerning the dependence of $\mathscr{V}$ and $P_{0}$, whose proof is deferred to Section~3.

    \begin{theorem}{\label{thm:main_2}}

        For any $\psi_{m}\in[\max(\psi_{1},\psi_{2}),0)$, we have two distinct cases. 

        \begin{itemize}
            \item Case (i): When $\psi_m=\psi_{1}$ or $\psi_{m}=\psi_2$ (without loss of generality, assume $\psi_m=\psi_{2}$).
            
            In this case, we choose $u_{m},x_{m}$ to be arbitrary nonpositive real numbers. Then there exits a solution $u(x)$ of \eqref{equ:1.3.4} satisfying \eqref{equ:1.3.5}. If the corresponding curve $(x,u(x))$ intersects two inclined walls with slope angles $\psi_{1}$ and $\psi_{2}$, then $x_{m}$ can be expressed as a function of $u_{m}$. Consequently, the quantities $\mathscr{V}$, $P_{0}$, $x_1$ and $x_2$ can all be represented as functions of $u_m$. Moreover,
            \begin{equation*}
                \lim_{u_{m}\rightarrow -\infty} \mathscr{V}(u_m)=0,
            \end{equation*}

            \noindent and $\mathscr{V}(u_{m})$ is bounded from above. Define, 

            \begin{equation*}
                V_1:=\max_{u_m\in(-\infty,0)} \mathscr{V}(u_m).
            \end{equation*}
            
            \noindent Therefore, for any $V$ satisfying $0\le V\le V_{1}$, there exists a value $u_{m}$ such that $\mathscr{V}(u_{m})=V$, together with a corresponding solution of the system \eqref{equ:1.3.300}.
            
            \item Case (ii): $\psi_{m}\in (\max(\psi_{1},\psi_{2}),0)$
            
            In this situation, we have $u_m=0$. For any $\psi_{m}\in(\max(\psi_{1},\psi_{2}),0)$ and any $x_{m}\in\mathbb{R}$, there exists a solution $u(x)$ of \eqref{equ:1.3.4} satisfying \eqref{equ:1.3.5}. If the curve $(x,u(x))$ intersects the two inclined walls with slope angles $\psi_{1}$ and $\psi_{2}$, then $x_{m}$ can be expressed as a function of $\psi_{m}$. In this case, the quantities $\mathscr{V}$, $P_{0}$, $x_{1}$, and $x_{2}$ can all be expressed as functions of $\psi_{m}$. Furthermore,
            \begin{equation*}
            \lim_{\psi_m\rightarrow 0} \mathscr{V}(\psi_{m})=+\infty,
            \end{equation*}

            \noindent and $\mathscr{V}(\psi_{m})$ is bounded from below. Define

            \begin{equation*}
                V_{m}:=\min_{\psi_m \in(\max(\psi_1,\psi_2),0)}\mathscr{V}(\psi_m).
            \end{equation*}

            \noindent Consequently, for any $V\ge V_{m}$, there exist a $\psi_m$ such that $\mathscr{V}(\psi_m)=V$, and a corresponding solution function of system \eqref{equ:1.3.300}.
        \end{itemize}
    \end{theorem}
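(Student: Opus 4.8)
\emph{Proof plan.} The strategy is to integrate the reduced equation \eqref{equ:1.3.4} explicitly through a first integral, and then to reduce the whole statement to a continuity-plus-limits argument for the resulting area function. Writing $\sin\psi=\partial_x u/\sqrt{1+|\partial_x u|^2}$ and $\cos\psi=1/\sqrt{1+|\partial_x u|^2}>0$, I would parametrize the curve by $\psi$, so that \eqref{equ:1.3.4} reads $\sigma\cos\psi\,\tfrac{d\psi}{dx}=gu$, while $\tfrac{du}{dx}=\tan\psi$. Eliminating $dx$ gives $gu\,du=\sigma\sin\psi\,d\psi$, which integrates to the first integral
\begin{equation*}
\tfrac{1}{2}gu^{2}+\sigma\cos\psi=C .
\end{equation*}
Evaluating at the distinguished point \eqref{equ:1.3.5} fixes $C=\tfrac12 gu_m^2+\sigma\cos\psi_m$, which equals $\tfrac12 gu_m^2+\sigma\cos\psi_2$ in Case (i) and $\sigma\cos\psi_m$ in Case (ii), where $u_m=0$. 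This yields $u$ as an explicit function of $\psi$; and since $\mathrm{sgn}(d\psi/dx)=\mathrm{sgn}(u)$, the angle $\psi$ is monotone on each side of the point where $u=0$, which confirms that $\psi$ attains its maximum exactly at the stated point. Integrating $dx=\sigma\cos\psi/(gu)\,d\psi$ then produces $x(\psi)$ up to a horizontal translation; where $u$ vanishes the integrand has only a square-root (hence integrable) singularity, so $x(\psi)$ stays finite across the maximal point.

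Next I would pin down the free parameters using the geometry of the walls. By the first integral the contact heights $u_1=u(\psi_1)$ and $u_2=u(\psi_2)$ are already determined by the shooting parameter, while $x(\psi_1),x(\psi_2)$ are determined up to the translation constant. Imposing that the contact point $(x_1,u_1)$ lie on the wall of inclination $\theta_1$ through $O=(0,-P_0/g)$, and likewise $(x_2,u_2)$ on the wall of inclination $\theta_2$, gives two linear equations in the two unknowns—the translation (equivalently $x_m$) and the vertex height $-P_0/g$. Solving this $2\times2$ system expresses both $x_m$ and $P_0$ as functions of the shooting parameter ($u_m$ in Case (i), $\psi_m$ in Case (ii)) and simultaneously yields $x_1,x_2$; the system is nondegenerate because the two wall directions are distinct, which is guaranteed by $\theta_1+\theta_2<\pi$. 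The enclosed area is translation-invariant, so I would compute it in the shifted frame, e.g. by Green's theorem as $\mathscr{V}=\tfrac12\oint (x\,du-u\,dx)$ over $\partial\Omega$, assembling the free-surface piece (written in $\psi$ via the first integral) together with the two straight wall segments down to $O$. This makes $\mathscr{V}$ an explicit function of the parameter.

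Finally I would establish the asymptotics and boundedness that drive the existence conclusion. In Case (i), as $u_m\to-\infty$ both contact heights satisfy $u_1^2=u_m^2+\tfrac{2\sigma}{g}(\cos\psi_2-\cos\psi_1)\sim u_m^2$, so the two contact points and the vertex $O$ all descend together and the region degenerates to a thin sliver, forcing $\mathscr{V}(u_m)\to0$; continuity on $(-\infty,0]$ together with boundedness above lets me set $V_1=\max\mathscr{V}$ and invoke the intermediate value theorem to realize every $V\in[0,V_1]$. In Case (ii), as $\psi_m\to0$ the maximal slope becomes horizontal and $dx/d\psi\sim\sigma/(gu)$ blows up over a growing range of $\psi$ near the top while the contact heights stay bounded, so the curve widens without bound and $\mathscr{V}(\psi_m)\to+\infty$; boundedness below then defines $V_m=\min\mathscr{V}$, and the intermediate value theorem realizes every $V\ge V_m$. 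I expect the main obstacle to be precisely these limit and boundedness estimates—in particular proving the divergence $\mathscr{V}\to+\infty$ and the existence of the finite extrema $V_1,V_m$—since these demand careful uniform control of the $\psi$-integrals for $x(\psi)$, $u(\psi)$ and the wall-intersection data as the parameter approaches the ends of its range, rather than the algebraically routine construction of the solution curve itself.
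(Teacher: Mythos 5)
Your plan follows essentially the same route as the paper's Section~3: the first integral $\tfrac12 gu^{2}+\sigma\cos\psi=C$ giving $u(\psi)$ and $x(\psi)$ explicitly, the observation that $\operatorname{sgn}(d\psi/dx)=\operatorname{sgn}(u)$ locates the maximum of $\psi$ at $u=0$, the $2\times 2$ wall-intersection system determining $x_m$ and $P_0$ from the shooting parameter, and the intermediate value theorem applied to the resulting area function. The one caveat is that the steps you defer as "the main obstacle" --- the divergence $\mathscr{V}(\psi_m)\to+\infty$ as $\psi_m\to 0$, the existence of a positive lower bound $V_m$, and the boundedness of $\mathscr{V}(u_m)$ from above in Case~(i) --- are precisely where the paper spends its effort (Theorems~3.7, 3.8 and~3.10, via the term-by-term analysis of the explicit volume formula and the non-integrable $1/|\psi|$ singularity of $dx/d\psi$ at $\psi_m=0$), so the proposal is a correct outline rather than a complete proof; your heuristics for these limits (thin sliver as $u_m\to-\infty$, unbounded widening as $\psi_m\to 0$) do match the mechanisms the paper makes rigorous.
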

     
     Theorem \ref{thm:main_2} leads to the following results.
     
     \begin{theorem}
         $V_{1}$ and $V_{m}$ are defined as in Theorem 1.3. For any prescribed $\theta_{1}$, $\theta_{2}$, interfacial parameter $[\![\gamma]\!]$, and surface tension coefficient $\sigma$, the following inequality holds
         \begin{align}
         V_{1}\geq V_{m}
         \end{align}
         Consequently, for any prescribed volume $V \ge 0$, there exists at least one solution of the system \eqref{equ:1.3.300}. Moreover, when $V_{1} > V_{m}$, the solution is not unique.
         
         Furthermore, the free surface determined by such a solution can be represented in polar coordinates as a graph $\rho(\theta)$, and it satisfies the volume constraint
\begin{equation}
\mathcal{V}(\rho)=\mathscr{V}(u_{m})=V.
\end{equation}
     \end{theorem}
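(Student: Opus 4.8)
The plan is to derive the inequality $V_1 \ge V_m$ from a single \emph{transition configuration} shared by the two regimes of Theorem~\ref{thm:main_2}, and then to read off existence for all admissible volumes, the non-uniqueness claim, and the polar representation as consequences. Throughout I normalize, as in Theorem~\ref{thm:main_2}, so that $\psi_2=\max(\psi_1,\psi_2)$; thus Case~(i) is the regime in which the maximal slope equals the wall value $\psi_2$ with $u_m\in(-\infty,0]$ free, and Case~(ii) is the regime $u_m=0$ with $\psi_m\in(\psi_2,0)$ free.

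First I would isolate the configuration lying in the closure of both parameter ranges, namely the solution of \eqref{equ:1.3.4} whose maximal slope angle equals the boundary value $\psi_2$ \emph{and} is attained at height $u_m=0$. In Case~(i) this is the endpoint limit $u_m\to 0^-$ of the family $\mathscr{V}(u_m)$; in Case~(ii) it is the endpoint limit $\psi_m\to\psi_2^+$ of the family $\mathscr{V}(\psi_m)$. The heart of the argument is to show that these two limits produce the \emph{same} solution curve of \eqref{equ:1.3.4}, and hence the same enclosed area. Concretely, in both limits the point of maximal slope migrates to the contact point $x_2$ where $\psi=\psi_2$ is imposed, so the interior maximum-slope point of Case~(ii) merges with the wall maximum-slope point of Case~(i), and the initial data $(x_m,u_m,\psi_m)$ prescribed for \eqref{equ:1.3.4} tends in both cases to the common triple $(x_2,0,\psi_2)$. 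Continuous dependence of the solution of \eqref{equ:1.3.4} on its initial data then forces both limiting curves to coincide, and the enclosed-area functional, being a continuous functional of the curve, takes a common value $V_*$ at this transition.

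Granting the matching, the inequality is immediate: every value $\mathscr{V}(u_m)$ in Case~(i) satisfies $\mathscr{V}(u_m)\le V_1$, so letting $u_m\to 0^-$ gives $V_*\le V_1$; symmetrically every $\mathscr{V}(\psi_m)$ in Case~(ii) satisfies $\mathscr{V}(\psi_m)\ge V_m$, so letting $\psi_m\to\psi_2^+$ gives $V_*\ge V_m$. Chaining these yields $V_m\le V_*\le V_1$, i.e. $V_1\ge V_m$. For the existence statement I would simply union the two ranges from Theorem~\ref{thm:main_2}: Case~(i) produces a solution for every $V\in[0,V_1]$ and Case~(ii) for every $V\in[V_m,\infty)$, and since $V_1\ge V_m$ these intervals cover $[0,\infty)$, so every prescribed $V\ge 0$ is attained. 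For non-uniqueness, when $V_1>V_m$ any $V\in(V_m,V_1)$ lies strictly inside both ranges, so one obtains a solution with wall-attained maximal slope ($\psi_m=\psi_2$, Case~(i)) and a genuinely different one with interior-attained maximal slope ($\psi_m>\psi_2$, Case~(ii)); these are distinct because their slope profiles $\psi(x)$ have different maxima relative to the boundary data. Finally, to obtain the polar graph I would undo the shift \eqref{equ:1.2.10}, setting $v(x)=u(x)+P_0/g$ so that it solves \eqref{equ:1.3.300}, and reparametrize $(x,v(x))$ in polar coordinates about $O$; since the shift is an area-preserving translation and the polar integral \eqref{equ:1.2.2} computes the same region $\Omega$, the identity $\mathcal{V}(\rho)=\mathscr{V}(u_m)=V$ follows.

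The main obstacle is the matching step of the second paragraph: one must verify that the $u_m$-family and the $\psi_m$-family degenerate to one and the same solution curve as the maximum-slope point reaches the wall, and that $\mathscr{V}$ is continuous through this degeneration. The delicate point is uniform control of the solution of \eqref{equ:1.3.4} near the transition, where $x_m\to x_2$ and the interior critical point of the slope collides with the boundary; establishing this two-sided continuity of $\mathscr{V}$ at $V_*$—rather than merely the one-sided asymptotics and monotonicity already recorded in Theorem~\ref{thm:main_2}—is exactly what makes $V_1\ge V_m$ go through.
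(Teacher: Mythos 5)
Your proposal is correct and follows essentially the same route as the paper: Theorem 3.10 establishes $V_{1}\geq V_{m}$ precisely by observing that the boundary-maximum family at $u_{1}=0$ coincides with the interior-maximum family at $\psi_{m}=\max(\psi_{1},\psi_{2})$, so that $V_{1}\geq \lim_{u_{1}\to 0}\mathscr{V}(u_{1})=\mathscr{V}(\psi_{1})\geq V_{m}$, and the existence, non-uniqueness, and polar-graph claims are then read off from the union of the two parameter ranges exactly as you describe. Your closing remark about needing two-sided continuity of $\mathscr{V}$ through the transition is a fair observation — the paper asserts the matching of the two limiting curves without detailed verification — but this does not constitute a difference in approach.
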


    \noindent \textbf{1.4 previous work}:
    The calculus of variations is one of the oldest branches of mathematical analysis, concerned with finding functions that minimize or maximize certain functionals. The subject originated with the brachistochrone problem, posed by Johann Bernoulli in 1696. He wanted to find the curve along which a particle descends under gravity in the shortest possible time. A systematic foundation for the subject was established by Leonhard Euler \cite{Euler} and Joseph-Louis Lagrange \cite{Lagrange} in the mid-18th century. They introduced what is now known as the Euler–Lagrange equation, providing the necessary condition for an extremals of variation problems. 
    
   When it comes to the study of the functional defined by \eqref{equ:1.2.1} and \eqref{equ:1.2.2}, it begins with analyzing the Euler-Lagrange equation, which was derived by Young\cite{Young}, Laplace\cite{Laplace}, and Gauss\cite{Gauss}, along with the boundary conditions encoding prescribed contact angles. The problem fell out of favor during the first half of the 19th century. However, driven on one hand by new mathematical advances in the theory of minimal surfaces and, on the other hand, by practical demands arising from space-age technology and medical applications, the subject has driven more attention in recent decades.

  To study the minimizer of energy functional similar to  \eqref{equ:1.2.1} and the corresponding solution function to Euler-Lagrange equation induced by this energy functional, the BV theory was established by Caccioppoli and De Giorgi, and later deveploped by Miranda, Giaquinta, Anzellotti, Massari, Tamanini, and others. Emmer \cite{Emmer} used this framework to derive the first general existence theorem for capillary surfaces. Independently, the 
ideas of geometric measure theory were introduced and developed by 
Federer, Fleming, Almgren, Allard, and others. Taylor \cite{Taylor} effectively employed this theory to establish boundary regularity results. There are many other related researches on this topic such as the research on the minimal surfaces. For a comprehensive overview of these results, we refer the reader to the monograph by Giusti \cite{Giusti}.

    In the axisymmetric setting in $\mathbb{R}^3$, Robert Finn investigated the existence of equilibrium configurations in several important cases \cite{Finn}. In particular, he proved the existence of a sessile drop for any prescribed volume $V$, as well as the nonexistence of an equilibrium liquid drop on an inclined wall. To the best of our knowledge, however, general contact line problems involving two boundary walls with distinct inclination angles have not yet been systematically studied. In this paper, we focus on this nonsymmetric setting, and our approach is inspired in part by Finn’s work.
    
    More recently, Guo and Tice \cite{Guo,Guo2} studied the dynamic stability of steady states in the case $\theta_{1}=\theta_{2}=\frac{\pi}{2}$. The well-posedness theory for this problem was developed in \cite{Guo1,Guo3,Zheng}, while decay properties of solutions in periodic domains and in the whole space were established in \cite{Guo4,Guo5}, indirectly implying the existence of steady states. Beyond the contact line problem, Tice has analyzed the dynamics and stability of water waves in a variety of related settings \cite{Kim,Tice1,Tice2,Tice3,Tice4,Tice5}. In particular, Tice and Wu \cite{Tice} examined the dynamic stability of sessile drops, which may be viewed as a flat version of the contact line problem corresponding to $\theta_{1}=\theta_{2}=0$, and established both existence and stability of steady states.

All of these problems involve dynamic contact points and contact angles, making them mathematically challenging. Their results have strongly motivated the present work and provided valuable insight. In this paper, our objective is to establish the existence of steady-state solutions for any prescribed volume $V$ and for arbitrary inclination angles $\theta_{1}$ and $\theta_{2}$.
    
    \begin{section}{The first case: $\psi_1\cdot \psi_2<0$ }
    
    In this chapter, we consider the case in which the contact angles $\psi_{1}$ and $\psi_{2}$ have opposite signs. We begin by analyzing the possible shapes of the free surface in Subsection~2.1, and then compute the total volume using the parameter $u_{m}$. Without loss of generality, we assume that $\psi_{1}>0$ and $\psi_{2}<0$, as illustrated in Figure~\ref{figure3}, and that $\theta_{1}>\theta_{2}$.
    
    \begin{subsection}{The shape of the curve}
    
    In this subsection, we assume that there exists a $C^{2}$ curve satisfying the following four properties:

    \begin{itemize}
    
    \item The curve intersects each of the inclined walls at exactly one point, with slope angles $\psi_{1}$ and $\psi_{2}$.
    
     \item It satisfies the equation \eqref{equ:1.2.7} locally. 
     
    \item It can be parametrized piecewise in polar coordinates by the angular variable $\theta$.

    \item The curve does not self-intersect.
    
    \end{itemize}
    
    Under these assumptions, we reparametrize the first equation using the variable $\psi$ defined in \eqref{def:slope}. This yields
    \begin{equation}{\label{equ:bdd_n}}
        \begin{cases}
        \frac{dx}{d\psi}=\sigma\frac{\cos\psi}{gy}\\
    \frac{dy}{d\psi}=\frac{dy}{dx}\frac{dx}{d\psi}=\sigma\tan\psi\frac{\cos\psi}{gy}=\sigma\frac{\sin\psi}{gy}\\
    \end{cases}
    \end{equation}
    \noindent We retain the assumptions stated above in this subsection and use them to study the geometry of the boundary curve. The following key theorem characterizes the geometry of the surface curve.

    \textbf{Remark} At this stage, we do not impose the total volume constraint corresponding to the prescribed constant $V$. Moreover, we only assume that the curve admits a piecewise parametrization in polar coordinates by the angle $\theta$, which allows for the possibility of folds in its polar representation.
    
    \begin{theorem}{\label{Thm:g_1}}({The shape of the surface curve}) Let $\psi$ be defined by \eqref{def:slope}, and suppose there exists a $C^{2}$ curve satisfying the four assumptions stated above. Then the curve have the following geometric properties:
    \begin{itemize}
    \item The curve attains a unique maximum of $y$-coordinate at a point $(x_{m},u_{m})$ and $u_{m}<0$

    \item This curve can be globally parameterized by the slope angle $\psi$. Equivalently, there exists a one-to-one correspondence between $\psi$ and points on the curve.
    \end{itemize}
    \end{theorem}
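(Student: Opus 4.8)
The plan is to translate the problem from the $\theta$/polar description into the intrinsic arc-length description, in which the slope angle $\psi$ of \eqref{def:slope} is the natural dynamical variable. Writing $s$ for arc length and using $dx/ds=\cos\psi$, $dy/ds=\sin\psi$, the local validity of \eqref{equ:bdd_n} is exactly the Young--Laplace relation $\sigma\,d\psi/ds=gy$ (signed curvature equals the pressure term), so that $d\psi/ds=gy/\sigma$ and $ds/d\psi=\sigma/(g|y|)$ wherever $y\neq0$. Combining $d\psi/ds=gy/\sigma$ with $dy/ds=\sin\psi$ gives $\frac{d}{ds}\bigl(\tfrac{g}{2}y^{2}+\sigma\cos\psi\bigr)=0$, i.e. the first integral $\tfrac{g}{2}y^{2}+\sigma\cos\psi\equiv C$ on the connected curve. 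This single identity, relating height to slope angle, is the engine behind both assertions.

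First I would determine $C$. Because $\psi_{1}>0>\psi_{2}$ and $\psi$ varies continuously along the curve, the intermediate value theorem produces a point $q$ with $\psi(q)=0$; evaluating the first integral there gives $\tfrac{g}{2}y(q)^{2}=C-\sigma$, hence $C\ge\sigma$. The borderline $C=\sigma$ is impossible: it would force $y(q)=0$, so that $(y,\psi)(q)=(0,0)$ is an equilibrium of the smooth system $dy/ds=\sin\psi$, $d\psi/ds=gy/\sigma$, and uniqueness would make the whole curve the flat line $y\equiv0$, $\psi\equiv0$, contradicting the nonzero contact angles of \eqref{equ:1.2.8}--\eqref{equ:1.2.9}. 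Therefore $C>\sigma$, and then $\tfrac{g}{2}y^{2}=C-\sigma\cos\psi\ge C-\sigma>0$ shows $y\neq0$ everywhere. By continuity $y$ keeps a fixed sign, so $d\psi/ds=gy/\sigma$ keeps a fixed sign and $\psi$ is strictly monotone in $s$; a strictly monotone function is injective, which is exactly the claimed global one-to-one correspondence between $\psi$ and the points of the curve (second bullet).

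For the first bullet I would locate the extrema of the height. Along the curve $dy/ds=\sin\psi$ vanishes only where $\sin\psi=0$, and since $\psi$ is monotone its range is the interval between $\psi_{2}$ and $\psi_{1}$, which lies in $(-\pi,\pi)$ (as follows from \eqref{equ:1.2.8}--\eqref{equ:1.2.9}); hence the only zero is $\psi=0$, attained once, at $q$. Differentiating once more, $d^{2}y/ds^{2}=\cos\psi\,(gy/\sigma)$ equals $gy(q)/\sigma$ at $q$, so $q$ is a strict local maximum exactly when $y(q)<0$, and since $q$ is then the unique critical point of $y$ on the compact arc it is the global maximum. It remains to fix the sign of $y$, which I would do by orientation: integrating $dx/d\psi=\sigma\cos\psi/(gy)$ across the curve gives the horizontal separation of the two contact points, $x_{1}-x_{2}=\tfrac{\sigma}{g}\int_{\psi_{2}}^{\psi_{1}}\frac{\cos\psi}{y}\,d\psi$, which must be negative since the left-wall contact lies to the left of the right-wall contact; with $y$ of constant sign this forces $y<0$. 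Consequently the maximum-height point $q=(x_{m},u_{m})$ satisfies $u_{m}=y(q)<0$, which is the first bullet.

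I expect the sign determination to be the main obstacle, and it is where the remaining hypotheses are needed. When the profile is a graph over $x$ one has $\psi\in(-\pi/2,\pi/2)$, $\cos\psi>0$, and the displacement integral is unambiguously of the sign of $y$; but in this case the curve may fold (Figure~\ref{figure3}), so $|\psi|$ can exceed $\pi/2$, $\cos\psi$ changes sign, and the sign of $\int\cos\psi/y\,d\psi$ is no longer immediate. To close this gap I would invoke the standing assumptions that the curve is non-self-intersecting, meets each wall exactly once, and admits a piecewise polar parametrization: these bound the total turning of the tangent and exclude the orientation $y>0$, which would place the left contact to the right of the right contact and force the profile to cross a wall or itself. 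The exclusion of the degenerate value $C=\sigma$ likewise rests on the contact angles being nonzero.
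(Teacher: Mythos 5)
Your conservation-law strategy (the first integral $\tfrac{g}{2}y^{2}+\sigma\cos\psi\equiv C$ in arc length, then strict monotonicity of $\psi$) is genuinely different from the paper's argument, but it is circular at the decisive step. The point $q$ with $\psi(q)=0$ is produced by the intermediate value theorem applied to a continuous lift of the tangent angle; the lift at the right contact, however, is only $\psi_{2}$ modulo $2\pi$, and in the ``folded'' configuration that the theorem must rule out the lift runs monotonically \emph{upward} from $\psi_{1}$ to $2\pi+\psi_{2}\in(\pi,2\pi)$ without ever meeting a multiple of $2\pi$. In that scenario there is no point with $\cos\psi=1$; the endpoint evaluations give only $C\ge\sigma\max(\cos\psi_{1},\cos\psi_{2})$, which can be smaller than $\sigma$, so $y$ may vanish and change sign and the monotonicity argument collapses. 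Note also that once $y\neq 0$ is granted, the rejected alternative $y>0$ \emph{is} exactly this winding scenario (since $d\psi/ds=gy/\sigma>0$ forces $\psi$ to climb from $\psi_{1}$ to $2\pi+\psi_{2}$), so your initial IVT step and your final ``orientation'' step are the same unresolved issue. Excluding that configuration is precisely what the paper's Steps 1--5 do: extend the solution of \eqref{equ:bdd_n} across the vertical tangents $\psi=\tfrac{\pi}{2},\tfrac{3\pi}{2}$, derive the reflection symmetry $y(\psi)=y(2\pi-\psi)$ from the very same first integral, and then show by a Jordan-curve/wall-crossing argument that the wound curve must self-intersect or cross a wall, contradicting the standing assumptions. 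Your closing remark that non-self-intersection ``bounds the total turning of the tangent'' is the right intuition, but it is asserted rather than proved, and it is the entire content of the theorem's difficulty.

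A second, smaller gap: even after $y$ has constant sign, your determination that the sign is negative rests on the positivity of the signed displacement $\int_{\psi_{2}}^{\psi_{1}}\cos\psi/\lvert y(\psi)\rvert\,d\psi$. When $\lvert\psi_{1}\rvert$ or $\lvert\psi_{2}\rvert$ exceeds $\tfrac{\pi}{2}$ the integrand changes sign and this positivity is not immediate; it is exactly the estimate the paper proves in Lemma \ref{lem:1} by splitting the integral at $\pi-\psi_{1}$ and $\tfrac{\pi}{2}$ and pairing positive against negative contributions. You flag the difficulty but do not supply the estimate. With the winding exclusion and a Lemma \ref{lem:1}--type inequality added, your route would yield a clean and arguably tidier proof; as written, it assumes the conclusion where the real work lies.
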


    \begin{proof}

      We begin by analyzing the geometry of the curve, focusing first on its behavior near the boundary point $(x_{1},u_{1})$. By assumption, the slope angle at this point satisfies $\psi=\psi_{1}>0$, where $\psi_{1}$ is defined by \eqref{equ:1.2.8}.

        \textbf{Case 1: $0< \psi_{1}< \frac{\pi}{2}$}:
        
          In this case, both $\sin\psi_{1}$ and $\cos\psi_{1}$ are strictly positive. Using equation \eqref{equ:bdd_n}, the curve locally satisfies the system

        \begin{equation}{\label{equ:2.1.4}}
        \begin{cases}
            \frac{dy}{d\psi}=\sigma\frac{\sin\psi}{gy}\\
            \frac{dx}{d\psi}=\sigma\frac{\cos\psi}{gy}.
        \end{cases}
        \end{equation}

        \noindent Since $\sin\psi_{1}>0$ and $\cos\psi_{1}>0$, equation \eqref{equ:2.1.4} implies that the curve can be locally parameterized by $\psi$ provided that $u_{1}\neq 0$. We proceed to show  that $u_1<0$. This will be proved through a contradiction argument. Suppose instead that $u_1\geq 0$. The proof proceeds in five steps.

        \textbf{Step 1} We first show that the curve can still be locally parametrized by $\psi$ in a neighborhood of $(x_{1},u_{1})$.

       If $u_{1}>0$, this follows immediately from \eqref{equ:2.1.4} and the positivity of $\sin\psi_{1}$ and $\cos\psi_{1}$. If $u_{1}=0$, then the first equation in \eqref{equ:2.1.4} can be rewritten as
        \begin{equation}{\label{equ:step1}}
            \frac{d\psi}{dy}=\sigma \frac{gy}{\sin\psi}.
        \end{equation}
        \noindent Integrating \eqref{equ:step1} from $u_{1}$ to $y$ yields
        \begin{align}{\label{equ:step1_1}}
            \frac{1}{2}gy^{2}=\sigma\cos\psi_{1}-\sigma \cos\psi .     
        \end{align}
        \noindent Since $\psi_{1}>0$ and $y\ge 0$ in a neighborhood of $(x_{1},u_{1})$, equation \eqref{equ:step1_1} implies that for $\psi$ sufficiently close to $\psi_{1}$, there exists a unique corresponding value of $y$. Substituting \eqref{equ:step1_1} into the second equation of \eqref{equ:2.1.4}, we obtain
        \begin{align}{\label{equ:step1_2}}
            \frac{dx}{d\psi}=\frac{\sigma \cos\psi}{\sqrt{\frac{2\sigma}{g}(\cos\psi_{1}- \cos\psi)}}.
        \end{align}
       Integrating \eqref{equ:step1_2} from $\psi_{1}$ to $\psi$ gives
        \begin{align}
            x=x_{1}+\int_{\psi_{1}}^{\psi}\frac{\sigma \cos\psi}{\sqrt{\frac{2\sigma}{g}(\cos\psi_{1}- \cos\psi)}}d\psi
        \end{align}
        Using the Taylor expansion of $\cos\psi-\cos\psi_{1}$ near $\psi_{1}$, we see that the integrand is locally integrable. Consequently, for each $\psi$ sufficiently close to $\psi_{1}$, there exists a unique point $(x,y)$ on the curve with slope angle $\psi$.
        
        \textbf{Step 2} In this step, we prove that the local parametrization by $\psi$ obtained in Step~1 can be extended to a global parametrization for $\psi \in (\psi_{1}, 2\pi-\psi_{1})$.
        
        Under the assumption that $u_{1}\geq 0$, \eqref{equ:2.1.4} implies that $\frac{du}{d\psi}$ and $\frac{dx}{d\psi}$ are nonnegative in a neighborhood of $\psi_1$. Hence, $u(\psi)$ and $x(\psi)$ are both increasing functions of $\psi$ near $\psi_1$. By a bootstrap argument, we further obtain that $u(\psi)\geq 0$ and $\frac{du}{d\psi}(\psi)\geq 0$ for all $\psi\in(\psi_1,\frac{\pi}{2})$. We then proceed to extend the curve beyond the vertical point $\psi=\frac{\pi}{2}$. 

        Proceeding as in Step~1, we integrate the first equation in the system \eqref{equ:2.1.4} from $\psi_{1}$ to an arbitrary angle $\psi \in (\psi_{1}, \frac{\pi}{2})$, which yields the following relation:
        
        \begin{equation}{\label{equ:2.1.5}}
            \frac{1}{2}gy^{2}(\psi)-\frac{1}{2}gu_1^{2}=\sigma\cos\psi_{1}-\sigma\cos\psi. 
        \end{equation}

        \noindent From equation \eqref{equ:2.1.5}, we notice that $y(\frac{\pi}{2})$ remains finite. Consequently, evaluating the first equation in \eqref{equ:2.1.4} at $\psi=\tfrac{\pi}{2}$ yields

        \begin{equation*}
            \frac{dy}{d\psi}(\frac{\pi}{2})=\frac{\sigma}{gy(\frac{\pi}{2})}\neq 0.
        \end{equation*}
        
        \noindent Therefore, the solution function $(x(\psi),u(\psi))$ of the equation system \eqref{equ:2.1.4} can be extended across the point $\psi=\frac{\pi}{2}$. 

        For $\tfrac{\pi}{2}<\psi\le \pi$, we have $\cos\psi<0$ and $\sin\psi>0$. In this regime, it follows from \eqref{equ:2.1.4} that $x(\psi)$ is strictly decreasing, while $y(\psi)$ is strictly increasing.
        
        When $\pi<\psi<\tfrac{3\pi}{2}$, we again have $\cos\psi<0$ and $\sin\psi<0$. In this interval, $x(\psi)$ remains strictly decreasing, while $y(\psi)$ is decreasing provided that $y(\psi)>0$. In Step~3, we will show that  $y(\psi)=y(2\pi-\psi)$, which means that the curve is symmetric. This symmetry ensures that $y(\psi)>0$ for all $\pi<\psi<\tfrac{3\pi}{2}$, and hence both $x(\psi)$ and $y(\psi)$ are strictly decreasing in this interval; see Figure~\ref{figure5}. As in the case $\psi=\tfrac{\pi}{2}$, the solution can be extended smoothly across $\psi=\tfrac{3\pi}{2}$. By continuing this argument, the curve can be extended up to $\psi=2\pi-\psi_{1}$.

        \begin{figure}
        \centering
        \includegraphics[width=0.9\linewidth=2]{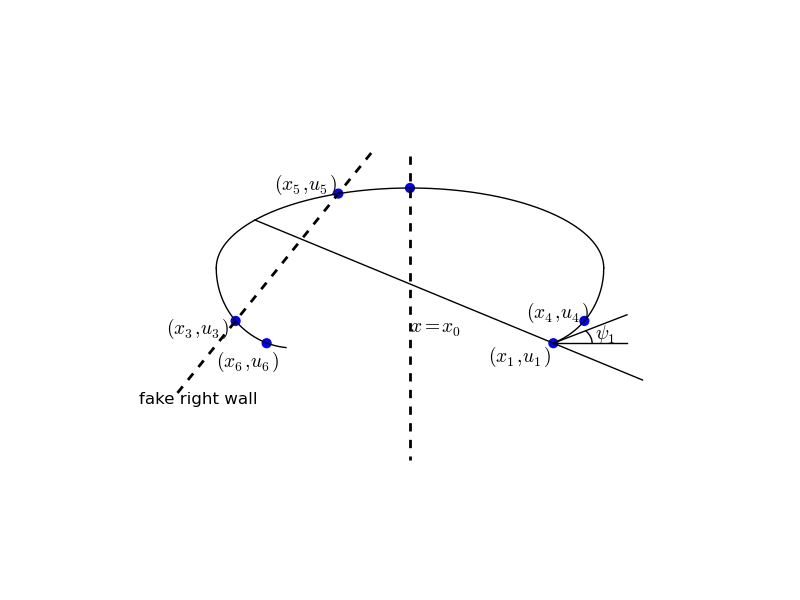}
        \caption{\label{figure5}}
        \end{figure}

        \textbf{Step 3}: In this step, we show that the curve constructed in step 2 exhibits the property $y(\psi)=y(2\pi-\psi)$, which implies the symmetry of the curve. 
        
        Applying the equation \eqref{equ:2.1.5}, we obtain

        \begin{equation}{\label{equ:2.1.51}}
            \frac{1}{2}gy^{2}(\varphi)-\frac{1}{2}gu_1^{2}=\sigma\cos\psi_1-\sigma\cos\varphi
        \end{equation}

        \noindent for any $\varphi\in (\psi_1,2\pi-\psi_1)$. Replacing $\varphi$ by $2\pi-\varphi$ in \eqref{equ:2.1.5} yields

        \begin{equation}{\label{equ:2.1.52}}
            \frac{1}{2}gy^{2}(2\pi-\varphi)-\frac{1}{2}gu_1^{2}=\sigma\cos\psi_1-\sigma\cos(2\pi-\varphi)=\sigma\cos\psi_1-\sigma\cos\varphi
        \end{equation}

         Comparing \eqref{equ:2.1.51} and \eqref{equ:2.1.52}, we conclude that

        \begin{equation}{\label{equ:2.1.53}}
            y(\psi)^{2}=y^{2}(2\pi-\psi),
        \end{equation}

        \noindent for all $\psi\in(\psi_{1},,2\pi-\psi_{1})$. Hence, for each such $\psi$, either $y(\psi)=y(2\pi-\psi)$ or $y(\psi)=-y(2\pi-\psi)$.
        
        Suppose, for the sake of contradiction, that there exists a $\psi_0\in (\pi,2\pi-\psi_1)$ such that $y(\psi_0)=-y(2\pi-\psi_{0})$.  Since $\psi_0\in (\pi,2\pi-\psi_1)$, it follows that $2\pi-\psi_0\in(\psi_1,\pi)$. From the discussion in step 1,  we know that $y(2\pi-\psi_0)>0$ , which implies that $y(\psi_0)<0$. By continuity of $y$, there exists a $\psi_7\in(2\pi-\psi_0,\psi_0)$ such that $y(\psi_7)=y(2\pi-\psi_7)=0$. However, since either $\psi_7\in(\psi_1,\pi]$ or $2\pi-\psi_7\in(\psi_1,\pi]$, the analysis in step 1 yields that either $y(2\pi-\psi_7)> 0$ or $y(\psi_7)>0$, which contradicts the definition of $\psi_7$. Therefore, no such $\psi_{0}$ exists.
        
        We conclude that

        \begin{equation}
            y(\psi)=y(2\pi-\psi),
        \end{equation}

        \noindent which establishes the desired symmetry of the curve.

         \textbf{Step 4} In this step, we prove that the curve we constructed in Step 1 can not satisfy the assumption stated in this theorem.
         
         Using equation \eqref{equ:1.2.600}, it holds that the curve meets the right wall with contact angle $\psi_2$. Accordingly, we consider the point corresponding to the angle $\psi=\psi_2+2\pi$. Suppose, for the sake of contradiction, that the curve intersects the right wall at $\psi=2\pi+\psi_2$. We will show that this assumption leads to a contradiction.
         
         Based on the preceding discussion, we give the definition of $\psi_{3}$ as follows:
         \begin{align}
             \psi_3:=2\pi+\psi_{2}=2\pi-\gamma-\frac{\pi}{2}+\theta_{2}
         \end{align}
         From the definition of $\psi_{2}$ given by \eqref{equ:1.2.9}, it follows that $\psi_{3}$ is an angle lying between $\pi$ and $2\pi$. More precisely, we have:

          \begin{equation*}
              \psi_3=2\pi-\gamma-\frac{\pi}{2}+\theta_{2}<2\pi-(\gamma+\frac{\pi}{2}-\theta_1)=2\pi-\psi_1,
          \end{equation*}

          \noindent where we used the assumption that $\theta_1>\theta_2$ together with equations \eqref{equ:1.2.8} and \eqref{equ:1.2.9}. Hence, $\psi_3\in (\pi,2\pi-\psi_1)$.
        Let $(x_{3},u_{3})=(x(\psi_{3}),y(\psi_{3}))$ denote the point on the curve corresponding to $\psi=\psi_{3}$; see Figure~\ref{figure5}. Under the assumptions of the theorem, the curve cannot intersect the right wall for any $\psi\in(\psi_{1},\psi_{3})$.

          By definition, we have

          \begin{equation*}
          \psi_{3}-\pi=2\pi+\psi(x_2)-\pi=\frac{\pi}{2}-\gamma+\theta_{2}>\theta_2
          \end{equation*}
          
          \noindent By a straightforward geometric argument, it follows that for any $\psi\in (\pi+\theta_2,\psi_3)$, the point $(x(\psi),u(\psi))$ is on the left hand side of the right wall; see Figure \ref{figure5}. 
           Using the property that $\theta_1>\theta_2$, we then define the angle $\psi_{4}$ as follows

          \begin{equation*}
          \psi_{4}:=2\pi-\psi_3=-\psi_{2}=\gamma+\frac{\pi}{2}-\theta_2>\gamma+\frac{\pi}{2}-\theta_1=\psi_1, 
          \end{equation*}
          
          \noindent which implies that $\psi_4\in(\psi_1,\pi)$. Moreover,  by the symmetry established in Step~3, we have $u(\psi_{4})=u(\psi_{3})$; see Figure \ref{figure5}. To avoid self-intersection, it must hold that $x(\psi_{4})>x(\psi_{3})$, meaning that the point $(x(\psi_{4}),u(\psi_{4}))$ lies on the right hand side of the right wall. Therefore, the curve must intersect the wall at a point $(x(\psi_{5}),u(\psi_{5}))$ with $\psi_{4}<\psi_{5}<\psi_{3}$, which contradicts to the assumption made in this step.

        Therefore, the curve can not meet the right wall when $\psi_1<\psi\leq 2\pi-\psi_3$. However, when $\psi_{6}=2\pi-\psi_1$,  Step 3 may yields $y(\psi_{6})=y(\psi_1)$. To avoid self-intersection, we must have $x(\psi_{6})<x(\psi_{1})$, which implies that the point $(x(\psi_{6}),u(\psi_{6}))$ lies to the left of the left wall. Hence, the curve must intersect—and in fact cross—the left wall at some angle between $\psi_{1}$ and $2\pi-\psi_{1}$, contradicting the assumptions of the theorem. We therefore conclude that $u_1<0$.

       With $u_{1}<0$ established, equation \eqref{equ:1.3.4} implies that $\psi(x)$ decreases whenever $y(x)<0$. Suppose that there exists an angle $\psi_{8}$ with $0<\psi_{8}<\psi_{1}$ such that $y(\psi_{8})=0$. Repeating the same argument used in the case $u_{1}\ge 0$, we again arrive at a contradiction. Hence, at the point where $\sin\psi=0$ and $u$ attains its maximum, we must have $u_{m}=y(0)<0$. Consequently, for every $\psi\in(\psi_{2},\psi_{1})$, we have $y(\psi)<0$, ensuring that the system \eqref{equ:2.1.4} is well defined throughout this interval. If the curve meets the right wall at $\psi=\psi_{2}$, then $\psi$ provides a global parametrization of the curve, and the $y$-component attains its unique maximum at $(x_{m},u_{m})$ with $u_{m}<0$. The remainder of the proof is devoted to showing that the curve indeed intersects the right wall at $\psi=\psi_{2}$.

       \textbf{Step 5} In this step, we show that the curve constructed in the last step intersects the right wall precisely at $\psi=\psi_{2}$. As a consequence, the point at which the $y$–component attains its maximum is unique.

       We argue by contradiction. Suppose that the curve does not meet the right wall at $\psi=\psi_{2}$. Since, by assumption, the curve can intersect the right wall only with contact angle $\psi_{2}$, it follows that the curve can be parametrized by $\psi$ for $\psi\in(-2\pi+\psi_{2},\psi_{1})$. At $\psi=-\pi$, we define $(x(-\pi),y(-\pi))=(x_{min},y_{min})$. Using the same discussion as in the Step 2 and Step 3 above, we deduce that $y_{min}<u_{m}$ and $y(-\pi+\psi)=y(-\pi-\psi)$ for all $\psi\in [0,\pi]$. Therefore, we have $y(-2\pi)=y(0)=u_{m}$. 
       
       Let $\mathcal{M}$ denote the region enclosed by the curve corresponding to $\psi\in (-\pi,\psi_{1})$, the line $y=y_{min}$, and the left wall. Using the assumption that the curve does not self-intersect, $(x(-2\pi),y(-2\pi))$ lies outside of region $\mathcal{M}$. Using the Jordan curve theorem, the curve must intersect the boundary of $\mathcal{M}$ at some value of $\psi\in (-\pi,-2\pi)$. However, by the same discussion used in Step 2 above, we have $y(\psi)>y_{min}$ for all $\psi\in (-\pi,-2\pi)$, which rules out intersection with the line $y=y_{\min}$ on this interval. Consequently, the curve must intersect either the left wall or itself for some $\psi\in(-2\pi,-\pi)$, contradicting the assumptions of the theorem.

        \textbf{Case 2} If $\frac{\pi}{2}\le \psi(x_{1})<\pi$, the argument proceeds in the same manner as in Case~1. In fact, the analysis is even simpler in this situation, since there is no need to extend the surface curve across the vertical angle $\psi=\tfrac{\pi}{2}$. This completes the proof of the theorem.
    \end{proof}

    \end{subsection}

    \begin{subsection}{The existence of the solution function}

   In the previous subsection, we derived an a priori geometric description of the free boundary, which provides insight into the shape of the steady-state configuration. In this subsection, for a prescribed volume $V$, we address the existence of a steady state.
     
    By Theorem~\ref{Thm:g_1}, the free boundary can be parametrized by the slope angle $\psi$, defined by \eqref{def:slope}. Accordingly, we construct the curve by solving the system \eqref{equ:1.2.6} subject to the initial conditions \eqref{equ:1.3.1}–\eqref{equ:1.3.2}. After applying the vertical shift $(x,y)\mapsto (x,y-\tfrac{P_{0}}{g})$, the original Euler–Lagrange system \eqref{equ:1.2.3} becomes equivalent to the system \eqref{equ:1.2.7}. Therefore, in this subsection we focus on the analysis of \eqref{equ:1.2.7}.

By the existence and uniqueness theorem for ordinary differential equations, the solution of \eqref{equ:1.2.7} is uniquely determined once the initial data is specified. We begin by the following lemma.

    \begin{lemma}{\label{lem:curve}}
        Suppose that the curve $(x(\psi),y(\psi))$ is a solution of \eqref{equ:bdd_n} with initial condition

        \begin{align}
            \begin{aligned}
                (x(0),y(0))=(x_{m},u_{m}),~~u_{m}<0.
            \end{aligned}
        \end{align}
          Then, for any $\psi\in(\psi_{2},\psi_{1})$, the functions $x(\psi)$ and $y(\psi)$ admit the following expressions
        \begin{align}
            y(\psi)=&-\sqrt{u_{m}^{2}+\frac{2\sigma}{g}(1-\cos\psi)}\label{equ:y_0}\\
            x(\psi)=&x_{m}-\int_{0}^{\psi}\frac{\sigma\cos\varphi}{g\sqrt{u_{m}^{2}+\frac{2\sigma}{g}(1-\cos\varphi)}}d\varphi\label{equ:x_0}\\
            r(\psi):=&|x(\psi)-x_{m}|=|\int_{0}^{\psi}\frac{\sigma\cos\varphi}{g\sqrt{u_{m}^{2}+\frac{2\sigma}{g}(1-\cos\varphi)}}d\varphi| \label{equ:2.2.100}
        \end{align}
    \end{lemma}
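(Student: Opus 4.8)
The plan is to integrate the system \eqref{equ:bdd_n} directly, exploiting the fact that its second equation decouples as a separable ODE in $y$. First I would rewrite $\frac{dy}{d\psi}=\sigma\frac{\sin\psi}{gy}$ as $y\,dy=\frac{\sigma}{g}\sin\psi\,d\psi$ and integrate from the initial point $\psi=0$, $y=u_m$ to a general $\psi$. This yields $\frac{1}{2}y^{2}-\frac{1}{2}u_m^{2}=\frac{\sigma}{g}(1-\cos\psi)$, equivalently $y^{2}=u_m^{2}+\frac{2\sigma}{g}(1-\cos\psi)$.

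The only step requiring genuine care is the choice of branch for the square root. Since $u_m<0$ by hypothesis and Theorem~\ref{Thm:g_1} establishes that $y(\psi)<0$ for all $\psi\in(\psi_2,\psi_1)$, continuity of $y$ forces the negative root throughout the interval, which gives the formula \eqref{equ:y_0}. At this point I would also record that the radicand satisfies $u_m^{2}+\frac{2\sigma}{g}(1-\cos\psi)\ge u_m^{2}>0$, so $y(\psi)$ stays bounded away from the singular set $\{y=0\}$; this confirms that the right-hand sides of \eqref{equ:bdd_n} remain smooth and that the solution genuinely extends across the whole range $\psi\in(\psi_2,\psi_1)$.

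With $y(\psi)$ in hand, the first equation becomes a pure quadrature. Substituting \eqref{equ:y_0} into $\frac{dx}{d\psi}=\sigma\frac{\cos\psi}{gy}$ turns it into $\frac{dx}{d\psi}=-\frac{\sigma\cos\psi}{g\sqrt{u_m^{2}+\frac{2\sigma}{g}(1-\cos\psi)}}$, and integrating from $0$ to $\psi$ with $x(0)=x_m$ produces \eqref{equ:x_0}. The expression \eqref{equ:2.2.100} for $r(\psi)$ then follows immediately by taking absolute values. Because the denominator is bounded below by $|u_m|>0$, the integrand is continuous and the integral is well defined, so no delicate integrability estimate near $\psi=0$ is needed.

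I do not anticipate a substantive obstacle: the argument is a standard separation of variables followed by a quadrature. The subtle point is purely the sign selection for $y$, which is not an analytic difficulty but rather a direct appeal to the geometric fact $y<0$ proved in Theorem~\ref{Thm:g_1}; absent that input, \eqref{equ:y_0} would be determined only up to an overall sign.
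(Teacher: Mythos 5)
Your proposal is correct and follows essentially the same route as the paper's proof: integrate the separable equation for $y$ to obtain $\tfrac{1}{2}gy^{2}-\tfrac{1}{2}gu_m^{2}=\sigma(1-\cos\psi)$, select the negative root via $u_m<0$ and the maximality of $(x_m,u_m)$ from Theorem~\ref{Thm:g_1}, then substitute into the $x$-equation and integrate. Your added observation that the radicand is bounded below by $u_m^{2}>0$, so the system stays nonsingular on all of $(\psi_2,\psi_1)$, is a small but welcome refinement the paper leaves implicit.
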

    \begin{proof}
        \textbf{Step 1 Proof of \eqref{equ:y_0}}: 
        
        Integrating both sides of the equation \eqref{equ:bdd_n} from $0$ to any $\psi\in (\psi_{2},\psi_{1})$, we obtain
    
        \begin{equation}{\label{equ:2.2.7}}
           \frac{1}{2}gy^{2}(\psi)-\frac{1}{2}gu_m^{2}=\sigma(1-\cos\psi),
        \end{equation}

        \noindent which is equivalent to

        \begin{equation}{\label{equ:2.2.8}}
             \frac{1}{2}gy(\psi)^{2}=\frac{1}{2}gu_{m}^{2}+\sigma(1-\cos\psi)
        \end{equation}

         \noindent By Theorem \ref{Thm:g_1}, it holds that $u_{m}<0$. Since $(x_m,u_m)$ is the point where the y-component of the curve attains its maximum. Consequently, $y(\psi)<0$ for all $\psi\in(\psi_{2},\psi_{1})$. Taking the negative square root of \eqref{equ:2.2.8}, we conclude that
        
        \begin{equation*}
            y(\psi)=-\sqrt{u_m^{2}+\frac{2\sigma}{g}(1-\cos\psi)}.
        \end{equation*} 

\textbf{ Step 2 Proof of \eqref{equ:x_0}}:

 Using the second equation in the system \eqref{equ:bdd_n}, we obtain

        \begin{equation}{\label{equ:2.2.9}}
            \frac{dx}{d\psi}=\sigma\frac{\cos\psi}{gy(\psi)}.
        \end{equation}

        \noindent Substituting the expression for $y(\psi)$ derived in Step~1,

        \begin{equation}{\label{equ:2.2.10}}
            y(\psi)=-\sqrt{u_{m}^{2}+\frac{2\sigma}{g}(1-\cos\psi)},
        \end{equation}

        \noindent into \eqref{equ:2.2.9} yields

        \begin{equation}{\label{equ:2.2.101}}
            \frac{dx}{d\psi}=\frac{\sigma\cos\psi}{-g\sqrt{u_{m}^{2}+\frac{2\sigma}{g}(1-\cos\psi)}}.
        \end{equation}

        Integrating \eqref{equ:2.2.101} from $0$ to $\psi$ gives

        \begin{equation*}
            x(\psi)-x_m=-\int_{0}^{\psi_1}\frac{\sigma\cos\gamma}{g\sqrt{u_{m}^{2}+\frac{2\sigma}{g}(1-\cos\gamma)}}d\gamma,
        \end{equation*}

    \noindent which is precisely the equation \eqref{equ:x_0}.
    
    Finally, the proof of \eqref{equ:2.2.100} is just another straightforward application of the equation \eqref{equ:2.2.101}. 
    \end{proof}
    
     A straightforward observation shows that the curve constructed in Lemma~\ref{lem:curve} is uniquely determined once the parameters $x_{m}$ and $u_{m}$ are specified. We therefore treat $x_{m}$ and $u_{m}$ as new parameters and derive a formula for the total enclosed volume in terms of them, rather than in terms of the unknown constant $P_{0}$. Since the system \eqref{equ:1.2.600} imposes two boundary conditions, the curve constructed in Lemma~\ref{lem:curve} must satisfy both, which effectively reduces the number of independent parameters among $x_{m}$ and $u_{m}$ to one. Consequently, both the total volume and the constant $P_{0}$ can be expressed as functions of a single parameter. By enforcing the volume constraint to equal the prescribed constant $V$, this parameter is uniquely determined. This strategy underlies the remainder of this section.

    A key difficulty in implementing the boundary conditions in \eqref{equ:1.2.7} lies in determining the location of the original point $O$—the intersection of the two walls—after the coordinate shift. In particular, the $y$-coordinate of $O$ is unknown; see Figure~\ref{figure3}. To be consistent with the transformation $(x,y)\mapsto (x,y-\tfrac{P_{0}}{g})$, the $y$-coordinate of $O$ must equal $-P_{0}/g$. However, since $P_{0}$ has not yet been determined, we cannot directly use this geometric relations to compute the coordinates of the two contact points.

Nevertheless, the curve is known to meet the left and right walls at the prescribed contact angles $\psi_{1}$ and $\psi_{2}$, respectively. Hence, once the curve is constructed by solving \eqref{equ:1.2.7} with the chosen initial conditions, the coordinates of the two contact points can be obtained simply by evaluating the solution at $\psi=\psi_{1}$ and $\psi=\psi_{2}$. This observation leads to the following theorem.
    
    \begin{theorem}( Boundary Points){\label{thm:bdd_p}} Suppose that the curve constructed in Lemma \ref{lem:curve} intersects two walls at two contact points $(x_1,u_1)$ and $(x_2,u_2)$, corresponding to the slope angles $\psi_{1}$ and $\psi_{2}$, respectively. Then the coordinates of these contact points are given by

        \begin{align}
            &u_1=-\sqrt{u_m^{2}+\frac{2\sigma}{g}(1-\cos\psi_1)} \label{equ:2.2.3}\\
             &u_2=-\sqrt{u_{m}^{2}+\frac{2\sigma}{g}(1-\cos\psi_2)} \label{equ:2.2.4}\\
              &x_1=x_m-\int_{0}^{\psi_1}\frac{\sigma\cos\gamma}{g\sqrt{u_{m}^{2}+\frac{2\sigma}{g}(1-\cos\gamma)}}d\gamma \label{equ:2.2.5}\\
              &x_2=x_m-\int_{0}^{\psi_2}\frac{\sigma\cos\gamma}{g\sqrt{u_{m}^{2}+\frac{2\sigma}{g}(1-\cos\gamma)}}d\gamma \label{equ:2.2.6}
        \end{align}
        
    \end{theorem}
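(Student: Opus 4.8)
The plan is to observe that this theorem is essentially an immediate corollary of Lemma~\ref{lem:curve}, once it is combined with the global parametrization established in Theorem~\ref{Thm:g_1}. The entire content reduces to identifying the two contact points with specific values of the parameter $\psi$ and then substituting those values into the explicit formulas \eqref{equ:y_0} and \eqref{equ:x_0}. No new analytic estimate is required; the only thing to verify carefully is the identification itself.

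First I would recall that, by Theorem~\ref{Thm:g_1}, the curve constructed in Lemma~\ref{lem:curve} is globally parametrized by the slope angle $\psi$, with a one-to-one correspondence between admissible angles and points on the curve. In particular, each prescribed slope angle singles out exactly one point. By hypothesis the curve meets the left wall at a contact point whose slope angle equals $\psi_{1}$ and meets the right wall at a contact point whose slope angle equals $\psi_{2}$. Since the parametrization variable is precisely the slope angle, these two contact points can be nothing other than the parametrized points $(x(\psi_{1}),y(\psi_{1}))$ and $(x(\psi_{2}),y(\psi_{2}))$. Hence $u_{1}=y(\psi_{1})$, $x_{1}=x(\psi_{1})$, $u_{2}=y(\psi_{2})$ and $x_{2}=x(\psi_{2})$.

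With this identification in hand, I would simply evaluate the formulas of Lemma~\ref{lem:curve} at $\psi=\psi_{1}$ and $\psi=\psi_{2}$: substituting $\psi=\psi_{1}$ into \eqref{equ:y_0} yields \eqref{equ:2.2.3}, substituting $\psi=\psi_{2}$ yields \eqref{equ:2.2.4}, and substituting the same two values into \eqref{equ:x_0} yields \eqref{equ:2.2.5} and \eqref{equ:2.2.6}, respectively. These are exactly the four claimed expressions, so the proof is complete.

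The only genuine subtlety — rather than a real obstacle — is that Lemma~\ref{lem:curve} asserts its formulas on the open interval $\psi\in(\psi_{2},\psi_{1})$, whereas here I must evaluate them at the endpoints $\psi_{1}$ and $\psi_{2}$. I would dispose of this by a continuity argument: the function $y(\psi)=-\sqrt{u_{m}^{2}+\tfrac{2\sigma}{g}(1-\cos\psi)}$ is continuous up to and including the endpoints since $u_{m}<0$ keeps the radicand bounded away from zero, and the integrand defining $x(\psi)$ is locally integrable near $\psi_{1}$ and $\psi_{2}$ (as already noted in the proof of Theorem~\ref{Thm:g_1}), so $x(\psi)$ extends continuously to the closed interval. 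Taking limits $\psi\to\psi_{1}^{-}$ and $\psi\to\psi_{2}^{+}$ then legitimizes the endpoint evaluations, and the stated formulas hold.
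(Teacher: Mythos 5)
Your proposal is correct and follows essentially the same route as the paper: the paper's own proof simply substitutes $\psi=\psi_{1}$ and $\psi=\psi_{2}$ into \eqref{equ:y_0} and \eqref{equ:x_0} from Lemma~\ref{lem:curve}. Your additional remarks on identifying the contact points via the global $\psi$-parametrization and on continuity up to the endpoints are reasonable elaborations of details the paper leaves implicit.
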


    \begin{proof}

   The results directly follows from the lemma \ref{lem:curve}. Equations \eqref{equ:2.2.3} and \eqref{equ:2.2.4} are proved by setting $\psi=\psi_{1}$ and $\psi=\psi_{2}$, respectively, in equation \eqref{equ:y_0}. Similarly, equations \eqref{equ:2.2.5} and \eqref{equ:2.2.6} are proved by setting $\psi=\psi_{1}$ and $\psi=\psi_{2}$, respectively, in equation \eqref{equ:x_0}.

    \end{proof}

     Suppose that the curve we constructed in Lemma \ref{lem:curve} satisfies the assumptions of Theorem \ref{thm:bdd_p}. We impose these conditions on the curve throughout the remainder of this subsection.
     
     Having derived the coordinates of boundary contact points, we proceed to express the total volume in terms of $u_{m}$ and $x_{m}$. Before deriving this formula, we first establish a relationship between $x_{m}$ and $u_{m}$ using the geometry relation, thereby reducing the problem to a single independent parameter.

    \begin{theorem}{\label{thm:relation}}

        Suppose that $r(\psi)$ is defined as in Lemma \ref{lem:curve}, and that $x_{1},u_{1},x_{2},u_{2}$ are defined as in Theorem \ref{thm:bdd_p}, then the following relations hold:

        \begin{align}
             &x_{m}=\frac{r_1\tan\theta_{1}-r_{2}\tan\theta_{2}-(u_1-u_{2})}{\tan\theta_{1}+\tan\theta_{2}}~\operatorname{when}~\theta_1\neq \frac{\pi}{2} \label{equ:2.2.11}\\
              &x_m=r_1~\operatorname{when}~\theta_1=\frac{\pi}{2} \label{equ:2.2.12}\\
               &P_{0}=g(x_{2}\tan\theta_2-u_{2}) \label{equ:2.2.13},
        \end{align}
        
        \noindent where

        \begin{align}
            r_1&:=r(\psi_1)=x_m-x_1 \label{equ:2.2.102}\\
             r_2&:=r(\psi_2)=x_2-x_m \label{equ:2.2.103}.
        \end{align}
        
    \end{theorem}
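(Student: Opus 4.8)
The plan is to treat this as an elementary exercise in plane geometry. After the shift \eqref{equ:1.2.10}, the vertex $O$ where the two walls meet sits at $(0,-P_0/g)$, and each wall is a ray emanating from $O$ whose direction is fixed by its inclination angle. The two contact points $(x_1,u_1)$ and $(x_2,u_2)$ lie on these rays, and imposing this incidence produces two linear relations from which both $P_0$ and $x_m$ can be read off directly.

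First I would parametrize the walls. In the polar setup the free surface is described for $\theta\in(\theta_2,\pi-\theta_1)$ with pole at $O$, so the right wall is the ray from $O$ in direction $(\cos\theta_2,\sin\theta_2)$ and the left wall is the ray in direction $(-\cos\theta_1,\sin\theta_1)$. Writing $(x_2,u_2)=(0,-P_0/g)+t_2(\cos\theta_2,\sin\theta_2)$ with $t_2>0$ and eliminating $t_2$ gives the right-wall relation
\[
u_2=-\frac{P_0}{g}+x_2\tan\theta_2,
\]
and the same computation on the left wall (for $\theta_1\neq\tfrac{\pi}{2}$) yields
\[
u_1=-\frac{P_0}{g}-x_1\tan\theta_1.
\]
I would record that the resulting parameters $t_1,t_2$ are positive to confirm that the contact points really sit on the correct rays.

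Next, \eqref{equ:2.2.13} is immediate: solving the right-wall relation for $P_0$ gives $P_0=g(x_2\tan\theta_2-u_2)$. For \eqref{equ:2.2.11} I would subtract the two wall relations to cancel the unknown $P_0$, obtaining $u_1-u_2=-x_1\tan\theta_1-x_2\tan\theta_2$. Then I substitute $x_1=x_m-r_1$ and $x_2=x_m+r_2$, which hold by the definitions \eqref{equ:2.2.102}--\eqref{equ:2.2.103}, collect the $x_m$-terms, and divide by $\tan\theta_1+\tan\theta_2$ to reach the stated formula. The case $\theta_1=\tfrac{\pi}{2}$ is handled separately: the left wall is then the vertical line $x=0$ through $O$, forcing $x_1=0$ and hence $x_m=x_m-x_1=r_1$, which is \eqref{equ:2.2.12}.

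The computation is routine once the geometry is in place, so the one thing to get right --- and the only real obstacle --- is the bookkeeping of signs and wall directions. In particular I need to check that $r_1=x_m-x_1>0$ and $r_2=x_2-x_m>0$ are consistent with the picture (the maximal point lying horizontally between the two contact points, which follows from the monotonicity of $x(\psi)$ established in Theorem~\ref{Thm:g_1}), and that the left inclination angle contributes with the correct sign of $\tan\theta_1$. The split into $\theta_1\neq\tfrac{\pi}{2}$ and $\theta_1=\tfrac{\pi}{2}$ is precisely the degeneracy where $\tan\theta_1$ blows up, which is what forces the two-line form of the statement.
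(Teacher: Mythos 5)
Your proposal is correct and follows essentially the same route as the paper: both arguments impose that the two contact points lie on the rays through $O=(0,-P_{0}/g)$, giving the collinearity relations $u_{1}=-\tfrac{P_{0}}{g}-x_{1}\tan\theta_{1}$ and $u_{2}=-\tfrac{P_{0}}{g}+x_{2}\tan\theta_{2}$, from which $P_{0}$ is read off and $x_{m}$ is obtained by eliminating $P_{0}$ and substituting $x_{1}=x_{m}-r_{1}$, $x_{2}=x_{m}+r_{2}$. Your handling of the degenerate case $\theta_{1}=\tfrac{\pi}{2}$ (vertical left wall forces $x_{1}=0$, hence $x_{m}=r_{1}$) is a slightly more explicit justification of what the paper asserts directly from the definition of $r(\psi)$.
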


    \textbf{Remark}: Equations \eqref{equ:2.2.11}, \eqref{equ:2.2.12}, and \eqref{equ:2.2.13} imply that $u_1$, $u_2$, $r_1$, and $r_2$ are all functions of $u_{m}$. Consequently, $x_{m}$ can also be expressed as a function of $u_{m}$.
    
    \begin{proof}

        We first note that when $\theta_{1}=\frac{\pi}{2}$, the identity $x_{m}=r_{1}$ follows directly from the definition \eqref{equ:2.2.100}. Therefore, it suffices to prove \eqref{equ:2.2.11} and \eqref{equ:2.2.13}.
        
         Let the two walls intersect at the point $O$. By our choice of coordinates, the $x$–coordinate of $O$ is zero, while its $y$–coordinate, denoted by $y_{0}$, is to be determined. Since both $(0,y_{0})$ and $(x_{1},u_{1})$ lie on the left wall inclined at angle $\theta_{1}$, we obtain

        \begin{equation}{\label{equ:2.2.14}}
        y_0=u_1-(-x_1)\tan\theta_1.
        \end{equation}

        \noindent Similarly, since $(x_2,u_2)$ and $(0,y_0)$ lie on the right wall inclined at angle $\theta_2$, we have:
        
        \begin{equation}{\label{equ:2.2.15}}
          y_0=u_2-x_2\tan\theta_2.
        \end{equation}

        Combining \eqref{equ:2.2.14} and \eqref{equ:2.2.15} together, we deduce

        \begin{equation}{\label{equ:2.2.16}}
            u_1-(-x_1)\tan\theta_1=u_2-x_{2}\tan\theta_2.
        \end{equation}

        \noindent  Substituting the definitions of $r_{1}$ and $r_{2}$ given by \eqref{equ:2.2.102} and \eqref{equ:2.2.103} into \eqref{equ:2.2.16}, we obtain

        \begin{equation*}
            u_1-(r_1-x_m)\tan\theta_1=u_2-(r_2+x_m)\tan\theta_2.
        \end{equation*}

        \noindent Rearranging terms yields
        
        \begin{equation*}
            x_{m}(\tan\theta_1+\tan\theta_2)=u_2-u_1+r_1\tan\theta_1-r_{2}\tan\theta_2,
        \end{equation*}

        \noindent and hence

        \begin{equation*}
            x_{m}=\frac{r_1\tan\theta_1-r_{2}\tan\theta_2-(u_1-u_2)}{(\tan\theta_1+\tan\theta_2)},
        \end{equation*}

        \noindent which is precisely the equation \eqref{equ:2.2.11}.

        Finally, using \eqref{equ:2.2.15}, the coordinates of the intersection point $O$ are given by $(0,y_0)=(0,u_2-x_{2}\tan\theta_2)$. After applying the shift $(x,y)\rightarrow (x,y+\frac{P_{0}}{g})$, the point $O$ is mapped to the origin $(0,0)$. Therefore, 

        \begin{equation*}
            0=\frac{P_{0}}{g}+(u_2-x_2\tan\theta_2),
        \end{equation*}

        \noindent which implies

        \begin{equation*}
            P_{0}=g(x_2\tan\theta_2-u_2).
        \end{equation*}

        \noindent This finishes the proof.
    \end{proof}
    
    Now we have the expressions for $P_{0}$ and $x_m$ in terms of $u_{m}$. In order to proceed with the computation of the enclosed area, it is necessary to determine the sign of $x_{m}$. This motivates the following lemma.

    \begin{lemma}{\label{lem:pos}}

    Suppose that $x_{m}$ is expressed as a function of $u_{m}$ as in Theorem~\ref{thm:relation}. Then the following properties hold:

    \begin{itemize}
    \item If $\theta_1=\theta_2$, $x_{m}=0$.
    \item   If $\theta_{2}<\theta_{1}<\frac{\pi}{2}$ and $[\![\gamma]\!]\le 0$, then $x_{m}$ is an increasing function of $\theta_{1}$, provided that $\theta_{2}$ and $u_{m}$ are fixed.
    \end{itemize}
    \end{lemma}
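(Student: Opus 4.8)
The plan is to read everything off the closed-form expression for $x_m$ in Theorem~\ref{thm:relation}, exploiting that, once $u_m$ and $\theta_2$ are frozen, the only $\theta_1$-dependence enters through $\psi_1=\gamma+\tfrac{\pi}{2}-\theta_1$; thus $r_2$, $u_2$, $\psi_2$ are constants while $r_1=r_1(\psi_1)$ and $u_1=u_1(\psi_1)$ vary. Throughout I write $S:=\sqrt{u_m^{2}+\tfrac{2\sigma}{g}(1-\cos\psi_1)}=-u_1$, and I record the elementary sign facts that, under $[\![\gamma]\!]\le 0$ (equivalently $\gamma\ge 0$) and $\theta_2<\theta_1<\tfrac{\pi}{2}$, one has $\psi_1\in(0,\pi)$ and $0<\psi_1<|\psi_2|$, hence $\cos\psi_1>\cos\psi_2$, so that $u_1-u_2=-S+\sqrt{u_m^2+\tfrac{2\sigma}{g}(1-\cos\psi_2)}>0$, while $r_1,r_2>0$.

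For the first bullet I would argue by symmetry. When $\theta_1=\theta_2$ the defining relations \eqref{equ:1.2.8}--\eqref{equ:1.2.9} give $\psi_2=-\psi_1$; since the integrand in \eqref{equ:2.2.100} depends on the variable only through its cosine and is therefore even, this forces $r_1=r_2$, while $\cos\psi_2=\cos\psi_1$ forces $u_1=u_2$ via \eqref{equ:2.2.3}--\eqref{equ:2.2.4}. Substituting $r_1=r_2$, $u_1=u_2$ and $\tan\theta_1=\tan\theta_2$ into \eqref{equ:2.2.11} makes the numerator vanish identically, so $x_m=0$.

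For the second bullet I would differentiate \eqref{equ:2.2.11} in $\theta_1$ by the quotient rule. The fundamental theorem of calculus applied to \eqref{equ:2.2.100}, together with $\tfrac{d\psi_1}{d\theta_1}=-1$, gives $\tfrac{dr_1}{d\theta_1}=-\tfrac{\sigma\cos\psi_1}{gS}$ and, from \eqref{equ:2.2.3}, $\tfrac{du_1}{d\theta_1}=\tfrac{\sigma\sin\psi_1}{gS}$. The one simplification that makes the computation tractable is the identity $\theta_1+\psi_1=\gamma+\tfrac{\pi}{2}$, which yields
\begin{equation*}
\tan\theta_1\cos\psi_1+\sin\psi_1=\frac{\sin(\theta_1+\psi_1)}{\cos\theta_1}=\frac{\cos\gamma}{\cos\theta_1}.
\end{equation*}
After collecting terms and clearing the positive factor $\cos^{2}\theta_1\,(\tan\theta_1+\tan\theta_2)^{-2}$, showing $\tfrac{\partial x_m}{\partial\theta_1}>0$ reduces to the single inequality
\begin{equation*}
\tan\theta_2\,(r_1+r_2)+(u_1-u_2)>\frac{\sigma\cos\gamma\cos\theta_1}{gS}\,(\tan\theta_1+\tan\theta_2).
\end{equation*}

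The main obstacle is this last inequality. The $\tan\theta_2$-part is manageable: since $S(\varphi)$ is increasing on $(0,\pi)$, when $\psi_1\le\tfrac{\pi}{2}$ one has $r_1\ge\tfrac{\sigma\sin\psi_1}{gS}$, and $\sin\psi_1=\cos(\gamma-\theta_1)=\cos\gamma\cos\theta_1+\sin\gamma\sin\theta_1\ge\cos\gamma\cos\theta_1$ (using $\gamma\ge 0$) gives $r_1\ge\tfrac{\sigma\cos\gamma\cos\theta_1}{gS}$, which exactly absorbs the $\tan\theta_2$ term on the right. What remains, and is the genuinely delicate point, is to dominate the $\tan\theta_1$-part, i.e. to prove $\tan\theta_2\,r_2+(u_1-u_2)\ge\tfrac{\sigma\cos\gamma\sin\theta_1}{gS}$. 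Here the hypothesis $[\![\gamma]\!]\le 0$ must be used quantitatively, not just for signs: the estimate is tight, it becomes most delicate as $\theta_1\to\tfrac{\pi}{2}$ (where $\tan\theta_1\to\infty$ while $r_2$, $u_2$ stay fixed) and in the subcase $\gamma>\theta_1$ where $\psi_1>\tfrac{\pi}{2}$ and the simple lower bound on $r_1$ is unavailable. Confirming that it holds throughout $(\theta_2,\tfrac{\pi}{2})$ under the stated hypothesis — presumably via a sharper analysis of the competition between the growth of $\sin\theta_1/S$ and the frozen quantities $r_2,u_2$ — is the step I expect to demand the most care, and the regime $\gamma$ near $0$ is where I would check the argument first.
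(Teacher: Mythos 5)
Your first bullet is exactly the paper's argument (symmetry forces $r_1=r_2$, $u_1=u_2$) and is fine. For the second bullet your route diverges from the paper's in a way that matters: you differentiate the full quotient $x_m=f(\theta_1)/(\tan\theta_1+\tan\theta_2)$, whereas the paper differentiates only the numerator $f(\theta_1)=r_1\tan\theta_1-r_2\tan\theta_2-(u_1-u_2)$ and shows $f'(\theta_1)\ge 0$ using the bound $r_1\ge \sigma\sin\psi_1/(gS)$ from Lemma~\ref{lem:1}; together with $f=0$ at $\theta_1=\theta_2$ this gives $f\ge 0$, hence $x_m\ge 0$, which is the only property invoked later (see the Remark after Lemma~\ref{lem:1}). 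Your quotient-rule reduction and the identity $\tan\theta_1\cos\psi_1+\sin\psi_1=\cos\gamma/\cos\theta_1$ are algebraically correct, but the residual inequality you flag as ``the genuinely delicate point'' is not merely delicate: it fails in part of the admissible range, so the gap cannot be closed. Take $\gamma=0$ and let $\theta_2\to 0^+$. The $(r_1+r_2)\tan\theta_2$ term vanishes and your requirement degenerates to $u_1-u_2\ge \sigma\sin\theta_1/(gS)$; writing $a=u_m^2$, $b=2\sigma/g$, $s=\sin\theta_1$, this reads
\begin{equation*}
\frac{bs}{\sqrt{a+b}+\sqrt{a+b(1-s)}}\;\ge\;\frac{(b/2)\,s}{\sqrt{a+b(1-s)}},
\end{equation*}
which is equivalent to $\sqrt{a+b(1-s)}\ge\sqrt{a+b}$ and is false for every $s>0$. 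By continuity the same holds for small $\theta_2>0$, so $dx_m/d\theta_1<0$ there: the quotient is genuinely non-monotone even though its numerator is monotone increasing, and no sharper estimate in the regime you singled out will rescue the computation.

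The upshot is that you correctly located the hard spot, but the hard spot is hard because the literal monotonicity of $x_m$ is not what can (or needs to) be proved. To repair the write-up, drop the quotient rule and compute $f'(\theta_1)$ alone: with your formulas $dr_1/d\theta_1=-\sigma\cos\psi_1/(gS)$ and $du_1/d\theta_1=\sigma\sin\psi_1/(gS)$ and the bound $r_1\ge\sigma\sin\psi_1/(gS)$, one obtains
\begin{equation*}
f'(\theta_1)\;\ge\;\frac{\sigma}{gS}\,\tan\theta_1\,\cos(\theta_1-\gamma)\,\bigl(\tan\theta_1-\tan(\theta_1-\gamma)\bigr)\;\ge\;0
\end{equation*}
for $\gamma\in[0,\tfrac{\pi}{2})$ and $\theta_1\in(0,\tfrac{\pi}{2})$, and the conclusion to record is the positivity $x_m\ge 0$ (equivalently, monotonicity of the numerator), which is what the rest of Section~2 actually uses.
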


    \begin{proof}

     Using Theorem \ref{thm:relation}, we have the formula for $x_{m}$:

    \begin{equation}{\label{equ:2.2.17}}
            x_{m}=\frac{r_1\tan\theta_{1}-r_{2}\tan\theta_{2}-(u_1-u_2)}{\tan\theta_{1}+\tan\theta_{2}}
    \end{equation}

    Let $f(\theta_{1})$ denote the numerator on the right-hand side of \eqref{equ:2.2.17}. Then $f(\theta_{1})$ can be written as
    \begin{equation}{\label{equ:2.2.110}}
        f(\theta_1)=r_1\tan\theta_{1}-r_{2}\tan\theta_{2}-(u_1-u_2)
    \end{equation}
    
    \textbf{Case one $\theta_1=\theta_2$}: 

     When $\theta_1=\theta_2$, equations \eqref{equ:1.2.8} and \eqref{equ:1.2.9} imply that $\psi_1=-\psi_2$, and consequently $\cos\psi_{1}=\cos\psi_{2}$. Substituting this relation into equations \eqref{equ:2.2.3}–\eqref{equ:2.2.6} in Theorem~\ref{thm:bdd_p}, we obtain:

    \begin{equation*}
        u_1=u_2~~and~~r_1=r_2
    \end{equation*}

    \noindent Hence, we conclude that $x_m=0$ from equation \eqref{equ:2.2.11}. 

    \textbf{Case two $\theta_1\neq \theta_2$}: 
    
    We now consider the general case where $\theta_1>\theta_2$. To establish the desired result, we compute $\frac{df}{d\theta_{1}}$, where $f$ is defined as in equation \eqref{equ:2.2.110}.

    From equations \eqref{equ:2.2.3}–\eqref{equ:2.2.6} in Theorem~\ref{thm:bdd_p}, we obtain

        \begin{equation}{\label{equ:2.2.18}}
            r_{1}=x_m-x_1=\int_{0}^{\psi_1}\frac{\sigma\cos\psi d\psi}{g\sqrt{u_{m}^{2}+\frac{2\sigma}{g}(1-\cos\psi)}},
        \end{equation}

        \begin{equation}{\label{equ:2.2.19}}
            r_{2}=x_2-x_m=\int_{\psi_2}^{0}\frac{\sigma\cos\psi d\psi}{g\sqrt{u_{m}^{2}+\frac{2\sigma }{g}(1-\cos\psi)}},
        \end{equation}

        \noindent and

        \begin{equation}{\label{equ:2.2.20}}
            u_1-u_2=-\sqrt{u_{m}^{2}+\frac{2\sigma}{g}(1-\cos\psi_{1})}+\sqrt{u_{m}^{2}+\frac{2\sigma}{g}(1-\cos\psi_{2})}.
        \end{equation}

        \noindent  Substituting \eqref{equ:2.2.18}–\eqref{equ:2.2.20} into \eqref{equ:2.2.110}, we differentiate $f(\theta_{1})$ with respect to $\theta_{1}$ to show

        \begin{align}
            \frac{df(\theta_1)}{d\theta_1}=&\frac{1}{\cos^{2}\theta_{1}}r_1+\frac{dr_1}{d\theta_1}\tan\theta_{1}-\frac{du_1}{d\theta_1} \notag\\
            =&\frac{\sigma}{\cos^{2}\theta_1}\int_{0}^{\psi_1}\frac{\cos\psi d\psi}{g\sqrt{u_{m}^{2}+\frac{2\sigma}{g}(1-\cos\psi)}}+\tan\theta_{1}\frac{\sigma\cos\psi_1}{g\sqrt{u_m^{2}+\frac{2\sigma}{g}(1-\cos\psi_1)}}\frac{d\psi_1}{d\theta_{1}} \notag\\
            &+\frac{\sigma\sin\psi_1}{g\sqrt{u_{m}^{2}+\frac{2\sigma}{g}(1-\cos\psi_1)}}\frac{d\psi_1}{d\theta_{1}} \label{equ:2.2.21},
        \end{align}
        where we have used the chain rule and the dependence of $\psi_{1}$ on $\theta_{1}$ given by \eqref{equ:1.2.8}.
        
        To move on to the next step, we first introduce an inequality below :

        \begin{equation}{\label{equ:2.2.22}}
        \int_{0}^{\psi_1}\frac{\cos\psi d\psi}{g\sqrt{u_{m}^{2}+\frac{2\sigma}{g}(1-\cos\psi)}}\geq \int_{0}^{\psi_1} \frac{\cos\psi d\psi}{g\sqrt{u_{m}^{2}+\frac{2\sigma}{g}(1-\cos\psi_1)}}
        \end{equation}

        \noindent This inequality will be established in Lemma~2.6, whose proof is postponed until after the main argument.
        
          Applying inequality \eqref{equ:2.2.22} to the equation $\eqref{equ:2.2.21}$, we obtain:

        \begin{align*}
            \frac{df}{d\theta_1} \geq &\frac{\sigma}{g\cos^{2}\theta_1\sqrt{u_m^{2}+\frac{2\sigma}{g}(1-\cos\psi_1)}}\int_{0}^{\psi_1} \cos\psi d\psi \\
            &+\tan\theta_1\frac{\sigma\cos\psi_1}{g\sqrt{u_{m}^{2}+\frac{2\sigma}{g}(1-\cos\psi_1)}}\frac{d\psi_1}{d\theta_{1}}+\frac{\sigma\sin\psi_1}{g\sqrt{u_{m}^{2}+\frac{2\sigma}{g}(1-\cos\psi_1)}}\frac{d\psi_1}{d\theta_1} 
        \end{align*}

          From \eqref{equ:1.2.8}, we have $\psi_1=\gamma+\frac{\pi}{2}-\theta_1$, which implies $\frac{d\psi(x_1)}{d\theta_{1}}=-1$. Substituting this identity into the inequality above yields

        \begin{align}
            \frac{df(\theta_1)}{d\theta_1}\geq &\frac{\sigma\sin\psi(x_1)}{g\cos^{2}\theta_1\sqrt{u_m^{2}+\frac{2\sigma}{g}(1-\cos\psi(x_1))}}-\tan\theta_1\frac{\sigma \cos\psi(x_1)}{g\sqrt{u_m^{2}+\frac{2\sigma}{g}(1-\cos\psi(x_1))}}\notag\\
             &-\frac{\sigma\sin\psi(x_1)}{g\sqrt{u_m^{2}+\frac{2\sigma}{g}(1-\cos\psi(x_1))}}\notag\\
             =&\frac{\sigma}{g\sqrt{u_{m}^{2}+\frac{2\sigma}{g}(1-\cos\psi(x_1))}}(\frac{\sin\psi(x_1)}{\cos^{2}\theta_1}-\tan\theta_1 \cos\psi(x_1)-\sin\psi(x_1))\notag\\
             =&\frac{\sigma }{g\sqrt{u_{m}^{2}+\frac{2\sigma}{g}(1-\cos\psi(x_1))}}(\sin\psi(x_1)\tan^{2}\theta_1-\tan\theta_1\cos\psi(x_1))\notag\\
             =&\frac{\sigma }{g\sqrt{u_{m}^{2}+\frac{2\sigma}{g}(1-\cos\psi(x_1))}}\tan\theta_1(\sin\psi(x_1)\tan\theta_1-\cos\psi(x_1)) \notag\\
             =&\frac{\sigma }{g\sqrt{u_{m}^{2}+\frac{2\sigma}{g}(1-\cos\psi(x_1))}}\tan\theta_1(\sin(\frac{\pi}{2}-\theta_1+\gamma)\tan\theta_1-\cos(\frac{\pi}{2}-\theta_1+\gamma)) \notag\\
              =&\frac{\sigma}{g\sqrt{u_{m}^{2}+\frac{2\sigma}{g}(1-\cos\psi(x_1))}}\tan\theta_1(\cos(\theta_1-\gamma)\tan\theta_1-\sin(\theta_1-\gamma)) \notag \\
            =&\frac{\sigma}{g\sqrt{u_{m}^{2}+\frac{2\sigma}{g}(1-\cos\psi(x_1))}}\tan\theta_1\cos(\theta_1-\gamma)(\tan\theta_1-\tan(\theta_1-\gamma)) \label{equ:2.2.23}
        \end{align}

        Since $[\![\gamma]\!]\leq 0$, it follows from the definition of $\gamma$ in \eqref{equ:1.2.110} that:

        \begin{equation*}
            \sin\gamma=-\frac{[\![\gamma]\!]}{\sigma} \geq 0,  
        \end{equation*}

        \noindent hence $\gamma\in [0,\frac{\pi}{2})$. Moreover, under the assumption $0<\theta_1<\frac{\pi}{2}$, we have $\cos(\theta_1-\gamma)>0$ and $\tan\theta_1-\tan(\theta_1-\gamma)>0$. Substituting these inequalities into \eqref{equ:2.2.23}, we conclude that

        \begin{equation*}
            \frac{df(\theta_1)}{d\theta_1}\geq \frac{1}{g\sqrt{u_{m}^{2}+\frac{2}{g}(1-\cos\psi(x_1))}}\tan\theta_1\cos(\theta_1-\gamma)(\tan\theta_1-\tan(\theta_1-\gamma))\geq 0
        \end{equation*}

        \noindent This completes the proof.
    \end{proof}

     It remains to prove the inequality \eqref{equ:2.2.22}. To this end, we establish the following lemma.

    \begin{lemma}{\label{lem:1}}
    For any $\psi_{1}\in(0,\pi)$, the following inequality holds:
        \begin{equation}{\label{equ:key_ineq}}
        \int_{0}^{\psi_1}\frac{\cos\psi d\psi}{g\sqrt{u_{m}^{2}+\frac{2\sigma}{g}(1-\cos\psi)}}\geq \int_{0}^{\psi_1} \frac{\cos\psi d\psi}{g\sqrt{u_{m}^{2}+\frac{2\sigma}{g}(1-\cos\psi_{1})}}
        \end{equation}
    \end{lemma}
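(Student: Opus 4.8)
The plan is to reduce the claim to a single sign statement and then dispatch it with an integration by parts that trades the troublesome factor $\cos\psi$ for $\sin\psi$. First I would cancel the common factor $1/g$ (legitimate since $g>0$) and abbreviate $D(\psi):=\sqrt{u_m^{2}+\frac{2\sigma}{g}(1-\cos\psi)}$; recall from Theorem~\ref{Thm:g_1} that $u_m<0$ in the regime of interest, so $D(\psi)\ge |u_m|>0$ and every integrand below is continuous on $[0,\psi_1]$. Since $\sigma>0$ and $1-\cos\psi$ is strictly increasing on $(0,\pi)$, the function $D$ is positive and strictly increasing there, with $D'(\psi)=\frac{\sigma\sin\psi}{gD(\psi)}>0$. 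The inequality \eqref{equ:key_ineq} is then equivalent to showing that
\[
I:=\int_{0}^{\psi_1}\cos\psi\Big(\frac{1}{D(\psi)}-\frac{1}{D(\psi_1)}\Big)\,d\psi\ge 0.
\]

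Next I would set $w(\psi):=\frac{1}{D(\psi)}-\frac{1}{D(\psi_1)}$. Because $D$ is increasing, $w$ is nonnegative and decreasing on $[0,\psi_1]$, with $w(\psi_1)=0$. In the easy regime $\psi_1\le\frac{\pi}{2}$ the integrand $\cos\psi\,w(\psi)$ is a product of two nonnegative functions, so $I\ge 0$ at once. The only genuine difficulty is the case $\psi_1>\frac{\pi}{2}$, where $\cos\psi<0$ on $(\frac{\pi}{2},\psi_1)$ and the integrand changes sign, so a pointwise comparison is impossible.

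To handle this uniformly I would integrate by parts, differentiating $w$ and integrating $\cos\psi$:
\[
I=\big[\sin\psi\,w(\psi)\big]_{0}^{\psi_1}-\int_{0}^{\psi_1}\sin\psi\,w'(\psi)\,d\psi.
\]
The boundary term vanishes at both endpoints, since $\sin 0=0$ and $w(\psi_1)=0$. What remains is $-\int_{0}^{\psi_1}\sin\psi\,w'(\psi)\,d\psi$, and here the two signs I need are automatic on $(0,\psi_1)\subset(0,\pi)$: one has $\sin\psi\ge 0$, while $w'(\psi)=-D'(\psi)/D(\psi)^{2}\le 0$ by the monotonicity of $D$. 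Hence $-\sin\psi\,w'(\psi)\ge 0$ throughout, so $I\ge 0$ and \eqref{equ:key_ineq} follows.

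The key conceptual point — and the step I expect to carry the whole argument — is that the integration by parts exchanges the sign-changing weight $\cos\psi$ for $\sin\psi$, which is nonnegative on all of $(0,\pi)$; all remaining information, namely $w(\psi_1)=0$ and the decrease of $w$, is supplied for free by the monotonicity of $D$. The single computation worth recording is $D'(\psi)=\sigma\sin\psi/(gD(\psi))$, and I would note that the argument is uniform in the parameter $u_m$, entering only through the strictly positive quantity $D$.
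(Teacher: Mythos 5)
Your proof is correct, and it takes a genuinely different and cleaner route than the paper. The paper argues by cases: for $\psi_1\le\frac{\pi}{2}$ it uses the pointwise comparison, and for $\psi_1\in(\frac{\pi}{2},\pi)$ it splits the integral over $(0,\pi-\psi_1)$, $(\pi-\psi_1,\frac{\pi}{2})$, and $(\frac{\pi}{2},\psi_1)$, handling the first piece pointwise and pairing the latter two via the reflection $\cos(\pi-\psi)=-\cos\psi$ so that the sign-changing contributions cancel favorably. Your single integration by parts, writing the difference as $-\int_0^{\psi_1}\sin\psi\,w'(\psi)\,d\psi$ with $w(\psi)=D(\psi)^{-1}-D(\psi_1)^{-1}$, eliminates the case analysis entirely: the boundary terms vanish because $\sin 0=0$ and $w(\psi_1)=0$, and the integrand is nonnegative because $\sin\psi\ge 0$ on $(0,\pi)$ while $D$ is increasing. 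You correctly flag the only hypothesis needed, namely $u_m\neq 0$ so that $D\ge|u_m|>0$ and $w'$ is bounded, which holds in the regime where the lemma is applied. An added benefit of your argument is that it proves the general statement $\int_0^{\psi_1}\cos\psi\,\bigl(h(\psi)-h(\psi_1)\bigr)\,d\psi\ge 0$ for any nonincreasing absolutely continuous $h$, so it covers at once the analogous inequalities for $a(\psi)$ and $b(\psi)$ in Lemma~2.9 (with $D$ replaced by $D^{3}$ or $D$), which the paper disposes of only by asserting that the same three-way splitting "works analogously."
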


    \begin{proof}

    We first consider the case $\psi_{1}\in\bigl(0,\tfrac{\pi}{2}\bigr)$. In this range, the result is straightforward. Indeed, $\cos\psi>0$ for all $\psi\in(0,\psi_{1})$, and moreover $\cos\psi\ge\cos\psi_{1}$ for every $\psi\in(0,\psi_{1})$. Consequently, for all such $\psi$ we have

    \begin{equation}{\label{equ:2.2.24}}
        \frac{\cos\psi}{g\sqrt{u_{m}^{2}+\frac{2\sigma}{g}(1-\cos\psi)}}\geq \frac{\cos\psi}{g\sqrt{u_{m}^{2}+\frac{2\sigma}{g}(1-\cos\psi_1)}}
    \end{equation}

    \noindent Multiplying both sides by the positive factor $\cos\psi/g$ and integrating over $(0,\psi_{1})$ yields \eqref{equ:key_ineq}.
    
    If $\psi(x_1)\in (\frac{\pi}{2},\pi)$, we split the integral on the left hand side of \eqref{equ:key_ineq} into three parts as follows

    \begin{align}{\label{eq:integral_0}}
         \int_{0}^{\psi_1}\frac{\cos\psi d\psi}{g\sqrt{u_{m}^{2}+\frac{2\sigma}{g}(1-\cos\psi)}}=&\int_{0}^{\pi-\psi_1} \frac{\cos\psi d\psi}{g\sqrt{u_{m}^{2}+\frac{2\sigma}{g}(1-\cos\psi)}}\\
          &+\int_{\pi-\psi_1}^{\frac{\pi}{2}} \frac{\cos\psi d\psi}{g\sqrt{u_{m}^{2}+\frac{2\sigma}{g}(1-\cos\psi)}}+\int_{\frac{\pi}{2}}^{\psi_1} \frac{\cos\psi d\psi}{g\sqrt{u_{m}^{2}+\frac{2\sigma}{g}(1-\cos\psi)}}.
    \end{align}

       \noindent For $\psi\in(0,\pi-\psi_{1})$, we have $\cos\psi\ge0\ge\cos\psi_{1}$, which implies 

        \begin{equation}{\label{equ:2.2.25}}
            \int_{0}^{\pi-\psi_1} \frac{\cos\psi d\psi}{g\sqrt{u_{m}^{2}+\frac{2\sigma}{g}(1-\cos\psi)}}\geq \int_{0}^{\pi-\psi_1} \frac{\cos\psi d\psi}{g\sqrt{u_{m}^{2}+\frac{2\sigma}{g}(1-\cos\psi_1)}}.
        \end{equation}

        \noindent This finishes the estimate for the first term on the right hand side of equation \eqref{eq:integral_0}. For the second term and the third term, we combine these remaining two integrals and use the identity $\cos(\pi-\psi)=-\cos\psi$. Writing the difference explicitly, we obtain

        \begin{align}
             &\int_{\pi-\psi_1}^{\frac{\pi}{2}} \frac{\cos\psi d\psi}{g\sqrt{u_{m}^{2}+\frac{2\sigma}{g}(1-\cos\psi)}}+\int_{\frac{\pi}{2}}^{\psi_1} \frac{\cos\psi d\psi}{g\sqrt{u_{m}^{2}+\frac{2\sigma}{g}(1-\cos\psi)}}\notag\\
              =&\int_{\pi-\psi_1}^{\frac{\pi}{2}} \frac{\cos\psi d\psi}{g\sqrt{u_{m}^{2}+\frac{2\sigma}{g}(1-\cos\psi_1)}}+\int_{\frac{\pi}{2}}^{\psi_1} \frac{\cos\psi d\psi}{g\sqrt{u_{m}^{2}+\frac{2\sigma}{g}(1-\cos\psi_1)}}\notag\\
               &+\int_{\pi-\psi_1}^{\frac{\pi}{2}} \frac{\cos\psi d\psi}{g\sqrt{u_{m}^{2}+\frac{2\sigma}{g}(1-\cos\psi)}}-\frac{\cos\psi d\psi}{g\sqrt{u_{m}^{2}+\frac{2\sigma}{g}(1-\cos\psi_1)}}\notag\\
               &+\int_{\pi-\psi_1}^{\frac{\pi}{2}} \frac{\cos\psi d\psi}{g\sqrt{u_{m}^{2}+\frac{2\sigma}{g}(1-\cos\psi_1)}}-\frac{\cos\psi d\psi}{g\sqrt{u_{m}^{2}+\frac{2\sigma}{g}(1+\cos\psi)}}\notag\\
               =&\int_{\pi-\psi_1}^{\frac{\pi}{2}} \frac{\cos\psi d\psi}{g\sqrt{u_{m}^{2}+\frac{2\sigma}{g}(1-\cos\psi_1)}}+\int_{\frac{\pi}{2}}^{\psi_1} \frac{\cos\psi d\psi}{g\sqrt{u_{m}^{2}+\frac{2\sigma}{g}(1-\cos\psi_1)}}\label{equ:2.2.26}\\
                &+\int_{\pi-\psi_1}^{\frac{\pi}{2}} \frac{\cos\psi d\psi}{g\sqrt{u_{m}^{2}+\frac{2\sigma}{g}(1-\cos\psi)}}-\frac{\cos\psi d\psi}{g\sqrt{u_{m}^{2}+\frac{2\sigma}{g}(1+\cos\psi)}} \label{equ:2.2.27}
        \end{align}

        \noindent  We retain the terms in the line \eqref{equ:2.2.26} and proceed to show that the expression in the line \eqref{equ:2.2.27} is positive. 
        
        Since

        \begin{equation*}
            \cos\psi>0~\operatorname{when}~\psi\in(\pi-\psi_1,\frac{\pi}{2}),
        \end{equation*}

       \noindent it follows that

        \begin{equation*}
            g\sqrt{u_{m}^{2}+\frac{2\sigma}{g}(1-\cos\psi)}<g\sqrt{u_{m}^{2}+\frac{2\sigma}{g}(1+\cos\psi)}.
        \end{equation*}

        \noindent Consequently,
        
        \begin{equation}{\label{equ:2.1.1000}}
            \int_{\pi-\psi_1}^{\frac{\pi}{2}} \frac{\cos\psi d\psi}{g\sqrt{u_{m}^{2}+\frac{2\sigma}{g}(1-\cos\psi)}}-\frac{\cos\psi d\psi}{g\sqrt{u_{m}^{2}+\frac{2\sigma}{g}(1+\cos\psi)}}>0
        \end{equation}

        \noindent Substituting \eqref{equ:2.1.1000} into \eqref{equ:2.2.27} yields

        \begin{align}{\label{equ:2.2.28}}
        \begin{aligned}
             &\int_{\pi-\psi_1}^{\frac{\pi}{2}} \frac{\cos\psi d\psi}{g\sqrt{u_{m}^{2}+\frac{2\sigma}{g}(1-\cos\psi)}}+\int_{\frac{\pi}{2}}^{\psi_1} \frac{\cos\psi d\psi}{g\sqrt{u_{m}^{2}+\frac{2\sigma}{g}(1-\cos\psi)}}\\
              \geq& \int_{\pi-\psi_1}^{\frac{\pi}{2}} \frac{\cos\psi d\psi}{g\sqrt{u_{m}^{2}+\frac{2\sigma}{g}(1-\cos\psi_1)}}+\int_{\frac{\pi}{2}}^{\psi_1} \frac{\cos\psi d\psi}{g\sqrt{u_{m}^{2}+\frac{2\sigma}{g}(1-\cos\psi_1)}}.
        \end{aligned}
        \end{align}

       Finally, combining inequality \eqref{equ:2.2.28} with \eqref{equ:2.2.25} yields the desired result.
    \end{proof}
    
    \textbf{Remark}: From Lemma~\ref{lem:pos}, we conclude that $x_{m}>0$ when $\theta_{2}<\theta_{1}<\frac{\pi}{2}$ and $[\![\gamma]\!]<0$. Moreover, when $\theta_{1}>\frac{\pi}{2}$, the positivity of $x_{m}$ follows directly from the coordinate construction. Hence, in all cases with $\theta_{1}>\theta_{2}$ and $[\![\gamma]\!]<0$, we have
    \begin{align}
        x_{m}>0
    \end{align}
    
    Theorem~\ref{thm:relation} together with Lemma~\ref{lem:pos} ensures that $P_{0}$ can be expressed as a function of $u_{m}$. It therefore remains to represent the total enclosed volume in terms of $u_{m}$. To this end, we decompose the total volume into five parts.
    
    The first part is the region enclosed by $y=u_{1}$, $x=x_{m}$, and the surface curve $(x(\psi),y(\psi))$ for $\psi\in(0,\psi_{1})$.
The second part is the region enclosed by $y=u_{2}$, $x=x_{m}$, and the surface curve $(x(\psi),y(\psi))$ for $\psi\in(\psi_{2},0)$.
The third part is the triangular region enclosed by $y=u_{1}$, $x=x_{m}$, and the left wall.
The fourth part is the triangular region enclosed by $y=u_{2}$, $x=0$, and the right wall.
The fifth part is the rectangular region enclosed by $y=u_{1}$, $y=u_{2}$, $x=0$, and $x=x_{m}$.

We denote these regions by $\Omega_{1}$ through $\Omega_{5}$, and their corresponding areas by $\mathscr{V}_{1}$ through $\mathscr{V}_{5}$, respectively; see Figure~\ref{figure6}.

    \begin{figure}
        \centering
        \includegraphics[width=0.9\linewidth=2]{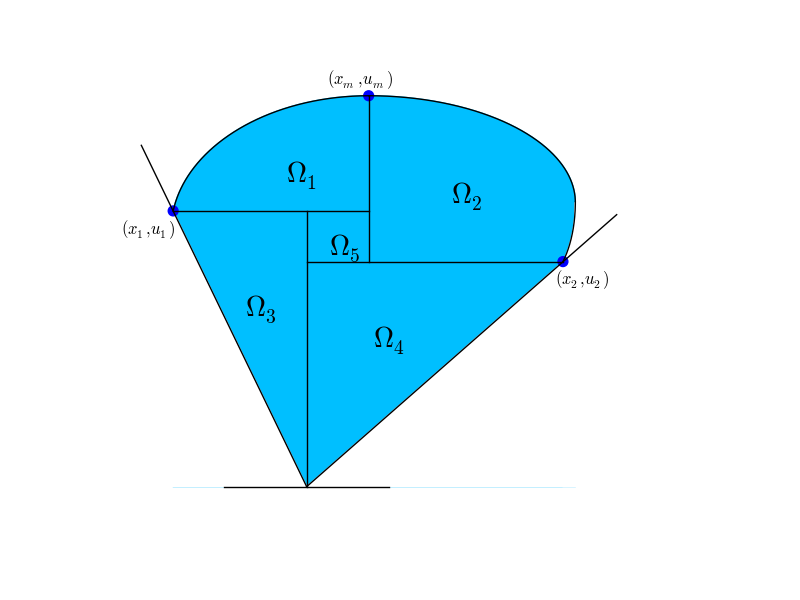}
        \caption{\label{figure6}}
     \end{figure}

   To proceed further, we note that Theorem~\ref{thm:relation} distinguishes between the cases $\theta_{1}=\frac{\pi}{2}$ and $\theta_{1}\neq\frac{\pi}{2}$. Accordingly, we divide the subsequent volume computation into two separate cases.

   \subsubsection{\textbf{The case $\boldsymbol{\theta_{1}\neq \frac{\pi}{2}}$}}

Throughout this subsection, we assume that $\theta_{1}\neq \frac{\pi}{2}$. Under this assumption, the volume $\mathscr{V}_{1}$ defined above can be expressed as

    \begin{align}
         \mathscr{V}_{1}&=  \int_{u_1}^{u_m} r(\psi)dy=\int_{0}^{\psi_1}r(\psi)\frac{dy}{d\psi}d\psi=\int_{0}^{\psi_1}r(\psi)\frac{\sin\psi}{gy}d\psi \notag \\
         &=\int_{0}^{\psi_1}  \int_{0}^{\psi}\frac{\sigma \cos\gamma d\gamma}{g\sqrt{u_m^{2}+\frac{2\sigma}{g}(1-\cos\gamma)}} \frac{\sigma\sin\psi}{g\sqrt{u_{m}^{2}+\frac{2\sigma}{g}(1-\cos\psi)}}d\psi.\label{equ:2.2.29}
    \end{align}

    \noindent where $r(\psi)$ is defined by \eqref{equ:2.2.100}.
    
    By the same reasoning, the volume $\mathscr{V}_{2}$ is given by
    
    \begin{equation}{\label{equ:2.2.30}}
        \mathscr{V}_{2}=\int_{\psi_2}^{0}  \int_{0}^{\psi}\frac{\sigma\cos\gamma d\gamma}{g\sqrt{u_m^{2}+\frac{2\sigma}{g}(1-\cos\gamma)}} \frac{\sigma\sin\psi}{g\sqrt{u_{m}^{2}+\frac{2\sigma}{g}(1-\cos\psi)}}d\psi
    \end{equation}

     Moreover, using the definitions of $r_{1}$, $r_{2}$, $x_{m}$, $u_{1}$, and $u_{2}$, we can directly compute $\mathscr{V}_{3}$–$\mathscr{V}_{5}$ by applying the standard area formulas for triangles and rectangles:

    \begin{align}
        \mathscr{V}_{3}&=\frac{1}{2}(r_1-x_m)^{2}\tan\theta_1 \label{equ:2.2.31}\\
        \mathscr{V}_{4}&=\frac{1}{2}(r_2+x_{m})^{2}\tan\theta_2 \label{equ:2.2.32}\\
        \mathscr{V}_{5}&=x_{m}(u_1-u_2)\label{equ:2.2.33}.
    \end{align}

    \noindent Then it remains to establish the dependence of the total volume $\mathscr{V}=\mathscr{V}_1+\mathscr{V}_2+\mathscr{V}_3+\mathscr{V}_4+\mathscr{V}_5$ on the parameter $u_{m}$.

    \textbf{Remark}: Although Figure~\ref{figure6} illustrates only the case $\theta_{1}\in\bigl(0,\tfrac{\pi}{2}\bigr)$, the representation of the total volume $\mathscr{V}$ remains valid and takes the same form when $\theta_{1}\in\bigl(\tfrac{\pi}{2},\pi\bigr)$.
    
    \begin{theorem}{\label{thm:v_1}}

    The two volume functions $\mathscr{V}_1$ and $\mathscr{V}_2$ are both increasing functions of $u_{m}$. Moreover, as $u_{m}\rightarrow 0$, we have $\mathscr{V}_1\to +\infty$ and $\mathscr{V}_2\rightarrow +\infty$, while as $u_m\rightarrow -\infty$, both $\mathscr{V}_1$ and $\mathscr{V}_{2}$ converge to $0$.
    
    \end{theorem}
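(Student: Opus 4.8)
The plan is to collapse the double integral \eqref{equ:2.2.29} into a single integral by swapping the order of integration, and then read off both the monotonicity and the two limits from the resulting closed form. Throughout I write $A(\psi):=u_{m}^{2}+\tfrac{2\sigma}{g}(1-\cos\psi)$, so that $y(\psi)=-\sqrt{A(\psi)}$ and, crucially, $\tfrac{d}{d\psi}\sqrt{A(\psi)}=\tfrac{\sigma\sin\psi}{g\sqrt{A(\psi)}}$. Applying Fubini to \eqref{equ:2.2.29} over the triangle $\{0<\gamma<\psi<\psi_{1}\}$ and evaluating the inner $\psi$-integral with this antiderivative gives
\begin{equation*}
\mathscr{V}_{1}=\int_{0}^{\psi_{1}}\frac{\sigma\cos\gamma}{g\sqrt{A(\gamma)}}\bigl(\sqrt{A(\psi_{1})}-\sqrt{A(\gamma)}\bigr)\,d\gamma=\sqrt{A(\psi_{1})}\,r_{1}-\frac{\sigma}{g}\sin\psi_{1}=-u_{1}r_{1}-\frac{\sigma}{g}\sin\psi_{1},
\end{equation*}
where $r_{1}$ is as in \eqref{equ:2.2.102} and $u_{1}=-\sqrt{A(\psi_{1})}$ by \eqref{equ:2.2.3}.

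For the monotonicity I would differentiate this closed form in $u_{m}$; this is legitimate for $u_{m}<0$ since $A(\gamma)\ge u_{m}^{2}>0$ uniformly on $[0,\psi_{1}]$. Using $\tfrac{d}{du_{m}}\sqrt{A(\psi_{1})}=u_{m}/\sqrt{A(\psi_{1})}$ and $\tfrac{d}{du_{m}}A(\gamma)^{-1/2}=-u_{m}A(\gamma)^{-3/2}$, then rewriting $r_{1}$ over the common denominator $A(\gamma)^{3/2}$ and using $A(\gamma)-A(\psi_{1})=\tfrac{2\sigma}{g}(\cos\psi_{1}-\cos\gamma)$, a short computation collapses everything to
\begin{equation*}
\frac{d\mathscr{V}_{1}}{du_{m}}=-\frac{2\sigma^{2}u_{m}}{g^{2}\sqrt{A(\psi_{1})}}\,I,\qquad I:=\int_{0}^{\psi_{1}}\frac{\cos\gamma\,(\cos\gamma-\cos\psi_{1})}{A(\gamma)^{3/2}}\,d\gamma.
\end{equation*}
Since $u_{m}<0$ the prefactor is positive, so monotonicity of $\mathscr{V}_{1}$ reduces exactly to the claim $I>0$.

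The hard part is showing $I>0$. When $\psi_{1}\le\tfrac{\pi}{2}$ this is immediate, since both $\cos\gamma>0$ and $\cos\gamma-\cos\psi_{1}>0$ on $(0,\psi_{1})$. When $\psi_{1}\in(\tfrac{\pi}{2},\pi)$ the factor $\cos\gamma$ turns negative on $(\tfrac{\pi}{2},\psi_{1})$, so I would repeat the splitting device already used in Lemma~\ref{lem:1}: split $I$ at $\tfrac{\pi}{2}$, apply the reflection $\gamma\mapsto\pi-\gamma$ to the upper piece (which sends $A(\gamma)$ to $u_{m}^{2}+\tfrac{2\sigma}{g}(1+\cos\gamma)$ and flips $\cos\gamma\mapsto-\cos\gamma$), and recombine over the common interval $(\pi-\psi_{1},\tfrac{\pi}{2})$, where $\cos\gamma>0$. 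On that interval the two integrands sum to
\begin{equation*}
\cos\gamma\Bigl[\frac{\cos\gamma-\cos\psi_{1}}{A(\gamma)^{3/2}}+\frac{\cos\gamma+\cos\psi_{1}}{\bigl(u_{m}^{2}+\tfrac{2\sigma}{g}(1+\cos\gamma)\bigr)^{3/2}}\Bigr],
\end{equation*}
and since $A(\gamma)\le u_{m}^{2}+\tfrac{2\sigma}{g}(1+\cos\gamma)$ there, bounding the first denominator upward and combining leaves a quantity at least $2\cos\gamma\bigl(u_{m}^{2}+\tfrac{2\sigma}{g}(1+\cos\gamma)\bigr)^{-3/2}>0$; the leftover piece on $(0,\pi-\psi_{1})\subset(0,\tfrac{\pi}{2})$ is manifestly positive. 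Hence $I>0$ and $\mathscr{V}_{1}$ is strictly increasing. This splitting estimate is where essentially all the difficulty sits.

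Finally the two limits follow from the closed form. As $u_{m}\to-\infty$, the crude bounds $A(\gamma)\ge u_{m}^{2}$ applied directly in \eqref{equ:2.2.29} give $\mathscr{V}_{1}\le\sigma^{2}\psi_{1}^{2}/(2g^{2}u_{m}^{2})\to0$. As $u_{m}\to0^{-}$, the factor $-u_{1}=\sqrt{A(\psi_{1})}$ tends to the finite positive value $\sqrt{\tfrac{2\sigma}{g}(1-\cos\psi_{1})}$, while $r_{1}=\tfrac{\sigma}{g}\int_{0}^{\psi_{1}}\cos\gamma\,A(\gamma)^{-1/2}\,d\gamma\to+\infty$, because near $\gamma=0$ the integrand behaves like a constant multiple of $1/\gamma$, a logarithmic divergence (monotone convergence handles the positive part on $(0,\min(\psi_{1},\tfrac{\pi}{2}))$, and the remainder stays bounded); hence $\mathscr{V}_{1}\to+\infty$. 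The assertions for $\mathscr{V}_{2}$ are identical: the change of variables $\psi\mapsto-\psi$, using that $\cos$ is even and $\sin$ is odd, turns \eqref{equ:2.2.30} into the $\mathscr{V}_{1}$-integral with $\psi_{1}$ replaced by $-\psi_{2}\in(0,\pi)$, so every step above applies verbatim.
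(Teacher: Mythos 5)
Your proof is correct, but it takes a genuinely different route from the paper's. The paper works directly with the double-integral representation \eqref{equ:2.2.29}: it differentiates under the integral sign, obtains two terms, and controls each by introducing the auxiliary functions $a(\psi)$ and $b(\psi)$ whose positivity is the content of Lemma 2.9; for the blow-up as $u_{m}\to 0$ it argues that the integrand at $u_{m}=0$ behaves like $1/\gamma$ near the origin and then invokes monotonicity. You instead collapse the double integral by Fubini into the closed form $\mathscr{V}_{1}=-u_{1}r_{1}-\tfrac{\sigma}{g}\sin\psi_{1}$ (which I have checked agrees with the geometric area $\int_{x_{1}}^{x_{m}}(y(x)-u_{1})\,dx$), after which all three assertions become statements about the two scalar quantities $r_{1}$ and $u_{1}$: the $u_{m}\to-\infty$ limit is a crude bound, the $u_{m}\to 0$ blow-up is exactly the divergence of $r_{1}$ already recorded in the paper's Remark (equation \eqref{equ:2.2.111}), and monotonicity reduces to the positivity of the single integral $I=\int_{0}^{\psi_{1}}\cos\gamma(\cos\gamma-\cos\psi_{1})A(\gamma)^{-3/2}d\gamma$. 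That last positivity is where your argument and the paper's converge: your reflection $\gamma\mapsto\pi-\gamma$ across $\tfrac{\pi}{2}$ and recombination over $(\pi-\psi_{1},\tfrac{\pi}{2})$ is the same device as the paper's Lemmas 2.6 and 2.9, applied to a different integrand, and your estimate there is sound (the combined integrand is bounded below by $2\cos^{2}\gamma\,\bigl(u_{m}^{2}+\tfrac{2\sigma}{g}(1+\cos\gamma)\bigr)^{-3/2}$ rather than the $2\cos\gamma(\cdot)^{-3/2}$ you wrote, but this is immaterial since only positivity is needed). The reduction of $\mathscr{V}_{2}$ to $\mathscr{V}_{1}$ by $\psi\mapsto-\psi$ is also fine. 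What your approach buys is an explicit formula and a cleaner limit analysis; what the paper's buys is that the same double-integral machinery and the same Lemma 2.9 are reused verbatim for the other volume pieces and for the later monotonicity arguments in Theorem 2.11.
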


    \begin{proof}

        Since $\mathscr{V}_1$ and $\mathscr{V}_2$ have analogous integral representations, it suffices to establish the stated properties for $\mathscr{V}_{1}$.

        From the explicit formula \eqref{equ:2.2.29}, we immediately see that $\lim_{u_{m\rightarrow -\infty}}\mathscr{V}_1=0$, since the denominator of the integrant diverges as $u_{m}\to -\infty$. 

        We then show the monotonicity of the volume $\mathscr{V}_{1}$. Differentiating both sides of the equation \eqref{equ:2.2.29} with respect to $u_{m}$, we obtain the following expression

        \begin{align}
             \frac{d\mathscr{V}_{1}}{du_{m}}=&\sigma^{2} u_{m}\int_{0}^{\psi_1} \int_{0}^{\psi}\frac{\cos\tau d\tau}{g(u_m^{2}+\frac{2\sigma}{g}(1-\cos\tau))^{\frac{3}{2}}}\frac{\sin\psi}{-g\sqrt{u_{m}^{2}+\frac{2\sigma}{g}(1-\cos\psi)}}d\psi\notag\\
             &+\sigma^{2}u_{m}\int_{0}^{\psi_1} \int_{0}^{\psi}\frac{\cos\tau d\tau}{g(u_m^{2}+\frac{2\sigma}{g}(1-\cos\tau))^{\frac{1}{2}}}\frac{\sin\psi}{-g({u_{m}^{2}+\frac{2\sigma}{g}(1-\cos\psi)})^{\frac{3}{2}}}d\psi \label{equ:2.2.34}.
        \end{align}

        To analyze the sign of the first term on the right-hand side of \eqref{equ:2.2.34}, we introduce the auxiliary function

        \begin{equation*}
            a(\psi)=\int_{0}^{\psi} \frac{\cos\tau d\tau}{g(u_{m}^{2}+\frac{2\sigma}{g}(1-\cos\tau))^{\frac{3}{2}}}
        \end{equation*}

       \noindent We will show in Lemma~2.9 (proved after the main argument) that $a(\psi)>0$ for all $\psi\in(0,\psi_{1})$. Since $u_{m}<0$, it follows that $u_{m}a(\psi)<0$. 
       
       Similarly, Lemma~2.9 also implies that the function
        
        \begin{equation*}
            b(\psi):=\int_{0}^{\psi} \frac{\cos\tau d\tau}{g(u_{m}^{2}+\frac{2\sigma}{g}(1-\cos\tau))^{\frac{1}{2}}}
        \end{equation*}

        \noindent is positive for all $\psi\in(0,\psi_{1})$. Hence, $u_{m}b(\psi)<0$ as well.

        Using the definition of functions $a(\psi)$ and $b(\psi)$, we rewrite the equation \eqref{equ:2.2.34} as follows:

        \begin{equation*}
            \frac{d\mathscr{V}_{1}}{du_{m}}=\sigma^{2}\int_{0}^{\psi_1}u_{m}a(\psi)\frac{\sin\psi}{-g\sqrt{u_{m}^{2}+\frac{2\sigma}{g}(1-\cos\psi)}}d\psi+\sigma^{2}\int_{0}^{\psi_1} u_{m}b(\psi) \frac{\sin\psi}{-g(u_{m}^{2}+\frac{2\sigma}{g}(1-\cos\psi))^{\frac{3}{2}}}d\psi
        \end{equation*}

         \noindent Since $\sin\psi>0$ for $\psi\in(0,\pi)$ and both $u_{m}a(\psi)$ and $u_{m}b(\psi)$ are negative, each integrand is non-negative. Consequently, $\frac{d\mathscr{V}_{1}}{du_{m}}$ is non-negative, which shows that $\mathscr{V}_1$ is a monotone increasing function of $u_m$.

         It remains to show the blow-up behavior of $\mathscr{V}_{1}$. Using the expression for $\mathscr{V}_1$, we have: 

        \begin{equation*}
            \mathscr{V}_{1}=\sigma^{2}\int_{0}^{\psi_1}\int_{0}^{\psi} \frac{\cos\tau d\tau}{-g\sqrt{u_{m}^{2}+\frac{2\sigma}{g}(1-\cos\tau)}} \frac{\sin\psi}{-g\sqrt{u_{m}^{2}+\frac{2\sigma}{g}(1-\cos\psi)}} d\psi
        \end{equation*}

        \noindent When $u_{m}= 0$, this expression reduces to

        \begin{equation*}
            \mathscr{V}_{1}(0)=\sigma^{2}\int_{0}^{\psi_1}\int_{0}^{\psi} \frac{\cos\tau d\tau}{-g\sqrt{\frac{2\sigma}{g}(1-\cos\tau)}} \frac{\sin\psi}{-g\sqrt{\frac{2\sigma}{g}(1-\cos\psi)}} d\psi
        \end{equation*}

        \noindent To analyze the behavior of the integrand near $(\tau,\psi)=(0,0)$, we use the Taylor expansions

        \begin{equation*}
            \frac{\cos\tau}{-g\sqrt{\frac{2\sigma}{g}(1-\cos\tau)}} \frac{\sin\psi}{-g\sqrt{\frac{2\sigma}{g}(1-\cos\psi)}}\sim \frac{1}{4g}\frac{1}{\sin\gamma}\frac{\sin\psi}{\sin\psi}\sim\frac{1}{4g}\frac{1}{\gamma}
        \end{equation*}

        \noindent Consequently,

        \begin{equation*}
            \int_{0}^{\psi}\frac{1}{4g} \frac{1}{\gamma}d\gamma=+\infty,
        \end{equation*}

        \noindent which implies that $\mathscr{V}_1(0)= +\infty$.
        
        Since $\mathscr{V}_{1}$ is monotone increasing in $u_{m}$, Fatou’s lemma implies 
        \begin{align}
            \lim_{u_{m}\rightarrow 0^{-}}\mathscr{V}_{1}=\infty
        \end{align}

       The same argument applies to $\mathscr{V}_{2}$, which therefore satisfies identical monotonicity and blow-up properties.
    \end{proof}

    \textbf{Remark}: In fact, the quantity $-u_{m}a(\psi_1)$ appearing in the proof of the theorem above represents the derivative of $r_1$ with respect to $u_{m}$, implying that $r_1$ is a monotone increasing function of $u_m$. Similarly, for the same reason, $r_2$ is a monotone increasing function of $u_m$. Moreover, the proof of Theorem 2.7 further shows that

    \begin{equation}{\label{equ:2.2.111}}
    \lim_{u_{m}\rightarrow 0} r_i=+\infty,
    \end{equation}

    \noindent and

    \begin{equation}{\label{equ:2.2.112}}
        \lim_{u_{m}\rightarrow -\infty} r_i=0
    \end{equation}

    \noindent for $i=1,2$.
    
    We now establish analogous asymptotic properties for $\mathscr{V}_{3}$, $\mathscr{V}_{4}$, and $\mathscr{V}_{5}$. This leads to the following result.

    \begin{theorem}{\label{thm:v_3}}

        $\mathscr{V}_{3}$, $\mathscr{V}_{4}$ and $\mathscr{V}_{5}$ are defined in \eqref{equ:2.2.31}, \eqref{equ:2.2.32} and \eqref{equ:2.2.33}, respectively. As $u_{m}\rightarrow -\infty$ , we have $\mathscr{V}_{i} \rightarrow 0$ for $i=3,4,5$. 
    \end{theorem}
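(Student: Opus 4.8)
The plan is to reduce everything to asymptotics that are already in hand. Each of $\mathscr{V}_3$, $\mathscr{V}_4$, $\mathscr{V}_5$ is, by the explicit formulas \eqref{equ:2.2.31}--\eqref{equ:2.2.33}, a polynomial in the four quantities $r_1$, $r_2$, $x_m$, and $u_1-u_2$. Hence it suffices to show that all four of these tend to $0$ as $u_m\to-\infty$. The limits $r_1\to 0$ and $r_2\to 0$ are already recorded in \eqref{equ:2.2.112}, so the only work is to control $u_1-u_2$ and then $x_m$.

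First I would treat $u_1-u_2$. Writing $A_i:=u_m^{2}+\frac{2\sigma}{g}(1-\cos\psi_i)$, the closed forms \eqref{equ:2.2.3}--\eqref{equ:2.2.4} give $u_i=-\sqrt{A_i}$, and rationalizing the difference of square roots yields
\[
u_1-u_2=\sqrt{A_2}-\sqrt{A_1}=\frac{A_2-A_1}{\sqrt{A_2}+\sqrt{A_1}}=\frac{\frac{2\sigma}{g}(\cos\psi_1-\cos\psi_2)}{\sqrt{u_m^{2}+\frac{2\sigma}{g}(1-\cos\psi_1)}+\sqrt{u_m^{2}+\frac{2\sigma}{g}(1-\cos\psi_2)}}.
\]
The numerator is a fixed constant depending only on $\psi_1,\psi_2$, while the denominator grows like $2|u_m|$ as $u_m\to-\infty$; therefore $u_1-u_2\to 0$.

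Next I would pass to $x_m$. Since $\theta_1,\theta_2$ are fixed with $\theta_1+\theta_2<\pi$ (which excludes $\tan\theta_1+\tan\theta_2=0$) and $\theta_1\neq\frac{\pi}{2}$ throughout this subsection, the denominator in \eqref{equ:2.2.11} is a nonzero constant, whereas its numerator $r_1\tan\theta_1-r_2\tan\theta_2-(u_1-u_2)$ tends to $0$ by \eqref{equ:2.2.112} together with the previous step. Hence $x_m\to 0$ as $u_m\to-\infty$.

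Finally, substituting $r_1\to 0$, $r_2\to 0$, $x_m\to 0$, and $u_1-u_2\to 0$ into \eqref{equ:2.2.31}--\eqref{equ:2.2.33} gives $(r_1-x_m)^{2}\tan\theta_1\to 0$, $(r_2+x_m)^{2}\tan\theta_2\to 0$, and $x_m(u_1-u_2)\to 0$, so that $\mathscr{V}_i\to 0$ for $i=3,4,5$. The argument is elementary; the only point requiring genuine care is the claim $u_1-u_2\to 0$, since $u_1$ and $u_2$ individually diverge to $-\infty$. This is precisely why the rationalization step above is needed, and it is the only mild obstacle in an otherwise routine limit computation.
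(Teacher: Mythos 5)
Your proposal is correct and follows essentially the same route as the paper: the key step in both is the rationalization identity showing $u_1-u_2\to 0$ despite $u_1,u_2\to-\infty$ individually, combined with $r_1,r_2\to 0$ from \eqref{equ:2.2.112} and the formula \eqref{equ:2.2.11} to control $x_m$. The only cosmetic difference is that you extract $x_m\to 0$ as a separate step before substituting, while the paper substitutes \eqref{equ:2.2.11} directly into \eqref{equ:2.2.31}--\eqref{equ:2.2.32}; the two are equivalent.
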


    \begin{proof}

          Substituting \eqref{equ:2.2.11} into the equation \eqref{equ:2.2.31}, we obtain the following expression for $\mathscr{V}_{3}$

        \begin{equation}\label{equ:2.2.35}
            \mathscr{V}_{3}=\frac{1}{2}(\frac{r_1\tan\theta_2+r_2\tan\theta_2+(u_1-u_2)}{\tan\theta_1+\tan\theta_{2}})^{2}\tan\theta_1
        \end{equation}

         From equation \eqref{equ:2.2.111} and equation \eqref{equ:2.2.112}, we know that $r_1\rightarrow 0$ and $r_{2}\rightarrow 0$ when $u_m\rightarrow -\infty$. Moreover, note that

        \begin{align*}
             u_1-u_2&=-\sqrt{u_{m}^{2}+\frac{2\sigma}{g}(1-\cos\psi(x_1))}+\sqrt{u_m^{2}+\frac{2\sigma}{g}(1-\cos\psi(x_2))}\\
              &=\frac{2\sigma}{g}\frac{\cos\psi(x_1)-\cos\psi(x_2)}{\sqrt{u_{m}^{2}+\frac{2\sigma}{g}(1-\cos\psi(x_1))}+\sqrt{u_m^{2}+\frac{2\sigma}{g}(1-\cos\psi(x_2))}}.
        \end{align*}
    
        \noindent It follows immediately that $u_1-u_2 \to 0$ as $u_m \to -\infty$. Returning to equation \eqref{equ:2.2.35}, we conclude the following asymptotic behavior for $\mathscr{V}_{3}$ as $u_{m}\rightarrow -\infty$

        \begin{equation*}
           \lim_{u_m\rightarrow \infty} \mathscr{V}_{3}=\lim_{u_{m}\rightarrow -\infty}\frac{1}{2}(\frac{r_1\tan\theta_2+r_2\tan\theta_2+(u_1-u_2)}{\tan\theta_1+\tan\theta_{2}})^{2}\tan\theta_1\rightarrow 0.
        \end{equation*}

         Similarly, substituting \eqref{equ:2.2.11} into \eqref{equ:2.2.32}, we obtain 

        \begin{equation*}
            \mathscr{V}_{4}=\frac{1}{2}(\frac{r_1\tan\theta_1+r_2\tan\theta_{2}-(u_1-u_2)}{\tan\theta_1+\tan\theta_2})^{2}\tan\theta_2.
       \end{equation*} 
     By the same reasoning used for $\mathscr{V}_{3}$, it follows that $\mathscr{V}_{4}\rightarrow 0$ as $u_{m}\rightarrow -\infty$.

        From the preceding computations, we have already shown that $u_1-u_2\rightarrow 0$ when $u_{m}\rightarrow -\infty$.  Combining this property with the relation \eqref{equ:2.2.112}, we deduce that

        \begin{equation*}
            x_{m}=\frac{r_1\tan\theta_1-r_2\tan\theta_2-(u_1-u_2)}{\tan\theta_1+\tan\theta_2}\rightarrow 0~\operatorname{when}~u_{m}\rightarrow -\infty
        \end{equation*}

        \noindent Consequently, we further obtain

        \begin{equation*}
            \lim_{u_{m}\rightarrow -\infty}\mathscr{V}_{5}=\lim_{u_{m}\rightarrow -\infty}(u_1-u_2)x_{m}=0
        \end{equation*}

        \noindent This completes the proof of the theorem.
    \end{proof}

   We now prove the following lemma, which is used in the proofs of Theorems \ref{thm:v_1} and \ref{thm:v_3}.
   
    \begin{lemma}

     For any $\psi\in(0,\psi_1)$, the following two inequalities hold:

     \begin{equation}{\label{equ:2.2.36}}
         a(\psi)=\int_{0}^{\psi}\frac{\cos\varphi d\varphi }{(u_m^{2}+\frac{2\sigma}{g}(1-\cos\varphi))^{\frac{3}{2}}}\geq \int_{0}^{\psi}\frac{\cos\varphi d\varphi}{(u_m^{2}+\frac{2\sigma}{g}(1-\cos\psi))^{\frac{3}{2}}}\geq 0,
     \end{equation}

     \noindent and

     \begin{equation*}
         b(\psi)=\int_{0}^{\psi}\frac{\cos\varphi d\varphi }{(u_m^{2}+\frac{2\sigma}{g}(1-\cos\varphi))^{\frac{1}{2}}}\geq \int_{0}^{\psi}\frac{\cos\varphi d\varphi}{(u_m^{2}+\frac{2\sigma}{g}(1-\cos\psi))^{\frac{1}{2}}}\geq 0.
     \end{equation*}
     \end{lemma}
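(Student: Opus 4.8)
The plan is to reduce both inequalities to the argument already carried out for the key inequality in Lemma~\ref{lem:1}, since their structure is identical; no genuinely new estimate is required. I would first dispose of the rightmost inequality in each line, which is immediate: in the middle integral the denominator depends only on the upper limit $\psi$, not on the integration variable, so it factors out and leaves
\begin{equation*}
\int_{0}^{\psi}\frac{\cos\varphi\,d\varphi}{(u_m^{2}+\frac{2\sigma}{g}(1-\cos\psi))^{p}}=\frac{1}{(u_m^{2}+\frac{2\sigma}{g}(1-\cos\psi))^{p}}\int_{0}^{\psi}\cos\varphi\,d\varphi=\frac{\sin\psi}{(u_m^{2}+\frac{2\sigma}{g}(1-\cos\psi))^{p}},
\end{equation*}
for $p=\tfrac32$ in the case of $a(\psi)$ and $p=\tfrac12$ in the case of $b(\psi)$. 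Since $\psi\in(0,\psi_{1})\subseteq(0,\pi)$ we have $\sin\psi>0$, so this quantity is nonnegative.

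Next I would observe that the inequality for $b(\psi)$ is nothing more than Lemma~\ref{lem:1} with the upper limit $\psi_{1}$ replaced by the arbitrary value $\psi\in(0,\psi_{1})$ and up to the positive multiplicative constant $g$; because Lemma~\ref{lem:1} was established for every upper limit in $(0,\pi)$, this case requires no further work. For $a(\psi)$ the only change is that the exponent $\tfrac12$ is replaced by $\tfrac32$. The proof of Lemma~\ref{lem:1} never used the specific value of the exponent: it relied only on the monotonicity of $t\mapsto t^{-p}$, valid for any $p>0$, and on the pairing inequality $(1-\cos\varphi)^{p}<(1+\cos\varphi)^{p}$, valid whenever $\cos\varphi>0$ and again for any $p>0$. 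Hence I would transcribe that proof verbatim with $p=\tfrac32$.

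Concretely, for $\psi\in(0,\tfrac{\pi}{2}]$ the monotonicity $\cos\varphi\ge\cos\psi$ on $(0,\psi)$ gives $u_m^{2}+\frac{2\sigma}{g}(1-\cos\varphi)\le u_m^{2}+\frac{2\sigma}{g}(1-\cos\psi)$, so the left integrand dominates the right one pointwise and integrating yields the claim. For $\psi\in(\tfrac{\pi}{2},\pi)$ I would split the left integral as $\int_{0}^{\pi-\psi}+\int_{\pi-\psi}^{\pi/2}+\int_{\pi/2}^{\psi}$: on $(0,\pi-\psi)$ one has $\cos\varphi\ge 0\ge\cos\psi$ and the pointwise bound applies directly, while for the last two pieces the substitution $\varphi\mapsto\pi-\varphi$ together with $\cos(\pi-\varphi)=-\cos\varphi$ reduces the surplus over the fixed-denominator comparison to an integral of $\cos\varphi\bigl[(u_m^{2}+\frac{2\sigma}{g}(1-\cos\varphi))^{-p}-(u_m^{2}+\frac{2\sigma}{g}(1+\cos\varphi))^{-p}\bigr]$ over $(\pi-\psi,\tfrac{\pi}{2})$, which is positive by the pairing inequality.

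The only genuinely nontrivial regime is $\psi>\tfrac{\pi}{2}$, where the naive pointwise comparison fails on the portion of the interval on which $\cos\varphi<\cos\psi<0$; this is exactly the difficulty resolved by the symmetric pairing in Lemma~\ref{lem:1}, and I expect it to be the main obstacle here as well. Since the exponent enters the argument only through the two monotonicity facts recorded above, importing that argument with $p=\tfrac32$ completes the proof for $a(\psi)$, and the case $p=\tfrac12$ for $b(\psi)$ is already subsumed in Lemma~\ref{lem:1}.
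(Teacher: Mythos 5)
Your proposal is correct and follows essentially the same route as the paper: the rightmost inequality is handled by factoring out the constant denominator to get $\sin\psi\ge 0$, and the left inequality is reduced to the argument of Lemma~2.6 (pointwise comparison for $\psi\le\tfrac{\pi}{2}$, and the three-part split with the $\varphi\mapsto\pi-\varphi$ pairing for $\psi>\tfrac{\pi}{2}$). Your explicit remark that the Lemma~2.6 argument depends on the exponent only through the monotonicity of $t\mapsto t^{-p}$ and the pairing inequality is a useful clarification of what the paper leaves implicit.
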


    \begin{proof}

    To prove that the integral is non-negative, it suffices to show that

    \begin{equation*}
        \int_{0}^{\psi}\frac{\cos\varphi d\varphi}{(u_m^{2}+\frac{2\sigma}{g}(1-\cos\varphi))^{\frac{3}{2}}}=\frac{\sin\psi}{(u_m^{2}+\frac{2\sigma}{g}(1-\cos\psi))^{\frac{3}{2}}}\geq 0,
    \end{equation*}

    \noindent where the last inequality follows from $\psi\in(0,\pi)$. Therefore, it remains to prove that

    \begin{equation}{\label{equ:2.2.37}}
         \int_{0}^{\psi}\frac{\cos\varphi d\varphi }{(u_m^{2}+\frac{2\sigma}{g}(1-\cos\varphi))^{\frac{3}{2}}}\geq \int_{0}^{\psi}\frac{\cos\varphi d\varphi}{(u_m^{2}+\frac{2\sigma}{g}(1-\cos\psi))^{\frac{3}{2}}}.
    \end{equation}

    We note that the inequality \eqref{equ:2.2.37} has the same structure as the inequality \eqref{equ:2.2.22} which was established in Lemma \ref{lem:1}. Indeed, by applying the same idea of splitting the integral into three parts and performing analogous computations as in Lemma \ref{lem:1}, we obtain the desired result. The proof for the function $b(\psi)$ follows by an entirely analogous argument.
    
    \end{proof}
    
    Finally, we establish the following existence result for the volume function $\mathscr{V}(u_{m})$.
    
    \begin{theorem}{\label{thm:exist_1}}

    Suppose that $\theta_1\neq \frac{\pi}{2}$.
        For any given total volume $V\geq 0$, there exists at least one value $u_m$ such that $\mathscr{V}(u_m)=\mathscr{V}_{1}(u_m)+\mathscr{V}_{2}(u_m)+\mathscr{V}_{3}(u_m)+\mathscr{V}_{4}(u_m)+\mathscr{V}_{5}(u_m)= V$. Consequently, there exists at least one solution function of the equation system \eqref{equ:1.2.600} subject to the conserved volume $V$.
    \end{theorem}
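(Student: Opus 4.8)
The plan is to treat $\mathscr{V}$ as a continuous function of the single parameter $u_m$ on $(-\infty,0)$ and invoke the Intermediate Value Theorem. Since only $\mathscr{V}_1$ and $\mathscr{V}_2$ are known to be monotone (Theorem \ref{thm:v_1}), I would aim for existence of the required $u_m$ rather than uniqueness. The two ingredients are the continuity of $\mathscr{V}=\mathscr{V}_1+\mathscr{V}_2+\mathscr{V}_3+\mathscr{V}_4+\mathscr{V}_5$ and its two boundary limits, both of which can be assembled from the results already proved.

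First I would record continuity. By Theorems \ref{thm:bdd_p} and \ref{thm:relation}, the quantities $u_1,u_2,r_1,r_2$ and hence $x_m$ are explicit continuous functions of $u_m$ on $(-\infty,0)$, because the radicands $u_m^{2}+\tfrac{2\sigma}{g}(1-\cos\psi_i)$ stay bounded away from zero there; consequently the algebraic expressions \eqref{equ:2.2.31}--\eqref{equ:2.2.33} for $\mathscr{V}_3,\mathscr{V}_4,\mathscr{V}_5$ are continuous. For $\mathscr{V}_1,\mathscr{V}_2$ I would use the iterated-integral formulas \eqref{equ:2.2.29}--\eqref{equ:2.2.30}: on any compact subset of $(-\infty,0)$ the integrands depend continuously on $u_m$ and are dominated by an integrable majorant, the only singularity being the integrable one at $\psi=0$ arising from $\sqrt{1-\cos\psi}\sim|\psi|/\sqrt{2}$, so dominated convergence yields continuity.

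Next I would read off the limits from the earlier theorems. As $u_m\to 0^{-}$, Theorem \ref{thm:v_1} gives $\mathscr{V}_1,\mathscr{V}_2\to+\infty$, and since $\mathscr{V}_3,\mathscr{V}_4,\mathscr{V}_5\ge 0$ are genuine areas, this forces $\mathscr{V}(u_m)\to+\infty$. As $u_m\to-\infty$, Theorem \ref{thm:v_1} gives $\mathscr{V}_1,\mathscr{V}_2\to 0$ and Theorem \ref{thm:v_3} gives $\mathscr{V}_3,\mathscr{V}_4,\mathscr{V}_5\to 0$, so $\mathscr{V}(u_m)\to 0$. Continuity together with these two limits shows that the image of $(-\infty,0)$ under $\mathscr{V}$ contains $(0,+\infty)$, so the Intermediate Value Theorem yields, for every $V>0$, some $u_m\in(-\infty,0)$ with $\mathscr{V}(u_m)=V$ (the value $V=0$ being the degenerate empty limit). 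For this $u_m$, Lemma \ref{lem:curve} produces the solution curve of \eqref{equ:1.2.7}, Theorem \ref{thm:relation} fixes $P_0$, and the shift $(x,y)\mapsto(x,y+P_0/g)$ returns a solution of \eqref{equ:1.2.600} enclosing volume exactly $V$.

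The step I expect to require the most care is not the Intermediate Value Theorem itself but the standing hypothesis that, for each $u_m\in(-\infty,0)$, the curve built in Lemma \ref{lem:curve} is a simple arc meeting both inclined walls at the prescribed angles $\psi_1,\psi_2$, so that $\mathscr{V}(u_m)$ is genuinely the area of an admissible region. This is precisely what the shape analysis of Theorem \ref{Thm:g_1} certifies, and I would invoke it to guarantee that $\mathscr{V}(u_m)$ is well defined throughout the interval before passing to the limit argument.
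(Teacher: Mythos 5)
Your proposal is correct and follows essentially the same route as the paper: both arguments combine the limits $\mathscr{V}\to+\infty$ as $u_m\to 0^-$ (from Theorem \ref{thm:v_1}) and $\mathscr{V}\to 0$ as $u_m\to-\infty$ (from Theorems \ref{thm:v_1} and \ref{thm:v_3}) with continuity of $\mathscr{V}$ in $u_m$ and the Intermediate Value Theorem. Your additional care about justifying continuity via dominated convergence and about the admissibility of the curve for each $u_m$ only makes explicit what the paper leaves implicit.
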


    \begin{proof}

        When $u_{m}\rightarrow 0$, Theorem \ref{thm:v_1} implies that $\mathscr{V}(u_{m})\geq \mathscr{V}_1+\mathscr{V}_2\rightarrow +\infty$.

        On the other hand, when $u_m\rightarrow -\infty$, using Theorem \ref{thm:v_1} and Theorem \ref{thm:v_3}, we have $\mathscr{V}_{i}(u_{m})\rightarrow 0,1\leq i\leq 5$. Hence, we conclude that

        \begin{equation*}
            \lim_{u_{m}\rightarrow +\infty}\mathscr{V}(u_{m})=\lim_{u_{m}\rightarrow +\infty}(\mathscr{V}_{1}+\mathscr{V}_{2}+\mathscr{V}_{3}+\mathscr{V}_{4}+\mathscr{V}_{5})=0.
        \end{equation*}

        Since $\mathscr{V}$ is a continuous function of $u_m$ with respect to $u_{m}$, there is at least one $u_m<0$ such that $\mathscr{V}(u_m)=V$
    \end{proof}

      Using Theorem \ref{thm:exist_1}, we have the existence of the solution function to \eqref{equ:1.2.600}. However,  it is not clear whether this solution is unique. Although $\mathscr{V}_{1}$ and $\mathscr{V}_{2}$ are monotone increasing functions of $u_{m}$, and although $u_1-u_2$, $r_1$, and $r_{2}$ are all monotone increasing functions of $u_{m}$ by the arguments in the proof of Theorem \ref{thm:v_1} and \ref{thm:v_3}, the monotonicity of $x_{m}$, $r_1-x_m$, and $r_{2}+x_{m}$ are still unclear. As a result, the monotonicity of the total volume function $\mathscr{V}(u_{m})$ cannot be concluded directly from the previous estimates.
      
     To address this difficulty, we next analyze the dependence of these geometric quantities on $u_{m}$ and establish the monotonicity of the full volume functional. This leads to the following theorem.

     \begin{theorem}
        Suppose that $[\![\gamma]\!]<0$, and that $\theta_1\geq \theta_2$. $ \mathscr{V}_{c}=\mathscr{V}_{3}+\mathscr{V}_{4}+\mathscr{V}_{5}$ is a monotone increasing function with respect to $u_{m}$.
    \end{theorem}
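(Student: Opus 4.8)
The plan is to eliminate the three quantities of indeterminate monotonicity ($x_m$, $r_1-x_m$, $r_2+x_m$) by first reducing $\mathscr{V}_c$ to a form that depends on $x_m$ only through $x_m^{2}$, and then to differentiate while removing the unknown-sign factor $\tfrac{dx_m}{du_m}$ using the linear relation \eqref{equ:2.2.11}. Concretely, I would substitute \eqref{equ:2.2.11} into $\mathscr{V}_5$ from \eqref{equ:2.2.33}, writing $u_1-u_2=r_1\tan\theta_1-r_2\tan\theta_2-x_m(\tan\theta_1+\tan\theta_2)$, and combine the resulting cross terms with $\mathscr{V}_3$ and $\mathscr{V}_4$ from \eqref{equ:2.2.31}--\eqref{equ:2.2.32}. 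Completing the square in each of the two groupings (using $r_1-x_m=-x_1$ and $r_2+x_m=x_2$ from \eqref{equ:2.2.102}--\eqref{equ:2.2.103}) collapses all the cross terms, and I expect to arrive at the clean identity
\begin{equation*}
\mathscr{V}_c=\tfrac12 r_1^{2}\tan\theta_1+\tfrac12 r_2^{2}\tan\theta_2-\tfrac12 x_m^{2}(\tan\theta_1+\tan\theta_2).
\end{equation*}

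Next I would differentiate in $u_m$, obtaining $\tfrac{d\mathscr{V}_c}{du_m}=r_1\tfrac{dr_1}{du_m}\tan\theta_1+r_2\tfrac{dr_2}{du_m}\tan\theta_2-x_m\tfrac{dx_m}{du_m}(\tan\theta_1+\tan\theta_2)$. Since the sign of $\tfrac{dx_m}{du_m}$ is not known, the key step is to differentiate the relation $x_m(\tan\theta_1+\tan\theta_2)=r_1\tan\theta_1-r_2\tan\theta_2-(u_1-u_2)$ and substitute, which eliminates $\tfrac{dx_m}{du_m}$ entirely and yields
\begin{equation*}
\frac{d\mathscr{V}_c}{du_m}=\frac{dr_1}{du_m}\tan\theta_1\,(r_1-x_m)+\frac{dr_2}{du_m}\tan\theta_2\,(r_2+x_m)+x_m\frac{d(u_1-u_2)}{du_m}.
\end{equation*}
The purpose of this manoeuvre is that every remaining factor now has a known sign: $\tfrac{dr_1}{du_m},\tfrac{dr_2}{du_m}\ge 0$ and $\tfrac{d(u_1-u_2)}{du_m}\ge 0$ are the monotonicity statements recorded in the remarks following Theorems \ref{thm:v_1} and \ref{thm:v_3}, while $x_m\ge 0$ holds by Lemma \ref{lem:pos} (with $x_m=0$ exactly when $\theta_1=\theta_2$).

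It then remains to fix the sign of the first two summands without using any monotonicity of $x_m$. Here I would exploit \eqref{equ:2.2.14}--\eqref{equ:2.2.15} together with $r_1-x_m=-x_1$ and $r_2+x_m=x_2$, which make the two products telescope into heights of the contact points above the vertex $O=(0,y_0)$:
\begin{equation*}
\tan\theta_1\,(r_1-x_m)=-x_1\tan\theta_1=u_1-y_0,\qquad \tan\theta_2\,(r_2+x_m)=x_2\tan\theta_2=u_2-y_0.
\end{equation*}
Thus $\tfrac{d\mathscr{V}_c}{du_m}=\tfrac{dr_1}{du_m}(u_1-y_0)+\tfrac{dr_2}{du_m}(u_2-y_0)+x_m\tfrac{d(u_1-u_2)}{du_m}$, and everything reduces to the single geometric input $u_1-y_0>0$ and $u_2-y_0>0$. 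This is the step I expect to be the main obstacle: I would justify it by arguing that both contact points lie strictly above the apex $O$, which is the lowest point of the wedge-shaped vessel (forced by $\theta_1,\theta_2\in(0,\pi)$ and $\theta_1+\theta_2<\pi$, so that both walls rise from $O$). Granting this, all three summands are nonnegative, proving that $\mathscr{V}_c$ is increasing in $u_m$; the boundary case $\theta_1=\theta_2$ is immediate, since then $x_m\equiv0$ and $r_1=r_2$ reduce the derivative to $r_1\tfrac{dr_1}{du_m}\tan\theta_1+r_2\tfrac{dr_2}{du_m}\tan\theta_2\ge0$.
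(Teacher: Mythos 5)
Your proposal is correct, and it takes a genuinely different and in fact leaner route than the paper. The paper substitutes \eqref{equ:2.2.11} into $\mathscr{V}_{c}$ and massages it into the three-term expression \eqref{equ:2.2.40}, then shows each term is a product of positive increasing functions; this forces it to prove the full Lemma 2.12, whose hardest content is the positivity and monotonicity of $g_{1}$ and $g_{2}$ via the trigonometric estimate $k(\gamma)\geq p(\gamma)\geq 0$, split into the cases $\theta_1<\frac{\pi}{2}$ and $\theta_1>\frac{\pi}{2}$. Your identity $\mathscr{V}_{c}=\tfrac12 r_1^{2}\tan\theta_1+\tfrac12 r_2^{2}\tan\theta_2-\tfrac12 x_m^{2}(\tan\theta_1+\tan\theta_2)$ checks out against \eqref{equ:2.2.38} (the cross terms do collapse), and differentiating the constraint \eqref{equ:2.2.11} to eliminate $\tfrac{dx_m}{du_m}$ is exactly the right move: it reduces the whole argument to facts the paper already has — $\tfrac{dr_i}{du_m}\geq 0$ (remark after Theorem \ref{thm:v_1}, via Lemma 2.9), $x_m\geq 0$ (Lemma \ref{lem:pos} and the remark following Lemma \ref{lem:1}), and $\tfrac{d(u_1-u_2)}{du_m}\geq 0$ — plus the geometric positivity $u_1-y_0>0$, $u_2-y_0>0$. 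Two small points. First, $\tfrac{d(u_1-u_2)}{du_m}\geq 0$ is not in the remarks you cite; it is Step 1 of Lemma 2.12 (equations \eqref{equ:2.2.46}--\eqref{equ:2.2.116}), so you still need that one easy piece of the lemma, but not Steps 2--3. Second, since $O$ is located a posteriori by the shooting construction, you should not leave $u_i>y_0$ purely to intuition; it is however immediate from your own formulas, e.g. $u_1-y_0=(r_1-x_m)\tan\theta_1$ with $r_1-x_m=\bigl((r_1+r_2)\tan\theta_2+(u_1-u_2)\bigr)/(\tan\theta_1+\tan\theta_2)$, whose sign matches that of $\tan\theta_1$ in both regimes $\theta_1<\frac{\pi}{2}$ and $\theta_1>\frac{\pi}{2}$, and $u_2-y_0=x_2\tan\theta_2$ with $x_2=r_2+x_m>0$ and $\theta_2<\frac{\pi}{2}$. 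With those two patches your argument is complete, handles both regimes of $\theta_1$ uniformly, and buys a substantially shorter proof than the paper's.
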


    \begin{proof}

    Applying \eqref{equ:2.2.31},\eqref{equ:2.2.32} and \eqref{equ:2.2.33} to the definition of $\mathscr{V}_{c}$, we obtain

    \begin{equation*}
        \mathscr{V}_{c}=\mathscr{V}_{3}+\mathscr{V}_{4}+\mathscr{V}_{5}=\frac{1}{2}(r_1-x_{m})^{2}\tan\theta_{1}+\frac{1}{2}(r_{2}+x_{m})^{2}\tan\theta_{2}+x_{m}(u_1-u_2).
    \end{equation*}

    Substituting equation \eqref{equ:2.2.11} into the equation above, we have

    \begin{align}
        \mathscr{V}_{c}=&\frac{1}{2}(r_{1}-\frac{r_1\tan\theta_1-r_2\tan\theta_2-(u_1-u_2)}{\tan\theta_1+\tan\theta_{2}})^{2}\tan\theta_1 \notag\\
        &+\frac{1}{2}(r_{2}+\frac{r_1\tan\theta_1-r_2\tan\theta_2-(u_1-u_2)}{\tan\theta_1+\tan\theta_{2}})^{2}\tan\theta_2 \notag \\
         &+(\frac{r_1\tan\theta_1-r_2\tan\theta_2-(u_1-u_2)}{\tan\theta_1+\tan\theta_{2}})(u_1-u_2)\notag\\
         =&\frac{1}{2}\frac{(r_1\tan\theta_2+r_2\tan\theta_2+(u_1-u_2))^{2}}{(\tan\theta_1+\tan\theta_2)^{2}}\tan\theta_1+\frac{1}{2}(\frac{(r_{2}\tan\theta_1+r_{1}\tan\theta_1-(u_1-u_2))^{2}}{(\tan\theta_1+\tan\theta_2)^{2}})\tan\theta_2 \notag\\
        & +(\frac{r_1\tan\theta_1-r_2\tan\theta_2-(u_1-u_2)}{\tan\theta_1+\tan\theta_{2}})(u_1-u_2) \label{equ:2.2.113}
    \end{align}

    \noindent Rearranging terms on the right hand side of \eqref{equ:2.2.113}, we obtain

    \begin{align}
         \eqref{equ:2.2.113}=&\frac{1}{2}\frac{(r_1\tan\theta_2+r_2\tan\theta_2)^{2}}{(\tan\theta_1+\tan\theta_2)^{2}}\tan\theta_1+\frac{1}{2}\frac{(r_1\tan\theta_1+r_2\tan\theta_1)^{2}}{(\tan\theta_1+\tan\theta_2)^{2}}\tan\theta_2\notag\\
        &+(u_1-u_2)(r_1+r_2)\frac{\tan\theta_1\tan\theta_2}{(\tan\theta_1+\tan\theta_2)^{2}}-(u_1-u_2)(r_1+r_2)\frac{\tan\theta_1\tan\theta_2}{(\tan\theta_1+\tan\theta_2)^{2}}\notag\\
       & +\frac{1}{2}\frac{(u_1-u_2)^{2}\tan\theta_1}{(\tan\theta_1+\tan\theta_2)^{2}}+\frac{1}{2}\frac{(u_1-u_2)^{2}\tan\theta_2}{(\tan\theta_1+\tan\theta_2)^{2}}\notag\\
       & +(\frac{r_1\tan\theta_1-r_2\tan\theta_2-(u_1-u_2)}{\tan\theta_1+\tan\theta_{2}})(u_1-u_2) \notag \\
       =&\frac{1}{2}\frac{(r_1+r_2)^{2}}{\tan\theta_1+\tan\theta_2}\tan\theta_1\tan\theta_2+\frac{1}{2}\frac{(u_1-u_2)^{2}}{(\tan\theta_1+\tan\theta_2)}\notag\\
       & +(u_1-u_2)(\frac{r_1\tan\theta_1-r_2\tan\theta_2}{\tan\theta_1+\tan\theta_2})-\frac{(u_1-u_2)^{2}}{\tan\theta_1+\tan\theta_2}\notag\\
        =&\frac{1}{2}\frac{(r_1+r_2)^{2}}{\tan\theta_1+\tan\theta_2}\tan\theta_1\tan\theta_2+(u_1-u_2)(\frac{r_1\tan\theta_1-r_2\tan\theta_2}{\tan\theta_1+\tan\theta_2})-\frac{1}{2}\frac{(u_1-u_2)^{2}}{\tan\theta_1+\tan\theta_2} \label{equ:2.2.38}
    \end{align}

   To proceed to the next step, we introduce the following result, whose proof will be given after the completion of the main proof of this theorem:
    
    \begin{equation}{\label{equ:2.2.39}}
      \frac{1}{\tan\theta_1+\tan\theta_2} (r_1\tan\theta_1-(u_1-u_2)) ~~~\operatorname{is~ a~ positive~ monotone ~increasing ~function ~with~respect~to~ u_{m}}.
    \end{equation}

    \noindent Using this result, we can rewrite equation \eqref{equ:2.2.38} as:

    \begin{align}
         \eqref{equ:2.2.38}=&\frac{1}{2}\frac{r_1^{2}+r_2^{2}}{\tan\theta_1+\tan\theta_2}\tan\theta_1\tan\theta_2+\frac{r_1r_2}{\tan\theta_1+\tan\theta_2}\tan\theta_1\tan\theta_2-(u_1-u_2)\frac{r_2\tan\theta_2}{\tan\theta_1+\tan\theta_2} \notag\\
         &+(u_1-u_2)\frac{r_1\tan\theta_1}{\tan\theta_1+\tan\theta_2}-\frac{1}{2}\frac{(u_1-u_2)^{2}}{\tan\theta_1+\tan\theta_2} \notag\\
         =&\frac{1}{2}\frac{r_1^{2}+r_2^{2}}{\tan\theta_1+\tan\theta_2}\tan\theta_1\tan\theta_2+\frac{r_2\tan\theta_2}{\tan\theta_1+\tan\theta_2}(r_1\tan\theta_1-(u_1-u_2))\notag \\
         &+\frac{u_1-u_2}{\tan\theta_1+\tan\theta_2}(r_1\tan\theta_1-\frac{1}{2}(u_1-u_2)) \label{equ:2.2.40}
    \end{align}

   \noindent For the first term in \eqref{equ:2.2.40}, note that both $r_1^{2}$ and $r_2^{2}$ are monotone increasing functions of $u_m$ by the Remark following Theorem \ref{thm:v_1}, \eqref{equ:2.2.111} and \eqref{equ:2.2.112}. Moreover, we will show in Lemma 2.12 that

   \begin{equation*}
       \frac{\tan\theta_{1}\tan\theta_{2}}{\tan\theta_{1}+\tan\theta_{2}}>0,
   \end{equation*}
   \noindent which implies that
   \begin{equation*}
       \frac{1}{2}(r_{1}^{2}+r_{2}^{2})\frac{\tan\theta_{1}\tan\theta_{2}}{\tan\theta_{1}+\tan\theta_{2}}
   \end{equation*}
   \noindent is a monotone increasing function of $u_{m}$.
   
   For the second term in \eqref{equ:2.2.40}, we already know that $r_{2}$ is increasing monotonically from the remark following Theorem \ref{thm:v_1}. Moreover, function 
   \begin{align}
   \frac{1}{\tan\theta_1+\tan\theta_1}(r_1\tan\theta_1-(u_1-u_2))
   \end{align}
   is a positive, monotone increasing function of $u_{m}$ by Lemma 2.12. Therefore,

   \begin{equation*}
       \frac{r_2\tan\theta_2}{\tan\theta_1+\tan\theta_2}(r_1\tan\theta_1-(u_1-u_2))
   \end{equation*}

   \noindent is a monotone increasing function of $u_{m}$, since the product of two positive, increasing function remains monotonically increasing.
   
  Finally, for the third term in \eqref{equ:2.2.40}, note that $u_1-u_2$ is positive and monotone increasing, and

   \begin{equation*}
       \frac{1}{\tan\theta_1+\tan\theta_2}(r_1\tan\theta_1-\frac{1}{2}(u_1-u_2))
   \end{equation*}
   
   \noindent is also positive and monotone increasing by Lemma 2.12. Hence, the third term is monotone increasing as well.

    Combining the above discussion for all three terms, we conclude that \eqref{equ:2.2.40} is a monotone increasing function of $u_m$. Consequently, $\mathscr{V}_{c}$ is a monotone increasing function of $u_m$.
    \end{proof}

    To complete the proof of the preceding theorem, it remains to establish the following lemma.

   \begin{lemma}

    Assume that $\theta_1\geq\theta_2$ and $[\![\gamma]\!]<0$. Then the functions $g_1$ and $g_2$ defined by
    
   \begin{equation}{\label{equ:2.2.41}}
        g_1(u_m)=\frac{1}{\tan\theta_1+\tan\theta_2}(r_1\tan\theta_1-(u_1-u_2)),
    \end{equation}

    \begin{equation}{\label{equ:2.2.42}}
        g_2(u_m)=\frac{1}{\tan\theta_1+\tan\theta_2}(r_1\tan\theta_1-\frac{1}{2}(u_1-u_2)),
    \end{equation}

    \noindent and

    \begin{equation}{\label{equ:2.2.115}}
        u_1(u_{m})-u_2(u_{m})
    \end{equation}
    
    \noindent are all positive and monotone increasing functions with respect to $u_{m}$.

   \end{lemma}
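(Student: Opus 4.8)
The plan is to order the three claims by difficulty and to funnel the two delicate ones (positivity and monotonicity of $g_1$) into a single integral inequality that the comparison lemmas already established in this section can dispatch. I first record that $\psi_1=\gamma+\frac{\pi}{2}-\theta_1$ and $-\psi_2=\gamma+\frac{\pi}{2}-\theta_2$, so $\theta_1\ge\theta_2$ gives $0<\psi_1\le-\psi_2<\pi$ and hence $\cos\psi_1\ge\cos\psi_2$, while $[\![\gamma]\!]<0$ gives $\sin\gamma>0$, i.e. $\gamma\in(0,\frac{\pi}{2}]$; moreover $\theta_1+\theta_2<\pi$ together with $\theta_1\ge\theta_2$ forces $\theta_2<\frac{\pi}{2}$, so only $\theta_1$ may be obtuse. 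The claim for $u_1-u_2$ is then immediate: by \eqref{equ:2.2.3}–\eqref{equ:2.2.4} and $\cos\psi_1\ge\cos\psi_2$ we have $|u_1|\le|u_2|$, so $u_1-u_2=|u_2|-|u_1|\ge0$, and differentiating yields $\frac{d}{du_m}(u_1-u_2)=-u_m\bigl(\tfrac{1}{|u_1|}-\tfrac{1}{|u_2|}\bigr)\ge0$ since $u_m<0$.

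Before the main estimate I would dispose of the obtuse regime $\theta_1>\frac{\pi}{2}$, in which $\tan\theta_1<0$ and, because $\tan\theta_1+\tan\theta_2=\frac{\sin(\theta_1+\theta_2)}{\cos\theta_1\cos\theta_2}$ with $\cos\theta_1<0<\cos\theta_2$, the common denominator is negative. Here no integral comparison is needed: the numerators $r_1\tan\theta_1-(u_1-u_2)$ and $r_1\tan\theta_1-\frac12(u_1-u_2)$ are sums of nonpositive terms, hence negative, with $u_m$-derivatives $\tan\theta_1 r_1'-(u_1-u_2)'$ and $\tan\theta_1 r_1'-\frac12(u_1-u_2)'$ that are $\le0$ (using $r_1'\ge0$ from the Remark after Theorem~\ref{thm:v_1} and $(u_1-u_2)'\ge0$ from the previous paragraph). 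Dividing a negative numerator with nonpositive derivative by the negative constant denominator shows $g_1$ and $g_2$ are positive and increasing.

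The heart of the matter is the acute regime $\theta_1<\frac{\pi}{2}$, where $\tan\theta_1+\tan\theta_2>0$. Setting $\phi_\beta(\psi)=\bigl(u_m^2+\tfrac{2\sigma}{g}(1-\cos\psi)\bigr)^{\beta}$ and using the representations
\begin{equation*}
r_1=\frac{\sigma}{g}\int_0^{\psi_1}\frac{\cos\psi}{\phi_{1/2}(\psi)}\,d\psi,\qquad u_1-u_2=\frac{\sigma}{g}\int_{\psi_1}^{-\psi_2}\frac{\sin\psi}{\phi_{1/2}(\psi)}\,d\psi,
\end{equation*}
the positivity of the numerator $r_1\tan\theta_1-(u_1-u_2)$ and, after differentiating in $u_m$ (which acts only on $\phi$ and sends $\phi_{1/2}^{-1}$ to $-u_m\phi_{3/2}^{-1}$), the nonnegativity of its derivative both reduce to the single inequality
\begin{equation*}
\tan\theta_1\int_0^{\psi_1}\frac{\cos\psi}{\phi_\beta(\psi)}\,d\psi\ \ge\ \int_{\psi_1}^{-\psi_2}\frac{\sin\psi}{\phi_\beta(\psi)}\,d\psi,
\end{equation*}
at $\beta=\frac12$ for positivity and $\beta=\frac32$ for monotonicity. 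I would prove it by comparing both sides to their values at $\psi_1$: Lemma~\ref{lem:1} (for $\beta=\frac12$) and the estimate \eqref{equ:2.2.36} (for $\beta=\frac32$) give $\int_0^{\psi_1}\cos\psi/\phi_\beta\,d\psi\ge\sin\psi_1/\phi_\beta(\psi_1)$, while the monotonicity of $\phi_\beta$ on $(0,\pi)$ together with $\sin\psi\ge0$ on $(\psi_1,-\psi_2)$ gives $\int_{\psi_1}^{-\psi_2}\sin\psi/\phi_\beta\,d\psi\le(\cos\psi_1-\cos\psi_2)/\phi_\beta(\psi_1)$. After cancelling the common factor $\phi_\beta(\psi_1)^{-1}$, the inequality collapses, independently of $\beta$, to $\tan\theta_1\sin\psi_1\ge\cos\psi_1-\cos\psi_2$.

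To finish I reduce this trigonometric inequality using $\theta_1+\psi_1=\gamma+\frac{\pi}{2}$: a direct computation gives $\tan\theta_1\sin\psi_1-\cos\psi_1=-\cos(\theta_1+\psi_1)/\cos\theta_1=\sin\gamma/\cos\theta_1$, and $\cos\psi_2=\sin(\theta_2-\gamma)$, so the claim becomes $\frac{\sin\gamma}{\cos\theta_1}+\sin(\theta_2-\gamma)\ge0$; since $\cos\theta_1\le1$ this follows from $\sin\gamma+\sin(\theta_2-\gamma)=2\sin\frac{\theta_2}{2}\cos\bigl(\gamma-\frac{\theta_2}{2}\bigr)>0$. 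Finally, $g_2$ needs no separate argument in the acute regime: because $0\le\frac12(u_1-u_2)\le u_1-u_2$, both the numerator of $g_2$ and its $u_m$-derivative dominate those of $g_1$, so positivity and monotonicity of $g_2$ follow a fortiori. The main obstacle is precisely the displayed integral inequality; the decisive simplification is recognizing that its two instances $\beta=\frac12,\frac32$ are governed by the same comparison already proved in Lemma~\ref{lem:1} and \eqref{equ:2.2.36}, so that the whole lemma rests on the elementary estimate $\frac{\sin\gamma}{\cos\theta_1}+\sin(\theta_2-\gamma)\ge0$.
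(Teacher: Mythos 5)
Your proof is correct and follows essentially the same route as the paper: the same acute/obtuse case split, the same use of Lemma~2.6 and Lemma~2.9 to freeze the denominator at $\psi_1$, the same upper bound on $\tfrac{d}{du_m}(u_1-u_2)$, and the same reduction of everything to the trigonometric inequality $\tan\theta_1\sin\psi_1\ge\cos\psi_1-\cos\psi_2$. The only (minor, both valid) deviations are that you obtain positivity of $g_1,g_2$ directly from the $\beta=\tfrac12$ instance of your unified integral inequality, whereas the paper deduces it from monotonicity together with $g_1,g_2\to0$ as $u_m\to-\infty$, and that you settle the final inequality via the identity $\tan\theta_1\sin\psi_1-\cos\psi_1=\sin\gamma/\cos\theta_1$ rather than the paper's monotonicity argument for the auxiliary function $p(\gamma)$.
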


  \begin{proof}

  We divide the analysis into two cases: The first correspond to $\theta_1\in(0,\frac{\pi}{2})$, and the second to $\theta_1\in(\frac{\pi}{2},\pi)$.
  
  \textbf{Case 1 $\theta_1\in (0,\frac{\pi}{2})$}: 
  
  Under the assumption $\theta_1\in (\theta_2,\frac{\pi}{2})\subseteq(0,\frac{\pi}{2})$, we have $\tan\theta_1>0$ and $\tan\theta_2>0$. Therefore, $\frac{\tan\theta_1}{\tan\theta_1+\tan\theta_2}>0$. We divide the proof into three steps.

  \textbf{Step 1}: The function \eqref{equ:2.2.115} is positive and monotonically increasing. 
  
 Using the definition of $u_{1}$ and $u_{2}$ given by \eqref{equ:2.2.3} and \eqref{equ:2.2.4}, respectively, we compute

  \begin{align}
      \frac{d(u_1-u_2)}{du_{m}}=&\frac{d(-\sqrt{u_{m}^{2}+\frac{2\sigma}{g}(1-\cos\psi_1)}+\sqrt{u_{m}^{2}+\frac{2\sigma}{g}(1-\cos\psi_2)})}{du_{m}} \notag\\
      =&\frac{u_{m}}{\sqrt{u_{m}^{2}+\frac{2\sigma}{g}(1-\cos\psi_{2})}}-\frac{u_{m}}{\sqrt{u_{m}^{2}+\frac{2\sigma}{g}(1-\cos\psi_1)}}\notag \\
       =&u_{m}\frac{2\sigma(\cos\psi_2-\cos\psi_1)}{(\sqrt{u_{m}^{2}+\frac{2\sigma}{g}(1-\cos\psi_{2})}+\sqrt{u_{m}^{2}+\frac{2\sigma}{g}(1-\cos\psi_{1})})}\times \notag\\
       &\frac{1}{g\sqrt{u_{m}^{2}+\frac{2\sigma}{g}(1-\cos\psi_{2})}\sqrt{u_{m}^{2}+\frac{2\sigma}{g}(1-\cos\psi_1)}} \label{equ:2.2.46}.
  \end{align}
  
   Since $\theta_1\in(\theta_2,\frac{\pi}{2})$ and $\gamma\in (0,\frac{\pi}{2})$, we have

  \begin{equation}{\label{equ:2.2.47}}
  \cos\psi_2=\cos(\gamma+\frac{\pi}{2}-\theta_2)=\sin (\theta_2-\gamma)<\sin(\theta_1-\gamma)=\cos(\gamma+\frac{\pi}{2}-\theta_1)=\cos\psi_1,
  \end{equation}
  
  \noindent where we use the definition of $\psi_1$ and $\psi_2$ given by \eqref{equ:1.2.8} and \eqref{equ:1.2.9}. Therefore, we have:

  \begin{equation}{\label{equ:2.2.116}}
      u_m(\cos\psi_2-\cos\psi_1)\geq 0
  \end{equation}

  \noindent  Substituting \eqref{equ:2.2.116} into the equation \eqref{equ:2.2.46}, we obtain

  \begin{equation*}
      \eqref{equ:2.2.46}\geq 0,
  \end{equation*}

  \noindent which completes the proof of the first step. 
  
  In fact, the same argument works as well for Case 2. The only modification required is to use the condition $\theta_1<\pi-\theta_2$ to establish the inequality \eqref{equ:2.2.47}.

  \textbf{Step 2}: In this step, we compute the first derivative of $g_1$ and $g_2$ with respect to $u_m$.
  
  Taking the derivative of function $g_1$ with respect to $u_{m}$, we obtain

  \begin{align}
      \frac{dg_1(u_{m})}{du_{m}}&=\frac{1}{\tan\theta_1+\tan\theta_2}\frac{d(r_1\tan\theta_1-(u_1-u_2))}{du_{m}} \notag\\
      &=\frac{\tan\theta_1}{\tan\theta_1+\tan\theta_2}\frac{d}{du_m}r_1-\frac{1}{\tan\theta_1+\tan\theta_2}\frac{d(u_1-u_2)}{du_m} \label{equ:2.2.43}
  \end{align}

   \noindent  Using the equation \eqref{equ:2.2.100}, we compute the first term in \eqref{equ:2.2.43} as

  \begin{align}
      \frac{dr_1\tan\theta_1}{du_{m}}&=\frac{d}{du_{m}}\tan\theta_1\int_{0}^{\psi_1}\frac{\sigma\cos\psi d\psi}{g\sqrt{u_{m}^{2}+\frac{2\sigma}{g}(1-\cos\psi)}}\notag \\
      &=-u_{m}\tan\theta_1\int_{0}^{\psi_1} \frac{\sigma\cos\psi d\psi}{{g(u_{m}^{2}+\frac{2\sigma}{g}(1-\cos\psi))^{\frac{3}{2}}}} \label{equ:2.2.44}.
  \end{align}

  \noindent Since $u_{m}<0$ and the integrand in equation \eqref{equ:2.2.44} is positive by Lemma 2.9, this derivative is positive. Applying Lemma 2.9 again yields the estimate
  
  \begin{equation}{\label{equ:2.2.45}}
      \frac{dr_1\tan\theta_1}{du_{m}}\geq -\sigma u_{m}\tan\theta_1\int_{0}^{\psi_1}\frac{\cos\psi d\psi}{{g(u_{m}^{2}+\frac{2\sigma}{g}(1-\cos\psi_1))^{\frac{3}{2}}}}
  \end{equation}

   For the second term in \eqref{equ:2.2.43}, we use the relation \eqref{equ:2.2.47} again in the equation \eqref{equ:2.2.46} to obtain

  \begin{equation}{\label{equ:2.2.48}}
       \frac{d(u_1-u_2)}{du_{m}}\leq u_m\frac{2\sigma}{g}\frac{\cos\psi_2-\cos\psi_1}{2(u_{m}^{2}+\frac{2\sigma}{g}(1-\cos\psi_1))^{\frac{3}{2}}}=\frac{u_{m}}{g}\frac{\cos\psi_2-\cos\psi_1}{(u_m^{2}+\frac{2\sigma}{g}(1-\cos\psi_1))^{\frac{3}{2}}}
  \end{equation}

    Combining equation \eqref{equ:2.2.48} and $\eqref{equ:2.2.44}$ together, we obtain

   \begin{align}
       \frac{dg_{1}(u_{m})}{du_{m}}&=\frac{1}{\tan\theta_1+\tan\theta_2}\frac{d(r_1\tan\theta_1-(u_1-u_2))}{du_{m}}\notag\\
       &\geq \frac{\sigma}{\tan\theta_1+\tan\theta_2}(-u_{m}\tan\theta_1\int_{0}^{\psi_1}\frac{\cos\psi d\psi}{g(u_m^{2}+\frac{2\sigma}{g}(1-\cos\psi_1))^{\frac{3}{2}}}-\frac{u_{m}}{g}\frac{\cos\psi_2-\cos\psi_1}{(u_m^{2}+\frac{2\sigma}{g}(1-\cos\psi_1))^{\frac{3}{2}}}) \notag\\
       &=\frac{\sigma}{\tan\theta_1+\tan\theta_2}(-u_{m}\tan\theta_1\sin\psi_1\frac{1}{g}\frac{1}{(u_m^{2}+\frac{2\sigma}{g}(1-\cos\psi_1))^{\frac{3}{2}}}-\frac{u_{m}}{g}\frac{\cos\psi_2-\cos\psi_1}{(u_m^{2}+\frac{2\sigma}{g}(1-\cos\psi_1))^{\frac{3}{2}}}) \notag\\
        &=\frac{\sigma}{\tan\theta_1+\tan\theta_2}(-\frac{u_m}{g}\frac{1}{(u_m^{2}+\frac{2\sigma}{g}(1-\cos\psi_1))^{\frac{3}{2}}}(\tan\theta_1\sin\psi_1-(\cos\psi_1-\cos\psi_2))) \label{equ:2.2.49}
   \end{align}
   
  \noindent Using the fact that $u_m<0$, we have:

  \begin{equation*}
  \frac{1}{\tan\theta_1+\tan\theta_2}(-\frac{u_m}{g}\frac{1}{(u_{m}^{2}+\frac{2\sigma}{g}(1-\cos\psi_1))^{\frac{3}{2}}})>0.
  \end{equation*}
  
  \noindent Therefore, to determine the sign of $\frac{dg_{1}(u_{m})}{du_{m}}$, it remains to determine the sign of the term $\tan\theta_1\sin\psi_1-(\cos\psi_1-\cos\psi_2)$ in \eqref{equ:2.2.49}, which only depends on the inclined angles and contact angles. When $\theta_{1}$ and $\theta_{2}$ are fixed, this term becomes a function of contact angle $\gamma$. We denote it by $k(\gamma)$.
  
  \textbf{Step 3}: In this step, we show that $k(\gamma)$ is positive. 
  
  Using the definition of $\psi_1$ and $\psi_2$ given in equation \eqref{equ:1.2.8} and equation \eqref{equ:1.2.9}, we compute

  \begin{align*}
      k(\gamma)&=\tan\theta_1\sin\psi_1-(\cos\psi_1-\cos\psi_2) \notag\\
       &=\tan\theta_1\sin(\frac{\pi}{2}-\theta_1+\gamma)-(\cos(\frac{\pi}{2}-\theta_1+\gamma)-\cos(\frac{\pi}{2}-\theta_2+\gamma)) \notag\\
        &=\tan\theta_1\cos(\theta_1-\gamma)-(\sin(\theta_1-\gamma)-\sin(\theta_2-\gamma))\notag \\
         &\geq \tan\theta_1\cos(\theta_1-\gamma)-(\sin(\theta_1-\gamma)+\sin(\gamma))=p(\gamma)
  \end{align*}
  
   Observe that when $\gamma=0$, we have

  \begin{equation*}
      p(0)=\sin\theta_1-\sin\theta_1+\sin0=0
  \end{equation*}

  \noindent Then we compute the derivative of function $p(\gamma)$ with respect to $\gamma$.

  \begin{align*}
       p'(\gamma)&=\tan\theta_1\sin(\theta_1-\gamma)+\cos(\theta_1-\gamma)-\cos\gamma \\
       &=\tan\theta_1(\sin\theta_1\cos\gamma-\cos\theta_1\sin\gamma)+\cos\theta_1\cos\gamma+\sin\theta_1\sin\gamma-\cos\gamma \\
        &=\frac{\sin^{2}\theta_1}{\cos\theta_1}\cos\gamma-\sin\theta_1\sin\gamma+\cos\theta_1\cos\gamma+\sin\theta_1\sin\gamma-\cos\gamma\\
         &=\frac{\sin^{2}\theta_1}{\cos\theta_1}\cos\gamma+\cos\theta_1\cos\gamma-\cos\gamma\\
         &=\frac{1}{\cos\theta_1}\cos\gamma-\cos\gamma\geq 0~for~\gamma\in (0,\frac{\pi}{2})
  \end{align*}

  \noindent The last inequality holds since $\gamma\in(0,\frac{\pi}{2})$ when $[\![\gamma]\!]<0$. Combining this with the fact that $p(0)=0$, we have

  \begin{equation*}
      k(\gamma)\geq p(\gamma)\geq p(0)=0,
  \end{equation*}

  \noindent which implies that \eqref{equ:2.2.41} is increasing monotonically.

  To verify relation \eqref{equ:2.2.42}, we observe that

  \begin{equation*}
      g_2(u_{m})=g_{1}(u_{m})+\frac{1}{2}\frac{1}{\tan\theta_1+\tan\theta_2}(u_1-u_2)
  \end{equation*}

  \noindent Since both $g_{1}(u_{m})$ and $u_1-u_2$ are increasing functions of $u_{m}$, it follows immediately that $g_2(u_m)$ is also a monotone increasing function of $u_m$.

  Finally, it remains to establish the positivity of $g_{1}$ and $g_{2}$. Using the following fact

  \begin{equation*}
  \lim_{u_m\rightarrow -\infty}r_1=\lim_{u_m\rightarrow -\infty}(u_1-u_2)=0,
  \end{equation*}
  we obtain
  \begin{equation}{\label{equ:2.2.200}}
      \lim_{u_m\rightarrow -\infty} g_1(u_m)=\lim_{u_m\rightarrow -\infty}g_2(u_m)=0.
  \end{equation}
  Then the positivity of $g_1$ and $g_2$ directly follow from \eqref{equ:2.2.200} and the monotonicity of $g_1$ and $g_2$. 

  \textbf{Remark}: The positivity can also be deduced from Theorem \ref{lem:pos}, which shows that $x_{m}>0$ when $[\![\gamma]\!]<0$. This fact implies that

  \begin{equation*}
      r_1\tan\theta_1-r_2\tan\theta_2-(u_1-u_2)
  \end{equation*}

  \noindent is positive. Consequently, $r_1\tan\theta_1-(u_1-u_2)$ should also be positive.
  
  \textbf{Case 2 $\theta_1\in (\frac{\pi}{2},\pi-\theta_2)$}:
  
  In this case, we observe that 
  \begin{equation*}
  \tan\theta_1 ~~\operatorname{and}~~ \tan\theta_1+\tan\theta_2
  \end{equation*}
  are both negative, under the assumption that $\frac{\pi}{2}<\theta_1<\pi-\theta_2$. Therefore, $\frac{\tan\theta_1}{\tan\theta_1+\tan\theta_2}$ is positive. However, the following term
  \begin{equation*}
  r_1\tan\theta_1-(u_1-u_2)
  \end{equation*}
  is negative since $u_1>u_2$, $r_1>0$ and $\tan\theta_1<0$. Hence, $g_1(u_m)$ should be positive and so is $g_{2}(u_m)$. 
  
   Using the Remark following Theorem 2.7 and Step 1 in Case 1, we know that both $r_1$ and $u_1-u_2$ are monotone increasing function of $u_m$. Consequently, 
   \begin{equation*}
   r_1\tan\theta_1-(u_1-u_2) ~\operatorname{and}~ r_1\tan\theta_1-\frac{1}{2}(u_1-u_2)
   \end{equation*}
   are monotonically decreasing. Since $\frac{1}{\tan\theta_1+\tan\theta_2}$ is negative, it follows that $g_1(u_m)$ and $g_{2}(u_m)$ are monotone increasing functions of $u_{m}$. This completes the proof of the Theorem.
  \end{proof}

  \subsubsection{ \textbf{The case $\theta_1=\frac{\pi}{2}$}}
  
  When $\theta_1=\frac{\pi}{2}$, we have $\mathscr{V}=\mathscr{V}_1+\mathscr{V}_2+\mathscr{V}_4+\mathscr{V}_5$ where $\mathscr{V}_{i}, ~i=1,2,4,5$ are defined as in Section 2.2.1. From Theorem \ref{thm:v_1}, we have the monotonicity of $\mathscr{V}_1$ and $\mathscr{V}_2$. Then it remains to show the monotonicity of $\mathscr{V}_{4}$ and $\mathscr{V}_{5}$

  \begin{theorem}
      When $\theta_1=\frac{\pi}{2}$ and $[\![\gamma]\!]<0$,  $\mathscr{V}_4$ and $\mathscr{V}_5$ are both monotone increasing functions of $u_m$.
  \end{theorem}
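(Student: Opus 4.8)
The plan is to exploit the algebraic simplification that occurs at $\theta_1=\frac{\pi}{2}$, namely the identity $x_m=r_1$ supplied by \eqref{equ:2.2.12} in Theorem~\ref{thm:relation}. Substituting this into the area formulas \eqref{equ:2.2.32} and \eqref{equ:2.2.33} collapses $\mathscr{V}_4$ and $\mathscr{V}_5$ into expressions built entirely from the quantities $r_1$, $r_2$, and $u_1-u_2$, each of which has already been shown to be positive and monotone increasing in $u_m$. Concretely, one gets $\mathscr{V}_4=\frac{1}{2}(r_1+r_2)^2\tan\theta_2$ and $\mathscr{V}_5=r_1(u_1-u_2)$, so the whole statement reduces to combining known monotonicity facts.

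First I would assemble the monotonicity inputs. By the Remark following Theorem~\ref{thm:v_1}, together with the limits \eqref{equ:2.2.111}--\eqref{equ:2.2.112}, both $r_1$ and $r_2$ are positive and increasing in $u_m$. The quantity $u_1-u_2$ is positive and monotone increasing by the lemma just proved; its hypotheses are met here since $\theta_1=\frac{\pi}{2}>\theta_2$ (a consequence of $\theta_1+\theta_2<\pi$) and $[\![\gamma]\!]<0$ by assumption. I would also record that $\theta_2<\pi-\theta_1=\frac{\pi}{2}$, so $\tan\theta_2>0$.

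With these inputs the conclusion is immediate. For $\mathscr{V}_4$, the sum $r_1+r_2$ is positive and increasing, hence so is its square, and multiplication by the positive constant $\frac{1}{2}\tan\theta_2$ preserves monotonicity. For $\mathscr{V}_5$, it is the product of the two positive increasing functions $r_1$ and $u_1-u_2$, and a product of positive increasing functions is increasing. Both volumes are therefore monotone increasing in $u_m$.

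There is no real obstacle here, only one point requiring care: confirming that the monotonicity of $u_1-u_2$ from the preceding lemma is genuinely available at $\theta_1=\frac{\pi}{2}$. That argument (Step~1 of the lemma) rested on $\cos\psi_2<\cos\psi_1$, equivalently $\sin(\theta_2-\gamma)<\sin(\theta_1-\gamma)$, which remains valid since $\theta_1-\gamma=\frac{\pi}{2}-\gamma$ and $\theta_2-\gamma$ both lie in $(-\frac{\pi}{2},\frac{\pi}{2})$, where sine is increasing (using $\gamma\in(0,\frac{\pi}{2})$, which holds because $[\![\gamma]\!]<0$). Once this is checked, everything else is a direct substitution, so I would keep the written proof short.
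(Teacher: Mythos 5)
Your proposal is correct and follows essentially the same route as the paper: substitute $x_{m}=r_{1}$ from \eqref{equ:2.2.12} into \eqref{equ:2.2.32}--\eqref{equ:2.2.33} to get $\mathscr{V}_{4}=\tfrac{1}{2}(r_{1}+r_{2})^{2}\tan\theta_{2}$ and $\mathscr{V}_{5}=r_{1}(u_{1}-u_{2})$, then invoke the positivity and monotonicity of $r_{1}$, $r_{2}$, and $u_{1}-u_{2}$. Your extra check that the monotonicity of $u_{1}-u_{2}$ (Step~1 of Lemma~2.12) survives at $\theta_{1}=\tfrac{\pi}{2}$, where that lemma's case split is silent, is a worthwhile addition but does not change the argument.
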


  \begin{proof}

      First we show the monotonicity for $\mathscr{V}_4$. By definition,

      \begin{equation*}
          \mathscr{V}_4=\frac{1}{2}(r_2+x_m)^{2}\tan\theta_2
      \end{equation*}

      \noindent  From Theorem \ref{lem:pos}, we know that $x_m=r_1$. Using this fact in the expression of $\mathscr{V}_{4}$ above, we have

      \begin{equation*}
          \mathscr{V}_4=\frac{1}{2}(r_2+r_1)^{2}\tan\theta_2.
      \end{equation*}

      \noindent Since $r_1$ and $r_2$ are both positive and monotonically increasing functions of $u_{m}$, it follows that $\mathscr{V}_{4}$ is monotonically increasing .

       For $\mathscr{V}_5$, we have

      \begin{equation*}
          \mathscr{V}_5=x_m(u_1-u_2)=r_1(u_1-u_2).
      \end{equation*}

      \noindent Since $r_1$ and $u_1-u_2$ are both positive and monotone increasing functions of $u_{m}$, $\mathscr{V}_{5}$ is monotonically increasing in $u_{m}$. This completes the proof of the theorem.
      
  \end{proof}

  \subsection{The main result of Section 2} In this subsection, we show the main results derived in Section 2.2.
  
  Combining the result of Theorem 2.11 and Theorem 2.13, we have the following theorem:

  \begin{theorem}
      Suppose $\theta_1,\theta_2\in(0,\pi)$, $\theta_1\in (0,\pi-\theta_2)$, and $[\![\gamma]\!]\leq 0$. We conclude that $\mathscr{V}(u_m)$ is a monotone increasing function of $u_{m}$. 
  \end{theorem}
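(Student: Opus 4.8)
The plan is to obtain this as an assembly of the monotonicity facts already proved for the individual pieces of the area decomposition $\mathscr{V}=\mathscr{V}_1+\mathscr{V}_2+\mathscr{V}_3+\mathscr{V}_4+\mathscr{V}_5$, splitting the argument according to whether $\theta_1=\tfrac{\pi}{2}$. Since a finite sum of monotone increasing functions of $u_m$ is again monotone increasing, it suffices to display each summand, or a convenient grouping of them, as increasing. By the reduction adopted at the start of Section~2 we may assume $\psi_1>0$, $\psi_2<0$, and $\theta_1\geq\theta_2$, and I will first run the argument under the strict hypothesis $[\![\gamma]\!]<0$, recovering the endpoint $[\![\gamma]\!]=0$ at the end.

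The two interior pieces are immediate: Theorem~\ref{thm:v_1} states that $\mathscr{V}_1$ and $\mathscr{V}_2$ are monotone increasing in $u_m$ (with $\mathscr{V}_i\to+\infty$ as $u_m\to0^-$ and $\mathscr{V}_i\to0$ as $u_m\to-\infty$), and this holds regardless of the sign of $\theta_1-\tfrac{\pi}{2}$. It therefore remains to control the boundary contribution $\mathscr{V}_3+\mathscr{V}_4+\mathscr{V}_5$. When $\theta_1\neq\tfrac{\pi}{2}$ this is exactly the quantity $\mathscr{V}_c$, which Theorem~2.11 shows is monotone increasing in $u_m$ under $[\![\gamma]\!]<0$ and $\theta_1\geq\theta_2$; adding it to the increasing $\mathscr{V}_1+\mathscr{V}_2$ gives the claim. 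When $\theta_1=\tfrac{\pi}{2}$, relation \eqref{equ:2.2.12} forces $x_m=r_1$, equivalently $x_1=0$, so the left wall is vertical and the triangle $\Omega_3$ degenerates; thus $\mathscr{V}_3=\tfrac12(r_1-x_m)^2\tan\theta_1=0$ drops out and $\mathscr{V}=\mathscr{V}_1+\mathscr{V}_2+\mathscr{V}_4+\mathscr{V}_5$. Here Theorem~2.13 supplies the monotonicity of $\mathscr{V}_4$ and $\mathscr{V}_5$, and the conclusion follows in the same way.

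The only genuinely delicate point—and the one I expect to be the sole obstacle—is that the present hypothesis is $[\![\gamma]\!]\leq0$ whereas Theorems~2.11 and~2.13 were stated with the strict inequality $[\![\gamma]\!]<0$, i.e.\ the endpoint $\gamma=0$ must be justified separately. I would handle this by inspecting those proofs rather than citing them as black boxes: the decisive sign estimates there are already non-strict and remain valid at $\gamma=0$. Most notably, in Lemma~2.12 one has $k(\gamma)\geq p(\gamma)\geq p(0)=0$ from $p'(\gamma)=\bigl(\tfrac{1}{\cos\theta_1}-1\bigr)\cos\gamma\geq0$ on $[0,\tfrac{\pi}{2})$, and the comparison $\cos\psi_2\leq\cos\psi_1$ in \eqref{equ:2.2.47} persists at $\gamma=0$ because $\sin\theta_1>\sin\theta_2$ under $\theta_2\le\theta_1<\pi-\theta_2$. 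Hence all the monotonicity conclusions extend, as non-strict monotonicity, to the closed range $[\![\gamma]\!]\leq0$. Alternatively, since $\mathscr{V}(u_m)$ depends continuously on $[\![\gamma]\!]$ and (non-strict) monotonicity is preserved under pointwise limits, the case $[\![\gamma]\!]=0$ follows from the strict case by continuity. Either route closes the gap and completes the proof.
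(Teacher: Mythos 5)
Your proposal is correct and follows the paper's own route exactly: the paper proves this theorem simply by combining Theorem \ref{thm:v_1} with Theorems 2.11 and 2.13, splitting on whether $\theta_1=\tfrac{\pi}{2}$, and using that a sum of increasing functions is increasing. In fact you go slightly beyond the paper by explicitly justifying the endpoint $[\![\gamma]\!]=0$ (which the paper's cited theorems state only for $[\![\gamma]\!]<0$), and both of your arguments for that endpoint are sound.
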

  
  Finally, we have the following theorem:

 \begin{theorem}

    For any prescribed constant $V$ and fixed inclined angles $\theta_1$ and $\theta_2$ such that $\theta_{1}\in(0,\pi-\theta_{2})$, there exist a $u_m$ and at least one corresponding solution to the equation system \eqref{equ:1.2.600} such that 

    \begin{equation*}
        \mathscr{V}(u_{m})=V
    \end{equation*}
    
    Furthermore, $\mathscr{V}(u_{m})$ is a monotonically increasing function with respect to $u_{m}$ when $[\![\gamma]\!]<0$. In this case, the choice of $u_{m}$ and the corresponding solution function are unique.
 \end{theorem}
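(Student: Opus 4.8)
The plan is to obtain existence from the intermediate value theorem applied to the volume function $\mathscr{V}(u_m)$, and then to obtain uniqueness under the hypothesis $[\![\gamma]\!]<0$ by upgrading the monotonicity already established to \emph{strict} monotonicity. All of the analytic content has been assembled in the preceding results, so the task here is mainly to combine them and to close the two cases $\theta_1\neq\tfrac{\pi}{2}$ and $\theta_1=\tfrac{\pi}{2}$ uniformly.

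For existence I would first note that $\mathscr{V}$ is a continuous function of $u_m$ on $(-\infty,0)$, being a finite sum of the continuous pieces $\mathscr{V}_1,\dots,\mathscr{V}_5$. When $\theta_1\neq\tfrac{\pi}{2}$ the conclusion is exactly Theorem~\ref{thm:exist_1}: the blow-up $\mathscr{V}_1+\mathscr{V}_2\to+\infty$ as $u_m\to 0^-$ (Theorem~\ref{thm:v_1}) forces $\mathscr{V}\to+\infty$, while $\mathscr{V}_i\to 0$ for every $i$ as $u_m\to-\infty$ (Theorems~\ref{thm:v_1} and \ref{thm:v_3}) forces $\mathscr{V}\to 0$, and the intermediate value theorem supplies a $u_m$ with $\mathscr{V}(u_m)=V$ for each prescribed $V\geq 0$. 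For $\theta_1=\tfrac{\pi}{2}$ I would run the identical argument using the decomposition $\mathscr{V}=\mathscr{V}_1+\mathscr{V}_2+\mathscr{V}_4+\mathscr{V}_5$, since $\mathscr{V}_1+\mathscr{V}_2\to+\infty$ and $\mathscr{V}_4,\mathscr{V}_5\to 0$ carry over verbatim; this fills the small gap left by the hypothesis $\theta_1\neq\tfrac{\pi}{2}$ in Theorem~\ref{thm:exist_1}. Once $u_m$ is fixed, Lemma~\ref{lem:curve} produces the curve $(x(\psi),y(\psi))$, and shifting back by $P_0/g$ (with $P_0$ read off from Theorem~\ref{thm:relation}) yields a solution of \eqref{equ:1.2.600}.

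For the monotonicity and uniqueness statements under $[\![\gamma]\!]<0$, I would invoke the preceding theorem (the combination of the monotonicity of $\mathscr{V}_c=\mathscr{V}_3+\mathscr{V}_4+\mathscr{V}_5$ with that of $\mathscr{V}_1,\mathscr{V}_2$), which already gives that $\mathscr{V}$ is increasing. The point still needing attention is strictness: uniqueness of $u_m$ requires $\mathscr{V}$ to be strictly increasing, not merely non-decreasing. I would extract this from the proof of Theorem~\ref{thm:v_1}, where the integrand defining $d\mathscr{V}_1/du_m$ is strictly positive whenever $u_m<0$ (because $\sin\psi>0$ on $(0,\psi_1)$ and $u_m a(\psi)<0$ strictly), so that $\mathscr{V}_1$, and likewise $\mathscr{V}_2$, are strictly increasing; adding the non-decreasing $\mathscr{V}_c$ keeps the sum strictly increasing. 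A strictly increasing continuous function attains each value at most once, so the $u_m$ found in the existence step is unique. Uniqueness of the solution function then follows because, by the ODE existence–uniqueness theorem underlying Lemma~\ref{lem:curve}, the curve is determined by $(x_m,u_m)$, with $x_m$ itself a function of $u_m$ through Theorem~\ref{thm:relation}; the corresponding $\rho(\theta)$ and the verification $\mathcal{V}(\rho)=\mathscr{V}(u_m)=V$ are then immediate.

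The genuinely hard work—the sign analysis giving monotonicity of $\mathscr{V}_c$, in particular the positivity of the auxiliary quantity $k(\gamma)$ and the treatment of the obtuse regime $\theta_1>\tfrac{\pi}{2}$—has already been carried out in the earlier theorems and lemmas, so the only real subtleties remaining for this final statement are (i) covering the case $\theta_1=\tfrac{\pi}{2}$ in the existence half, which Theorem~\ref{thm:exist_1} formally excludes, and (ii) promoting the weak monotonicity of $\mathscr{V}$ to the strict monotonicity needed for uniqueness. I expect (ii) to be the more delicate of the two, since it requires observing that the strict positivity of $d(\mathscr{V}_1+\mathscr{V}_2)/du_m$ cannot be cancelled by $\mathscr{V}_c$; this is guaranteed because $\mathscr{V}_c$ contributes a non-negative derivative, but it is worth stating explicitly rather than folding it silently into the cited monotonicity.
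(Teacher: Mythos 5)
Your proposal is correct and follows essentially the same route as the paper, which likewise obtains existence from the intermediate value theorem (via Theorem \ref{thm:exist_1} and the asymptotics in Theorems \ref{thm:v_1} and \ref{thm:v_3}) and uniqueness from the monotonicity of $\mathscr{V}_1,\mathscr{V}_2$ and $\mathscr{V}_c$ assembled in the preceding theorems. Your two added observations --- that the case $\theta_1=\frac{\pi}{2}$ must be covered separately by the decomposition $\mathscr{V}=\mathscr{V}_1+\mathscr{V}_2+\mathscr{V}_4+\mathscr{V}_5$, and that uniqueness requires the \emph{strict} positivity of $d(\mathscr{V}_1+\mathscr{V}_2)/du_m$ rather than mere non-decrease of the total --- are points the paper passes over silently, and making them explicit strengthens rather than departs from its argument.
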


  When $\psi_1<0$ and $\psi_2>0$, the function curve reaches a minimum at a point $(x_m,u_m)$. Then we use a similar computation as in the previous case; we can also prove the existence. However, the uniqueness is still unknown in this case.
  
  Now we have a solution to equation \eqref{equ:1.2.600}, and the volume of the region enclosed by this curve and two boundary walls equals $V$. Finally, we aim to show that this curve can be expressed as a graph of a function in polar coordinates.

  \begin{theorem}
      Suppose that $\theta_1,\theta_2\in(0,\pi)$, and $\theta_1\in (0,\pi-\theta_2)$. For any prescribed positive number $V$.  The boundary curve derived in Theorem 2.15 can be be expressed as a graph of a function $\rho(\theta)$ in polar coordinates. Furthermore, this function $\rho(\theta)$ satisfies the following property:
      \begin{align}
          \mathcal{V}(\rho)=\mathscr{V}({u_{m}})=V
      \end{align}
  \end{theorem}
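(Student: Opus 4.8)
The plan is to transport the Cartesian construction of Theorem 2.15 back into the polar framework of the original problem, and then to recognize that the two volume functionals are merely two ways of computing the area of one and the same region. By Theorem 2.15 the vertically shifted curve $\bigl(x(\psi),\,y(\psi)+P_0/g\bigr)$ is a genuine solution of \eqref{equ:1.2.600}, so I would first check that it meets the standing assumptions of Subsection 2.1 (it is $C^2$, does not self-intersect, meets each wall once, and is piecewise polar-parametrizable), so that Theorem~\ref{Thm:g_1} applies. That theorem then provides a global parametrization by $\psi\in(\psi_2,\psi_1)$ with a unique height maximum at $(x_m,u_m)$, $u_m<0$.

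Next I would establish the polar-graph representation. Writing $O=(0,-P_0/g)$ for the wedge vertex and $(X,Y)=(x,\,y+P_0/g)$ for coordinates centred at $O$, the polar angle along the curve is $\theta(\psi)=\arg\bigl(X(\psi),Y(\psi)\bigr)$, and its derivative has the sign of
\[
X\frac{dY}{d\psi}-Y\frac{dX}{d\psi}=\frac{\sigma}{gy}\bigl(x\sin\psi-(y-y_0)\cos\psi\bigr),\qquad y_0=-P_0/g,
\]
using \eqref{equ:1.2.7}. Showing that this quantity keeps a constant sign is equivalent to the one-to-one correspondence between $\psi$ and $\theta$ asserted in the Case One reformulation theorem, and geometrically to the curve being star-shaped with respect to the corner $O$. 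Granting this, $\theta(\psi)$ is a strictly monotone homeomorphism of $(\psi_2,\psi_1)$ onto $(\theta_2,\pi-\theta_1)$, the endpoints landing on the two walls, which are themselves the rays $\theta=\theta_2$ and $\theta=\pi-\theta_1$ emanating from $O$. Inverting and setting $\rho(\theta)=\sqrt{X^2+Y^2}\big|_{\psi=\psi(\theta)}$ produces a single-valued radial function whose graph is precisely the free surface.

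Finally I would identify the enclosed fluid region $\Omega$, the curvilinear triangle with vertex $O$ bounded by the two wall segments and the free surface, and compute its area two ways. Star-shapedness with respect to $O$ makes the polar area formula applicable, giving $\operatorname{Area}(\Omega)=\tfrac12\int_{\theta_2}^{\pi-\theta_1}\rho^2(\theta)\,d\theta=\mathcal{V}(\rho)$. On the other hand $\mathscr{V}(u_m)$ was defined in Subsection 2.2 as the area of exactly this region through the decomposition $\Omega=\Omega_1\cup\cdots\cup\Omega_5$. Since planar area is invariant under the vertical shift $(x,y)\mapsto(x,y-P_0/g)$, both expressions are the Lebesgue measure of the same set, so $\mathcal{V}(\rho)=\mathscr{V}(u_m)$; Theorem 2.15 supplies a $u_m$ with $\mathscr{V}(u_m)=V$, completing the chain $\mathcal{V}(\rho)=\mathscr{V}(u_m)=V$.

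The hard part is the monotonicity of $\theta(\psi)$, i.e.\ the star-shapedness of the curve about $O$. The natural route is to combine $u_m<0$ with the wall relation \eqref{equ:2.2.13} to place $O$ strictly below the surface (the contact points lie above $O$ along the walls, and the surface stays above them), and then to show $x\sin\psi-(y-y_0)\cos\psi$ cannot vanish along the curve; alternatively one simply invokes the one-to-one $\psi$–$\theta$ correspondence already recorded in the Case One reformulation theorem, after verifying its hypotheses for the constructed curve. Once this is secured, the volume identity follows immediately from the coordinate-independence of area, with no further estimates required.
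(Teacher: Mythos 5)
Your overall architecture is sound, and your treatment of the volume identity is fine (indeed the paper does not even spell it out: once the curve is a single-valued polar graph over $(\theta_2,\pi-\theta_1)$, the identity $\mathcal{V}(\rho)=\tfrac12\int\rho^2\,d\theta=\operatorname{Area}(\Omega)=\mathscr{V}(u_m)$ is immediate from the polar area formula and the shift-invariance of Lebesgue measure). The problem is that you have correctly isolated the crux --- the monotonicity of $\theta(\psi)$, equivalently the star-shapedness of the free surface with respect to the wedge vertex $O$ --- and then not proved it. You write ``Granting this\dots'' and offer two fallbacks, neither of which closes the gap. Invoking the Case One reformulation theorem (Theorem 1.1) is circular: that theorem is an \emph{a priori} statement whose hypothesis is that \eqref{equ:1.2.3} already admits a solution, i.e.\ that the surface is already given as a polar graph $\rho(\theta)$; the whole point of the present theorem is to show that the curve constructed from the $\psi$-parametrized system \eqref{equ:1.2.7} admits such a representation. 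Likewise Theorem~\ref{Thm:g_1} only \emph{assumes} a piecewise polar parametrization (which permits folds) among its standing hypotheses and concludes things about the $\psi$-parametrization, not about single-valuedness in $\theta$. Your other suggested route --- placing $O$ below the surface via $u_m<0$ and \eqref{equ:2.2.13} and then claiming $x\sin\psi-(y-y_0)\cos\psi$ cannot vanish --- is not carried out, and the stated ingredients are not obviously sufficient: in Case One the curve can overhang (it need not be a graph over $x$), so ``$O$ lies below the surface'' does not by itself preclude a ray from $O$ meeting the curve twice.

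The paper fills exactly this gap with a concrete contradiction argument. Supposing some ray $y=(\tan\theta_0)x$ meets the curve at two parameter values, it extracts the last two intersection angles $\psi_B<\psi_A$ (the existence of a ``last'' one before $\psi_A$ being justified by a separate finiteness argument: $\psi$ ranges over an interval of length less than $2\pi$, so $\psi\equiv\theta_0\pmod\pi$ has at most two solutions), derives the geometric bounds $\psi_A\le\theta_0$ and $\psi_B\le\theta_0-\pi$, and then uses the integral representation
\begin{equation*}
\frac{x(\psi)}{y(\psi)}=\frac{x(\psi_B)-\int_{\psi}^{\psi_B}\cot\varphi\,dy}{y(\psi_B)-\int_{\psi}^{\psi_B}dy}
\end{equation*}
together with $\cot\varphi>\cot\theta_0$ on the relevant range to show $x(\psi)/y(\psi)<\cot\psi_B\le\cot\theta_0$ all the way down to $\psi=\psi_2$, contradicting $x(\psi_2)/y(\psi_2)=\cot\theta_2>\cot\theta_0$. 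Some quantitative argument of this kind (or an honest proof that your bracket $x\sin\psi-(y-y_0)\cos\psi$ has a fixed sign) is needed; without it your proposal establishes the reduction but not the theorem.
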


  \begin{proof}

  We prove this theorem by a contradiction argument. Assume that there is a $\theta_{0}\in (\theta_{2},\pi-\theta_{1})$ such that the line $y=(\tan\theta_{0}) x$ intersects the boundary curve at more than one point.  Then there exist $\psi_{A}$ and $\psi_{B}$ satisfying the following three properties
  \begin{itemize}
      \item $\psi_{2}<\psi_{B}<\psi_{A}<\psi_{1}$
      \item the boundary curve $(x(\psi),y(\psi))$ intersect $y=(\tan\theta_{0}) x$ when $\psi=\psi_{A}$ or $\psi=\psi_{B}$.
      \item the boundary curve $(x(\psi),y(\psi))$ does not intersect $y=(\tan\theta_{0}) x$ when $\psi\in (\psi_{A},\psi_{1})\cup (\psi_{B},\psi_{A})$. 
  \end{itemize} 
  The first and second properties above directly come from our assumption that there are more than two intersection points. We now assume that there exists a $\psi_{B}$ such that the third property holds and proceed to the main theorem under this assumption first.
  
  \textbf{The proof of the main theorem}

  Using the three properties above, it follows directly from geometric considerations that
  \begin{align}
      \psi_{A}\leq\theta_{0}~\operatorname{and}~\psi_{B}\leq\theta_{0}-\pi
  \end{align}
  \noindent Therefore, we have
  \begin{align}{\label{equ:left}}
      -\pi<\psi_{2}<\psi<\psi_{B}\leq\theta_{0}-\pi<0
  \end{align}
  \noindent for all $\psi\in (\psi_{2},\psi_{B})$. Then for any such $\psi$, we have
  \begin{align}{\label{equ:mix}}
      \frac{x(\psi)}{y(\psi)}=\frac{x(\psi_{B})-\int_{\psi}^{\psi_{B}}\cot\psi dy}{y(\psi_{B})-\int_{\psi}^{\psi_{B}}dy}
  \end{align}
  \noindent From equation \eqref{equ:left}, we have $\cot\psi>\cot(\theta_{0}-\pi)=\cot\theta_{0}$. Using this result together with $y(\psi_{B})-\int_{\psi}^{\psi_{B}}dy>0$, we deduce from equation \eqref{equ:mix} that
  \begin{align}{\label{equ:cot}}
      \frac{x(\psi)}{y(\psi)}=\frac{x(\psi_{B})-\int_{\psi}^{\psi_{B}}\cot\psi dy}{y(\psi_{B})-\int_{\psi}^{\psi_{B}}dy}<\frac{x(\psi_{B})-\cot\psi_{B}\int_{\psi}^{\psi_{B}} dy}{y(\psi_{B})-\int_{\psi}^{\psi_{B}}dy}=\cot\psi_{B}
  \end{align}
  \noindent for all $\psi\in(\psi_{2},\psi_{B}]$. 
  
  However, since $(x_{2},u_{2})=(x(\psi_{2}),y(\psi_{2}))$, we have
  \begin{align}
      \frac{x(\psi_{2})}{y(\psi_{2})}=\cot \theta_{2}>\cot \theta_{0}, 
  \end{align}
  \noindent which contradicts \eqref{equ:cot}. Therefore, the assumption of the existence of such an angle $\theta_{0}$ is false.

  \textbf{The proof of three properties} It remains to show that there exists a $\psi_{B}$ satisfying the three properties stated above. 

  We again proceed by contradiction. Suppose the third property does not hold. Then there exists a sequence $\psi_{i}$ such that $\psi_{i}<\psi_{i+1}<\psi_{A}$, $\psi_{n}\rightarrow \psi_{A}$ when $n\rightarrow +\infty$, and the straight line $y=(\tan\theta_{0})x$ intersects the boundary curve at $(x(\psi_{n}),y(\psi_{n}))$. 
  
  By the mean value theory, there exists a sequence of angles $\tilde{\psi}_{i}$ such that $\tilde{\psi}_{i}\in(\psi_{i-1},\psi_{i})$, and
  \begin{align}{\label{equ:seq_theta}}
  \tilde{\psi}_{n}=\theta_{0}+k\pi~~~~\operatorname{for~some~integer~k}
  \end{align}
  From the definition of $\psi$, we have
  \begin{align}
      \psi\in(\psi_{2},\psi_{1})\subseteq (-\pi,\pi)
  \end{align}
  \noindent Therefore, there can be at most two values of $\psi\in(\psi_{2},\psi_{1})$ satisfying \eqref{equ:seq_theta}, which contradicts the existence of such an infinite sequence $\{\psi_{i}\}$. Hence, a $\psi_{B}$ satisfying the three properties must exist.
  \end{proof}
  
    \end{subsection}
    \end{section}

    \begin{section}{The second case: $\psi_1\cdot \psi_2\geq 0$ }

    In this section, we discuss the case when $\psi_1$ and $\psi_2$ have the same sign. We begin by examining a special case before establishing the general theorem describing the shape of the surface curve. We then use arguments similar to those used in the previous case to derive the desired result.
    
    \begin{subsection}{A special case when $\theta_1=\frac{\pi}{2}$ and $\theta_2=0$}

    As in Section 2, we first identify several key geometric properties that the solution curve must satisfy. This is an \emph{a priori} discussion, in which we assume the existence of a $C^{2}$ curve satisfying the following four properties

    \begin{itemize}
    
    \item It intersects each of the inclined walls at exactly one point, with slope angles $\psi_{1}$ and $\psi_{2}$.
    
     \item It satisfies the equation \eqref{equ:1.2.7} locally. 
     
    \item It can be piece-wise parameterized in polar coordinates by $\theta$.

    \item This curve can not intersect itself.
    
    \end{itemize}
    Then we reparameterize the first equation of \eqref{equ:1.2.3} by $\psi$ as defined in \ref{def:slope}. We write it down as follows 
    \begin{equation}{\label{equ:bdd_n_1}}
        \begin{cases}
        \frac{dx}{d\psi}=\sigma\frac{\cos\psi}{gy}\\
    \frac{dy}{d\psi}=\sigma\frac{\sin\psi}{gy}\\
    \end{cases}
    \end{equation}
    \noindent We retain the assumptions stated above in this subsection.
    
    While the overall setting resembles that of Section 2, the analysis here is considerably more involved. We start by studying a special case, which is presented in the theorem below.

    \begin{theorem}
    
    Suppose that there exists a curve satisfying four properties above when $\theta_1=\frac{\pi}{2}$ and $\theta_2=0$. Suppose that  $[\![\gamma]\!]>0$, and the contact angle $\gamma\in(-\frac{\pi}{2},-\frac{\pi}{4})$. Furthermore, suppose curve can be expressed as a graph of function with respect to $x$ denoted by $v(x)$. Then the slope angle $\psi$ (defined by equation \eqref{def:slope}) is neither monotonically increasing nor monotonically decreasing of $x$ when the volume of the area enclosed by this curve and two inclined walls denoted as $V$ is sufficiently large.
     
    \end{theorem}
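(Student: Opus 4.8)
The plan is to reduce monotonicity of $\psi$ to the sign of the (shifted) height $u$, and then to bound the enclosed volume in the monotone regime. First I would record the two contact angles for this geometry. Reading off the sine arguments in \eqref{equ:1.2.8}–\eqref{equ:1.2.9} with $\theta_1=\tfrac{\pi}{2}$ and $\theta_2=0$ gives $\psi_1=\gamma$ and $\psi_2=-\gamma-\tfrac{\pi}{2}$. Since $[\![\gamma]\!]>0$ forces $\gamma<0$, the hypothesis $\gamma\in(-\tfrac{\pi}{2},-\tfrac{\pi}{4})$ yields $\psi_1<\psi_2$ with both angles in $(-\tfrac{\pi}{2},0)$; in particular $\cos\psi>0$ along the whole curve. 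Because $\theta_1=\tfrac{\pi}{2}$ the left wall is the vertical line through $O$, so the left contact point has $x_1=0$, while the horizontal right wall sits at height $u_2=-P_0/g$.

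Next I would differentiate the first equation of \eqref{equ:1.3.4}, writing it as $\sigma\cos\psi\,\psi'(x)=gu(x)$. Since $\cos\psi>0$ for a graph, the sign of $\psi'$ coincides with the sign of $u$. Hence $\psi$ is monotone increasing precisely when $u\ge 0$ throughout, and monotone decreasing precisely when $u\le 0$ throughout. A monotone decreasing $\psi$ would force $\psi_1=\psi(x_1)\ge\psi(x_2)=\psi_2$, contradicting $\psi_1<\psi_2$; thus the decreasing alternative is excluded for every $V$, and it remains only to rule out the increasing alternative once $V$ is large.

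In the increasing case $u\ge 0$ on the whole interval, so $u_2\ge 0$. Integrating \eqref{equ:1.3.4} exactly as in Lemma \ref{lem:curve} produces the first integral $\tfrac12 gu^2+\sigma\cos\psi=\mathrm{const}$, which lets me parametrize the curve by $\psi\in[\psi_1,\psi_2]$ with $dx=\tfrac{\sigma\cos\psi}{gu}\,d\psi$. The enclosed region is $\{0\le x\le x_2,\ u_2\le y\le u(x)\}$, and the change of variables $x\mapsto\psi$ collapses the volume to the closed form
\begin{equation*}
\mathscr{V}=\int_{0}^{x_2}\bigl(u(x)-u_2\bigr)\,dx=\frac{\sigma}{g}\bigl(\sin\psi_2-\sin\psi_1\bigr)-u_2\,x_2,
\end{equation*}
where I used $\int_{\psi_1}^{\psi_2}\tfrac{\sigma\cos\psi}{gu}\,d\psi=x_2-x_1=x_2$. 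As $u_2\ge 0$ and $x_2>0$, this gives the uniform bound $\mathscr{V}\le V_1:=\tfrac{\sigma}{g}(\sin\psi_2-\sin\psi_1)$, a finite positive constant since $\psi_1<\psi_2$ both lie in $(-\tfrac{\pi}{2},0)$. Consequently, whenever $V>V_1$ the curve cannot have $\psi$ monotone increasing; combined with the previous paragraph, $\psi$ is then neither increasing nor decreasing, which is the assertion.

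The main obstacle is the volume identity: one must correctly locate the corner $O$ and both walls after the vertical shift (so that $x_1=0$ and the floor lies at $u_2=-P_0/g$), justify the reparametrization by $\psi$ over the full range $[\psi_1,\psi_2]$ using the first integral (checking integrability of $\tfrac{\cos\psi}{u}$ where $u\to 0$ in the boundary case $u_2=0$), and verify that the $u_2 x_2$ term enters with the sign that makes $V_1$ an upper bound. Everything else—excluding the decreasing case via $\psi_1<\psi_2$ and converting $V>V_1$ into non-monotonicity—is immediate once the sign identity $\operatorname{sign}\psi'=\operatorname{sign} u$ is in hand.
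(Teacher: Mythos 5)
Your argument is correct and is essentially the paper's own proof in contrapositive form: integrating $\sigma\partial_x(\sin\psi)=gv-P_0$ over $[0,x_2]$ gives exactly the paper's identity $gV+\sigma\sin\psi_1-\sigma\sin\psi_2=x_2P_0=-gu_2x_2$, your bound $V\le V_1=\tfrac{\sigma}{g}(\sin\psi_2-\sin\psi_1)$ in the increasing case is the paper's deduction that $P_0>0$ for large $V$, and both proofs exclude the decreasing case via $\sin\psi_1=\sin\gamma<\sin(-\tfrac{\pi}{2}-\gamma)=\sin\psi_2$. The only cosmetic differences are that you make the threshold $V_1$ explicit and that the $\psi$-reparametrization you invoke is unnecessary (direct integration of the ODE in $x$ suffices, as you note).
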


    \textbf{Remark} From the assumption made in this theorem, we have
    \begin{align}
        \psi_{1}=\gamma+\frac{\pi}{2}-\theta_{1}=\gamma\in (-\frac{\pi}{2},-\frac{\pi}{4}),
    \end{align}
    and
    \begin{align}
        \psi_{2}=-\gamma-\frac{\pi}{2}+\theta_{2}=-\gamma-\frac{\pi}{2}\in(-\frac{\pi}{4},0).
    \end{align}
    \noindent Hence, $\psi_{1}$ and $\psi_{2}$ have then same sign from the setting in this theorem.
    \begin{proof}
    
     From the assumption that the curve can be expressed as a graph of a function of $x$, we consider the problem in Cartesian coordinates, with the origin located at the intersection point of the two walls.  Reparameterizing \eqref{equ:bdd_n_1} with respect to $x$, we obtain the equation for the free surface:

     \begin{equation}{\label{equ:3.1.1}}
         gv(x)-\sigma \mathcal{H}=P_{0},
     \end{equation}

     \noindent where

     \begin{equation*}
         \mathcal{H}=\partial_{\theta}(\frac{\partial_{x}v(x)}{\sqrt{1+\vert \partial_{x}v\vert^{2}}})
     \end{equation*}

     \noindent denotes the mean curvature of the curve.
     
     Suppose, for the sake of contradiction, that $P_{0}\le 0$. Integrating both sides of \eqref{equ:3.1.1} over the interval $[0,x_2]$, we obtain

     \begin{equation*}
         gV+\sigma \sin(\psi_1)-\sigma \sin(\psi_2)=x_2P_{0}
     \end{equation*}

     \noindent Since $V$ is assumed to be sufficiently large, the left-hand side is strictly positive, whereas the right-hand side is non-positive under the assumption $P_0\le 0$. This contradiction implies that $P_{0}>0$.
     
     Since $v(x_2)=0$, we deduce from \eqref{equ:3.1.1} that

     \begin{equation*}
         -\sigma \partial_{x}(\sin\psi)|_{x=x_2}=P_{0}> 0,
     \end{equation*}

     \noindent which implies

     \begin{equation*}
         \partial_{x}(\sin\psi)|_{x=x_2}< 0.
     \end{equation*}

     \noindent Hence, if $\psi$ were a monotone function of $x$, it would have to be monotonically decreasing. However, this can not be true, since

     \begin{equation}{\label{equ:3.1.2}}
         \sin(\psi)|_{x=x_{1}}=\sin\psi_1=\sin(\gamma)<\sin(-\frac{\pi}{2}-\gamma)=\sin\psi_2=\sin(\gamma)|_{x=x_2},
     \end{equation}

    \noindent where we have used the definitions of $\psi_1$ and $\psi_2$ given in \eqref{equ:1.2.8} and \eqref{equ:1.2.9}, together with the assumption $\gamma<-\frac{\pi}{4}$, which implies
    \begin{align}
        -\sin(\gamma)<-\sin(\frac{\pi}{2}+\gamma).
    \end{align}
     From equation \eqref{equ:3.1.2}, we know that $\psi$ cannot be monotonically decreasing with respect to $x$. Therefore, when $V$ is large enough, $\psi$ is not monotonic in $x$, which implies that the boundary curve in this case can not be parameterized by $\psi$. 
    \end{proof}

    Theorem 3.1 implies that when $\psi_{1}$ and $\psi_{2}$ have the same sign, the boundary curve cannot, in general, be parameterized by $\psi$.  We now aim to establish a theorem for this special case, which may provide insight into the curve's shape in the general case.
    
    \begin{theorem}{The shape of the function }

Assume the same hypotheses as in Theorem 3.1. Then the following statements hold:
    
    1: If
    \begin{align}
        gV+\sigma \sin(\gamma)-\sigma \sin(-\frac{\pi}{2}-\gamma)\leq 0, 
    \end{align}
     then $\psi$ is a monotone increasing function of $x$.

    2:If

    \begin{align}
        gV+\sigma \sin(\gamma)-\sigma \sin(-\frac{\pi}{2}-\gamma)>0,
    \end{align}
      then $\psi$ attains its unique maximal value $\psi_m$ in the interior of the interval where $ \psi_m\in(\max(\gamma-\frac{\pi}{2},-\gamma),0)$. 
    \end{theorem}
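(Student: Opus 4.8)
The plan is to reduce the whole statement to a scalar ODE for the slope angle and then control the sign of its derivative. Reparametrizing the free-surface equation \eqref{equ:3.1.1} by $x$ and using $\tan\psi=\partial_x v$ and $\sin\psi=\partial_x v/\sqrt{1+|\partial_x v|^2}$ from \eqref{def:slope}, I obtain on $x\in[x_1,x_2]=[0,x_2]$ the system
\begin{equation*}
\sigma\cos\psi\,\frac{d\psi}{dx}=gv-P_0,\qquad \frac{dv}{dx}=\tan\psi,
\end{equation*}
with boundary data $\psi(0)=\psi_1=\gamma$, $\psi(x_2)=\psi_2=-\gamma-\tfrac{\pi}{2}$, and $v(x_2)=0$. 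Since the curve is assumed to be a graph, $\cos\psi>0$, so the sign of $\psi'$ equals the sign of $gv-P_0$. Integrating \eqref{equ:3.1.1} over $[0,x_2]$ exactly as in the proof of Theorem~3.1 gives $gV+\sigma\sin\gamma-\sigma\sin(-\tfrac{\pi}{2}-\gamma)=P_0x_2$; as $x_2>0$, the dichotomy in the statement is precisely $P_0\le 0$ versus $P_0>0$. I would record this reduction at the outset.

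The heart of the argument is the claim, which I would isolate as a lemma, that $\psi(x)<0$ for every $x\in[0,x_2]$; this is what breaks the apparent circularity (signing $\psi'$ wants the sign of $v$, but the monotonicity of $v$ comes from the sign of $\psi$). I would prove it by a maximum-principle argument on the system. Since $\psi_1,\psi_2<0$, if $\psi$ were somewhere $\ge 0$ it would attain an interior local maximum at some $x^\ast$ with $\psi(x^\ast)\ge 0$. There $\psi'(x^\ast)=0$ forces $gv(x^\ast)=P_0$, and differentiating the $\psi$-equation yields
\begin{equation*}
\psi''(x^\ast)=\frac{g\sin\psi(x^\ast)}{\sigma\cos^2\psi(x^\ast)}.
\end{equation*}
If $\psi(x^\ast)>0$ this is strictly positive, contradicting maximality; and if $\psi(x^\ast)=0$ then $(\psi,v)=(0,P_0/g)$ is an equilibrium of the system, so by ODE uniqueness the solution would be constant, contradicting $\psi(0)=\gamma\neq 0$. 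Hence $\psi<0$ throughout, whence $v'=\tan\psi<0$, so $v$ is strictly decreasing and $v>0$ on $[0,x_2)$.

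With the lemma in hand the two cases follow. If $P_0\le 0$ (Case 1) then $gv-P_0\ge gv>0$ on $[0,x_2)$, so $\psi'>0$ and $\psi$ is strictly increasing. If $P_0>0$ (Case 2) I would first rule out $\psi'(0)\le 0$: otherwise $gv(0)\le P_0$ and, $v$ being decreasing, $gv-P_0<0$ on all of $(0,x_2]$, forcing $\psi$ strictly decreasing and hence $\psi_2<\gamma$, which contradicts $\psi_2=-\gamma-\tfrac{\pi}{2}>\gamma$ (valid since $\gamma<-\tfrac{\pi}{4}$). Thus $gv(0)>P_0>0=gv(x_2)$, and since $v$ is continuous and strictly decreasing there is a unique $x_m\in(0,x_2)$ with $v(x_m)=P_0/g$. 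The sign of $\psi'$ is then positive on $[0,x_m)$ and negative on $(x_m,x_2]$, so $\psi$ has a unique interior maximum $\psi_m=\psi(x_m)$. Strict monotonicity on each side gives $\psi_m>\psi_1$ and $\psi_m>\psi_2$, i.e. $\psi_m>\max(\psi_1,\psi_2)=-\gamma-\tfrac{\pi}{2}$, while the lemma gives $\psi_m<0$; this is the asserted range $\psi_m\in(\max(\psi_1,\psi_2),0)$.

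I expect the main obstacle to be the lemma $\psi<0$: the sign bookkeeping everywhere else is routine once the monotonicity of $v$ is known, but that monotonicity cannot be read off directly and must instead be extracted from the equilibrium-point/second-derivative argument above. A secondary point to verify is that $\psi$ stays in $(-\tfrac{\pi}{2},\tfrac{\pi}{2})$ so that $\cos\psi>0$ and the reparametrization is legitimate; this follows from the graph hypothesis together with the bound $\psi\ge\min(\psi_1,\psi_2)=\gamma>-\tfrac{\pi}{2}$ furnished by the monotonicity structure just established.
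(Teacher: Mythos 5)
Your proof is correct, and it reaches the conclusion by a genuinely cleaner route than the paper's, even though the underlying mechanism is the same: the sign of $\psi'$ equals the sign of $gv-P_{0}$, critical points of $\psi$ are tested via the identity $\sigma\cos\psi\,\psi''=g\tan\psi$ at points where $\psi'=0$, and the degenerate possibility of a critical point with $\psi=0$ is excluded by ODE uniqueness. The difference is in organization and in the hardest step. You front-load a single lemma, $\psi<0$ on all of $[0,x_{2}]$, proved by a maximum-principle argument on the planar system, and then deduce that $v$ is strictly decreasing with $v>0$ on $[0,x_{2})$; both cases of the theorem then reduce to locating the at most one zero of $gv-P_{0}$ by the intermediate value theorem, which also delivers uniqueness of the maximum for free. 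The paper instead proves only that the maximal value satisfies $\psi_{m}\le 0$ (its Step 1), excludes $\psi_{m}=0$ separately (Step 2), and establishes uniqueness of the maximum by a further bootstrap (Step 3). The most substantive divergence is in excluding $\psi_{m}=0$: the paper reduces to a scalar equation $\frac{d\psi}{dx}=F(\psi)$ whose right-hand side contains $\sqrt{P_{0}^{2}-2g\sigma(\cos\psi-C)}$ and must Taylor-expand the square root to verify that $F$ is Lipschitz at the degenerate point, whereas you observe that $(\psi,v)=(0,P_{0}/g)$ is an equilibrium of the smooth autonomous system $\sigma\cos\psi\,\psi'=gv-P_{0}$, $v'=\tan\psi$, so uniqueness of the constant solution is immediate and no expansion is needed; this is simpler and more robust. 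Two minor remarks: your range $\psi_{m}\in(\max(\psi_{1},\psi_{2}),0)$ is the correct reading of the theorem's printed $\max(\gamma-\tfrac{\pi}{2},-\gamma)$, which is positive as written and is evidently a typo for $\max(\gamma,-\gamma-\tfrac{\pi}{2})$; and your derivation of $v>0$ from the lemma replaces the paper's implicit appeal to the geometry of the vessel (the fluid lying above the horizontal wall), which is a small gain in rigor.
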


    \begin{proof}

    \textbf{proof of} (1):  Integrating both sides of the equation \eqref{equ:3.1.1} from 0 to $x_2$, we obtain

    \begin{equation}{\label{equ:special}}
        gV+\sigma \sin(\frac{\pi}{2}-\gamma)-\sigma\sin(\gamma)=x_2P_{0}
    \end{equation}

    \noindent Using the condition $gV+\sigma \sin(\phi)-\sigma \sin(\frac{\pi}{2}-\phi)\leq 0$, we derive $P_{0}\leq 0$ from equation \eqref{equ:special}. Substituting this into \eqref{equ:3.1.1} gives
    
    \begin{equation*}
        \sigma \partial_{x} \sin\psi=gv(x)-P_{0}>0~\operatorname{for~any}~x.
    \end{equation*}

    \noindent Hence, $\psi$ is an increasing function of $x$, and therefore the curve can be parameterized by $\psi$.

    \begin{equation*}
        ~
    \end{equation*}
    
    \textbf{proof of} (2): From Theorem 3.1, $P_{0}> 0$ when  $gV+\sigma \sin(\phi)-\sigma \sin(\frac{\pi}{2}-\phi)>0$. In this case $\psi$ is neither monotonically decreasing nor increasing. We divide the main proof into several steps.

    \textbf{Step 1} In this step, we prove that the boundary curve in this case must attain its maximum $\psi_{m}$ at some point. Moreover, $\psi_m\leq0$. 
    
    To prove that the boundary curve attains its maximum at some point. It suffices to show the following property

    \begin{equation}{\label{key_assume}}
        \sigma \frac{d}{dx}\sin(\psi)|_{x=0}=gv(x_1)-P_{0}>0.
    \end{equation}

    \noindent We argue by contradiction and assume instead that

    \begin{equation}{\label{equ:3.1.3}}
        \sigma \frac{d}{dx}\sin(\psi)|_{x=0}=gv(x_1)-P_{0}\leq 0
    \end{equation}

     Differentiating both sides of equation \eqref{equ:3.1.1} with respect to $x$, we obtain

    \begin{equation}{\label{equ:3.1.4}}
        \sigma \frac{d^{2}}{dx^{2}}\sin(\psi)|_{x=0}=g\partial_{x}v|_{x=0}=g \tan\psi|_{x=0}=g \tan (\gamma)< 0.
    \end{equation}

    \noindent Consequently,

    \begin{equation*}
        \sigma \frac{d^{2}}{d^{2}x}\sin(\psi)<0,
    \end{equation*}

    \noindent in a neighborhood of $x=0$, which implies

    \begin{equation*}
        \sin\psi(x)<0.
    \end{equation*}
    
    \noindent for $x$ close enough to $0$. We now define

    \begin{equation*}
        x_{0}:=\min\{x:\frac{d^{2}}{dx^{2}}\sin\psi=0\}.
    \end{equation*}

    \noindent This point is well-defined since, by assumption, $\psi$ is not a monotone decreasing function of $x$. From the computation in equation \eqref{equ:3.1.4}, we have $\tan\psi=0$ when $x=x_0$. Hence, $\sin\psi(x_0)=0>\sin\psi_1$. Combining this result with the assumption \eqref{equ:3.1.3}, we deduce that there exits a point $x_{1}\in(x,x_{0})$ such that $\frac{d}{dx}(\sin\psi)|_{x=x_{1}}=0$. However, by the definition of $x_{0}$, we know that 
    \begin{align}
        \frac{d^{2}}{dx^{2}}(\sin\psi)<0
    \end{align}
    for all $0<x<x_0$. Therefore, $\frac{d}{dx}(\sin\psi)<0$ for all $x\in(0,x_0)$. This  contradicts the existence of such points $x_0$ and $x_{1}$. Therefore, assumption \eqref{equ:3.1.3} can not hold.
    
    We now show that the maximum angle $\psi_{m}\leq0$. From \eqref{key_assume}, we obtain the following property

    \begin{equation}{\label{equ:3.1.5}}
        \sigma \frac{d}{d\psi}\sin\psi|_{x=x_{1}}=gv(x_{1})-P_{0}>0
    \end{equation}

    \noindent Suppose, for the sake of contradiction, that the maximum value $\psi_m>0$. Then we have:

    \begin{equation*}
        \sigma \frac{d}{d\psi}\sin\psi|_{x=x_m}=gv(x_m)-P_{0}=0
    \end{equation*}

    \noindent From equation \eqref{equ:3.1.4}, we have

    \begin{equation}{\label{equ:3.1.6}}
        \sigma \frac{d^{2}}{d\psi^{2}}\sin(\psi)|_{x=x_m}=g\tan\psi_m>0
    \end{equation}

    \noindent However, since $\psi_m$ is assumed to be a maximum, we must have
    
    \begin{equation*}
        \sigma \frac{d^{2}}{d\psi^{2}}\sin(\psi)|_{\psi=\psi_m}=g\tan\psi_m\leq 0,
    \end{equation*}

    \noindent which contradicts \eqref{equ:3.1.6}. Therefore, the maximum for the inclined angle $\psi_{m}$ must satisfy $\psi_m\leq 0$.
    
     \textbf{Step 2}: In this step, we prove that $\psi_m\neq 0$. 
     
     We proceed by a contradiction argument again. Suppose that $\psi_m=0$. From the first and second equation of the system \eqref{equ:1.2.600}, we have

    \begin{equation}{\label{equ:3.1.7}}
        \frac{d\psi}{dx}=\frac{gv-P_{0}}{\sigma \cos\psi}
    \end{equation}

    \begin{equation}{\label{equ:3.1.8}}
        \frac{dy}{d\psi}=\frac{\sigma \sin\psi}{gv-P_{0}}
    \end{equation}

    \noindent Integrating both sides of the equation \eqref{equ:3.1.8} from $\psi$ to $\psi_m$, we obtain

    \begin{equation}{\label{equ:3.1.9}}
        y(\psi)=\frac{P_{0}\pm \sqrt{P_{0}^{2}-2g\sigma(\cos\psi-C)}}{g},
    \end{equation}

    \noindent where $C$ is a constant determined by condition $v(\psi_m)=\frac{P_{0}}{g}$.

    Plugging the equation \eqref{equ:3.1.9} into the ODE \eqref{equ:3.1.7}, we obtain

    \begin{equation}{\label{equ:3.1.110}}
         F(\psi):=\frac{d\psi}{dx}=\frac{\sqrt{P_{0}^{2}-2g\sigma(\cos\psi-C)}}{\sigma \cos\psi}.
    \end{equation}
    
    \noindent Differentiating function $F$ with respect to $\psi$ near $\psi_m$,  we obtain

    \begin{equation}{\label{equ:3.1.10}}
        \frac{dF(\psi)}{d\psi}=-\frac{\sin\psi\sqrt{P_{0}^{2}-2g\sigma(\cos \psi-C)}}{\sigma \cos^{2}\psi}+\frac{d\sqrt{P_{0}^{2}-2g\sigma(\cos \psi-C)}}{d\psi}\frac{1}{\sigma \cos \psi}.
    \end{equation}

    \noindent  The first term of equation \eqref{equ:3.1.10} is a smooth function that tends to zero as $\psi\rightarrow 0$, and is therefore bounded in a neighborhood of $\psi_m=0$. For the second term, we have
    
    \begin{equation}{\label{equ:3.1.11}}
        \frac{d\sqrt{P_{0}^{2}-2g\sigma(cos \psi-C)}}{d\psi}=-\frac{1}{2}\frac{2g\sigma \sin\psi}{\sqrt{P_{0}^{2}-2g\sigma(\cos \psi-C)}}.
    \end{equation}

     \noindent Performing a Taylor expansion of the right-hand side of equation \eqref{equ:3.1.11} near $\psi=0$, we obtain

    \begin{align}
        \frac{d\sqrt{P_{0}^{2}-2g\sigma(\cos \psi-C)}}{d\psi}&=-\frac{1}{2}\frac{2g\sigma \sin\psi}{\sqrt{P_{0}^{2}-2g\sigma(\cos \psi-C)}} \notag\\
        &=-g\sigma \frac{\psi+O(\psi^{2})}{\sqrt{2g\sigma}\sqrt{\frac{P_{0}^{2}}{2g\sigma}+C-\cos\psi}} \label{equ:3.1.12}.
    \end{align}

     From the definition of constant $C$, we have

    \begin{equation*}
        \frac{P_{0}^{2}}{2g\sigma}+C-\cos\psi_m=0
    \end{equation*}

    \noindent Hence, the denominator in \eqref{equ:3.1.12} can be expanded as

    \begin{equation*}
        \sqrt{\frac{P_{0}^{2}}{2g\sigma}+C-\cos\psi}=\sqrt{1-\cos\psi}=\frac{\sqrt{2}}{2}\psi+O(\psi^{2})
    \end{equation*}
    
    \noindent  Substituting it into the derivative term \eqref{equ:3.1.12}, we obtain

    \begin{align}
         \eqref{equ:3.1.12}&=-g\sigma \frac{\psi+O(\psi^{2})}{\sqrt{2g\psi}\sqrt{\frac{P_{0}^{2}+C}{2g\psi}-\cos\psi}} \notag\\
         &= -g\sigma \frac{\psi+O(\psi^{2})}{\sqrt{2g\sigma}\sqrt{1-\cos\psi}} \notag\\
         &=-g\sigma \frac{\psi+O(\psi^{2})}{\sqrt{g\sigma}\psi+O(\psi^{2})} \notag\\
          &=-\sqrt{g\sigma} (1+O(\psi)) \label{equ:3.1.13}
    \end{align}
    
      Thus, for the ODE \eqref{equ:3.1.110} written as follows

    \begin{equation*}
        \frac{d\psi}{dy}=\frac{\sqrt{P_{0}^{2}-2g\sigma(\cos\psi-C)}}{\sigma \cos\psi}=F(\psi),
    \end{equation*}

    \noindent we infer from \eqref{equ:3.1.10} and \eqref{equ:3.1.13} that $F(\psi)$ has a bounded first derivative and therefore Lipschitz continuous in a neighborhood of $\psi=\psi_m=0$. By the uniqueness theorem for ODEs,  $\psi=0$ then must be the unique solution of the ODE, which contradicts our assumption that $\psi_{m}$ is a nontrivial maximum.

    \textbf{Step 3} We prove the uniqueness of the maximum in this step. 
    
    From Step 1 and Step 2, $\psi$ reaches to a maximum $\psi_{m}<0$. Since
    \begin{align}
        \frac{d}{dx}\sin\psi=gv(x)-P_{0},
    \end{align}
    we have
    \begin{align}{\label{equ:negative}}
        \frac{d}{dx}\sin\psi|_{x=x_{m}}=gv(x_{m})-P_{0}=0.
    \end{align}
    \noindent Since $\psi_{m}<0$, the function $v(x)$ is monotonically decreasing in a neighborhood of $x_{3}$. Therefore, we have
    \begin{align}
        \frac{d}{dx}\sin\psi=gv(x)-P_{0}<0
    \end{align}
    for all $x\in (x_{m},x_{m}+\delta)$ for some small $\delta$. We can extend this argument inductively to conclude that the relation in \eqref{equ:negative} is true for all $x\in (x_{m},x_{4})$ where $x_{4}$ is a point such that $x_{4}>x_{m}$ and $\psi(x_{4})=\psi_{2}$. 
    
    If the curve $(x,v(x))$ does not touch the right wall at $x= x_{4}$, then there exists a point $x_{5}>x_{4}$ such that $\psi(x_{5})=-\frac{\pi}{2}$ which contradicts the assumption that the curve can be expressed as a function of $x$. Therefore, the point $(x_{4},y(x_{4}))=(x_{2},u_{2})$ is the contact point with the right wall. 

    From the discussion above, $\psi(x)$ is monotonically increasing when $x\in (x_{1},x_{m})$, and then monotonically decreasing when $x\in (x_{m},x_{2})$. This implies that the maximum $\psi_{m}=\psi(x_{m})$ is unique.
    
    \end{proof}
 
    \end{subsection}

    \begin{subsection}{The shape of the curve when $\theta_1\leq \frac{\pi}{2}$}

     After discussing the special case, we now focus on the general case. This subsection provides an a priori estimate.  We assume that $0\leq \theta_2\leq \theta_1\leq \frac{\pi}{2}$ and that there exists a $C^{2}$ curve satisfying four properties assumed in Section 3.1. From the discussion in the previous subsection, we may infer that $\psi$ can serve as a valid global parameter only when $V$ is small. This makes the analysis more straightforward in such cases, as we can apply a similar approach to the first case. Therefore, our primary focus should be on the case where $V$ is large, which presents additional analytical challenges.
     
    We begin by deriving a theorem concerning the geometric structure of the curve. Assume that $V$ is large enough such that the boundary curve can not be parameterized by $\psi$. Under this assumption,  we establish the following lemma and theorems

    \begin{lemma}

    Suppose that $0\leq \theta_2\leq \theta_1\leq \frac{\pi}{2}$ and that $\psi_1$ and $\psi_2$ have the same sign. Then $\psi_1<0$ and $[\![\gamma]\!]>0$.  
    \end{lemma}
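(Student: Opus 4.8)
The plan is to reduce everything to the elementary angle identities for $\psi_1$ and $\psi_2$ that are encoded in \eqref{equ:1.2.8}--\eqref{equ:1.2.9}. First I would record that, as actual slope angles (consistent with the relation $\frac{d\psi_1}{d\theta_1}=-1$ already used in Section~2), one has $\psi_1=\gamma+\frac{\pi}{2}-\theta_1$ and $\psi_2=-\gamma-\frac{\pi}{2}+\theta_2$. Adding these produces the telescoping identity $\psi_1+\psi_2=\theta_2-\theta_1$, whose right-hand side is nonpositive because the standing hypothesis gives $\theta_2\le\theta_1$. Thus $\psi_1+\psi_2\le 0$, which is the only structural fact I will need.

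Next I would use this to pin down the sign of $\psi_1$. If $\psi_1$ and $\psi_2$ were both positive, then $\psi_1+\psi_2>0$, contradicting $\psi_1+\psi_2\le 0$. Hence, under the hypothesis that $\psi_1$ and $\psi_2$ share a common (strict) sign, they cannot both be positive, so both must be negative; in particular $\psi_1<0$, which is the first claim.

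To deduce $[\![\gamma]\!]>0$, I would simply unwind $\psi_1<0$. From $\psi_1=\gamma+\frac{\pi}{2}-\theta_1<0$ I get $\gamma<\theta_1-\frac{\pi}{2}$, and since $\theta_1\le\frac{\pi}{2}$ the right-hand side is nonpositive, so $\gamma<0$. Because $\gamma\in[-\frac{\pi}{2},\frac{\pi}{2}]$ by its definition, $\gamma<0$ forces $\sin\gamma<0$; combining this with \eqref{equ:1.2.110}, namely $\sin\gamma=-[\![\gamma]\!]/\sigma$, and with $\sigma>0$, yields $[\![\gamma]\!]>0$, completing the argument.

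The whole argument is elementary, so I do not anticipate a genuine analytic obstacle; the only points requiring care are bookkeeping ones. Specifically, I must justify that $\psi_1$ and $\psi_2$ are literally the angles $\gamma+\frac{\pi}{2}-\theta_1$ and $-\gamma-\frac{\pi}{2}+\theta_2$ (so that the sum genuinely telescopes to $\theta_2-\theta_1$), rather than merely quantities prescribed through their sines, and I must confirm that the same-sign hypothesis is what rules out the all-positive branch. Keeping track of the range $\gamma\in[-\frac{\pi}{2},\frac{\pi}{2}]$ in the final step is the last detail to verify.
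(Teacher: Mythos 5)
Your proposal is correct and rests on exactly the same ingredients as the paper's proof: the explicit formulas $\psi_1=\gamma+\tfrac{\pi}{2}-\theta_1$, $\psi_2=-\gamma-\tfrac{\pi}{2}+\theta_2$ together with the hypotheses $\theta_2\le\theta_1\le\tfrac{\pi}{2}$ and $\gamma\in[-\tfrac{\pi}{2},\tfrac{\pi}{2}]$. The only difference is organizational — you first rule out the all-positive branch via the telescoping identity $\psi_1+\psi_2=\theta_2-\theta_1\le 0$ and then deduce $[\![\gamma]\!]>0$ from $\psi_1<0$, whereas the paper runs two separate contradiction arguments in the opposite order — and, like the paper, you implicitly treat ``same sign'' as strict, leaving aside the degenerate case $\psi_1=\psi_2=0$ (which occurs only when $\theta_1=\theta_2$ and $\gamma=\theta_1-\tfrac{\pi}{2}$).
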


    \textbf{Remark} We will retain these background assumptions on
$\theta_1,\theta_{2},\psi_{1}$ and $\psi_{2}$ throughout the remainder of this section.
    
    \begin{proof}

        First, we prove that $[\![\gamma]\!]>0$.  By definition, we have:

        \begin{equation*}
            \psi_1=\frac{\pi}{2}-\theta_1+\gamma,
        \end{equation*}

        \noindent and
        
        \begin{equation*}
            \psi_2=-\gamma-\frac{\pi}{2}+\theta_2.
        \end{equation*}

        \noindent Suppose, for the sake of contradiction, that $[\![\gamma]\!]\leq 0$. Then, by equation \eqref{equ:1.2.110}, we have $\gamma\in[0,\frac{\pi}{2})$. Since $\theta_1\in(0,\frac{\pi}{2})$, it follows that 
        \begin{align}
            \psi_1=\frac{\pi}{2}-\theta_1+\gamma\geq 0~~\operatorname{and}~~\psi_2=-\gamma-\frac{\pi}{2}+\theta_2<0.
        \end{align}
        
        \noindent Thus, $\psi_1$ and $\psi_2$ have opposite signs, contradicting our assumption. Therefore, $[\![\gamma]\!]>0$
        
        Next, we show that $\psi_{1}<0$. Suppose, to the contrary, that  $\psi_1=\frac{\pi}{2}-\theta_1+\gamma>0$, which implies $-\gamma<\frac{\pi}{2}-\theta_1$. Since $\theta_2\leq \theta_1\leq \frac{\pi}{2}$, we have 
        \begin{align}
            -\gamma<\frac{\pi}{2}-\theta_1\leq \frac{\pi}{2}-\theta_2,
        \end{align}
        which implies that  $\psi_2=-\gamma-\frac{\pi}{2}+\theta_2<0$. So $\psi_1$ and $\psi_2$ again have opposite signs, contradicting our assumption. Therefore, both $\psi_{1}$ and $\psi_{2}$ must be negative.
    \end{proof}

    \textbf{Remark}: We use equations \eqref{equ:1.2.8} and \eqref{equ:1.2.9} in Lemma 3.3 to show that $\psi_1=\frac{\pi}{2}-\theta_1+\gamma\in(-\frac{\pi}{2},0)$ and $\psi_2=-\gamma-\frac{\pi}{2}+\theta_2\in(-\frac{\pi}{2},0)$
    
   From Lemma 3.3, we now establish the following theorem describing the shape of the curve.

    \begin{theorem}

          Suppose that there exists a $C^{2}$ curve satisfying four properties assumed in this Section, and that the boundary curve can not be parameterized by $\psi$. Then $\psi(x)$  increases from $\psi_1$ to its unique negative maximum $\psi_m$. And then decreases from the maximal point to $\psi_{2}=\psi$. Moreover, the curve is a graph of $x$.
        
    \end{theorem}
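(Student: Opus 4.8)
The plan is to reproduce the three-step analysis behind Theorem 3.2, replacing the explicit values of $\psi_1,\psi_2$ used there by the structural facts supplied by the preceding lemma, namely $\psi_1,\psi_2\in(-\tfrac{\pi}{2},0)$ and $[\![\gamma]\!]>0$. I work in Cartesian coordinates with the origin at the vertex $O$, where \eqref{equ:3.1.1} reads $\sigma\frac{d}{dx}\sin\psi=gv-P_0$; differentiating in $x$ gives the key identity $\sigma\frac{d^{2}}{dx^{2}}\sin\psi=g\,\partial_x v=g\tan\psi$ (cf. \eqref{equ:3.1.4}), valid wherever the curve is locally a graph, i.e. wherever $\psi\in(-\tfrac{\pi}{2},\tfrac{\pi}{2})$; this holds near both contact points since $\psi_1,\psi_2\in(-\tfrac{\pi}{2},0)$. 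The governing observation is that while $\psi\in(-\tfrac{\pi}{2},0)$ one has $\tan\psi<0$, so $\sin\psi$ is strictly concave in $x$. As a preliminary I would record, exactly as in Theorem 3.1, that for $V$ sufficiently large $P_0>0$: integrating $\sigma\frac{d}{dx}\sin\psi=gv-P_0$ over $[x_1,x_2]$ gives $\sigma(\sin\psi_2-\sin\psi_1)=g\int_{x_1}^{x_2}v\,dx-P_0(x_2-x_1)$, and a large enclosed volume forces the surface high above $O$, so $\int_{x_1}^{x_2}v\,dx$ is large and positive while the left-hand side is bounded, whence $P_0>0$.

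\textbf{Step 1} (an interior maximum with $\psi_m\le0$). Because the curve cannot be parametrized by $\psi$, the angle $\psi$ is not monotone along the curve. If $\frac{d}{dx}\sin\psi|_{x_1}\le0$, then by the concavity identity $\frac{d}{dx}\sin\psi$ stays negative as long as $\psi\in(-\tfrac{\pi}{2},0)$, so $\psi$ would decrease monotonically, contradicting non-monotonicity; hence $\frac{d}{dx}\sin\psi|_{x_1}>0$ and $\psi$ increases initially. Being non-monotone, $\psi$ therefore attains an interior maximum $\psi_m$, where $\frac{d}{dx}\sin\psi=0$. If $\psi_m>0$ were possible, then $\frac{d^{2}}{dx^{2}}\sin\psi|_{x_m}=g\tan\psi_m>0$ would contradict maximality, so $\psi_m\le0$.

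\textbf{Step 2} (strict negativity $\psi_m<0$) is the main obstacle; it is carried out as in Theorem 3.2, the argument being purely local near the critical value and depending only on $P_0>0$. Assuming $\psi_m=0$, the first integral of \eqref{equ:3.1.8} gives \eqref{equ:3.1.9}, $y(\psi)=\frac{P_0\pm\sqrt{P_0^{2}-2g\sigma(\cos\psi-C)}}{g}$, with $C$ fixed by $y(x_m)=P_0/g$, and hence \eqref{equ:3.1.110}, $F(\psi)=\frac{d\psi}{dx}=\frac{\sqrt{P_0^{2}-2g\sigma(\cos\psi-C)}}{\sigma\cos\psi}$. The normalization $\tfrac{P_0^{2}}{2g\sigma}+C-\cos\psi_m=0$ at $\psi_m=0$ reduces the radicand to $2g\sigma(1-\cos\psi)$, and a Taylor expansion then shows $F$ has bounded derivative, hence is Lipschitz near $\psi=0$. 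By ODE uniqueness $\psi\equiv0$ would be forced, contradicting that $\psi$ genuinely increases up to $\psi_m$; therefore $\psi_m<0$.

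\textbf{Step 3} (monotone profile, uniqueness, and graph property). With $\psi_m<0$ fixed, the concavity identity applies on both sides of $x_m$ while $\psi<0$: for $x>x_m$ the derivative $\frac{d}{dx}\sin\psi$ turns and stays negative, so $\psi$ strictly decreases, and for $x<x_m$ it is positive, so $\psi$ strictly increases to $\psi_m$; this yields uniqueness of $\psi_m$. The contact condition forces the right-wall intersection to occur exactly where $\psi=\psi_2$ and the left-wall intersection where $\psi=\psi_1$; since $\psi_1,\psi_2>-\tfrac{\pi}{2}$ and $\psi$ is strictly monotone on each side, these contacts occur while $\psi\in(-\tfrac{\pi}{2},0)$, so no vertical tangent ever arises. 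Consequently $\psi\in(-\tfrac{\pi}{2},0)$ throughout, $\cos\psi>0$ everywhere, $x$ is strictly monotone along the curve, and the free surface is a graph of $x$, completing the proof. The only genuinely delicate point is Step 2; the remaining steps are a direct transcription of the special-case argument once the preceding lemma supplies $\psi_1,\psi_2<0$.
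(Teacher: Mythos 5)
Your overall skeleton matches the paper's: show $\psi$ increases away from the left contact point, locate an interior maximum, get $\psi_m\le 0$ from the second-derivative identity, promote this to $\psi_m<0$ by the Lipschitz/ODE-uniqueness argument of Theorem 3.2, and then descend monotonically to $\psi_2$. Step 2 is faithfully reproduced and is fine. However, there is a genuine gap in Step 1 (and it recurs in Step 3). To rule out $\frac{d}{dx}\sin\psi|_{x_1}\le 0$ you argue that $\psi$ would then "decrease monotonically, contradicting non-monotonicity." But your concavity identity $\sigma\frac{d^2}{dx^2}\sin\psi=g\tan\psi$ is only valid while the curve is a graph, i.e.\ while $\psi\in(-\tfrac{\pi}{2},0)$; it only shows that $\psi$ decreases until it reaches $-\tfrac{\pi}{2}$, where the tangent becomes vertical and the identity breaks down. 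Monotonicity of $\psi$ on an initial sub-arc does not contradict the hypothesis that the \emph{whole} curve fails to be parameterized by $\psi$ — the curve could fold past the vertical point, spiral, and have $\psi$ turn around later. The paper closes exactly this loophole by extending the solution through $\psi=-\tfrac{\pi}{2}$ and $-\tfrac{3\pi}{2}$ using the first integral $\tfrac12 gy^2-\tfrac12 gu_1^2=\sigma(\cos\psi_1-\cos\psi)$, and then invoking the symmetry and self-intersection argument of Theorem \ref{Thm:g_1} to show such a curve must either cross itself or exit through the left wall, violating the standing assumptions. Some argument of this global, geometric type is unavoidable here; your purely local replacement does not suffice.

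The same issue reappears at the end: you assert that the right-wall contact "occurs while $\psi\in(-\tfrac{\pi}{2},0)$" merely because $\psi_2>-\tfrac{\pi}{2}$. But a priori the descending branch could pass through the value $\psi=\psi_2$ without touching the wall, continue below $-\tfrac{\pi}{2}$, and meet the wall only after winding around (at angle $\psi_2-2\pi$). The paper again excludes this by the Theorem \ref{Thm:g_1} self-intersection argument. Once these two global exclusions are supplied, the rest of your write-up (the second-derivative sign argument for $\psi_m\le0$, the Theorem 3.2 uniqueness step for $\psi_m<0$, and the resulting strict monotonicity on either side of $x_m$ giving uniqueness of the maximum and the graph property) aligns with the paper's proof.
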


    \begin{proof}

        Since $\psi_1\in(-\frac{\pi}{2},0)$ by Lemma 3.3, the curve can be locally expressed as the graph of a function $v(x)$ near contact point $(x_{1},y_{1})$. In this neighborhood, the equation \eqref{equ:1.2.7} can be rewritten as

        \begin{equation}{\label{equ:3.3.16}}
            \partial_{x}(\sin\psi)=gv-P_{0}
        \end{equation}

        \noindent Consider the contact point $(x_{1},y_{1})$. Suppose that $gy_1-P_{0}\leq 0$. Then $\sin\psi$ is a decreasing function in a small neighborhood of $(x_1,y_1)$. Since $\psi_1<0$, we can easily show $\psi<\psi_1<0$ when $x\in (x_1,x_1+\delta)$ for some small $\delta$.  Repeating the same discussion at the point $(x_1+\delta,v(x_1+\delta))$, we can show that $\psi(x)<\psi_1$ for any $x\in (x,x+2\delta)$. By induction, we obtain $\psi(x)<\psi_1$ for all $x\in (x_1,x_3)$, where $x_{3}$ is a point such that $\psi(x_3)=-\frac{\pi}{2}$.  This portion of the curve cannot intersect the right wall because of the assumption that the curve can not be parameterized by $\psi$.

         As in the proof of Theorem \ref{Thm:g_1}, we can extend the solution function to $\psi\in (-\frac{\pi}{2},-\frac{3}{2}\pi)$ and further to $\psi\in (-\frac{3}{2}\pi,-2\pi-\psi_1)$. Then using a similar discussion as in Theorem \ref{Thm:g_1}, one finds that the curve would either self-intersect or pass through the left wall due to symmetry, which leads to a contradiction.

        Hence, we must have $gy_1-P_{0}>0$, implying that $\psi$ increases to the maximal value $\psi_{m}$. Since $\psi_{m}$ is the maximum value, we obtain $gv(x_{m})=P_{0}$ from equation \eqref{equ:3.3.16}. Moreover, by Theorem 3.2, it holds that $\psi_{m}<0$ (Although Theorem 3.2 was proven in a special case, the proof also works for the general case). Examining equation \eqref{equ:3.3.16},  $\psi$ decreases when $gv(x)<P_{0}$, implying that $\psi$ decreases monotonically to $\psi_2$ as a function of $x$. The curve must intersect the right wall at this point; otherwise, an argument identical to that in the proof of Theorem \ref{Thm:g_1} leads to a contradiction.  This completes the proof. Moreover, using the Remark following Lemma 3.3,  we conclude that $\psi(x)\in(\min(\psi_1,\psi_2),0)\subseteq(-\frac{\pi}{2},0)$ along this curve, confirming that the curve is globally representable as the graph of a function of $x$.
        
    \end{proof}

    \begin{figure}
        \centering
        \includegraphics[width=0.92\linewidth=4]{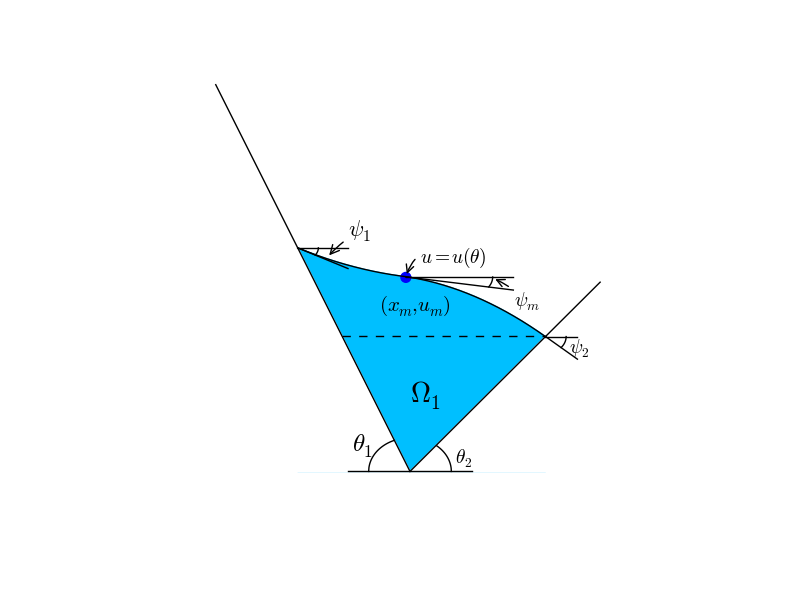}
        \caption{\label{figure7}}
     \end{figure}

     \subsection{The existence of the solution function}

     $~$
     
     \textbf{The idea}:
    
    From Section 3.2, we know that $\psi$ might not be a valid parameter for the boundary curve and that it attains its maximum value $\psi_{m}$ at a point $(x_m,v_m)=(x(\theta_m),v(\theta_{m}))$. After applying the shift $(x,y)\rightarrow (x,y-\frac{P_{0}}{g})$, we obtain $u_{m}=v_m-\frac{P_{0}}{g}=0$, implying that the coordinates of the special point is $(x_m,0)$ with the maximum angle $\psi_m$. 
    
   Since the curve can be described as a graph of a function $u(x)$, we construct the boundary curve via solving the first equation of \eqref{equ:1.3.3} with initial data $(x,y,\psi)=(x_{m},0,\psi_{m})$.  We then integrate both sides of the first equation in the system \eqref{equ:1.3.3} from $x_1$ to $x_2$ to show the following expression for $P_{0}$

    \begin{equation}{\label{equ:3.3.14}}
        P_{0}=\frac{g{\mathscr{V}+\frac{1}{2}gx_1^{2}\tan\theta_1+\frac{1}{2}gx_2^{2}\tan\theta_2-\sigma \sin(\psi_2)+\sigma \sin(\psi_1)}}{x_2-x_1}.
    \end{equation}

    \noindent where $\mathscr{V}$ denotes the total volume enclosed by the boundary curve and two walls. It remains to determine $P_{0}$ so that $\eqref{equ:3.3.14}$ shows the relation between $\mathscr{V}$ and $\psi_{m}$.

     By Theorem 3.4, we can construct the surface curve as two separated segments. Each segments are solutions to equation \eqref{equ:1.2.7} subject to initial condition $(x,y,\frac{dy}{dx}|_{x=x_{m}})=(x_{m},0,\psi_{m})$. The first extending from $x_1$ to $x_m$ and the second from $x_m$ to $x_2$. See the figure \ref{figure7}. The system \eqref{equ:1.3.3} provides a global description of the curve. By making the separation, we establish that each segment of the curve can be parameterized by $\psi$, which is defined by \eqref{def:slope}, allowing us to use equation \eqref{equ:1.2.7} for each part. The remainder of this chapter focuses on using $\psi_{m}$ to represent the unknown parameters $P_{0}$, $x_1$, and $x_2$. We first prove the following theorem in terms of the coordinates of the two contact points:

        \begin{theorem}

             Suppose the the curve constructed above intersects two inclined walls at $(x_{1},u_{1})$ and $(x_{2},u_{2})$ with corresponding slope angles $\psi_{1}$ and $\psi_{2}$. Then the following formulas for the coordinates of two contact points hold

            \begin{align}
                u_1&=\sqrt{\frac{2\sigma}{g}(\cos\psi_m-\cos\psi_1)} \label{equ:3.2.2}\\
                u_2&=-\sqrt{\frac{2\sigma}{g}(\cos\psi_m-\cos\psi_2)} \label{equ:3.2.3}\\
                x_1&=x_m-\int_{\psi_1}^{\psi_m}\frac{\sigma\cos\gamma}{g\sqrt{\frac{2\sigma}{g}(\cos\psi_m-\cos\gamma)}}d\gamma \label{equ:3.2.4}\\
                x_2&=x_m-\int_{\psi_m}^{\psi_2}\frac{\sigma\cos\gamma}{g\sqrt{\frac{2\sigma}{g}(\cos\psi_m-\cos\gamma)}}d\gamma \label{3.2.5}\\
                r(\psi)&=\vert x(\psi)-x_m\vert=\vert\int_{\psi_m}^{\psi}\frac{\sigma\cos\gamma}{g\sqrt{\frac{2\sigma}{g}(\cos\psi_m-\cos\gamma)}}d\gamma \label{3.2.100} \vert
            \end{align}

        \end{theorem}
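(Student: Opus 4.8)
The plan is to follow the same two-step strategy used in Case~1, namely Lemma~\ref{lem:curve} together with Theorem~\ref{thm:bdd_p}, but now anchored at the maximal-slope point instead of at $\psi=0$. Recall from the preceding discussion that after the shift the distinguished point is $(x_m,u_m)=(x_m,0)$ with slope angle $\psi_m$, and that by Theorem~3.4 the curve splits into a left branch on which $\psi$ increases from $\psi_1$ to $\psi_m$ and a right branch on which $\psi$ decreases from $\psi_m$ to $\psi_2$; each branch can be parametrized by $\psi$ and solves \eqref{equ:bdd_n_1}. First I would establish the first integral for $y$: multiplying the second equation of \eqref{equ:bdd_n_1} by $gy$ gives $gy\,dy=\sigma\sin\psi\,d\psi$, and integrating from $\psi_m$ to $\psi$ while using the initial value $y(\psi_m)=u_m=0$ yields $\tfrac12 g\,y^2(\psi)=\sigma(\cos\psi_m-\cos\psi)$, hence $y^2(\psi)=\tfrac{2\sigma}{g}(\cos\psi_m-\cos\psi)$. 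Since all relevant angles lie in $(-\tfrac\pi2,0)$, where $\cos$ is increasing, and $\psi_m$ is the maximum, the right-hand side is nonnegative, so this relation is well defined.

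Next I would fix the sign of $y$ on each branch. Because $\psi<0$ along the entire curve by Lemma~3.3 and Theorem~3.4, we have $\tfrac{dy}{dx}=\tan\psi<0$, so $y$ is strictly decreasing in $x$; as $y(x_m)=0$, it follows that $y>0$ on the left branch ($x<x_m$) and $y<0$ on the right branch ($x>x_m$). Taking the positive root on the left and the negative root on the right, and then setting $\psi=\psi_1$ and $\psi=\psi_2$ in turn, gives \eqref{equ:3.2.2} and \eqref{equ:3.2.3}.

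For the horizontal coordinates I would substitute the branch-dependent expression for $y$ into the first equation of \eqref{equ:bdd_n_1}, obtaining $\tfrac{dx}{d\psi}=\tfrac{\sigma\cos\psi}{g\sqrt{(2\sigma/g)(\cos\psi_m-\cos\psi)}}$ on the left branch and the negative of this on the right branch. Integrating from $\psi_m$ to $\psi_1$ (respectively $\psi_2$) and keeping careful track of the orientation of the limits then produces \eqref{equ:3.2.4} and \eqref{3.2.5}; the same computation with a general upper limit gives the distance formula \eqref{3.2.100}. All of this is completely parallel to the elementary integrations in Lemma~\ref{lem:curve} and Theorem~\ref{thm:bdd_p}.

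The one genuinely delicate point, which I expect to be the main obstacle, is that the $x$-integrands are singular at the endpoint $\psi=\psi_m$, where $\cos\psi_m-\cos\psi$ vanishes. I would handle this exactly as in Step~1 of the proof of Theorem~\ref{Thm:g_1}: a Taylor expansion gives $\cos\psi_m-\cos\psi=\sin\psi_m\,(\psi-\psi_m)+O((\psi-\psi_m)^2)$, and since $\psi_m<0$ and $\psi<\psi_m$ the leading term has size $|\sin\psi_m|\,|\psi-\psi_m|$, which is strictly positive and vanishes only linearly. Hence the integrand blows up at worst like $|\psi-\psi_m|^{-1/2}$ and is locally integrable, which guarantees that $x_1$, $x_2$, and $r(\psi)$ are finite and well defined. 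With this integrability check in hand, the remaining steps are routine, and the stated formulas follow.
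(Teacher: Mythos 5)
Your proposal is correct and follows essentially the same route as the paper, whose own proof is a one-line reference back to the computation of Lemma~\ref{lem:curve} and Theorem~\ref{thm:bdd_p} with the initial data moved to $(x_m,0,\psi_m)$; your sign analysis on the two branches and the orientation of the integration limits reproduce the stated formulas exactly. The only addition is your explicit check that the $x$-integrand is integrable at $\psi=\psi_m$ (where the expansion is linear because $\sin\psi_m\neq 0$), which the paper leaves implicit here but which is a worthwhile verification.
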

        
        \begin{proof}
           Since each segment of the curve can be parameterized by $\psi$, the computation proceeds identically to that in Theorem \ref{thm:bdd_p}. We simply use the equation \eqref{equ:1.2.7} with the corresponding boundary condition to derive those relations above.
        \end{proof}

        Unlike the previous section, the curve is now separated into two segments, each potentially exhibiting distinct properties. We need to prove that the second segment does not intersect the left wall, which consequently implies that $x_2>0$. 

        \begin{lemma}

        For any $x\in(x_m,x_2)$, the curve $u(x)$ does not intersect the left wall.

        \end{lemma}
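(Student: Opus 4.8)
The plan is to encode the left wall as the zero set of an affine function and to track the sign of that function along the free surface. In the shifted coordinates, write the left wall, which passes through the vertex $O=(0,y_{0})$ at inclination $\theta_{1}$, as the line $\{L=0\}$, where
\[
L(x,y):=x\sin\theta_{1}+(y-y_{0})\cos\theta_{1}.
\]
Since the left contact point $(x_{1},u_{1})$ lies on this wall, we have $L(x_{1},u_{1})=0$. By Theorem 3.4 the whole curve is the graph of a $C^{2}$ function $u(x)$ over $[x_{1},x_{2}]$ whose slope angle $\psi(x)=\arctan u'(x)$ satisfies $\psi(x)\in(-\tfrac{\pi}{2},0)$, increases on $[x_{1},x_{m}]$ and decreases on $[x_{m},x_{2}]$, with unique maximum $\psi_{m}<0$. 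The segment to be controlled is exactly $x\in(x_{m},x_{2})$.

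First I would differentiate $L$ along the graph. Using $u'(x)=\tan\psi$,
\[
\frac{d}{dx}L(x,u(x))=\sin\theta_{1}+\tan\psi\,\cos\theta_{1}=\frac{\sin(\theta_{1}+\psi)}{\cos\psi}.
\]
The crux is to show this is strictly positive on all of $(x_{1},x_{2})$. Since $\psi\in(-\tfrac{\pi}{2},0)$ we have $\cos\psi>0$. For the numerator, the a priori bounds of Lemma 3.3 (and the remark following it) give $\psi_{1},\psi_{2}\in(-\tfrac{\pi}{2},0)$ and $[\![\gamma]\!]>0$, hence $\gamma\in(-\tfrac{\pi}{2},0)$; in particular $\psi_{1}=\tfrac{\pi}{2}-\theta_{1}+\gamma<0$ rewrites as $\theta_{1}-\gamma-\tfrac{\pi}{2}=-\psi_{1}>0$, whence $\theta_{1}+\psi_{2}=\theta_{2}-\psi_{1}>0$ and $\theta_{1}+\psi_{1}=\tfrac{\pi}{2}+\gamma>0$. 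Combined with $\psi\le\psi_{m}<0$, which forces $\theta_{1}+\psi<\theta_{1}\le\tfrac{\pi}{2}$, this yields $\theta_{1}+\psi\in(0,\tfrac{\pi}{2})$, and therefore $\sin(\theta_{1}+\psi)>0$ for every $\psi$ attained along the curve. Because numerator and denominator are continuous and positive across the join $x=x_{m}$, the composite $L(x,u(x))$ is strictly increasing on the entire interval $[x_{1},x_{2}]$.

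Finally, I would integrate from $x_{1}$ and use $L(x_{1},u_{1})=0$ to conclude $L(x,u(x))>0$ for all $x\in(x_{1},x_{2}]$, and in particular for all $x\in(x_{m},x_{2})$. Since the left wall is contained in the line $\{L=0\}$, the point $(x,u(x))$ can never lie on it, which proves the lemma. The delicate point is precisely the positivity of $\sin(\theta_{1}+\psi)$: it is here that the standing hypotheses $0\le\theta_{2}\le\theta_{1}\le\tfrac{\pi}{2}$, $[\![\gamma]\!]>0$, and the negativity of $\psi_{1},\psi_{2},\psi_{m}$ from Lemma 3.3 and Theorem 3.4 are all needed, after which the monotonicity and sign-tracking are routine. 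The edge case $\theta_{1}=\tfrac{\pi}{2}$ is automatically covered, since there $L(x,y)=x$ and the claim reduces to $x>x_{m}>x_{1}=0$.
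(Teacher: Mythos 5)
Your proof is correct and takes essentially the same route as the paper: both reduce the claim to the slope-angle inequality $\psi(x)\geq-\theta_{1}$ (your $\sin(\theta_{1}+\psi)>0$), derived from $\psi_{1},\psi_{2}<0$ together with the monotone decrease of $\psi$ on $(x_{m},x_{2})$ from Theorem 3.4. Your explicit signed-distance function $L$ and the integration starting at $x_{1}$ merely formalize the geometric step that the paper handles by asserting that $(x_{m},u_{m})$ lies to the right of the left wall.
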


        \begin{proof}

             By construction, we know that the maximal point $(x_m,u_m)$ lies to the right of the left wall. Therefore, to prove the theorem, it suffices to prove that $\psi(x)\geq -\theta_1$ for all $x\in (x_m,x_2)$.

            From Theorem 3.4, we know that $\psi$ is a monotonically decreasing function of $x$. Therefore, it is enough to verify that $\psi_2\geq -\theta_1$. From the definition of $\psi_2$ given in \eqref{equ:1.2.9}, we  have
            \begin{equation}
                \psi_2=\gamma-\frac{\pi}{2}+\theta_2,~~\operatorname{and}~~\psi_1=\frac{\pi}{2}-\theta_1-\gamma.
            \end{equation}
             By Lemma 3.3, we know that both $\psi_1<0$ and $\psi_2<0$. This implies

            \begin{equation*}
                \frac{\pi}{2}-\theta_1<\gamma<\frac{\pi}{2}-\theta_2.
            \end{equation*}

            \noindent Consequently,

            \begin{equation*}
               \psi_{2}=\gamma-\frac{\pi}{2}+\theta_2> \frac{\pi}{2}-\theta_1-\frac{\pi}{2}+\theta_2=\theta_2-\theta_1\geq -\theta_1 
            \end{equation*}

            \noindent Thus, $\psi_{2}\geq -\theta_{1}$ and the theorem is proved.
        \end{proof}

        Now we have two parameters $x_{m}$ and $\psi_{m}$. Our next goal is to establish a relation between these two parameters. Following the same approach as in Section 2, we compute $x_m$ and obtain:

        \begin{equation}{\label{equ:3.2.6}}
            x_m=\frac{r_1\tan\theta_1-r_2\tan\theta_2-(u_1-u_{2})}{\tan\theta_1+\tan\theta_2}~\operatorname{when}~\theta_1\neq \frac{\pi}{2}
        \end{equation}

        \begin{equation}{\label{equ:3.2.7}}
            x_m=r_1~\operatorname{when}~\theta_1=\frac{\pi}{2}
        \end{equation}

        \noindent Here $r_1=x_m-x_1$ and $r_2=x_2-x_m$. For the special case $\theta_1=\frac{\pi}{2}$, the computation is straightforward and follows a similar approach  to Subsection 2.2. Hence, in what follows, we assume $\theta_1\neq \frac{\pi}{2}$. 
        
         We next compute $P_0$ by determining the $y$ component of the contact point $O$, which is the intersection point of two walls. 
        After performing the vertical shift $(x,y)\rightarrow (x,y-\frac{P_{0}}{g})$, the $y$ coordinate of the point $O$ becomes $y_{0}=-\frac{P_{0}}{g}$.
       On the other hand, using the same computation as in Theorem \ref{thm:relation}, we obtain 
       \begin{align}
       y_{0}=-((r_2+x_m)\tan\theta_2-u_2).
       \end{align}
        Combining these two expressions for $y_{0}$, we obtain 
        \begin{align}
        P_{0}=g((r_2+x_m)\tan\theta_2-u_2).
        \end{align}
        Substituting this relation for $P_{0}$ into equation \eqref{equ:3.3.14}, we obtain

        \begin{equation}{\label{equ:3.2.8}}
           g{\mathscr{V}}(\psi_m)=(x_2-x_1)g((r_2+x_m)\tan\theta_2-u_2)-\frac{1}{2}gx_1^{2}\tan\theta_1-\frac{1}{2}gx_2^{2}\tan\theta_2+\sigma \sin\psi_2-\sigma \sin\psi_1.
        \end{equation}

        Having established an explicit expression for the functional $\mathscr{V}$, we now aim to simplify its representation \eqref{equ:3.2.8}. To achieve this goal, we plug equation \eqref{equ:3.2.6} into \eqref{equ:3.2.8}, which leads to the following desired simplified form

        \begin{align}
            g{\mathscr{V}}(\psi_m)=&(x_2-x_1)g(\frac{r_2\tan\theta_1+r_1\tan\theta_1}{\tan\theta_1+\tan\theta_2}\tan\theta_2-(u_1-u_2)\frac{\tan\theta_2}{\tan\theta_1+\tan\theta_2}-u_2)\notag \\
            &-\frac{1}{2}gx_1^{2}\tan\theta_1-\frac{1}{2}gx_2^{2}\tan\theta_2+\sigma \sin\psi_2-\sigma \sin\psi_1 \notag\\
             =&g((r_2+r_1)\frac{\tan\theta_2\tan\theta_1}{\tan\theta_1+\tan\theta_2}-u_1\frac{\tan\theta_2}{\tan\theta_1+\tan\theta_2}-u_2\frac{\tan\theta_1}{\tan\theta_1+\tan\theta_2})(r_1+r_2) \notag\\
             &-\frac{1}{2}g(r_1-x_m)^{2}\tan\theta_1-\frac{1}{2}g(r_2+x_m)^{2}\tan\theta_2+\sigma \sin\psi_2-\sigma \sin\psi_1 \notag \\
             =&g((r_2+r_1)\frac{\tan\theta_2\tan\theta_1}{\tan\theta_1+\tan\theta_2}-u_1\frac{\tan\theta_2}{\tan\theta_1+\tan\theta_2}-u_2\frac{\tan\theta_1}{\tan\theta_1+\tan\theta_2})(r_1+r_2) \notag \\
             &-\frac{1}{2}g\frac{\tan\theta_1\tan\theta_2}{\tan\theta_1+\tan\theta_2}(r_1^2+r_2^2)-\frac{1}{2}g\frac{1}{(\tan\theta_1+\tan\theta_2)}(u_1-u_2)^{2}+\sigma \sin\psi_2-\sigma \sin\psi_1 \notag\\
              =&g(\frac{1}{2}(r_2+r_1)\frac{\tan\theta_2\tan\theta_1}{\tan\theta_1+\tan\theta_2}-u_1\frac{\tan\theta_2}{\tan\theta_1+\tan\theta_2}-u_2\frac{\tan\theta_1}{\tan\theta_1+\tan\theta_2})(r_1+r_2)\notag \\
               &+\frac{1}{2}g(r_1+r_2)^{2}\frac{\tan\theta_1\tan\theta_2}{\tan\theta_1+\tan\theta_2}-\frac{1}{2}g\frac{\tan\theta_1\tan\theta_2}{\tan\theta_1+\tan\theta_2}(r_1^2+r_2^2)\notag \\
                &-\frac{1}{2}g\frac{1}{(\tan\theta_1+\tan\theta_2)}(u_1-u_2)^{2}+\sigma \sin\psi_2-\sigma \sin\psi_1 \notag\\
                 =&g(\frac{1}{2}(r_2+r_1)\frac{\tan\theta_2\tan\theta_1}{\tan\theta_1+\tan\theta_2}-u_1\frac{\tan\theta_2}{\tan\theta_1+\tan\theta_2}-u_2\frac{\tan\theta_1}{\tan\theta_1+\tan\theta_2})(r_1+r_2) \notag \\
                 &+gr_1r_2\frac{\tan\theta_1\tan\theta_2}{\tan\theta_1+\tan\theta_2}-\frac{1}{2}g\frac{1}{(\tan\theta_1+\tan\theta_2)}(u_1-u_2)^{2}+\sigma \sin\psi_2-\sigma \sin\psi_1 \label{equ:3.2.200}.
        \end{align}

        \noindent We use $f(\psi)$ to denote right-hand side of \eqref{equ:3.2.200}. Then we establish the following theorem to discuss the asymptotic behavior of $f(\psi)$.

        \begin{theorem}

        If we regard $r_1$ and $r_2$ as functions of $\psi_m$, then the function $f$ defined below diverges to $+\infty$ as $\psi_m\rightarrow 0$.

        \begin{align}
             f(\psi_{m})=&g(\frac{1}{2}(r_2+r_1)\frac{\tan\theta_2\tan\theta_1}{\tan\theta_1+\tan\theta_2}-u_1\frac{\tan\theta_2}{\tan\theta_1+\tan\theta_2}-u_2\frac{\tan\theta_1}{\tan\theta_1+\tan\theta_2})(r_1+r_2) \notag \\
              &+gr_1r_2\frac{\tan\theta_1\tan\theta_2}{\tan\theta_1+\tan\theta_2}-\frac{1}{2}g\frac{1}{(\tan\theta_1+\tan\theta_2)}(u_1-u_2)^{2}+\sigma \sin\psi_2-\sigma \sin\psi_1 \label{equ:3.2.9}
        \end{align}
        \end{theorem}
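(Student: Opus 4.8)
The plan is to show that the quantity $f(\psi_m)=g\mathscr{V}(\psi_m)$ blows up by isolating its leading-order term as $\psi_m\to 0^-$. First I would record the asymptotics of the four building blocks supplied by Theorem 3.5. Since $\cos\psi_m\to 1$ while $\psi_1,\psi_2\in(-\tfrac{\pi}{2},0)$ are fixed, the contact heights
\[
u_1=\sqrt{\tfrac{2\sigma}{g}(\cos\psi_m-\cos\psi_1)},\qquad u_2=-\sqrt{\tfrac{2\sigma}{g}(\cos\psi_m-\cos\psi_2)}
\]
converge to finite limits; in particular $u_1,u_2$ and $u_1-u_2$ remain bounded, and $u_2$ stays negative and bounded away from $0$. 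The crucial opposite behavior is that the horizontal extents satisfy $r_1,r_2\to+\infty$, and establishing this is the main obstacle.

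To prove $r_1,r_2\to+\infty$ I would work directly from the integral representation
\[
r_1=\int_{\psi_1}^{\psi_m}\frac{\sigma\cos\gamma}{g\sqrt{\tfrac{2\sigma}{g}(\cos\psi_m-\cos\gamma)}}\,d\gamma,
\]
together with the analogous formula $r_2=\int_{\psi_2}^{\psi_m}(\cdots)\,d\gamma$. On the relevant ranges $\gamma\in(-\tfrac{\pi}{2},0)$ one has $\cos\gamma>0$, so the integrand is positive. Fixing a small $\varepsilon>0$ and taking $\psi_m>-\varepsilon$, I may discard the tail and bound $r_1\ge\int_{\psi_1}^{-\varepsilon}\frac{\sigma\cos\gamma}{g\sqrt{\frac{2\sigma}{g}(\cos\psi_m-\cos\gamma)}}\,d\gamma$. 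On the fixed interval $[\psi_1,-\varepsilon]$ the integrand is uniformly bounded for $\psi_m$ near $0$ and converges pointwise as $\cos\psi_m\to 1$, so dominated convergence yields $\liminf_{\psi_m\to 0}r_1\ge\int_{\psi_1}^{-\varepsilon}\frac{\sigma\cos\gamma}{g\sqrt{\frac{2\sigma}{g}(1-\cos\gamma)}}\,d\gamma$. Using $1-\cos\gamma\sim\gamma^2/2$, the limiting integrand behaves like $\sqrt{\sigma/g}\,|\gamma|^{-1}$ near $\gamma=0$, which is non-integrable; letting $\varepsilon\to 0$ forces the right-hand side to $+\infty$. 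Hence $r_1\to+\infty$, and the identical argument gives $r_2\to+\infty$. The divergence comes precisely from this borderline non-integrable $|\gamma|^{-1}$ behavior at $\gamma=0$, which is the delicate point of the whole proof.

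Finally I would finish by a leading-order comparison, treating $f$ as a quadratic polynomial in $(r_1,r_2)$ whose coefficients involve the bounded quantities $u_1,u_2$. Collecting the top-order contributions of the first two terms gives
\[
g\,\frac{\tan\theta_1\tan\theta_2}{\tan\theta_1+\tan\theta_2}\Bigl(\tfrac12(r_1+r_2)^2+r_1r_2\Bigr),
\]
while every remaining term is at most linear in $r_1+r_2$ (the cross terms carry the bounded factors $u_1,u_2$) or bounded. When $0<\theta_2\le\theta_1<\tfrac{\pi}{2}$ the displayed coefficient is strictly positive, so this positive quadratic dominates all lower-order contributions and $f\to+\infty$. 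The only degenerate case is $\theta_2=0$, where the quadratic coefficient vanishes and $f$ reduces to $-g\,u_2\,(r_1+r_2)$ plus bounded terms; since $u_2<0$ is bounded away from $0$, the coefficient $-g\,u_2$ is positive and again $f\to+\infty$. This establishes the claimed divergence, and hence $\lim_{\psi_m\to 0}\mathscr{V}(\psi_m)=+\infty$.
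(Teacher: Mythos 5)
Your proposal is correct and follows essentially the same route as the paper: establish $r_1,r_2\to+\infty$ from the non-integrable $|\gamma|^{-1}$ singularity in the integral representation, note that $u_1,u_2$ stay bounded (with $u_2$ negative and bounded away from zero), and conclude that the positive quadratic terms in $(r_1,r_2)$ with coefficient $\tfrac{\tan\theta_1\tan\theta_2}{\tan\theta_1+\tan\theta_2}$ dominate, treating $\theta_2=0$ separately via the $-gu_2(r_1+r_2)$ term. The only cosmetic differences are that you justify the divergence of $r_i$ by truncation and dominated convergence where the paper uses monotonicity in $\psi_m$ together with Fatou's lemma, and that you group the quadratic contributions into a single expression where the paper handles the two rows of \eqref{equ:3.2.9} separately.
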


        \begin{proof}

            \textbf{Step 1} In this step, we show that $r_1$ and $r_2$ diverge  to $+\infty$ as $\psi_m\rightarrow 0$.
            
           From equation \eqref{equ:3.2.4}, we have

            \begin{equation*}
                r_1=\int_{\psi_1}^{\psi_m}\frac{\sigma\cos\gamma}{g\sqrt{\frac{2\sigma}{g}(\cos\psi_m-\cos\gamma)}}d\gamma
            \end{equation*}

            \noindent When  $\psi_m=0$, applying Taylor expansion of $\cos\gamma$ near $0$, we have

            \begin{equation*}
                \cos\gamma=1-\frac{1}{2}\gamma^{2}+O((\gamma)^{3}).
            \end{equation*}

           \noindent Substituting this expansion into the integral above yields

            \begin{equation}{\label{equ:3.2.10}}
                r_1(0)\geq \int_{\psi_1}^{0} \frac{\sqrt{\sigma}}{\sqrt{2\sigma g}}\frac{\cos\gamma}{-\gamma}d\gamma=+\infty.
            \end{equation}

            \noindent By a simple observation, $r_1$ is an increasing function of $\psi_m$ for $\psi_m\in(\psi_1,0)$. Therefore, by Fatou's lemma, we obtain

            \begin{equation*}
                \infty=\int_{\psi_1}^{0}\frac{\cos\gamma}{g\sqrt{\frac{2\sigma}{g}(1-\cos\gamma)}}d\gamma\leq \lim_{\psi_{m}\rightarrow 0} \int_{\psi_1}^{\psi_m}\frac{\cos\gamma}{g\sqrt{\frac{2\sigma}{g}(\cos\psi_m-\cos\gamma)}}d\gamma
            \end{equation*}

           \noindent which implies

            \begin{equation*}
            \lim_{\psi_m\rightarrow 0} r_1=+\infty.
            \end{equation*}

            Using the same argument, we conclude that $r_2\rightarrow +\infty$ as $\psi_m\rightarrow 0$, which completes the proof of step 1

            \textbf{Step 2}  In this step, we examine the behavior of $u_1$ and $u_2$ as functions of $\psi_m$. 
            
            Using \eqref{equ:3.2.2} and \eqref{equ:3.2.3}, we obtain
            \begin{align}
                u_1\rightarrow \sqrt{\frac{2\sigma}{g}(1-\cos\psi_1)}~~\operatorname{and}~~u_2\rightarrow \sqrt{\frac{2\sigma}{g}(1-\cos\psi_2)}
            \end{align}
            as $\psi_m\rightarrow 0$. Both $u_1$ and $u_2$ remain uniform bounded for all values of $\psi_m$.

            \textbf{Step 3} In this step, we examine the asymptotic behavior of function $f$ as $\theta_2\neq 0$. 
            
            We begin by analyzing equation \eqref{equ:3.2.9}. Consider first the contribution from the first row of \eqref{equ:3.2.9}. We have

            \begin{equation*}
               g(\frac{1}{2}(r_2+r_1)\frac{\tan\theta_2\tan\theta_1}{\tan\theta_1+\tan\theta_2}-u_1\frac{\tan\theta_2}{\tan\theta_1+\tan\theta_2}-u_2\frac{\tan\theta_1}{\tan\theta_1+\tan\theta_2})(r_1+r_2)\rightarrow +\infty
            \end{equation*}

            \noindent as $\psi_m\rightarrow 0$. Indeed, by Steps 1 and 2,

            \begin{equation*}
                \frac{1}{2}(r_2+r_1)\frac{\tan\theta_2\tan\theta_1}{\tan\theta_1+\tan\theta_2}-u_1\frac{\tan\theta_2}{\tan\theta_1+\tan\theta_2}-u_2\frac{\tan\theta_1}{\tan\theta_1+\tan\theta_2}\rightarrow +\infty,
            \end{equation*}
            
            \noindent and moreover

            \begin{equation*}
            r_2+r_1\rightarrow +\infty
            \end{equation*}
            by step 1. 

             Next, we consider the second row  of \eqref{equ:3.2.9}. We obtain

            \begin{equation}{\label{equ:3.2.12}}
                 gr_1r_2\frac{\tan\theta_1\tan\theta_2}{\tan\theta_1+\tan\theta_2}-\frac{1}{2}g\frac{1}{(\tan\theta_1+\tan\theta_2)}(u_1-u_2)^{2}+\sigma \sin\psi_2-\sigma \sin\psi_1\rightarrow +\infty.
            \end{equation}

            \noindent This follows since the first term in equation \eqref{equ:3.2.12} diverges to $\infty$ as $\psi_m\rightarrow 0$, while the remaining terms are uniformly bounded for all $\psi_m\in (\psi_1,0)$.
            
            In conclusion, we deduce that $f(\psi_m)\rightarrow +\infty$ as $\psi_m\rightarrow 0$

            \textbf{Step 4} In this step, we discuss the asymptotic behavior of $f$ when $\theta_2=0$.

            When $\theta_{2}=0$, equation \eqref{equ:3.2.9} reduces to 

            \begin{equation}{\label{equ:3.2.1000}}
                f(\psi_m)=-u_2g(r_1+r_2)-\frac{1}{2}g\frac{1}{\tan\theta_1}(u_1-u_2)^{2}+\sigma \sin\psi_2-\sigma \sin\psi_1
            \end{equation}

            \noindent From Theorem 3.4, we know that $u_2<0$. Therefore, the first term on the right hand side of \eqref{equ:3.2.1000} is positive and dominated others as $\psi_{m}\rightarrow 0$. Therefore, $f(\psi_m)\rightarrow +\infty$ as $\psi_{m}\rightarrow 0$ by results in Step 1 and Step 2.
            
        \end{proof}

         From Theorem 3.7, we know that as $\psi_m\rightarrow 0$, the total volume $\mathscr{V}(\psi_m)$ enclosed by the boundary curve and two walls diverges to $+\infty$. We now turn to the lower bound of the total volume and establish the following theorem.

        \begin{figure}
        \centering
        \includegraphics[width=0.92\linewidth=2]{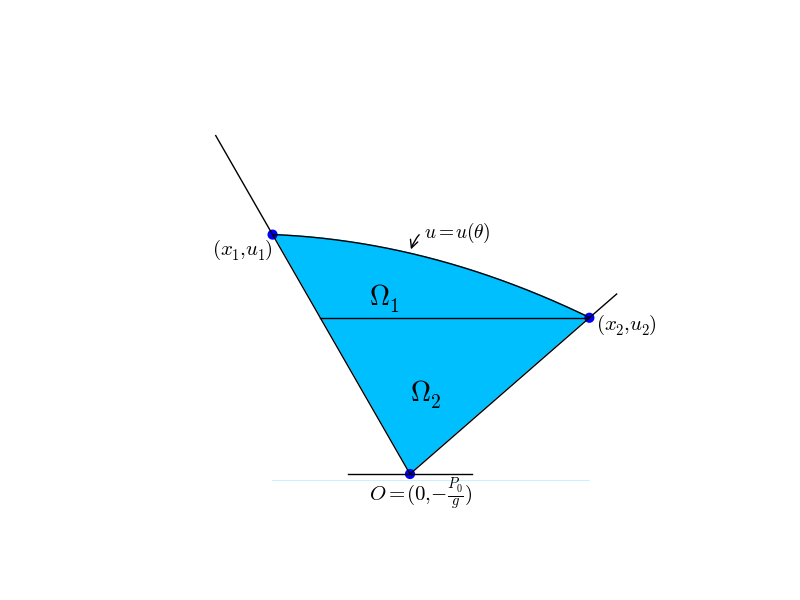}
        \caption{\label{figure8}}
        \end{figure}
     
        \begin{theorem}

        The total volume ${\mathscr{V}}(\psi_m)$, given by equation \eqref{equ:3.2.200}, admits a lower bound $V_m$. More precisely, there exists a minimum volume $V_{m}$ given by

        \begin{equation*}
             V_{m}=\min_{\psi_m \in(\max(\psi_1,\psi_2),0)}\mathscr{V}(\psi_m)
        \end{equation*}

        \noindent The quantity $V_m$ depends on $\psi_1$, $\psi_2$, $\theta_1$, and $\theta_2$. Moreover, $V_m>0$ whenever $\psi_1\neq \psi_2$.
        \end{theorem}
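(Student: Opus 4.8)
The plan is to treat $\mathscr{V}(\psi_m)$ as a continuous real-valued function on the open interval $(\max(\psi_1,\psi_2),0)$ and to locate its minimum by controlling the two endpoints. First I would record that $\mathscr{V}$ is continuous in $\psi_m$: by Theorem 3.5 the quantities $u_1,u_2,r_1,r_2$ are given by the integrals \eqref{equ:3.2.2}--\eqref{3.2.100}, whose integrands carry only an integrable square-root singularity at $\gamma=\psi_m$, and the explicit formula \eqref{equ:3.2.200} expresses $g\mathscr{V}$ as an algebraic combination of these quantities together with the bounded data $\tan\theta_1,\tan\theta_2,\sin\psi_1,\sin\psi_2$; hence $\psi_m\mapsto\mathscr{V}(\psi_m)$ is continuous on the open interval. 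Theorem 3.7 already supplies the behavior at the right endpoint, namely $\mathscr{V}(\psi_m)\to+\infty$ as $\psi_m\to0^-$, so no minimizing sequence can escape toward $\psi_m=0$.

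Next I would analyze the left endpoint $\psi_m\to\max(\psi_1,\psi_2)^+$, assuming without loss of generality $\psi_2=\max(\psi_1,\psi_2)$. As $\psi_m\downarrow\psi_2$ the right segment degenerates: since $\cos\psi_m-\cos\gamma\sim|\sin\psi_m|\,(\psi_m-\gamma)$ near $\gamma=\psi_m$, the integral defining $r_2$ over $[\psi_2,\psi_m]$ scales like $\sqrt{\psi_m-\psi_2}$ and so $r_2\to0$, while $u_2\to0$ directly from \eqref{equ:3.2.3}. On the other hand $r_1$ and $u_1$ converge to the finite values $r_1(\psi_2)$ and $u_1(\psi_2)$ because $\psi_1<\psi_2$ keeps the integration interval $[\psi_1,\psi_2]$ of positive length, with the singularity at the upper limit still integrable. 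Substituting these limits into \eqref{equ:3.2.200} shows $\mathscr{V}(\psi_m)\to L$ for a finite $L$. Combining the finite left limit, continuity, and the blow-up at $0$, I would extend $\mathscr{V}$ continuously to the closed left endpoint and apply the extreme value theorem on a compact subinterval $[\max(\psi_1,\psi_2),b]$ with $b<0$ chosen so that $\mathscr{V}(b)$ exceeds the infimum; this produces a point at which the minimum is attained and defines $V_m=\min_{\psi_m}\mathscr{V}(\psi_m)$, depending only on $\psi_1,\psi_2,\theta_1,\theta_2$.

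Finally I would establish the positivity dichotomy. For every $\psi_m$ in the open interval the region bounded by the curve and the two walls is non-degenerate, so $\mathscr{V}(\psi_m)>0$. When $\psi_1\neq\psi_2$, the limiting configuration at $\psi_m=\max(\psi_1,\psi_2)$ still retains one segment of positive length (only the shorter arc collapses), so the limit $L>0$ and hence $V_m>0$. When $\psi_1=\psi_2$, both $r_1$ and $r_2$ tend to $0$ as $\psi_m\to\psi_1=\psi_2$, the entire curve collapses, and $\mathscr{V}\to0$, so the infimum degenerates to $0$; this is exactly the case excluded by the hypothesis. The step I expect to be the main obstacle is the precise control of the left endpoint: verifying through the singular integrals \eqref{equ:3.2.4}--\eqref{3.2.100} that $\mathscr{V}$ tends to a finite and strictly positive limit when only one segment degenerates, and confirming that the minimum is genuinely attained rather than merely approached in this degenerate limit — the same asymptotic computation simultaneously delivering the sharp criterion $V_m>0\iff\psi_1\neq\psi_2$.
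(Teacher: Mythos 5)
Your proposal is essentially correct, but it takes a genuinely different route from the paper. You argue by soft analysis: continuity of $\mathscr{V}$ on the open interval, blow-up at $\psi_m\to 0^-$ (Theorem 3.7), a finite limit $L$ at the left endpoint obtained by tracking which of $r_1,r_2,u_1,u_2$ degenerate, and then compactness to conclude the infimum is positive. The paper instead produces an \emph{explicit uniform-in-$\psi_m$} positive lower bound: it splits the region along the line $y=u_2$, keeps only the upper piece $\Omega_2$, and bounds $\mathscr{V}\ge\mathscr{V}_2$ from below by the fixed quantity $(1+p)\int_{\psi_2}^{\psi_1}\bigl(\int_{\psi}^{\psi_1}\frac{\sigma\cos\gamma}{g\sqrt{\frac{2\sigma}{g}(1-\cos\gamma)}}\,d\gamma\bigr)\frac{\sigma}{g}\frac{-\sin\psi}{\sqrt{\frac{2\sigma}{g}(1-\cos\psi)}}\,d\psi$, where $p>-1$ absorbs the wall-correction term via $\tan\gamma/\tan\theta_1\ge\tan\psi_1/\tan\theta_1>p$ and the replacement $\cos\psi_m\mapsto 1$ makes everything independent of $\psi_m$. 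The paper's route buys a quantitative constant and entirely avoids endpoint asymptotics; your route is more elementary but leaves two soft spots that the explicit bound sidesteps. First, your positivity of $L$ rests on "the limiting region is non-degenerate," which is not visible from formula \eqref{equ:3.2.200} itself (it contains the negative terms $-u_1\frac{\tan\theta_2}{\tan\theta_1+\tan\theta_2}r_1$ and $-\frac{1}{2}g\frac{(u_1-u_2)^2}{\tan\theta_1+\tan\theta_2}$), so you must justify that \eqref{equ:3.2.200} computes the geometric area and passes to the limit; the paper's integral inequality makes positivity manifest. Second, you correctly flag that the infimum may only be approached at the degenerate endpoint, so writing $\min$ rather than $\inf$ is not fully justified by your argument --- though the paper's proof does not resolve this either, and the issue is harmless for the existence theorem since the boundary-maximum case (Theorems 3.9--3.10) covers $V=V_m$. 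Your treatment of the degenerate case $\psi_1=\psi_2$ (both arcs collapse, $\mathscr{V}\to 0$) matches the paper's.
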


        \begin{proof}
        
        Just as what we did in Section 2, we separate the area into several parts. Let $\Omega_{1}$ denote the area enclosed by $y=u_2$ and two walls, and let the corresponding volume function be ${\mathscr{V}}_1(\psi_m)$. Let $\Omega_{2}$ denote the area $\Omega/\Omega_{1}$, and let the corresponding volume function be ${\mathscr{V}}_2(\psi_m)$.  Then the total volume satisfies ${\mathscr{V}}(\psi_m)\geq {\mathscr{V}}_2(\psi_m)$. See the figure \ref{figure7}.

        We now proceed to compute $\mathscr{V}_{2}$. Suppose that $\psi_{1}\neq \psi_{2}$; without loss of generality, we assume that $\psi_{1}>\psi_{2}$. Hence, by a direct geometric observation, we have

        \begin{equation}{\label{equ:V_2}}
        \begin{aligned}
            \mathscr{V}_{2}(\psi)\geq& \int_{\psi_{2}}^{\psi_{m}}(\int_{\psi}^{\psi_{m}}\frac{\sigma\cos\gamma}{g\sqrt{\frac{2\sigma}{g}(\cos\psi_m-\cos\gamma)}}d\gamma -\frac{1}{\tan\theta_{1}}(\sqrt{\frac{2\sigma}{g}(\cos\psi_{m}-\cos\psi)}))\frac{\sigma}{g}\frac{-\sin\psi}{\sqrt{\frac{2\sigma}{g}(\cos\psi_{m}-\cos\psi)}}d\psi\\
            \geq& \int_{\psi_{2}}^{\psi_{1}}(\int_{\psi}^{\psi_{1}}\frac{\sigma\cos\gamma}{g\sqrt{\frac{2\sigma}{g}(\cos\psi_m-\cos\gamma)}}d\gamma-\frac{1}{\tan\theta_{1}}(\sqrt{\frac{2\sigma}{g}(\cos\psi_{m}-\cos\psi)}))\frac{\sigma}{g}\frac{-\sin\psi}{\sqrt{\frac{2\sigma}{g}(1-\cos\psi)}}d\psi
        \end{aligned}
        \end{equation}

        \noindent Note that $\frac{\sigma}{g}\frac{-\sin\psi}{\sqrt{\frac{2\sigma}{g}(1-\cos\psi)}}>0$ for all $\psi\in(\psi_{2},\psi_{m})$ and is independent of $\psi_{m}$. Therefore, it suffices to analyze the following integrant
        \begin{align}
            (\int_{\psi}^{\psi_{1}}\frac{\sigma\cos\gamma}{g\sqrt{\frac{2\sigma}{g}(\cos\psi_m-\cos\gamma)}}d\gamma -\frac{1}{\tan\theta_{1}}(\sqrt{\frac{2\sigma}{g}(\cos\psi_{m}-\cos\psi)}))
        \end{align}
        \noindent We use following computation to handle it
        \begin{align}{\label{equ:key_ineq_1}}
            &(\int_{\psi}^{\psi_{1}}\frac{\sigma\cos\gamma}{g\sqrt{\frac{2\sigma}{g}(\cos\psi_m-\cos\gamma)}}d\gamma -\frac{1}{\tan\theta_{1}}(\sqrt{\frac{2\sigma}{g}(\cos\psi_{m}-\cos\psi)}))\notag\\
            &=(\int_{\psi}^{\psi_{1}}\frac{\sigma\cos\gamma}{g\sqrt{\frac{2\sigma}{g}(\cos\psi_m-\cos\gamma)}}d\gamma +\frac{1}{\tan\theta_{1}}\int_{\psi}^{\psi_{1}}\frac{\sigma \sin \gamma}{g\sqrt{\frac{2\sigma}{g}(\cos\psi_{m}-\cos\gamma)}} d\gamma)\notag\\
            &\geq (\int_{\psi}^{\psi_{1}}\frac{\sigma\cos\gamma}{g\sqrt{\frac{2\sigma}{g}(\cos\psi_m-\cos\gamma)}}d\gamma +\int_{\psi}^{\psi_{1}}\frac{\sigma \frac{\tan\gamma}{\tan\theta_{1}} \cos \gamma}{g\sqrt{\frac{2\sigma}{g}(\cos\psi_{m}-\cos\gamma)}} d\gamma)
        \end{align}
        \noindent Since,
        \begin{align}
            \frac{\tan\gamma}{\tan\theta_{1}}\geq \frac{\tan\psi_{1}}{\tan\theta_{1}}>p>-1
        \end{align}
        \noindent for some constant $p>-1$ uniformly for all $\psi_{m}\in(\psi_{1},0)$. Therefore, using this result in equation \eqref{equ:key_ineq_1}, we have
        \begin{align}
            &(\int_{\psi}^{\psi_{1}}\frac{\sigma\cos\gamma}{g\sqrt{\frac{2\sigma}{g}(\cos\psi_m-\cos\gamma)}}d\gamma -\frac{1}{\tan\theta_{1}}(\sqrt{\frac{2\sigma}{g}(\cos\psi_{m}-\cos\psi)}))\notag\\
            &\geq ((1+p)\int_{\psi}^{\psi_{1}}\frac{\sigma\cos\gamma}{g\sqrt{\frac{2\sigma}{g}(1-\cos\gamma)}}d\gamma ).
        \end{align}
        \noindent Using this estimate in equation \eqref{equ:V_2}, we obtain
        \begin{align}
            \mathscr{V}_{2}\geq (1+p)\int_{\psi_{2}}^{\psi_{1}}(\int_{\psi}^{\psi_{1}}\frac{\sigma\cos\gamma}{g\sqrt{\frac{2\sigma}{g}(1-\cos\gamma)}}d\gamma))\frac{\sigma}{g}\frac{-\sin\psi}{\sqrt{\frac{2\sigma}{g}(1-\cos\psi)}}d\psi.
        \end{align}
        The right hand side of inequality is uniformly bounded from below for any $\psi_{m}\in (\psi_{1},0)$. (It is independent of $\psi_{m}$). Hence, the result for then case when $\psi_{1}\neq \psi_{2}$ is proved.

        When $\psi_{1}=\psi_{2}$, from equation \eqref{equ:3.2.4} and \eqref{3.2.5}, we have $x_{1}\rightarrow x_{m}$ and $x_{2}\rightarrow x_{m}$ as $\psi_{m}\rightarrow \psi_{1}$. It implies that $x_{1}\rightarrow 0$ as $\psi_{m}\rightarrow \psi_{1}$, and $x_{2}\rightarrow 0$ as $\psi_{m}\rightarrow \psi_{1}$. Combining these two limits, we conclude that $\mathscr{V}(\psi_{m})\rightarrow 0$ as $\psi_{m}\rightarrow \psi_{1}$. This completes the proof.
        \end{proof}

       It therefore suffices to consider the case in which the surface curve can be parameterized by $\psi$. In the preceding analysis, we assumed that the maximal angle $\psi_m$ is attained in the interior of the domain. It remains to address the complementary situation in which $\psi(x)$ attains its maximum at the boundary.

       The subsequent analysis is analogous to that in Section~2. We construct a solution to equation \eqref{equ:1.3.4} with initial condition $(x_1,u_1)$, corresponding to the left contact point. Based on this construction, we then derive an explicit expression for the location of the other contact point.

        \begin{theorem}

         Let $(x_1,u_1)$ and $(x_2,u_2)$ be coordinates of two contact points. We have the following two results
        
        1: If $\psi_1\geq \psi_2$, $\psi(x)$ is a monotonically decreasing function of $x$. The corresponding expressions for $(x_{2},u_{2})$ are given by

        \begin{align}
             x_2&=x_1+\int_{\psi_2}^{\psi_1}\frac{\sigma\cos\gamma d\gamma}{g\sqrt{u_1^{2}+\frac{2\sigma}{g}(\cos\psi_1-\cos\psi_2)}}\label{equ:3.2.15}\\
             u_2&=\sqrt{u_1^{2}+\frac{2\sigma}{g}(\cos\psi_1-\cos\psi_2)}\label{equ:3.2.16}
        \end{align}

        2: If $\psi_2\geq \psi_1$, $\psi(x)$ is a monotonically increasing function of $x$. The corresponding expressions for $(x_{2},u_{2})$ are given by

        \begin{align}
            x_2&=x_1+\int_{\psi_1}^{\psi_2}\frac{\sigma\cos\gamma d\gamma}{g\sqrt{u_1^{2}+\frac{2\sigma}{g}(\cos\psi_1-\cos\psi_2)}} \label{equ:3.2.17}\\
            u_2&=\sqrt{u_1^{2}+\frac{2\sigma}{g}(\cos\psi_1-\cos\psi_2)}\label{equ:3.2.18}
        \end{align}

        \end{theorem}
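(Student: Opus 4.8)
The plan is to exploit that, in the complementary regime under consideration, $\psi$ is a monotone function of $x$ and hence serves as a global parameter for the curve; the formulas \eqref{equ:3.2.15}--\eqref{equ:3.2.18} then follow by integrating the reparametrized system \eqref{equ:1.2.7} outward from the left contact point $(x_1,u_1)$, exactly as in Theorem \ref{thm:bdd_p}. First I would settle the direction of monotonicity. Expanding the derivative in \eqref{equ:1.3.4} gives $\sigma\cos\psi\,\psi'(x)=gu$, and since $\psi\in(-\tfrac{\pi}{2},0)$ forces $\cos\psi>0$, the sign of $\psi'(x)$ coincides with the sign of $u$ along the arc. A monotone $\psi$ cannot cross the singular level $u=0$, so $u$ keeps a constant sign; comparing the endpoint values $\psi(x_1)=\psi_1$ and $\psi(x_2)=\psi_2$ with $x_1<x_2$ then shows that $\psi_1\ge\psi_2$ forces $\psi$ to be decreasing (equivalently $u<0$), while $\psi_2\ge\psi_1$ forces it to be increasing (equivalently $u>0$). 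This is the first assertion in each of the two cases, and it simultaneously fixes the branch of the square root used below.

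Next I would integrate the second equation of \eqref{equ:1.2.7}, rewritten as $gu\,du=\sigma\sin\psi\,d\psi$, from $\psi_1$ to an arbitrary $\psi$, obtaining the energy relation
\begin{equation*}
\tfrac{1}{2} g\,u^2(\psi)-\tfrac{1}{2} g\,u_1^2=\sigma(\cos\psi_1-\cos\psi).
\end{equation*}
Evaluating at $\psi=\psi_2$ yields $u_2^2=u_1^2+\tfrac{2\sigma}{g}(\cos\psi_1-\cos\psi_2)$, whose right-hand side is nonnegative because $u^2(\psi)$ stays positive on the arc ($u$ never vanishing); taking the root with the orientation-consistent sign from the first paragraph gives \eqref{equ:3.2.16} and \eqref{equ:3.2.18}. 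For the horizontal coordinate I would substitute $u(\psi)$ into the first equation of \eqref{equ:1.2.7} and integrate,
\begin{equation*}
x_2-x_1=\int_{\psi_1}^{\psi_2}\frac{dx}{d\psi}\,d\psi=\int_{\psi_1}^{\psi_2}\frac{\sigma\cos\psi}{g\,u(\psi)}\,d\psi ,
\end{equation*}
where in the decreasing case ($u<0$) reversing the limits absorbs the sign and produces the orientation $\int_{\psi_2}^{\psi_1}$ of \eqref{equ:3.2.15}, while in the increasing case ($u>0$) the orientation $\int_{\psi_1}^{\psi_2}$ of \eqref{equ:3.2.17} is retained. Integrability of the integrand at the endpoints is the same Taylor-expansion estimate for $\cos\psi_1-\cos\psi$ already used in Lemma \ref{lem:curve}.

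The step I expect to be the crux is the sign/branch bookkeeping. One must verify that $u$ stays away from $0$ on the open arc, so that the $\psi$-parametrization and the integral defining $x_2$ are well posed, and that the square root is taken with the orientation-consistent sign in each of the two cases. This is precisely where the identity $\psi'(x)=gu/(\sigma\cos\psi)$ earns its keep, coupling the direction of monotonicity to the sign of $u$ and hence to the branch of the root; once that coupling is recorded, the remaining manipulations are the same definite-integral computations already carried out in Section~2, so no new analytic difficulty arises.
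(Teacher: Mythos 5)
Your proposal is correct and follows essentially the same route as the paper, whose proof simply integrates the reparametrized system \eqref{equ:1.2.7} outward from the contact point $(x_1,u_1)$ exactly as in Theorem 3.5 and Theorem \ref{thm:bdd_p}; your additional justification of the monotonicity direction via $\sigma\cos\psi\,\psi'(x)=gu$ is a welcome piece of detail the paper leaves implicit. One remark: your own sign bookkeeping ($u<0$ along the arc in case 1) actually yields $u_2=-\sqrt{u_1^{2}+\tfrac{2\sigma}{g}(\cos\psi_1-\cos\psi_2)}$ rather than the positive root written in \eqref{equ:3.2.16}, and an integrand carrying $\cos\gamma$ rather than the constant $\cos\psi_2$ under the radical in \eqref{equ:3.2.15}; both discrepancies appear to be typos in the stated formulas (the limit $u_2-u_1\to 0$ as $u_1\to-\infty$ used in Theorem 3.10 requires the negative root), so your derivation, not the displayed expression, is the internally consistent one.
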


        \begin{proof}

            The proof follows in the same manner as the proofs of Theorem 3.5 and Theorem \ref{thm:bdd_p} 
        \end{proof}

       For the remainder of the discussion, we assume that $\psi_2 \le \psi_1 < 0$.  
The case $\psi_1 < \psi_2$ can be treated analogously and is therefore omitted.  
See Figure~\ref{figure8} for an illustration of the geometry.

As in the derivation of \eqref{equ:3.2.6}, we obtain
\begin{equation}\label{equ:3.1.70}
    \tan\theta_1 (-x_1)=x_2\tan\theta_2+(u_1-u_2).
\end{equation}
Substituting $x_2=x_1+r_2$ into \eqref{equ:3.1.70} yields
\begin{equation}\label{equ:3.1.34}
    x_1=\frac{-r_2\tan\theta_2+(u_2-u_1)}{\tan\theta_1+\tan\theta_2}.
\end{equation}

Next, we express the total volume enclosed by the previously constructed curve in terms of $x_1$ and $u_1$.  
We decompose the total volume into two regions. The first region, denoted by $\Omega_1$, is the area enclosed by the boundary curve, the left wall, and the horizontal line $y=u_2$. The second region, denoted by $\Omega_2$, is the area bounded by the two walls and the line $y=u_2$; see Figure~\ref{figure8}.  

We denote the corresponding volumes of these regions by $\mathscr{V}_1$ and $\mathscr{V}_2$, respectively.

         Using the same computation as in Section 2, we obtain the following results

         \begin{align}
             {\mathscr{V}}_1(u_1)&=\mathscr{V}_1(x_1(u_1),u_1)\notag \\
             &=\int_{\psi_1}^{\psi_2}\int_{\psi}^{\psi_1}\frac{\sigma\cos\gamma}{g\sqrt{u_1^{2}+\frac{2\sigma}{g}(1-\cos\gamma)}}d\gamma \frac{\sigma\sin\psi}{g\sqrt{u_1^{2}+\frac{2\sigma}{g}(1-\cos\psi)}}d\psi-\frac{1}{2\tan\theta_1}(u_2-u_1)^{2} \label{equ:3.1.35}\\
             {\mathscr{V}}_2(u_1)&=\mathscr{V}_2(x_1(u_1),u_1)=\frac{1}{2}(\frac{1}{\tan\theta_1}+\frac{1}{\tan\theta_2})(u_2-(u_1+x_1\tan\theta_1)) \label{equ:3.1.36}
         \end{align}

        \noindent  Substituting equation \eqref{equ:3.1.34} into equation \eqref{equ:3.1.36}, we obtain

        \begin{equation*}
            {\mathscr{V}}_2=\frac{1}{2}\frac{1}{(\tan\theta_1+\tan\theta_2)}(\frac{1}{\tan\theta_1}+\frac{1}{\tan\theta_2})(r_2\tan\theta_2+(u_2-u_1)\tan\theta_2)
        \end{equation*}

         Based on the computation above,  the total volume ${\mathscr{V}}={\mathscr{V}}_1+{\mathscr{V}}_2$ can now be expressed in terms of $u_1$. Combining the ODE \eqref{equ:1.3.4}:

        \begin{equation*}
            \partial_{x}(\sin\psi)=gu
        \end{equation*}

        \noindent with the result in Theorem 3.9 which states $\psi(x)$ is monotonically decreasing, we obtain:

        \begin{equation*}
            gu_1=\partial_{x}(\sin\psi)(x_1)\leq 0,
        \end{equation*}

        \noindent which implies that

        \begin{equation*}
            u_1\leq 0.
        \end{equation*}

        \noindent Thus, we determine the admissible range of values for $u_1$
       and proceed to establish the following theorem.
        
        \begin{theorem}

            The total volume ${\mathscr{V}}(u_1)$ is a bounded function of $u_1$ when $u_1\leq 0$. Moreover, its upper bound $V_{1}$ satisfies $V_1\geq V_{m}$ if $\psi_{1}\neq \psi_{2}$.
        \end{theorem}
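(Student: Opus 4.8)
The plan is to prove the two assertions in turn: first that $\mathscr{V}(u_1)$ stays bounded on the ray $u_1\le 0$, so that $V_1:=\max_{u_1\le 0}\mathscr{V}(u_1)$ is well defined and finite; and second that this maximum dominates the minimum volume $V_m$ produced by the interior-maximum construction of Theorem 3.8. The structural idea behind the second part is that the two one-parameter families of admissible curves are not independent. The interior family is parametrized by $\psi_m\in(\max(\psi_1,\psi_2),0)$ with $u_m=0$, while the boundary family (the present one) is parametrized by $u_1\le 0$ with the maximal slope fixed at $\psi_m=\max(\psi_1,\psi_2)$. These two families share a common limiting configuration, and matching the volume functionals there forces $V_1\ge V_m$.

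For boundedness I would start from the explicit formulas \eqref{equ:3.1.35} and \eqref{equ:3.1.36} for $\mathscr{V}_1(u_1)$ and $\mathscr{V}_2(u_1)$, together with the contact-point expressions \eqref{equ:3.2.15}--\eqref{equ:3.2.18}. The crucial observation, which distinguishes this case from Section~2, is that along the whole curve the slope angle $\psi$ ranges over $[\psi_2,\psi_1]$ with both endpoints strictly negative; hence the integrands in \eqref{equ:3.1.35} never meet the angle $\psi=0$ that produced the logarithmic blow-up in Theorem~2.7. Consequently every integral converges uniformly, even at $u_1=0$, so $\mathscr{V}(0)$ is finite. At the other extreme, as $u_1\to-\infty$ each denominator diverges, and since $u_2^2-u_1^2$ is a constant independent of $u_1$ by \eqref{equ:3.2.16}, one obtains $u_2-u_1\to 0$ and $r_2\to 0$, whence $\mathscr{V}(u_1)\to 0$, recovering the asymptotics recorded in Theorem~1.3(i). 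Because $\mathscr{V}$ is continuous on $(-\infty,0]$, tends to $0$ at $-\infty$, and is finite at $0$, it is bounded and its supremum $V_1$ is attained.

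For the inequality $V_1\ge V_m$ I would argue by continuity at the transition value $\psi_m=\max(\psi_1,\psi_2)$, which here equals $\psi_1$ since $\psi_2\le\psi_1<0$. As $\psi_m\downarrow\psi_1$ in the interior construction, the left segment of the curve---running from the maximal point down to the contact angle $\psi_1$---shrinks to zero length: the integration range $[\psi_1,\psi_m]$ in \eqref{equ:3.2.4} collapses, so $r_1\to 0$ and $x_1\to x_m$, while \eqref{equ:3.2.2} gives $u_1=\sqrt{\tfrac{2\sigma}{g}(\cos\psi_m-\cos\psi_1)}\to 0=u_m$. In this limit the interior curve coincides with the monotone boundary curve built in Theorem~3.9 with left contact height $u_1=0$, and the enclosed region, hence the volume, is the same. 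Therefore $\lim_{\psi_m\to\psi_1^+}\mathscr{V}_{\mathrm{int}}(\psi_m)=\mathscr{V}_{\mathrm{bdy}}(u_1=0)$. Since $V_m$ is the infimum of $\mathscr{V}_{\mathrm{int}}$ over the open interval it is at most this limiting value, whereas $V_1$ is the supremum of $\mathscr{V}_{\mathrm{bdy}}$ and so is at least $\mathscr{V}_{\mathrm{bdy}}(0)$; chaining the two estimates gives
\[
V_m\ \le\ \lim_{\psi_m\to\psi_1^+}\mathscr{V}_{\mathrm{int}}(\psi_m)\ =\ \mathscr{V}_{\mathrm{bdy}}(0)\ \le\ V_1 .
\]
The hypothesis $\psi_1\neq\psi_2$ enters only through Theorem~3.8, which guarantees $V_m>0$, so that the inequality is genuinely nontrivial.

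The main obstacle I expect is making the transition-matching rigorous: one must verify that the two distinct parametrizations really describe the same physical curve in the limit and that the decomposed functionals $\mathscr{V}_1+\mathscr{V}_2$ from the two constructions converge to a common value. Concretely, this reduces to checking, via the explicit integral formulas \eqref{equ:3.2.4}, \eqref{3.2.5} and \eqref{equ:3.2.15}--\eqref{equ:3.2.18}, that as $\psi_m\to\psi_1$ the interior contact data $(x_1,u_1,x_2,u_2)$ converge to the corresponding boundary data at $u_1=0$, together with the continuous dependence of $\mathscr{V}$ on these parameters. The one delicate technical point is uniform control of the integrals near the collapsing endpoint, where the integrand carries an integrable inverse-square-root singularity; a dominated-convergence argument exploiting this integrable singularity should close the gap.
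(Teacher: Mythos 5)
Your proposal follows essentially the same route as the paper's proof: boundedness comes from continuity of $\mathscr{V}(u_1)$, the decay $r_2,\,u_2-u_1,\,x_1\to 0$ as $u_1\to-\infty$, and finiteness at $u_1=0$ because $\psi_1<0$ keeps the integrand in \eqref{equ:3.1.35} away from the singular angle $\psi=0$; the inequality $V_1\ge V_m$ comes from identifying the boundary-maximum curve at $u_1=0$ with the interior-maximum curve at $\psi_m=\psi_1$ and chaining $V_m\le \mathscr{V}(\psi_1)=\lim_{u_1\to 0}\mathscr{V}(u_1)\le V_1$. Your treatment of the transition limit $\psi_m\downarrow\psi_1$ is in fact slightly more careful than the paper's one-line identification, but the argument is the same.
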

        \begin{proof}
        
            We first prove that $\mathscr{V}$ is a bounded function of $u_1$. We note that it is a continuous function of $u_1$. Furthermore, as $u_1\rightarrow -\infty$, equations \eqref{equ:3.2.15} and \eqref{equ:3.2.16} yield

            \begin{equation}{\label{equ:3.1.37}}
                r_2=x_2-x_1=\int_{\psi_2}^{\psi_1}\frac{\sigma\cos\gamma d\gamma}{g\sqrt{u_1^{2}+\frac{2\sigma}{g}(\cos\psi_1-\cos\psi_2)}}\rightarrow 0~\operatorname{when}~u_1\rightarrow -\infty,
            \end{equation}
            and
            \begin{equation}{\label{equ:3.1.38}}
                u_2-u_1\rightarrow 0~\operatorname{when}~u_1\rightarrow -\infty.
            \end{equation}

            \noindent Substituting \eqref{equ:3.1.37} and \eqref{equ:3.1.38} into \eqref{equ:3.1.34}, we obtain

            \begin{equation}{\label{equ:3.1.39}}
                x_1=\frac{-r_2\tan\theta_2+(u_2-u_1)}{\tan\theta_1+\tan\theta_2}\rightarrow 0~\operatorname{when}~u_1\rightarrow -\infty.
            \end{equation}

             Using the explicit expressions for $\mathscr{V}_1$ and $\mathscr{V}_2$ given in \eqref{equ:3.1.35} and \eqref{equ:3.1.36}, together with \eqref{equ:3.1.37},\eqref{equ:3.1.38} and \eqref{equ:3.1.39}, we obtain the following results
            
            \begin{equation*}
                \mathscr{V}_1\rightarrow 0~\operatorname{when}~u_1\rightarrow -\infty,
            \end{equation*}

            \noindent and

            \begin{equation*}
                 \mathscr{V}_2\rightarrow 0~\operatorname{when}~u_1\rightarrow -\infty.
            \end{equation*}

            \noindent Consequently,
\[
\mathscr{V}(u_1)=\mathscr{V}_1+\mathscr{V}_2 \;\longrightarrow\; 0
\qquad \text{as } u_1\to -\infty .
\]

It remains to examine the behavior at $u_1=0$. From the preceding discussion, the integral appearing in the representation \eqref{equ:3.1.35} diverges only when $\psi_1=0$. Since $\psi_1<0$ in the present setting, the volume functional $\mathscr{V}$ remains bounded at $u_1=0$. This proves that $\mathscr{V}$ is bounded on $(-\infty,0]$.

             Finally, we show that the upper bound $V_1$ satisfies $V_{1}\geq V_m$. When $u_1=0$, the corresponding curve coincides with that constructed in Theorem~3.8 with $\psi_m=\psi_1$. Therefore, by definition,

            \begin{equation*}
                V_1\geq \lim_{u_1\rightarrow 0} {\mathscr{V}}(u_1)={\mathscr{V}}(\psi_1)\geq V_m,
            \end{equation*}

            \noindent which finishes the proof.
        \end{proof}

        Combining Theorems~3.8 and~3.10, we obtain the following result.

        \begin{theorem}

            Assume that $\theta_1$ and $\theta_2$ are both acute angles. Then, for any prescribed total volume $V$, there exists at least one steady-state solution.

Moreover, there exist two positive constants $V_1$ and $V_m$, depending on $\theta_1$, $\theta_2$, $\sigma$, and $[\![\gamma]\!]$, such that the following statements hold:
\begin{itemize}
\item $V_{1}\geq V_{m}$
\item If $V \ge V_m$, there exists a value $\psi_m$ and a corresponding solution to system \eqref{equ:1.3.300} for which the slope angle $\psi$ attains its maximum in the interior interval $(\psi_2,\psi_1)$ (or $(\psi_1,\psi_2)$ if $\psi_1 \le \psi_2$).

\item If $V \le V_1$, there exists a value $u_1$ and a corresponding solution to system \eqref{equ:1.2.3} for which the slope angle $\psi$ attains its maximum on the boundary.

\item If $V_1 > V_m$, the steady-state solution is not unique.
            \end{itemize}

        \end{theorem}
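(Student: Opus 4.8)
The plan is to synthesize the two one-parameter families of solutions produced in the interior-maximum and boundary-maximum analyses, and to exploit the inequality $V_1\geq V_m$ so that their volume ranges overlap and jointly cover $[0,\infty)$. I would begin with the interior-maximum regime, where the solution curve is determined by the parameter $\psi_m\in(\max(\psi_1,\psi_2),0)$. Theorem 3.7 gives the blow-up $\lim_{\psi_m\to 0}\mathscr{V}(\psi_m)=+\infty$, while Theorem 3.8 provides the strictly positive minimum $V_m=\min_{\psi_m}\mathscr{V}(\psi_m)$ (positive whenever $\psi_1\neq\psi_2$). Since $\mathscr{V}(\psi_m)$ is continuous in $\psi_m$, its image on $(\max(\psi_1,\psi_2),0)$ is exactly $[V_m,+\infty)$; by the intermediate value theorem, for every $V\geq V_m$ there is a $\psi_m$ with $\mathscr{V}(\psi_m)=V$, yielding a solution of \eqref{equ:1.3.300} whose slope angle attains an interior maximum.

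Next I would treat the boundary-maximum regime, parametrized by $u_1\leq 0$. Theorem 3.10 supplies both $\lim_{u_1\to-\infty}\mathscr{V}(u_1)=0$ and the finite upper bound $V_1=\max_{u_1\leq 0}\mathscr{V}(u_1)$, and $\mathscr{V}(u_1)$ is continuous in $u_1$; hence its image is $[0,V_1]$, and the intermediate value theorem furnishes, for every $V\in[0,V_1]$, a $u_1$ with $\mathscr{V}(u_1)=V$ and a solution of \eqref{equ:1.2.3} whose slope angle attains its maximum at a contact point. The decisive step is the inequality $V_1\geq V_m$, already proved at the end of Theorem 3.10 by identifying the boundary curve at $u_1=0$ with the interior curve at $\psi_m=\psi_1$, which gives $V_1\geq\mathscr{V}(\psi_1)\geq V_m$. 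Combining the two ranges, $[0,V_1]\cup[V_m,+\infty)=[0,+\infty)$, so every prescribed $V\geq 0$ falls into at least one regime; this establishes existence for all $V$ together with the first three bullet points.

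Finally, for the non-uniqueness claim I would take $V_1>V_m$ and any $V$ in the nonempty open overlap $(V_m,V_1)$. The interior regime then produces a solution with $u_m=0$ and $\psi_m$ strictly inside $(\psi_2,\psi_1)$, while the boundary regime produces a solution whose maximal slope is pinned to a wall; these are qualitatively different free-surface profiles and therefore cannot be the same curve, so at least two distinct steady states coexist. The substantive analysis—blow-up, the uniform bounds, continuity of $\mathscr{V}$ in each parameter, and above all $V_1\geq V_m$—has already been carried out in Theorems 3.7, 3.8, and 3.10, so this final theorem is essentially an assembly argument. The one point that must be handled with care, and which I regard as the main obstacle, is verifying that the interior and boundary families are genuinely distinct on the overlap rather than different parametrizations of one curve; this is exactly where the dichotomy between an interior maximum of $\psi$ (with $u_m=0$) and a boundary maximum (with $u_1<0$) is used, and it is what rules out any accidental coincidence of the two constructions.
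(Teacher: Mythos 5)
Your proposal is correct and follows essentially the same route as the paper, which proves this theorem simply by assembling Theorems 3.7, 3.8, and 3.10: blow-up of $\mathscr{V}(\psi_m)$ as $\psi_m\to 0$ and the positive lower bound $V_m$ give the range $[V_m,+\infty)$ for the interior-maximum family, decay of $\mathscr{V}(u_1)$ as $u_1\to-\infty$ and the upper bound $V_1$ give $[0,V_1]$ for the boundary-maximum family, and the identification of the $u_1=0$ curve with the $\psi_m=\psi_1$ curve yields $V_1\geq V_m$ so the two ranges cover $[0,+\infty)$. Your added care about distinguishing the two families on the overlap $(V_m,V_1)$ is a sensible refinement of the non-uniqueness claim that the paper leaves implicit.
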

        
        \end{subsection}

        \begin{subsection}{General case when $\theta_1>\frac{\pi}{2}$}

            In this case, the only difference from Subsection~3.2 is that $\psi_1$ and $\psi_2$ may have the same sign even when $[\![\gamma]\!]<0$. 

            By definition, $\psi_1=-\theta_1+\frac{\pi}{2}+\gamma$ and $\psi_2=\theta_2-\frac{\pi}{2}-\gamma$. It follows that $\psi_2<-\frac{\pi}{2}$ when $\gamma>\theta_2$, in which case the boundary curve may no longer be represented as the graph of a function with respect to $x$. Consequently, equation~\eqref{equ:3.3.14} can no longer be used to derive the equation for $P_0$. Instead, we must adopt the approach developed in Section~2, computing the total volume using the coordinates of a suitably chosen reference point. This constitutes the only essential difference between the present subsection and Subsection~3.2.
            
           In what follows, we only discuss the case $\gamma>\theta_2$. For $\gamma\leq \theta_2$, the analysis in Subsection 3.2 applies directly. We further assume that $\psi$ attains its maximum in the interior of the domain and that the curve cannot be parameterized by $\psi$. Specifically, we assume that $\psi(x)$ reaches its maximal value $\psi_m$ at the point $(x_m,0)$. We then construct the solution curve by separately constructing two curve segments, each satisfying equation~\eqref{equ:1.2.7} with initial data $(u_m, y_m=0, \psi_m)$.

             Suppose that the resulting curve intersects the two walls at contact points $(x_1,u_1)$ and $(x_2,u_2)$; see Figure~\ref{figure9}. The subsequent computations for these coordinates proceed in the same manner as those in Section~2, and are summarized as follows.

            \begin{figure}
            \centering
            \includegraphics[width=0.9\linewidth=2]{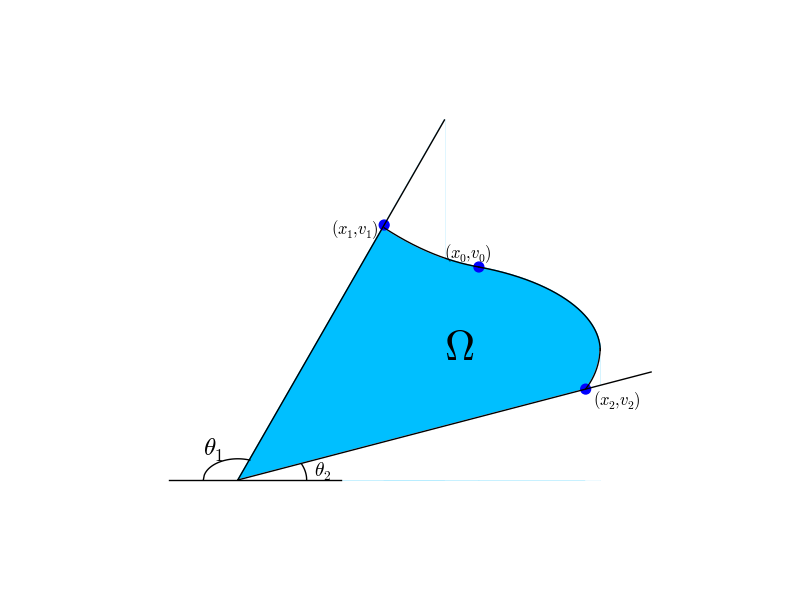}
           \caption{\label{figure9}}
           \end{figure}

           \begin{align}
               x_1&=x_m-\int_{\psi_1}^{\psi_m}\frac{\sigma\cos\gamma}{g\sqrt{\frac{2\sigma }{g}(\cos\psi_m-\cos\gamma)}}d\gamma   \label{equ:3.1.40}\\
               u_1&=\sqrt{\frac{2\sigma}{g}(\cos\psi_m-\cos\psi_1)} \label{equ:3.1.41}
               \end{align}

            \begin{align}
               x_2&=x_m+\int_{\psi_2}^{\psi_m}\frac{\sigma\cos\gamma}{g\sqrt{\frac{2\sigma }{g}(\cos\psi_m-\cos\gamma)}}d\gamma \label{equ:3.1.42}\\
               u_2&=-\sqrt{\frac{2\sigma}{g}(\cos\psi_m-\cos\psi_2)}\label{equ:3.1.43}
           \end{align}

            We employ the same approach as in the previous subsection to derive the relation between $x_m$ and $\psi_m$. In particular, we have

            \begin{equation*}
                -x_1\tan\theta_1-x_2\tan\theta_2=u_1-u_2,
            \end{equation*}

            \noindent which implies that
            
            \begin{equation}{\label{equ:3.1.44}}
                x_m=\frac{r_1\tan\theta_1-r_2\tan\theta_2-(u_1-u_2)}{\tan\theta_1+\tan\theta_2}.
            \end{equation}

            \noindent We next use $x_m$ and $\psi_m$ to express the total volume. A straightforward geometric computation yields

            \begin{align}
                 \mathscr{V}(\psi_{m})=&(r_1u_1-\int_{\psi_1}^{\psi_m}\int_{\psi}^{\psi_m}\frac{\sigma\cos\gamma}{g\sqrt{\frac{2\sigma}{g}(\cos\psi_m-\cos\gamma)}}d\gamma \frac{-\sigma\sin\psi}{g\sqrt{\frac{2\sigma}{g}(\cos\psi_m-\cos\psi)}}d\psi) \notag\\
                 &+\int_{\psi_2}^{\psi_m}\int_{\psi}^{\psi_m}\frac{\sigma\cos\gamma}{g\sqrt{\frac{2\sigma}{g}(\cos\psi_m-\cos\gamma)}}d\gamma \frac{-\sigma\sin\psi}{g\sqrt{\frac{2\sigma}{g}(\cos\psi_m-\cos\psi)}}d\psi \notag\\
                 &+\frac{1}{2}(u_1+u_2)^{2}\frac{1}{-\tan\theta_1}+(-u_2)r_1+\frac{1}{2}(u_2-u_1-x_1\tan\theta_1)^{2}(\frac{1}{\tan\theta_1}+\frac{1}{\tan\theta_2}) \label{equ:3.1.45}
            \end{align}

            We are now ready to establish the following theorem concerning the total volume.

            \begin{theorem}

                It holds that ${\mathscr{V}}(\psi_m)\rightarrow +\infty$ as $\psi_m\rightarrow 0$.
            \end{theorem}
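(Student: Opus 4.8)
The plan is to follow the template of Theorem 3.7, but to exploit the sign structure of the volume formula \eqref{equ:3.1.45} in the regime $\theta_1>\frac{\pi}{2}$ so that the conclusion follows from a single dominant term rather than a delicate cancellation. Concretely, I would show that each of the five summands in \eqref{equ:3.1.45} is nonnegative and that the fourth summand $(-u_2)r_1$ already diverges to $+\infty$; the desired blow-up of $\mathscr{V}(\psi_m)$ is then immediate.

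First I would record the two asymptotic inputs. Exactly as in Step~1 of the proof of Theorem 3.7, the Taylor expansion $\cos\gamma=1-\tfrac12\gamma^2+O(\gamma^3)$ near $\gamma=0$ shows that the integrand defining $r_1$ in \eqref{equ:3.1.40} behaves like $\tfrac{1}{|\gamma|}$ as $\gamma\to 0$, so the limiting integral $\int_{\psi_1}^{0}\tfrac{\sigma\cos\gamma}{g\sqrt{(2\sigma/g)(1-\cos\gamma)}}\,d\gamma$ diverges; since $r_1$ is monotone in $\psi_m$, Fatou's lemma yields $r_1\to+\infty$ as $\psi_m\to 0^-$, and likewise $r_2\to+\infty$. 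Next, from \eqref{equ:3.1.41}–\eqref{equ:3.1.43} and the standing hypotheses of this subsection (under which $\psi_2<-\tfrac{\pi}{2}<\psi_1$ and $\psi_m\in(\psi_1,0)\subset(-\tfrac{\pi}{2},0)$), I would observe that $u_1=\sqrt{(2\sigma/g)(\cos\psi_m-\cos\psi_1)}$ stays uniformly bounded, while $-u_2=\sqrt{(2\sigma/g)(\cos\psi_m-\cos\psi_2)}\ge\sqrt{(2\sigma/g)\,|\cos\psi_2|}>0$ is bounded below by a positive constant independent of $\psi_m$.

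The core step is the sign analysis of \eqref{equ:3.1.45}. The first group $r_1u_1-I_1$ and the term $I_2$ are genuine areas: changing variables from $\psi$ to $y$ identifies $I_1=\int_0^{u_1}(x_m-x(y))\,dy$ on the left segment, so $r_1u_1-I_1=\int_0^{u_1}(x(y)-x_1)\,dy\ge 0$, and $I_2=\int_{u_2}^{0}(x(y)-x_m)\,dy\ge 0$ on the right segment, where the pointwise inequalities $x_1\le x(y)\le x_m$ and $x(y)\ge x_m$ follow from the monotonicity of $x$ along each segment established in Theorem 3.4. For the three explicit terms I would use $\theta_1>\frac{\pi}{2}$: since $\tan\theta_1<0$ we have $\tfrac{1}{-\tan\theta_1}>0$, so $\tfrac12(u_1+u_2)^2\tfrac{1}{-\tan\theta_1}\ge 0$; and since $\theta_1+\theta_2<\pi$ forces $\tan\theta_1<-\tan\theta_2$, we get $\tan\theta_1+\tan\theta_2<0$ together with $\tan\theta_1\tan\theta_2<0$, whence $\tfrac{1}{\tan\theta_1}+\tfrac{1}{\tan\theta_2}=\tfrac{\tan\theta_1+\tan\theta_2}{\tan\theta_1\tan\theta_2}>0$ and the last term is $\ge 0$ as well. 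With all five summands nonnegative, \eqref{equ:3.1.45} gives $\mathscr{V}(\psi_m)\ge (-u_2)\,r_1$, and by the two inputs above this lower bound tends to $+\infty$ as $\psi_m\to 0^-$, which proves the theorem.

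The main obstacle I anticipate is not the divergence itself but the bookkeeping that makes this one-term lower bound legitimate. One must verify carefully that the two double-integral groups are nonnegative, and this is precisely where the geometric change of variables to $y$ and the segmentwise monotonicity of $x$ from Theorem 3.4 are essential: because $\psi_2<-\tfrac{\pi}{2}$, the inner integrand $\cos\gamma$ changes sign along the right segment, so a naive pointwise estimate of $I_2$ fails and one genuinely needs the area interpretation. One must also confirm that the sign of $\tfrac{1}{\tan\theta_1}+\tfrac{1}{\tan\theta_2}$ is pinned down solely by the constraint $\theta_1+\theta_2<\pi$. Once these signs are secured, no cancellation argument is required and the conclusion follows at once.
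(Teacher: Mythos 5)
Your overall strategy coincides with the paper's: both proofs bound $\mathscr{V}(\psi_m)$ from below by a single summand of \eqref{equ:3.1.45} after discarding the others as nonnegative, and both get the divergence ultimately from the logarithmic blow-up of $\int\frac{\cos\gamma}{\sqrt{\cos\psi_m-\cos\gamma}}\,d\gamma$ near $\gamma=\psi_m\to 0$ (Step~1 of Theorem~3.7). The difference is which summand is retained: the paper keeps the double integral over $(\psi_2,\psi_m)$ and shows it diverges directly, whereas you keep $(-u_2)r_1$ and use that $-u_2\ge\sqrt{(2\sigma/g)\,|\cos\psi_2|}>0$ uniformly while $r_1\to+\infty$. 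Your divergence mechanism is fine, and your sign analysis of the three explicit terms (using $\tan\theta_1<0$ and $\theta_1+\theta_2<\pi$ to get $\tfrac{1}{\tan\theta_1}+\tfrac{1}{\tan\theta_2}>0$) and of the first group $r_1u_1-I_1\ge 0$ is correct.

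The gap is in your treatment of $I_2=\int_{\psi_2}^{\psi_m}(\,\cdots)\,d\psi$. The paper's choice of retained term is not cosmetic: by keeping $I_2$ itself, the paper never has to prove $I_2\ge 0$, which is precisely the one sign that is delicate here. Your choice forces you to prove it, and the justification you offer --- ``the pointwise inequalities $x(y)\ge x_m$ follow from the monotonicity of $x$ along each segment established in Theorem 3.4'' --- does not apply in this subsection. Theorem~3.4 is proved under $\theta_1\le\frac{\pi}{2}$, where the curve is a graph of $x$; in Subsection~3.3 one has $\gamma>\theta_2$, hence $\psi_2<-\frac{\pi}{2}$, the right segment passes through a vertical tangent, and $x(\psi)$ is \emph{not} monotone along it ($\frac{dx}{d\psi}=\sigma\cos\psi/(gy)$ changes sign at $\psi=-\frac{\pi}{2}$). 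So the inequality $x(\psi)\ge x_m$ on the right segment, equivalently the positivity of $\int_{\psi}^{\psi_m}\frac{\sigma\cos\gamma}{g\sqrt{(2\sigma/g)(\cos\psi_m-\cos\gamma)}}\,d\gamma$ when $\cos\gamma$ changes sign, genuinely needs an argument; the correct tool is the rearrangement inequality \eqref{equ:3.1.52} (the analogue of Lemma~2.9 used in Theorem~3.13), which gives this integral the lower bound $\frac{\sigma(\sin\psi_m-\sin\psi)}{g\sqrt{(2\sigma/g)(\cos\psi_m+1)}}>0$. With that substitution your proof closes; as written, the appeal to Theorem~3.4 is the one step that would fail.
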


            \begin{proof}

                 Using the computation in Theorem 3.7, we have

                \begin{equation*}
                    \int_{\psi}^{\psi_m} \frac{\cos\gamma}{g\sqrt{\frac{2\sigma}{g}(\cos\psi_m-\cos\gamma)}}d\gamma \rightarrow +\infty
                \end{equation*}

                \noindent as $\psi_m\rightarrow 0$ for any $\psi\in (\psi_2,\psi_m)$. Therefore, we have:

                \begin{equation*}
                \mathscr{V}(\psi_m)\geq \int_{\psi_2}^{\psi_m}\int_{\psi}^{\psi_m}\frac{\sigma\cos\gamma}{g\sqrt{\frac{2\sigma}{g}(\cos\psi_m-\cos\gamma)}}d\gamma \frac{-\sigma\sin\psi}{g\sqrt{\frac{2\sigma}{g}(\cos\psi_m-\cos\psi)}}d\psi\rightarrow +\infty
                \end{equation*}

                \noindent as $\psi_m\rightarrow 0$. Hence,

                \begin{equation*}
                \mathscr{V}(\psi_m)\rightarrow +\infty
                \end{equation*}

                \noindent as $\psi_m\rightarrow 0$, which finishes the proof.
            \end{proof}

             Just as what we did in the previous subsection, we want to show that ${\mathscr{V}}(\psi_m)$ has a lower bound $V_m$.

            \begin{theorem}

               The energy functional $\mathscr{V}(\psi_m)$ has a positive lower bound $V_m$ if $\psi_{1}\neq \psi_{2}$.
            \end{theorem}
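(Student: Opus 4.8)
The plan is to transplant the argument of Theorem 3.8 into the present geometry; the statement and its proof are the exact analogues of the acute-angle case, and the only genuinely new difficulty comes from the two sign reversals forced by the standing assumptions $\theta_1>\frac{\pi}{2}$ (so $\tan\theta_1<0$) and $\gamma>\theta_2$, which gives $\psi_2<-\frac{\pi}{2}$ and hence $\cos\psi_2<0$. Note that in this regime $[\![\gamma]\!]<0$ and $\gamma>\theta_2$ force $\psi_1-\psi_2=\pi-\theta_1-\theta_2+2\gamma>0$, so $\psi_1>\psi_2$. As before, the objective is to exhibit a subregion of $\Omega$ whose area is bounded below by a constant that is \emph{uniform} in $\psi_m\in(\psi_1,0)$ and strictly positive exactly when $\psi_1\neq\psi_2$; since $\mathscr{V}$ is continuous on this interval and blows up at the right endpoint by Theorem 3.12, such a uniform bound immediately yields $V_m>0$.

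First I would decompose the enclosed region exactly as in Theorem 3.8. Because the right segment is monotone in $y$, the contact point $(x_2,u_2)$ with $u_2<0$ (see \eqref{equ:3.1.43}) is the lowest point of the free surface; let $\Omega_1$ denote the region lying below the line $y=u_2$ and between the two walls, and set $\Omega_2:=\Omega\setminus\Omega_1$ with areas $\mathscr{V}_1,\mathscr{V}_2$. Both pieces have positive area, so $\mathscr{V}(\psi_m)\geq\mathscr{V}_2(\psi_m)$ and it suffices to bound $\mathscr{V}_2$ from below. Since each monotone segment of the curve is parametrized by $\psi$, I would write $\mathscr{V}_2$ as a double integral in $\psi$ in the form of \eqref{equ:V_2}, retaining only the contribution of the right segment, where $\psi$ ranges over $(\psi_2,\psi_m)$; this range remains nondegenerate as $\psi_m\to\psi_1^{+}$, which is what ultimately produces a positive bound.

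The heart of the matter is then a uniform-in-$\psi_m$ lower estimate for this integral, carried out as in the chain \eqref{equ:key_ineq_1}. Using that the inner integral and the factor $\cos\psi_m-\cos\psi$ are monotone in $\psi_m$, I would replace the $\psi_m$-dependent quantities by their limiting endpoint values, reducing the estimate to a fixed positive multiple of the $\psi_m$-independent integral $\int_{\psi_2}^{\psi_1}\Bigl(\int_\psi^{\psi_1}\frac{\sigma\cos\gamma\,d\gamma}{g\sqrt{\frac{2\sigma}{g}(1-\cos\gamma)}}\Bigr)\frac{-\sigma\sin\psi\,d\psi}{g\sqrt{\frac{2\sigma}{g}(1-\cos\psi)}}$. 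The wall-correction term carrying $\frac{1}{\tan\theta_1}$ must be absorbed through the ratio $\frac{\tan\gamma}{\tan\theta_1}$; here $\tan\theta_1<0$ reverses the direction of the estimate relative to Theorem 3.8, so the inequality has to be rearranged accordingly, but a uniform bound of the shape $\frac{\tan\gamma}{\tan\theta_1}>p>-1$ for $\gamma$ in the relevant range still closes the argument. The resulting lower bound is strictly positive precisely because $\psi_1\neq\psi_2$ makes $(\psi_2,\psi_1)$ an interval of positive length; when $\psi_1=\psi_2$ this interval collapses and $\mathscr{V}\to0$, which is why the hypothesis $\psi_1\neq\psi_2$ is needed.

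I expect the main obstacle to be the sign bookkeeping forced by $\psi_2<-\frac{\pi}{2}$. On the subinterval $(\psi_2,-\frac{\pi}{2})$ one has $\cos\psi<0$ and $\tan\psi>0$, so neither the integrand $\frac{-\sigma\sin\psi}{g\sqrt{\cdots}}$ nor the ratio $\frac{\tan\gamma}{\tan\theta_1}$ keeps the constant sign exploited in Theorem 3.8. I would handle this by splitting $(\psi_2,\psi_m)$ at $-\frac{\pi}{2}$ and treating the overhanging portion separately, showing that it contributes nonnegatively to the area so that the clean estimate on $(-\frac{\pi}{2},\psi_m)$ survives. Verifying that the decomposition into $\Omega_1$ and $\Omega_2$ remains geometrically valid for the folded curve—that the overhang crosses neither a wall nor the line $y=u_2$—is the other point requiring care, and here the no-self-intersection hypothesis together with Lemma 3.6 adapted to $\theta_1>\frac{\pi}{2}$ is what I would invoke.
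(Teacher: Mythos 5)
Your proposal is correct in outline but takes a genuinely different route from the paper. The paper's own proof of this theorem never revisits the $\Omega_1$/$\Omega_2$ decomposition or the $\frac{\tan\gamma}{\tan\theta_1}>p>-1$ absorption of Theorem 3.8: it discards every term of the explicit volume formula \eqref{equ:3.1.45} except the double integral over the right segment (inequality \eqref{equ:3.1.53}), which amounts to comparing the curve with the vertical line $x=x_m$ rather than with the left wall, so no term involving $\tan\theta_1$ ever enters. It then bounds the inner integral from below by $\frac{\sin\psi_m-\sin\psi}{g\sqrt{(2\sigma/g)(\cos\psi_m+1)}}$ --- the Lemma~2.9 splitting argument being what absorbs the sign change of $\cos\gamma$ across $-\frac{\pi}{2}$ --- and integrates explicitly to land on the uniform bound $\frac{\sigma}{4g}\int_{\psi_2}^{\psi_1}\sin^{2}\psi\,d\psi>0$. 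By contrast, your slab $\Omega_2$ above $y=u_2$ measures width from the left wall, which is what forces the $\frac{\tan\gamma}{\tan\theta_1}$ bookkeeping; be aware that the uniform bound $\frac{\tan\gamma}{\tan\theta_1}>p>-1$ genuinely fails on the overhang $(\psi_2,-\tfrac{\pi}{2})$, where $\tan\gamma\to+\infty$ while $\tan\theta_1<0$. Your fallback (split at $-\tfrac{\pi}{2}$ and show the overhang contributes nonnegative area) repairs this, but the simpler observation is that for $\tan\theta_1<0$ the wall-correction term $-\frac{1}{\tan\theta_1}\sqrt{\tfrac{2\sigma}{g}(\cos\psi_m-\cos\psi)}$ is itself nonnegative and may be dropped outright, after which your estimate collapses to exactly the paper's comparison with $x=x_m$. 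What your route buys is uniformity of presentation with the acute-angle case; what the paper's buys is an explicit, computable lower bound and no wall geometry to verify.
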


            \begin{proof}

             Using equation \eqref{equ:3.1.45}, we have

            \begin{equation}{\label{equ:3.1.53}}
                 \mathscr{V}(\psi_m)\geq \int_{\psi_2}^{\psi_m}\int_{\psi}^{\psi_m}\frac{\sigma\cos\gamma}{g\sqrt{\frac{2\sigma}{g}(\cos\psi_m-\cos\gamma)}}d\gamma \frac{-\sigma\sin\psi}{g\sqrt{\frac{2\sigma}{g}(\cos\psi_m-\cos\psi)}}d\psi
            \end{equation}

            Using the definition of $\psi_1$ and $\psi_2$ given in \eqref{equ:1.2.8} and \eqref{equ:1.2.9}, we have $\psi_1=-\theta_1+\frac{\pi}{2}+\gamma$ and $\psi_2=\theta_2-\frac{\pi}{2}-\gamma$.  We first observe from these results that $\psi_1\in(-\frac{\pi}{2},0)$. Indeed,

            \begin{equation}{\label{equ:3.1.50}}
                0>\psi_1=-\theta_1+\frac{\pi}{2}+\gamma\geq -\theta_1+\frac{\pi}{2}+\theta_2>-\pi+\frac{\pi}{2}=-\frac{\pi}{2}
            \end{equation}
            
            \noindent Moreover, we have

            \begin{equation}{\label{equ:3.1.51}}
            \psi_m\geq \psi_1=-\theta_1+\frac{\pi}{2}-\gamma=\theta_2-\theta_1-\psi_2>-\pi-\psi_2
            \end{equation}
            
            From equation \eqref{equ:3.1.50} and equation \eqref{equ:3.1.51}, it follows that $\cos\psi_{m}>\cos\psi_2$. Consequently,

            \begin{equation}{\label{equ:3.1.52}}
                \int_{\psi}^{\psi_m}\frac{\cos\gamma}{g\sqrt{\frac{2\sigma}{g}(\cos\psi_m-\cos\gamma)}}d\gamma\geq \int_{\psi}^{\psi_m}\frac{\cos\gamma}{g\sqrt{\frac{2\sigma}{g}(\cos\psi_m+1)}}d\gamma =\frac{\sin\psi_m-\sin\psi}{g\sqrt{\frac{2\sigma}{g}(\cos\psi_m+1)}}
            \end{equation}

            \noindent The derivation of \eqref{equ:3.1.52} follows exactly the same argument as in Lemma~2.9 and is therefore omitted.
            
            Since $\psi\in (\psi_2,\psi_m)$, we have $\sin\psi_m>\sin\psi$.  Combining this result with equation \eqref{equ:3.1.52} and substituting them into equation \eqref{equ:3.1.53}, we obtain

            \begin{align*}
                \mathscr{V}(\psi_m)&\geq \sigma^{2}\int_{\psi_2}^{\psi_m}\frac{\sin\psi_{m}-\sin\psi}{g\sqrt{\frac{2\sigma}{g}(\cos\psi_m+1)}}\frac{-\sin\psi}{g\sqrt{\frac{2\sigma}{g}(\cos\psi_m-\cos\psi)}}d\psi\\
                 &>\frac{\sigma^{2}}{2\sigma g(\cos\psi_{m}+1)}\int_{\psi_2}^{\psi_{m}} -\sin\psi \sin\psi_{m}+\sin^{2}\psi d\psi\\
                 &>\frac{\sigma}{4 g}(-\sin\psi_{m}(\cos\psi_{m}-\sin\psi_2)+\int_{\psi_2}^{\psi_{m}}\sin^{2}\psi d\psi)\\
                 &>\frac{\sigma}{4 g}\int_{\psi_2}^{\psi_1}\sin^{2}\psi d\psi>C(\psi_1,\psi_{2})>0
            \end{align*}
            
            \noindent This estimate, together with Theorem 3.12, implies that ${\mathscr{V}}(\psi_m)$ is a positive continuous function for $\psi_m\in[\psi_1,0)$. Moreover, since ${\mathscr{V}}(\psi_{m})\rightarrow +\infty$ as $\psi_m\rightarrow 0$, $\mathscr{V}(\psi_m)$ is bounded from below by a positive constant $V_{m}$.
            \end{proof}

            It remains to consider the case in which the curve can be parameterized by $\psi$. We assume that $\psi_1>\psi_2$ and that the curve intersects the left wall at the contact point $(x_1,u_1)$. In this setting, the expressions for $x_2$ and $u_2$ in terms of $x_1$ and $u_1$ are given by \eqref{equ:3.2.15} and \eqref{equ:3.2.16}, respectively. Moreover, the representations of $x_1$ and of the total volume $\mathscr{V}(u_1)$ coincide with those in \eqref{equ:3.1.34}, \eqref{equ:3.1.35}, and \eqref{equ:3.1.36}. These observations lead to the following theorem.

            \begin{theorem}
            The total volume enclosed by the constructed curve ,denoted as $\mathscr{V}$, is a function of $u_1$, and is bounded from above when $u_1<0$, with a positive upper bound $V_a$. Furthermore, $V_a\geq V_m$, where $V_{m}$ is the constant derived in Theorem 3.13
            \end{theorem}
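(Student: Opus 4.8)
The plan is to mirror the argument of Theorem~3.10, exploiting the fact (recorded in the paragraph preceding the statement) that in the present regime $\theta_1>\frac{\pi}{2}$, $\gamma>\theta_2$ the coordinates $x_2,u_2$ are still given by \eqref{equ:3.2.15}--\eqref{equ:3.2.16}, while $x_1$ and the decomposition $\mathscr{V}=\mathscr{V}_1+\mathscr{V}_2$ are still given by \eqref{equ:3.1.34}, \eqref{equ:3.1.35}, and \eqref{equ:3.1.36}. First I would record that the ODE \eqref{equ:1.3.4} evaluated at the left contact point forces $gu_1=\sigma\partial_x(\sin\psi)(x_1)\le 0$, so that the natural domain of $\mathscr{V}$ is $(-\infty,0]$, and that $\mathscr{V}(u_1)$ is continuous there, since each integrand in \eqref{equ:3.1.35} depends continuously on $u_1$ away from the singular locus treated below.

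Next I would analyze the two endpoints of this interval. As $u_1\to-\infty$ the denominators in \eqref{equ:3.2.15} and \eqref{equ:3.1.35} blow up, so by the identical computation that produced \eqref{equ:3.1.37}--\eqref{equ:3.1.39} one obtains $r_2\to 0$, $u_2-u_1\to 0$, and hence $x_1\to 0$; feeding these into \eqref{equ:3.1.35}--\eqref{equ:3.1.36} yields $\mathscr{V}_1\to 0$ and $\mathscr{V}_2\to 0$, so $\mathscr{V}(u_1)\to 0$. At the other endpoint $u_1=0$, the only place where the integrand of \eqref{equ:3.1.35} could fail to be integrable is where $u_1^2+\frac{2\sigma}{g}(1-\cos\psi)$ vanishes, namely $\psi=0$; but by \eqref{equ:3.1.50} we have $\psi_1\in(-\frac{\pi}{2},0)$, so $\psi=0$ lies outside the integration range $[\psi_2,\psi_1]$ and $\mathscr{V}(0)$ is finite. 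A continuous function on $(-\infty,0]$ that tends to $0$ at $-\infty$ and is finite at $0$ is bounded above, which yields the existence of $V_a:=\sup_{u_1\le 0}\mathscr{V}(u_1)$.

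It then remains to compare $V_a$ with the interior-maximum bound $V_m$ of Theorem~3.13. The key geometric observation is that the boundary-maximum curve at $u_1=0$ coincides with the interior-maximum curve of Subsection~3.4 in the degenerate limit $\psi_m=\psi_1$: when $\psi_m=\psi_1$ the interior maximal point $(x_m,0)$ merges with the left contact point, so the two constructions produce the same curve and the same enclosed volume. Consequently $V_a\ge\mathscr{V}(0)=\mathscr{V}(\psi_m=\psi_1)\ge V_m$, where the last inequality is exactly the positive lower bound of Theorem~3.13, valid since $\psi_1\neq\psi_2$; in particular $V_a\ge V_m>0$, giving both the positivity of $V_a$ and the claimed comparison.

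The main obstacle is the bookkeeping of signs: because $\theta_1>\frac{\pi}{2}$ one has $\tan\theta_1<0$ and, from $\theta_1<\pi-\theta_2$, also $\tan\theta_1+\tan\theta_2<0$; moreover $u_2$ must be taken with the geometrically correct (negative) branch of the square root in \eqref{equ:3.2.16} so that $u_2-u_1\to 0$, rather than $\to+\infty$, as $u_1\to-\infty$. I would verify that with these signs the limits \eqref{equ:3.1.37}--\eqref{equ:3.1.39} and the vanishing of $\mathscr{V}_1,\mathscr{V}_2$ persist, and that the curve's passage through the vertical angle $\psi=-\frac{\pi}{2}$ (which occurs here because $\psi_2<-\frac{\pi}{2}$) does not obstruct the $\psi$-parametrization used to derive \eqref{equ:3.1.35}, exactly as in the extension argument of Theorem~\ref{Thm:g_1}.
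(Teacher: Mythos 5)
Your proposal is correct and follows essentially the same route as the paper, whose proof of this theorem simply defers to the argument of Theorem~3.10: the limit $\mathscr{V}\to 0$ as $u_1\to-\infty$ via \eqref{equ:3.1.37}--\eqref{equ:3.1.39}, finiteness at $u_1=0$ because the only singularity of the integrand in \eqref{equ:3.1.35} sits at $\psi=0\notin[\psi_2,\psi_1]$, and the comparison $V_a\ge\mathscr{V}(0)=\mathscr{V}(\psi_m=\psi_1)\ge V_m$ by identifying the $u_1=0$ curve with the degenerate interior-maximum curve. Your additional sign bookkeeping for $\tan\theta_1<0$ and the observation that $u_2$ must be taken on the negative branch of the square root in \eqref{equ:3.2.16} for $u_2-u_1\to 0$ to hold are welcome clarifications that the paper leaves implicit.
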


            \begin{proof}

                The proof follows directly from the argument used in Theorem 3.10. 
            \end{proof}

            Now we have constructed a boundary curve corresponding to a prescribed volume $V$, which represents a steady-state configuration. It remains to show that this curve can be represented as the graph of a function in polar coordinates.

            \begin{theorem}
                The surface function constructed in this section can be expressed as a graph of function $\rho(\theta)$ in polar coordinates.
            \end{theorem}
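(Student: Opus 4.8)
The plan is to run the polar‑representation argument from the end of Section~2, adapted to the curve constructed in Subsection~3.4, working in the original (un‑shifted) coordinates so that the pole coincides with the wall intersection $O$. What must be shown is that every ray $\{\,y=(\tan\theta_0)x:\theta_0\in(\theta_2,\pi-\theta_1)\,\}$ through $O$ meets the boundary curve in exactly one point; equivalently, the polar angle is strictly monotone as we traverse the curve, so that $\rho$ is a well‑defined function of $\theta$. As in Case~1, I would argue by contradiction: assume some ray at angle $\theta_0$ meets the curve at two or more points and derive a contradiction with the boundary condition forced at the right contact point.

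The first step is to supply a global monotone parameter, since here $\psi$ no longer serves this role: by Theorem~3.4, $\psi$ increases from $\psi_1$ to its interior maximum $\psi_m$ and then decreases to $\psi_2$. I would instead use the vertical coordinate. Because the distinguished point carries $u_m=0$ while \eqref{equ:3.1.41} and \eqref{equ:3.1.43} give $u_1=\sqrt{\tfrac{2\sigma}{g}(\cos\psi_m-\cos\psi_1)}>0$ and $u_2=-\sqrt{\tfrac{2\sigma}{g}(\cos\psi_m-\cos\psi_2)}<0$, the same formulas show that the vertical coordinate decreases strictly as we pass from the left contact point through $(x_m,0)$ to the right contact point (the shift by $P_0/g$ being constant does not affect this). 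Hence $y$ is a legitimate global parameter, along which $\psi=\psi(y)$ is a well‑defined \emph{unimodal} function, $x=x(y)$ satisfies $\tfrac{dx}{dy}=\cot\psi$, and — since $\psi<0$ throughout so $\sin\psi$ never vanishes — the representation
\begin{equation*}
x(y)=x(y_\star)-\int_{y}^{y_\star}\cot\psi\,dy
\end{equation*}
has no convergence issue (a simplification over Case~1, where $\psi$ passed through $0$).

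With $y$ replacing $\psi$, I would then reproduce the contradiction argument of Section~2 in spirit. Using the above representation together with the monotonicity of $\cot$ on the relevant subinterval of $(-\pi,0)$ and the geometric inequalities relating the slope angle at a crossing to $\theta_0$, one bounds the ratio $x/y$ along the lower portion of the curve and contradicts the value $x_2/y_2=\cot\theta_2$ imposed at the right wall. The existence of a pair of adjacent crossings — the analogue of the ``three properties'' in Section~2 — follows from a mean value argument: between consecutive crossings the tangent must become parallel to the ray, i.e. $\psi=\theta_0-\pi$, and since $\psi(y)$ is unimodal and $\theta_0-\pi$ is the only value of the form $\theta_0+k\pi$ lying in $(-\pi,0)$, this happens at most twice; this rules out an accumulating infinite sequence of crossings and reduces us to finitely many.

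The main obstacle is the regime $\gamma>\theta_2$, where $\psi_2<-\tfrac{\pi}{2}$ and the curve is \emph{not} a graph of $x$: the right segment folds back past the vertical. When transplanting the sign computations of Section~2 one must therefore track which branch of the curve a crossing lies on and keep careful account of the signs of the coordinates, because both the inequality $\cot\psi>\cot\theta_0$ and the direction in which $x/y$ is estimated depend on this. Verifying that the geometric crossing inequalities (the analogues of $\psi_A\le\theta_0$ and $\psi_B\le\theta_0-\pi$) remain valid on the folded part is the delicate point; once it is secured, the remaining estimates are routine adaptations of the Section~2 computation, and the same argument applied to the $\psi$‑parametrizable curves of Subsections~3.2–3.3 completes the proof.
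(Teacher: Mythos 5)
Your proposal matches the paper's approach: the paper's own proof of this theorem is a one-line deferral to the contradiction argument of Theorem~2.16 (a ray through $O$ meeting the curve more than once, adjacent crossings isolated via a tangency/mean-value count, and the $\cot\psi>\cot\theta_{0}$ estimate on the lower arc contradicting the contact condition $x_{2}/u_{2}=\cot\theta_{2}$ at the right wall), which is exactly the argument you adapt. You in fact supply more detail than the paper does — correctly identifying that $y$ must replace $\psi$ as the global monotone parameter when $\psi$ is only unimodal, and that unimodality limits the solutions of $\psi=\theta_{0}-\pi$ to at most two — so the proposal is a faithful and useful elaboration of the intended argument.
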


            \begin{proof}

                The main argument follows directly from that of Theorem~2.16. For brevity, the details are omitted.
            \end{proof}
            
            Finally, combining Theorem 3.11, Theorem 3.13, Theorem 3.14 and Theorem 3.15 together, we obtain the following existence result.

            \begin{theorem}
            
            When $\psi_1$ and $\psi_2$ have the same sign, there exist a solution function to the equation system \eqref{equ:1.2.3} for any prescribed volume $V$.
            \end{theorem}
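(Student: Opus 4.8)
The plan is to reduce the statement to the existence results already obtained in the two geometric regimes, namely $\theta_1\le\frac{\pi}{2}$ (Subsection 3.2) and $\theta_1>\frac{\pi}{2}$ (Subsection 3.3), and to patch them together by the intermediate value theorem. When both $\theta_1$ and $\theta_2$ are acute, Theorem 3.11 already asserts the existence of a steady state for every prescribed $V\ge 0$, so nothing further is needed there. The remaining work is therefore confined to the case $\theta_1>\frac{\pi}{2}$ (with $\gamma>\theta_2$, since $\gamma\le\theta_2$ is covered by the analysis of Subsection 3.2), where the free surface is produced by two distinct one–parameter families of curves: the interior–maximum family, parametrized by the maximal slope angle $\psi_m\in(\max(\psi_1,\psi_2),0)$, and the boundary–maximum family, parametrized by $u_1\le 0$.

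First I would treat the interior–maximum family. The enclosed volume $\mathscr{V}(\psi_m)$ in \eqref{equ:3.1.45} is continuous in $\psi_m$; by Theorem 3.12 it blows up, $\mathscr{V}(\psi_m)\to+\infty$ as $\psi_m\to 0$, while by Theorem 3.13 it is bounded below by the positive constant $V_m$ when $\psi_1\neq\psi_2$. Hence the intermediate value theorem produces, for every $V\ge V_m$, a value $\psi_m$ with $\mathscr{V}(\psi_m)=V$ and a corresponding curve. Next I would treat the boundary–maximum family, where the volume $\mathscr{V}(u_1)$ from \eqref{equ:3.1.35}--\eqref{equ:3.1.36} is continuous, tends to $0$ as $u_1\to-\infty$, and is bounded above by $V_a$ with $V_a\ge V_m$ by Theorem 3.14. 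The intermediate value theorem then yields, for every $0\le V\le V_a$, a value $u_1$ with $\mathscr{V}(u_1)=V$ and a corresponding curve.

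The key structural point is the overlap inequality $V_a\ge V_m$ from Theorem 3.14: it guarantees that the two admissible ranges $[0,V_a]$ and $[V_m,+\infty)$ cover the whole half–line $[0,+\infty)$, so that every prescribed $V\ge 0$ falls into at least one family and hence admits a curve. When $\psi_1=\psi_2$ the lower bound degenerates, $\mathscr{V}(\psi_m)\to 0$ as $\psi_m\to\psi_1$, and the interior family alone already sweeps all volumes. In every case, reversing the vertical shift $(x,y)\mapsto(x,y+\frac{P_0}{g})$, with $P_0$ recovered from the contact geometry, converts the constructed curve into a solution of the original system \eqref{equ:1.2.3}, and Theorem 3.15 certifies that it is the graph of a radial function $\rho(\theta)$, so that the volume constraint $\mathcal{V}(\rho)=\mathscr{V}=V$ holds. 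The main obstacle is precisely establishing and exploiting $V_a\ge V_m$ (Theorem 3.14): one must verify that the limit of the boundary family as $u_1\to 0^-$ reproduces the interior family at $\psi_m=\psi_1$, so that the two constructions match along their common boundary and their volume ranges genuinely overlap rather than leaving a gap.
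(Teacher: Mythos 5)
Your proposal is correct and follows essentially the same route as the paper, whose proof of this theorem is precisely the combination of Theorem 3.11 (both angles acute), Theorems 3.12--3.14 (the two one-parameter families with the overlap $V_a\ge V_m$ established via the matching $\lim_{u_1\to 0^-}\mathscr{V}(u_1)=\mathscr{V}(\psi_1)$), and Theorem 3.15 (polar-graph representation and reversal of the shift). Your added remarks on the degenerate case $\psi_1=\psi_2$ and on why the two volume ranges genuinely overlap are consistent with the arguments already given in the proofs of Theorems 3.8, 3.10, and 3.14.
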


        \end{subsection}

    \end{section}

    \begin{center}
        {\large A}CKNOWLEDGEMENTS
    \end{center}

    The author thanks his advisor Yan Guo for numerous comments. His mentorship and constructive feedback contribute significantly to the development of this work.

    This work is supported in part by NSF Grant DMS-2405051.

\bibliographystyle{plain}
\bibliography{sample}

\end{document}